\documentclass[11pt]{amsart}
\usepackage[top=1in, bottom=1in, left=1in, right=1in]{geometry}
\usepackage{float}
\usepackage{fancyhdr}
\usepackage{caption}
\usepackage{subcaption}
\usepackage{amsfonts}
\usepackage{amsmath}
\usepackage{amsthm}
\usepackage{amssymb}
\usepackage{bbm}
\numberwithin{equation}{section}
\usepackage{times}

\usepackage[usenames,dvipsnames]{color}
\usepackage{comment}
\usepackage{mathtools}
\usepackage{bbm}
\usepackage{bm}
\usepackage{esvect}
\usepackage{mathtools}
\usepackage{amssymb, amsthm, thmtools, mathrsfs}
\usepackage[backend=biber, sorting=nyt, url=false, doi=false, maxnames=99]{biblatex}
\usepackage[stretch=10, shrink=10]{microtype}
\usepackage{graphicx, xcolor}
\usepackage{colortbl}
\usepackage{hyperref}
\hypersetup{
    colorlinks = true,
linkcolor={black},
urlcolor={blue},
citecolor={blue},    
urlcolor = {blue},
citebordercolor = {0.33 .58 0.33},
 linkbordercolor = {0.99 .28 0.23},
 breaklinks=true}
\usepackage{placeins}

\numberwithin{equation}{section}

\declaretheorem[numberwithin=section]{theorem}

\declaretheorem[name=Conjecture]{conjecture}
\declaretheorem[name=Definition]{definition}

\declaretheorem[numberwithin=subsection, numberlike=theorem]{lemma}
\declaretheorem[numberwithin=subsection, numberlike=theorem]{corollary}
\declaretheorem[numberwithin=subsection, numberlike=theorem]{proposition}

\newtheoremstyle{myremarkstyle} 
  {}                           
  {}                           
  {\normalfont}                
  {}                           
  {\bfseries}                  
  {.}                          
  { }                          
  {}                           

\theoremstyle{myremarkstyle}

\declaretheorem[
    numberwithin=subsection,
    name=Remark
]{remark}

\declaretheorem[
    numberwithin=section,
    name=Remark
]{sectionremark}

\NewDocumentCommand{\diff}{}{\,\mathrm{d}}

\AtBeginDocument{\RenewDocumentCommand{\Re}{}{\operatorname{Re}}}
\AtBeginDocument{\RenewDocumentCommand{\Im}{}{\operatorname{Im}}}
\AtBeginDocument{}
\AtBeginDocument{}
\AtBeginDocument{}
\graphicspath{ {./image/} }

\title{Surfaces Associated to Zeros of Automorphic $L$-functions}

\author{Debmalya Basak, Cruz Castillo and Alexandru Zaharescu}

\address{
Debmalya Basak: Department of Mathematics,
University of Illinois Urbana-Champaign,
Altgeld Hall, 1409 W. Green Street,
Urbana, IL, 61801, USA and Max Planck Institute for Mathematics, Vivatsgasse 7, Bonn, Germany
53111}
\email{basakd@mpim-bonn.mpg.de}

\address{
Cruz Castillo : Department of Mathematics,
University of Illinois Urbana-Champaign,
Altgeld Hall, 1409 W. Green Street,
Urbana, IL, 61801, USA}
\email{ccasti30@illinois.edu}

\address{
Alexandru Zaharescu: Department of Mathematics,
University of Illinois Urbana-Champaign,
Altgeld Hall, 1409 W. Green Street,
Urbana, IL, 61801, USA and Simion Stoilow Institute of Mathematics of the Romanian Academy, 
P. O. Box 1-764, RO-014700 Bucharest, Romania}
\email{zaharesc@illinois.edu}  

\begin{document}
\subjclass[2020]{Primary: 11M26, 11F66. Secondary: 11M41, 11L20. \\ \indent \textit{Keywords and phrases}: Riemann zeta function, Automorphic $L$-functions, non-trivial zeros,  correlation surfaces, Gaussian, Laplace and Chi-squared distributions.}
\dedicatory{
Dedicated to Peter Sarnak on the occasion of his seventieth birthday \vspace{-1cm}}
\maketitle
\makeatother
\begin{abstract}
Assuming the Riemann Hypothesis, Montgomery established results concerning pair correlation of zeros of the Riemann zeta function. Rudnick and Sarnak extended these results to automorphic \( L \)-functions and all level correlations. We discover additional geometric structures associated to the zeros of automorphic \( L \)-functions. In the case of pair correlation, these structures form certain surfaces which display Gaussian behavior. For triple correlation, these structures exhibit Laplace and Chi-squared distributions, revealing an unexpected phase transition.  
\end{abstract}
\setcounter{tocdepth}{1}
\tableofcontents
\section{Introduction}\label{sec: Introduction}
\subsection{Motivation}\label{sec 1.1 Motivation}
In his seminal work \cite{montgomeryPairCorrelationZeros1973},
\citeauthor{montgomeryPairCorrelationZeros1973} conducted a detailed study of the pair correlation of non-trivial zeros of the Riemann zeta function $\zeta(s)$. To understand the Fourier transform of the distribution function of the numbers $\gamma-\gamma'$, where $\gamma, \gamma'$ are imaginary parts of the non-trivial zeros of $\zeta(s)$, Montgomery considered the function
\begin{align}\label{Montgomery's Sum 1}
F(\alpha):=F(\alpha, T)=\bigg(\frac{T}{2 \pi} \log T\bigg)^{-1} \sum_{0<\gamma^{\prime} \leqslant T} \sum_{0<\gamma \leqslant T} T^{i \alpha(\gamma-\gamma^{\prime})} w(\gamma-\gamma^{\prime}).
\end{align}
Here $\alpha \in \mathbb{R}$, $T \geqslant2$ and $w(u)$ is a suitable weight function defined on $\mathbb{R}$ by $w(u)=4/(4+u^2)$. Assuming the Riemann Hypothesis (RH), Montgomery showed that
\begin{align}\label{Montgomery's Result}
F(\alpha,T)=(1+o(1)) T^{-2 \alpha} \log T+\alpha+o(1),
\end{align}
as $T$ tends to infinity, uniformly for $0 \leqslant \alpha < 1$.
\par
\citeauthor{rudnickZerosPrincipalLfunctions1996} \cite{rudnickZerosPrincipalLfunctions1996}
vastly generalized Montgomery's work by establishing corresponding results for all higher level correlations of non-trivial zeros of general automorphic $L$-functions. Their work shows that the fluctuations of zeros of an automorphic $L$-function align precisely with the predictions of the GUE model as suggested by Dyson \cite{Dyson}.
\par
In this paper, we are interested in the fine scale distribution of zeros of general automorphic $L$-functions. We discover additional geometric structures in the form of surfaces related to the correlation of these zeros (see Figures \ref{fig:Intro_figure}--\ref{fig:TCS}). Let $m \in \mathbb{N}$ and $\mathscr{A}_m$ be the set of irreducible cuspidal automorphic representations of
$\mathrm{GL}_m$ over $\mathbb{Q}$ with unitary central character. For $\pi \in \mathscr{A}_m$, let $L(s, \pi)$ be its standard $L$-function. For $X,T \geqslant 2$, we consider the sum
\begin{align}
 \label{gXT}
\mathcal{S}_{\textnormal{av},\pi}(X,T)&= \frac{1}{N_{\pi}( T )}
 \sum_{\lvert \Im{\rho_{\pi}'} \rvert \leqslant T} \Re \sum_{\lvert \Im{\rho_{\pi}} \rvert \leqslant T} X^{\rho_{\pi}-\rho_{\pi}'}w(\rho_{\pi}-\rho_{\pi}'),
 \end{align}
where
 \begin{align}
 w &: \mathbb{C} \setminus \{2,-2\} \to \mathbb{C},\quad w(u) := \frac{4}{4 - u^2}, \label{weight} \\
 N_{\pi}( T ) & = \# \left\{ \rho_{\pi} = \beta_{\pi}+i\gamma_{\pi} \colon 0 < \beta_{\pi} < 1, \lvert \gamma_{\pi} \rvert \leqslant T, L(\rho_{\pi},\pi)=0 \right \}, \label{numOfZeros}
\end{align}
and $\rho_{\pi}, \rho_{\pi}'$ run over the non-trivial zeros of $L(s, \pi)$. Here the function $w(u)$ in \eqref{weight} corresponds to the weight function introduced by Montgomery in \eqref{Montgomery's Sum 1}. We shall discuss a more general class of weight functions in Section \ref{sec: general weight functions}. We rewrite $\mathcal{S}_{\textnormal{av},\pi}(X,T)$ as
\begin{align}\label{rewriting G}
 \mathcal{S}_{\textnormal{av},\pi}(X,T)&= \frac{1}{N_{\pi}( T )}
 \sum_{\substack{\lvert \Im{\rho_{\pi}'} \rvert \leqslant T}} \Re \mathcal{S}_{\pi}(X, T, \rho_{\pi}'),
\end{align}
 where
\begin{align}\mathcal{S}_{\pi}(X,T,\rho_{\pi}') &= \sum_{\substack{ \lvert \Im{\rho_{\pi}} \rvert \leqslant T}} X^{\rho_{\pi}-\rho_{\pi}'}w(\rho_{\pi}-\rho_{\pi}'). \label{S definition}
\end{align}
Given a fixed zero $\rho_{\pi}'$, $\mathcal{S}_{\pi}(X,T,\rho_{\pi}')$ encodes information about the gaps $\{(\rho_{\pi}-\rho_{\pi}')\}$ as $\rho_{\pi}$ varies over the neighboring zeros of $\rho_{\pi}'$. More precisely, it can be viewed as an exponential sum whose phases are determined by the spacings $\{(\rho_{\pi}-\rho_{\pi}')\}$ and hence, it captures the local configuration of the zero set near the fixed zero $\rho_{\pi}'$. When $\pi \in \mathscr{A}_1$ is trivial, assuming RH for $\zeta(s)$, one can also view $S_{\pi}(X,T,\rho_{\pi}')$ as the contribution to $F(\alpha)$ from the inner sum in \eqref{Montgomery's Sum 1} corresponding to the zero $\rho_{\pi}'$ when $X=T^{\alpha}$.

In the present paper, we study the behavior of $S_{\pi}(X,T,\rho_{\pi}')$ as $\rho'_{\pi}$ varies over the non-trivial zeros of $L(s,\pi)$. Surprisingly, numerical evidence, as shown in \autoref{fig:Intro_figure}, suggests that in the case of $\zeta(s)$, the associated surface appears to reveal Gaussian behavior led along Montgomery's function $F(\alpha)$. Our first aim in this paper is to study the precise nature of this surface for the Riemann zeta function. Our second goal is to extend the results in the broader context of automorphic $L$-functions. Our third objective is to investigate whether there is any change in these phenomena as one transitions from pairs to triples of zeros of automorphic $L$-functions. For detailed numerical and graphical evidence in these directions, interested readers are referred to the website linked in \cite{github}.
\subsection{A Brief Survey of Montgomery's Pair Correlation Conjecture} The function $F(\alpha)$ has been the subject of extensive research over the years. Besides proving \eqref{Montgomery's Result}, in \cite{montgomeryPairCorrelationZeros1973}, Montgomery also proposed the following conjecture concerning the behavior of $F(\alpha)$ for $\alpha>1.$
\begin{conjecture} \label{Montgomery Conj 1}
Let $F(\alpha)$ be as defined in \eqref{Montgomery's Sum 1}. Then for any fixed constants $1 \leqslant a<b<\infty$, we have 
\begin{align}\label{Montgomery > 1}
 F(\alpha, T)=1+o(1) \quad \text {as } \quad T \to \infty,
\end{align}
uniformly for $\alpha \in[a, b]$.
\end{conjecture} 
For variants of Conjecture \ref{Montgomery Conj 1} pertaining to more general $L$-functions, see Murty--Zaharescu \cite[Eq. (2.4)]{murtyExplicitFormulasPair2002}. Based on Conjecture \ref{Montgomery Conj 1}, Montgomery further conjectured the following distribution for the differences between the imaginary parts of non-trivial zeros of the Riemann zeta function.
\begin{conjecture}\textnormal{(Pair Correlation Conjecture).} \label{Montgomery Conj 2}
For any fixed $\alpha<\beta<\infty$,
\begin{align}\label{PC}
 \sum_{\substack{ 0 < \gamma \leqslant T \\ 0 < \gamma' \leqslant T \\ 2 \pi \alpha(\log T)^{-1} \leqslant \gamma-\gamma^{\prime} \leqslant 2 \pi \beta(\log T)^{-1} }} 1 \sim \bigg (\int_\alpha^\beta\bigg(1-\bigg(\frac{\sin \pi u}{\pi u}\bigg)^2\bigg) \diff u +\delta(\alpha,\beta) \bigg ) \frac{T}{2 \pi} \log T,
\end{align}
as $T$ tends to $\infty$. Here $\gamma, \gamma'$ vary over the imaginary parts of non-trivial zeros of $\zeta(s)$ and $\delta(\alpha,\beta)=1$ if $0\in [\alpha,\beta]$, $\delta(\alpha,\beta)=0$ otherwise.
\end{conjecture} 
Conjecture \ref{Montgomery Conj 2}, known as Montgomery's Pair Correlation Conjecture, implies that small gaps between the zeros of $\zeta(s)$ occur very rarely, which is surprising. Indeed, the pair correlation for many other number theoretic quantities differ from \eqref{PC}.
For example, although the sequence of zeros of $\zeta(s)$ and the sequence of prime numbers are related by explicit formulas, their distributions are very different. More precisely, assuming an appropriate version of the prime $k$-tuple conjecture,
\citeauthor{gallagherDistributionPrimesShort1976} \cite{gallagherDistributionPrimesShort1976}
showed that the sequence of primes in short intervals exhibits Poisson behavior.
\par
Dyson \cite{montgomeryPairCorrelationZeros1973} made the significant observation that the Gaussian Unitary Ensemble (GUE) from Random Matrix Theory has the same pair correlation function $1-(\frac{\sin \pi u}{\pi u})^2$
as in the case of zeros of $\zeta(s)$. In mathematical physics, GUE models the distribution of energy levels in systems comprising numerous particles. When suitably normalized, the limiting pair correlation function of  eigenvalues of matrices in the Gaussian Unitary Ensemble becomes $1-(\frac{\sin \pi u}{\pi u})^2$. For more results related to Montgomery's Pair Correlation Conjecture, the reader is referred to the works of Gallagher \cite{gallagher1985pair}, Goldston \cite{goldston1985prime}, Goldston--Montgomery \cite{ goldstonPairCorrelationZeros1987}, Goldston--Heath-Brown \cite{HB-Goldston}, Heath-Brown \cite{Heath1982}, Montgomery--Odlyzko \cite{Mont-Odlyzko} and Odlyzko \cite{Odlyzko, Odlyzko2}. For recent works, see Baluyot \cite{Baluyot2016}, Baluyot--Goldston--Suriajaya--Turnage-Butterbaugh \cite{BGST2024}, Bui--Milinovich \cite{BM2018}, Goldston--Suriajaya \cite{GS2022} and the references therein. 
\begin{figure}
 \centering
 \includegraphics[width=0.5\linewidth]{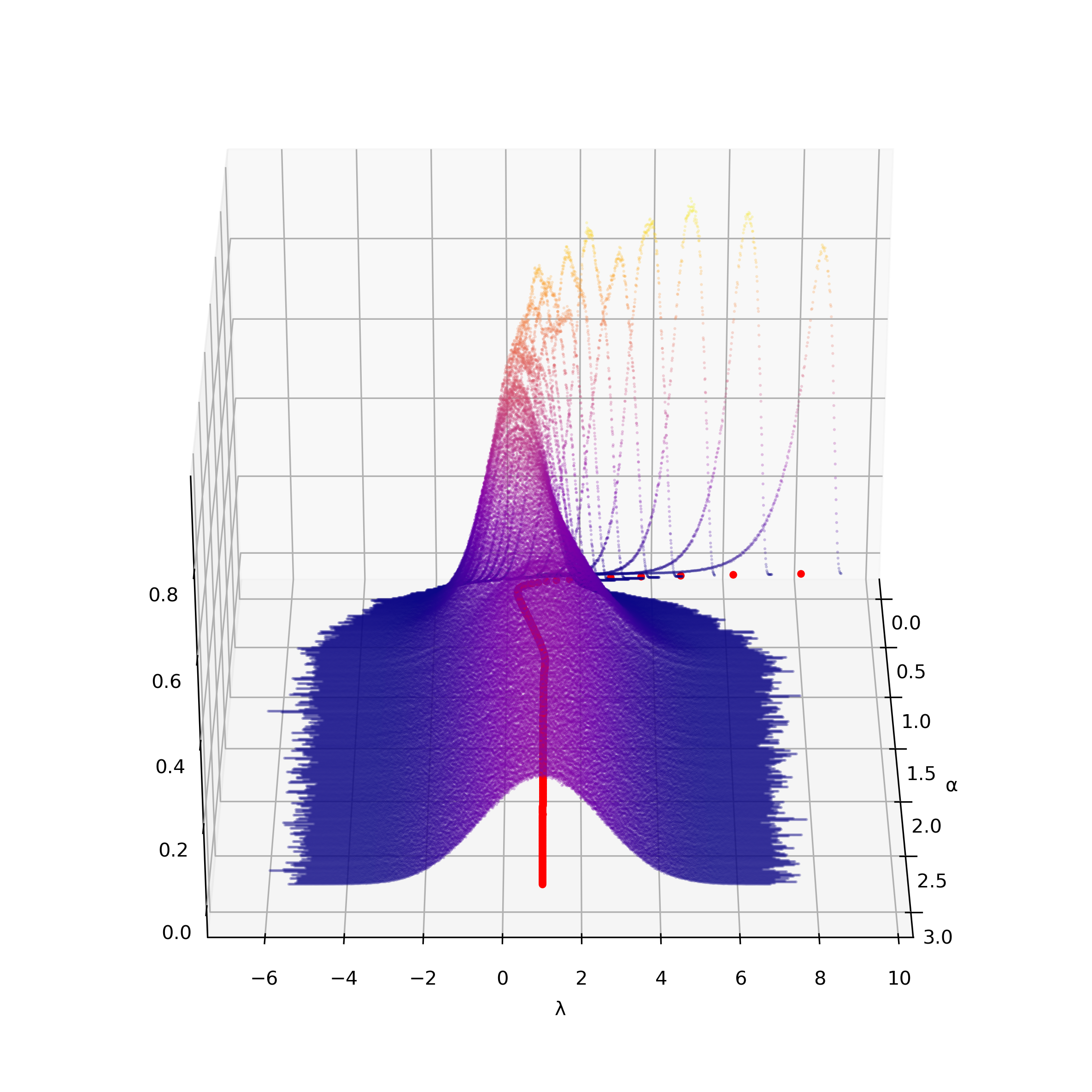}
 \caption{A surface plot for the probability density function associated to the set of values $\{\operatorname{Re}S_{\pi}( X,T,\rho_n) \colon 1 \leqslant n \leqslant N\}$, where $L(s,\pi) = \zeta(s), N=10^6$, $X=T^{\alpha}$ and $\alpha$ varies between $(0,3]$ in a discrete equi-spaced manner. The surface (in blue) appears to exhibit Gaussian behavior while shifting along Montgomery's function $F(\alpha)$ (in red).}
 \label{fig:Intro_figure}
\end{figure}
\par
\citeauthor{rudnickZerosPrincipalLfunctions1996} \cite{rudnickZerosPrincipalLfunctions1996} 
established the higher level correlations of non-trivial zeros of any given automorphic $L$-function. Their work shows that the fluctuations of the zeros of an automorphic $L$-function are universal and independent of the distribution of its coefficients. Furthermore, these fluctuations align precisely with the predictions of the GUE model as suggested by Dyson \cite{Dyson}. An essential ingredient in the work of Rudnick--Sarnak is the following technical hypothesis involving the coefficients of $L(s,\pi)$. Let
$$
\frac{L^{\prime}}{L}(s, \pi)=-\sum_{n=1}^{\infty} \frac{\Lambda_{\pi}(n)}{n^s}, \quad \Re(s)>1,
$$
where $\Lambda_{\pi}(n)$ is supported only on prime powers and is given by \eqref{Lambda Def}.\\

\noindent Hypothesis \textbf{H}$_{\pi}$ : Let $m \in \mathbb{N}$ and $\pi \in \mathscr{A}_{m}.$ For any fixed $k \geqslant 2,$
\begin{align}\label{Eq: Hypothesis H}
    \sum_{p \textnormal{ prime}}\frac{\lvert \Lambda_{\pi}(p^k) \rvert^2}{p^k} < \infty.
\end{align}
The Ramanujan conjecture for cusp forms on \(\mathrm{GL}_m\) suggests that \(|\Lambda_{\pi}(p^k)| \leqslant m \log p\), which more than adequately satisfies the requirements for Hypothesis \textbf{H}$_{\pi}$. Hypothesis \textbf{H}$_{\pi}$ was proved for $m\leqslant 4$ due to Rudnick--Sarnak \cite{rudnickZerosPrincipalLfunctions1996} and Kim \cite{kimgl4}, and in some cases when $m = 5,6$ due to Kim \cite{kim2003functoriality}, Kim--Shahidi \cite{KimShahidi} and Wu--Ye \cite{WuYe}. Recently, Hypothesis \textbf{H}$_{\pi}$ was proved in full generality for any $m \in \mathbb{N}$ by Jiang \cite{jiang2025}.
\par
Although the distribution of the coefficients of \( L(s,\pi) \) is not universal, the theory of Rankin–Selberg \( L \)-functions, together with Hypothesis \textbf{H}$_{\pi}$ ensures that the asymptotic relation 
\begin{align}\label{Key Asymptotic Relation}
    \sum_{n \leqslant X} \frac{\lvert \Lambda_{\pi}(n) \rvert^2}{n} \sim \frac{\log^2 X}{2}
\end{align}
holds regardless of the choice of $\pi$. This shared behavior contributes to the universality of the \( n \)-level correlations with respect to $\pi$ in \cite{rudnickZerosPrincipalLfunctions1996}. In this context, we also highlight the works of Katz--Sarnak \cite{katzRandomMatricesFrobenius1999,katzZeroesZetaFunctions1999} and Iwaniec--Luo--Sarnak \cite{Iwaniec-Luo-Sarnak} on the universality of distributions of low-lying zeros, that is, zeros close to the central point. Following their works, it is expected that the distribution of the low-lying zeros is predicted by 
only a few random matrix ensembles. For more on these topics, the reader is encouraged to consult
the works of Heath-Brown \cite{Heath-Brown2}, Keating--Snaith \cite{Keating-Snaith}, Iwaniec--Sarnak \cite{Iwaniec--Sarnak}, Mehta \cite{Mehta}, Young \cite{Young} and the references therein.
\subsection{A Pair Correlation Surface}\label{PCS Introduction Subsection}
In addition to Figures \ref{fig:Intro_figure}--\ref{fig:TCS}, we present further numerical evidence in \cite{github} related to the behavior of $S_{\pi}(X,T, \rho_{\pi}')$. Based on these computations and graphical evidence, we propose the following conjectures on the statistical distribution of $S_{\pi}(X,T, \rho_{\pi}')$. To proceed, we require some notation. Let $m \in \mathbb{N}$ and $\mathscr{A}_m$ be the set of irreducible cuspidal automorphic representations of
\(\mathrm{GL}_m\) over $\mathbb{Q}$ with unitary central character. For $\pi \in \mathscr{A}_m$, let $L(s, \pi)$ be its standard $L$-function. Let $X,T \geqslant 2$ and $\rho_{\pi}'$ be a non-trivial zero of $L(s, \pi)$. Define 
\begin{equation} \label{Defining Q}
\mathcal{Q}_{\pi}(X,T,\rho_{\pi}') : =
 \frac{\Re \mathcal{S}_{\pi}(X, T, \rho_{\pi}')-(m \log T)^{-1}(\min \{ \log X, m\log T\}) 
 }{(1/2) (\min \{ \log X, m\log T\})^{\frac{1}{2}}},
\end{equation}
where $\mathcal{S}_{\pi}(X,T,\rho'_{\pi})$ is defined by \eqref{S definition}. 
\begin{conjecture}
\label{conj:one}
Let $m \in \mathbb{N}$ and $\pi \in \mathscr{A}_m$. For any $\alpha \in \mathbb{R}\setminus \{0\}$ and any \( \lambda \in \mathbb{R} \),
 \begin{equation} \label{conjecture Equation}
  \lim_{T \to \infty} \frac{1}{N_{\pi}( T)}
 \# \left\{ \lvert \Im(\rho_{\pi}') \rvert \leqslant T \colon \mathcal{Q}_{\pi}(T^{\lvert \alpha \rvert m},T,\rho_{\pi}')
   < \lambda \right\} = \frac{1}{\sqrt{2 \pi}} \int_{-\infty}^{\lambda} e^{- \frac{1}{2}u^{2}} \diff u,
 \end{equation}
where $N_{\pi}(T)$ is as in \eqref{numOfZeros}, $\rho_{\pi}'$ runs over non-trivial zeros of $L(s,\pi)$ and $\mathcal{Q}_{\pi}(X,T,\rho_{\pi}')$ is defined by \eqref{Defining Q}.
\end{conjecture}

We now formulate the following definition of the Pair Correlation Surface (PCS) (if it exists). We define it pointwise as follows.
\begin{definition}\textnormal{(Definition of the Pair Correlation Surface).}
Let $m \in \mathbb{N}$ and $\pi \in \mathscr{A}_m$. Fix $\alpha \in \mathbb{R}\setminus \{0\}$, $\lambda \in \mathbb{R}$ and let
\begin{align}\label{g defn 1}
g_{\pi}(\alpha, \lambda) : = \lim_{\delta \to 0} \frac{1}{2 \delta}
 \lim_{T \to \infty} \frac{1}{N_{\pi}( T )}
 \# \left\{ \lvert \Im(\rho_{\pi}') \rvert \leqslant T \colon
\mathcal{Q}_{\pi}(T^{\lvert \alpha \rvert m},T,\rho_{\pi}') \in [\lambda -\delta, \lambda+\delta] \right\},
\end{align}
provided that the limit exists. Let $E_1$ be the set of points $(\alpha, \lambda) \in (\mathbb{R}\setminus \{0\} ) \times \mathbb{R}$ where $g_{\pi}(\alpha, \lambda)$ is defined. Let $E_0$ be the set of points $(0,\lambda) \in \{0\} \times \mathbb{R}$ for which the limit
\begin{align}\label{g defn 2}
g_{\pi}(0,\lambda) := \lim_{\substack{\alpha \to 0 \\ \alpha \neq 0 \\ (\alpha, \lambda) \in E_1 }} g_{\pi}(\alpha, \lambda)
\end{align}
exists. Let $E= E_0 \cup E_1$ and consider the function $g_{\pi}$ defined on $E$ by \eqref{g defn 1} and \eqref{g defn 2} above. We define the graph of the function $g_{\pi} : E \to \mathbb{R}$ to be the Pair Correlation Surface associated to the zeros of $L(s, \pi)$.
\end{definition}
\begin{figure}[t]
    \centering
    {{\includegraphics[width = .35\textwidth]{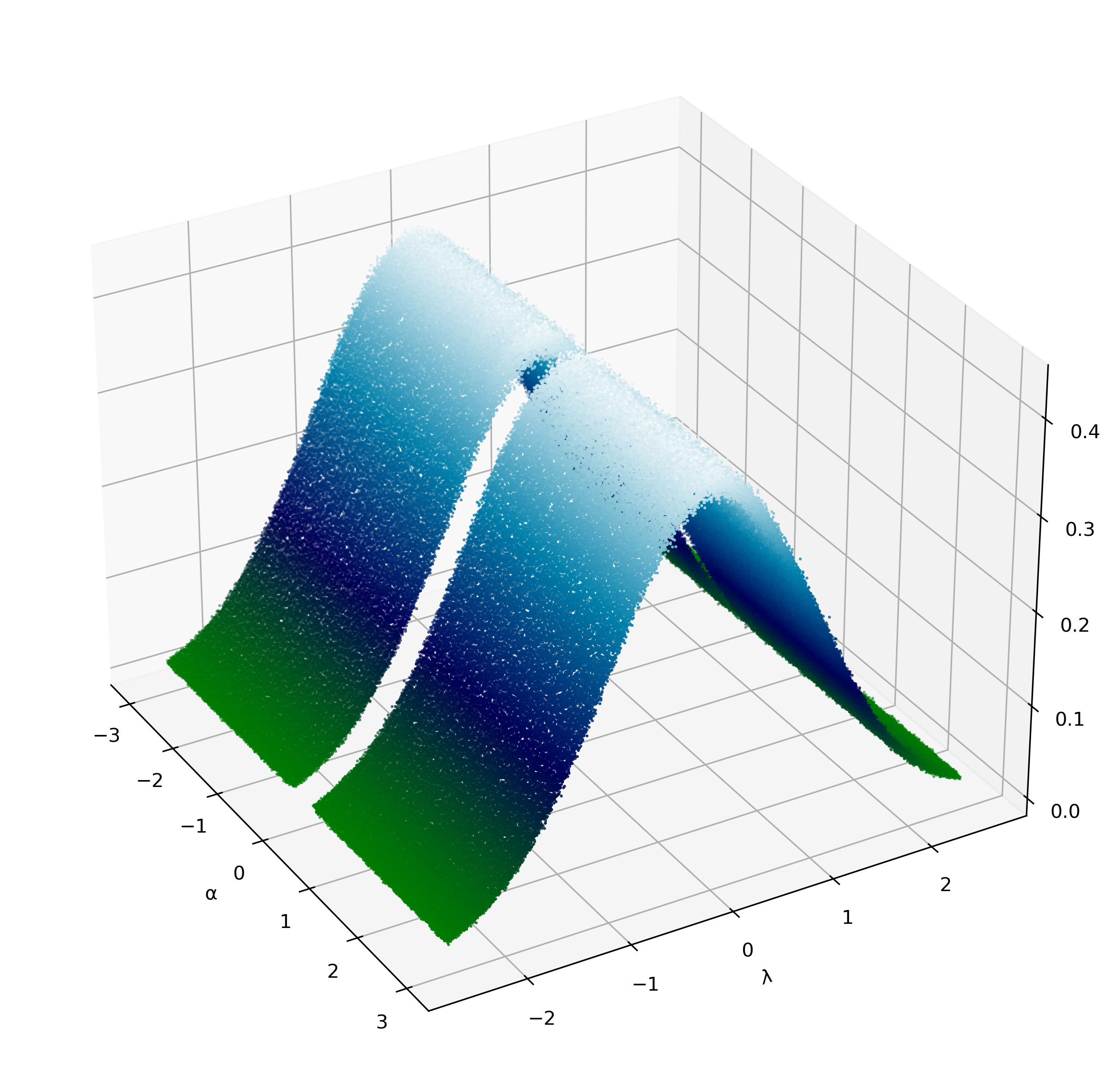}}}
     {{\includegraphics[width=.35\textwidth]{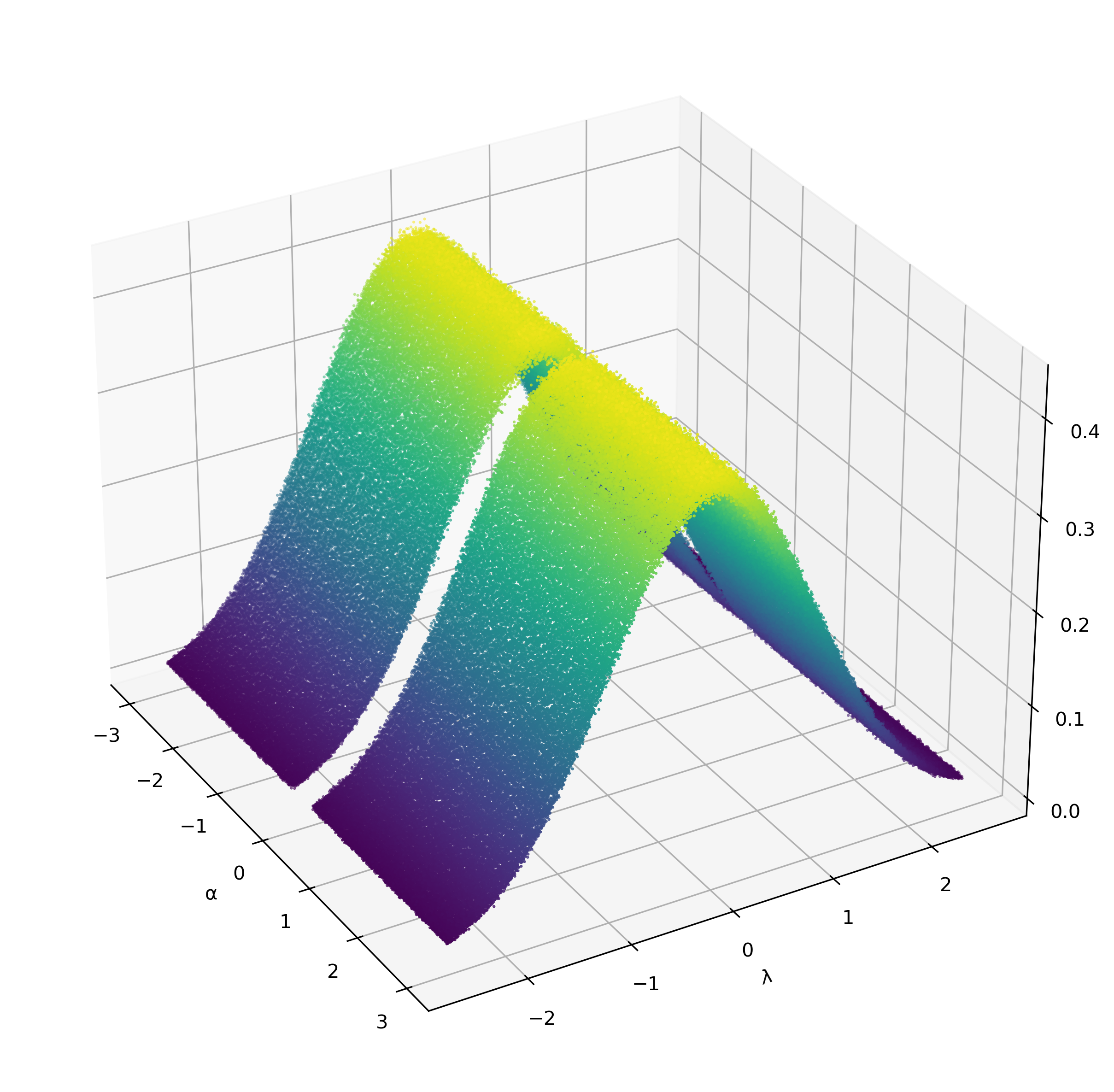} }}%
    \caption{\underline{Left:} A discrete approximation of the PCS using the zeros of $L(s,\Delta)$, the $L$-function associated with the modular discriminant,  with $|\alpha| \geqslant 0.2.$ \underline{Right:} A discrete approximation of the PCS using the zeros of $L(s,E)$ where $E/\mathbb{Q}$ is the elliptic curve over $\mathbb{Q}$ defined by $ E: y^2 +y = x^3-x$, with $|\alpha| \geqslant 0.2.$}
    \label{fig:PCS_GL2}
\end{figure}
We make the following conjecture on the existence of the Pair Correlation Surface.
\begin{conjecture}
 \label{conj:two}
 \textnormal{(Existence of the Pair Correlation Surface).}
Let $m \in \mathbb{N}$ and $\pi \in \mathscr{A}_m$. For any $\alpha \in \mathbb{R}\setminus \{0\}$ and any \( \lambda \in \mathbb{R} \), we have
\begin{equation}\label{PCS Limit Definition}
 \lim_{\delta \to 0} \frac{1}{2 \delta}
 \lim_{T \to \infty} \frac{1}{N_{\pi}( T )}
 \# \left\{ \lvert \Im(\rho_{\pi}')\rvert  < T \colon
\mathcal{Q}_{\pi}(T^{\lvert \alpha \rvert m},T,\rho_{\pi}') \in [\lambda -\delta, \lambda+\delta] \right\} = \frac{1}{\sqrt{2 \pi}}e^{-\frac{1}{2}\lambda^2},
\end{equation}
where $N_{\pi}(T)$ is given by \eqref{numOfZeros}, $\rho_{\pi}'$ runs over non-trivial zeros of $L(s,\pi)$ and $\mathcal{Q}_{\pi}(X,T,\rho_{\pi}')$ is defined by \eqref{Defining Q}. Moreover, the Pair Correlation Surface associated to the zeros of $L(s, \pi)$ is the graph of the function $g_{\pi} : \mathbb{R}^2 \to \mathbb{R}$ defined by
\begin{align}\label{PCS Equation}
g_{\pi}(\alpha, \lambda) = \frac{1}{\sqrt{2 \pi}} e^{- \frac{1}{2}\lambda^2}.
\end{align}
\end{conjecture}
We now present our main results. We focus on the case when $\alpha>0$. The case when $\alpha<0$ can be treated similarly. One can see that \autoref{conj:two} follows from \autoref{conj:one}. In the direction of \autoref{conj:one}, we prove the following result. 

\begin{theorem}\label{main thm}
Let $m \in \mathbb{N}$ and $\pi \in \mathscr{A}_m$. Assume the Riemann Hypothesis for $L(s,\pi)$. Let $\{\alpha_j \}_{j \in \mathbb{N}}$ be any sequence of
 strictly positive real numbers decreasing to $0$.
 Then there exists an increasing sequence $\{V_j\}_{j \in \mathbb{N}}$
 tending to $\infty$ with the following property.
 For any sequence $\{T_j\}_{j \in \mathbb{N}}$ such that
 $T_j \geqslant V_j$ for all $j \in \mathbb{N}$, we have for any $\lambda \in \mathbb{R}$,
\begin{align}\label{Theorem Equation}
  \lim_{j \to \infty} \frac{1}{N_{\pi}( T_j)}
 \# \left\{ \lvert \Im(\rho_{\pi}') \rvert \leqslant T_j \colon \mathcal{Q}_{\pi}(T_j^{\alpha_j m},T_j,\rho_{\pi}')
   < \lambda \right\}  = \frac{1}{\sqrt{2 \pi}} \int_{-\infty}^{\lambda} e^{- \frac{1}{2}u^{2}} \diff u,   \end{align}
where $N_{\pi}(T)$ is as in \eqref{numOfZeros}, $\rho_{\pi}'$ runs over non-trivial zeros of $L(s,\pi)$ and $\mathcal{Q}_{\pi}(X,T,\rho_{\pi}')$ is defined by \eqref{Defining Q}.
\end{theorem}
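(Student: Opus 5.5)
Under RH write $\rho_\pi=\tfrac12+i\gamma$, $\rho'_\pi=\tfrac12+i\gamma'$. The plan is to express $\mathcal{S}_\pi(X,T,\rho'_\pi)$ through a Montgomery-type explicit formula as a Dirichlet polynomial in $\gamma'$, and then to prove a central limit theorem for that polynomial as $\gamma'$ runs over the zeros by the method of moments.

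\textbf{Step 1: explicit formula.} I would use the Landau--Gonek-type explicit formula for $L(s,\pi)$ (the automorphic analogue of the identity behind Montgomery's \eqref{Montgomery's Result}, as in Murty--Zaharescu and Rudnick--Sarnak). With $w(u)=4/(4-u^2)$ and $X=T^{\alpha m}$ with $\alpha$ small, this should give
\begin{align*}
\mathcal{S}_\pi(X,T,\rho'_\pi) = \frac{\log X}{m\log T} - \frac{1}{\sqrt X}\sum_{n\leqslant X}\Lambda_\pi(n)\Bigl(\frac{X}{n}\Bigr)^{-1/2} n^{-i\gamma'} - \sqrt X\sum_{n>X}\frac{\Lambda_\pi(n)}{n}\Bigl(\frac{X}{n}\Bigr)^{3/2}n^{-i\gamma'} + \mathcal{E},
\end{align*}
up to conjugations and normalisation. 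The main term $\log X/(m\log T)$ comes from the density of zeros near $\gamma'$, and $\mathcal{E}$ collects the boundary terms from $|\gamma|\le T$, the contribution of $X^{-2}$-type terms, and $|\gamma'|$ close to $T$. For $\alpha\le 1$ we have $\min\{\log X,m\log T\}=\log X$, which matches the centering in \eqref{Defining Q}. So $\mathcal{Q}_\pi$ becomes the real part of a weighted sum $P_X(\gamma')=\sum_n a_n n^{-i\gamma'}$ with $a_n=\Lambda_\pi(n)n^{-1/2}\min\{(n/X)^{1/2},(X/n)^{1/2}\}\cdot(\ldots)$, divided by $\tfrac12(\log X)^{1/2}$, plus $o(1)$ on average over $\gamma'$.

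\textbf{Step 2: moments over zeros.} I would compute $\frac1{N_\pi(T)}\sum_{|\gamma'|\le T}(\Re P_X(\gamma'))^k$. Expanding gives sums $\sum_{\gamma'}(n_1\cdots n_a/(n_{a+1}\cdots n_k))^{i\gamma'}$. These are handled by the Landau--Gonek formula applied to $L(s,\pi)$ itself: the sum equals $N_\pi(T)$ when the two products are equal, and is otherwise $O(T\,x^{1/2}\log\cdot/\ldots)$ with an extra term involving $\Lambda_\pi(x)$ when the ratio is a prime power. The error terms are negligible once $X^k\le T^{1/2}$, which holds when $\alpha_j$ is small relative to $k$. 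This is why the theorem only asserts the result along sequences $\alpha_j\to0$ with $T_j\ge V_j$: for each $j$ we choose $V_j$ large enough that the first $j$ moments are within $1/j$ of their Gaussian values for all $T\ge V_j$.

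The diagonal is evaluated with \eqref{Key Asymptotic Relation} and Hypothesis $\textbf{H}_\pi$ (now known by Jiang). The prime-power terms are bounded, so only primes contribute, and $\sum_p|a_p|^2\sim \tfrac12\log X\cdot$const. Once the constant is checked against the normalisation $\tfrac12(\log X)^{1/2}$, this gives variance $1$. Distinct-prime pairings then give $(k-1)!!$ by the standard Selberg/Tsang-type combinatorics. Rankin--Selberg bounds on $\sum_p|\Lambda_\pi(p)|^4p^{-2}$ control repeated primes.

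\textbf{Step 3: conclusion and main obstacle.} The Gaussian is determined by its moments, so convergence of moments gives \eqref{Theorem Equation}. I expect the main obstacle to be Step 1 together with the off-diagonal part of Step 2. Step 1 requires showing that the truncation $|\Im\rho_\pi|\le T$ and the edge effects for $\gamma'$ near $\pm T$ give an $\mathcal{E}$ whose mean square over zeros is $o(\log X)$. Step 2 requires controlling the nondiagonal $\Lambda_\pi(x)$ terms uniformly in $k$. I would first handle the truncation by smoothing in $\gamma$ and bounding $\sum_{\gamma'}|\mathcal{E}|^2$ with Gallagher's lemma.
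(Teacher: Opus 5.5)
Your overall architecture is the paper's: an explicit formula turning $\mathcal{S}_{\pi}$ into a prime polynomial evaluated at $\rho'_{\pi}$, moments over zeros via Landau--Gonek, the diagonal via Rankin--Selberg and Hypothesis \textbf{H}$_{\pi}$, then the method of moments with a diagonal choice of $V_j$. However, Step 1 is wrong as written, and the error is not cosmetic.

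Evaluating the explicit formula at $s=\rho'_{\pi}+2$ and $s=\rho'_{\pi}-2$ and subtracting (Lemma \ref{Sum over Zeros to Sum over Primes}) gives $\mathcal{S}_{\pi}(X,T,\rho'_{\pi})=-\sum_{n}a_n\Lambda_{\pi}(n)n^{-\frac12-i\gamma'_{\pi}}+mX^{-2}\log(\lvert\gamma'_{\pi}\rvert+3)+(\text{small})$, with $a_n=\min\{(n/X)^2,(X/n)^2\}$. So for $n\leqslant X$ the coefficient is $X^{-1/2}(n/X)^{3/2}$, not your $X^{-1/2}(n/X)^{1/2}$. Yours is Montgomery's $\sigma=\frac32$ kernel and does not correspond to $w(u)=4/(4-u^2)$. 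With your coefficients the diagonal gives variance $\frac38\log X$ rather than $\frac14\log X$, so the check against the normalisation $\frac12(\log X)^{1/2}$ fails.

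There is also no pointwise main term $\log X/(m\log T)$. The only density term is $mX^{-2}\log(\lvert\gamma'_{\pi}\rvert+3)$, which is negligible for $X=T^{\alpha m}$. The centring $\alpha$ is the average over zeros of the prime sum itself. It is produced by the prime-power main term of Landau--Gonek at first order, $\frac{T}{\pi N_{\pi}(T)}\sum_{n\leqslant V}a_n\lvert\Lambda_{\pi}(n)\rvert^2/n=\alpha+O(1/\log T)$ (Lemma \ref{Calculating mean}). Inserting it by hand double counts; this is harmless after dividing by $(\log X)^{1/2}$, but it makes your identity false.

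Finally, $a_n$ decays only like $(X/n)^2$, so the series must be truncated at $V=X^{\xi}$ before Landau--Gonek can be applied. The paper's pointwise tail bound needs $\xi>\frac43$, so the constraint on $\alpha$ involves $V$ and $\theta_m$, not only $X$.

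Your choice of $V_j$ also fails in general. You cannot make \emph{the first $j$} moments close to Gaussian: with these methods the $r$-th moment is available only once $\alpha_j<1/(rm(1+\frac43\theta_m))$, and $\alpha_j$ may decrease arbitrarily slowly. You must instead control the first $r(j)$ moments, with $r(j)\to\infty$ dictated by $\alpha_j$. This is the paper's construction: indices $J_r$ and $V_j=\max\{U_{j,1},\dots,U_{j,r}\}$ for $J_r\leqslant j<J_{r+1}$.

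Separately, \eqref{Key Asymptotic Relation} with only $\sim$ does not yield the variance by partial summation. The quantity $\sum_n a_n^2\lvert\Lambda_{\pi}(n)\rvert^2/n=\frac12\log X+\cdots$ arises as a difference of two terms of size $\frac12\log^2X$, so you need an error $o(\log X)$ there. The paper proves $O(1)$ in Lemma \ref{lem:prime_number_theorem}.

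The obstacles you single out are handled directly in the paper.
\begin{itemize}
\item Truncation: discard the $O(\sqrt T\log T)$ zeros with $\lvert\gamma'_{\pi}\rvert<\sqrt T$ or $\lvert\gamma'_{\pi}\rvert>T-\sqrt T$; bounded trivially, they contribute $T^{-\delta}$ to each moment. For the remaining zeros, those with $\lvert\gamma_{\pi}\rvert>T$ contribute $O(\log T/\sqrt T)$ by local zero density, since $w(\rho_{\pi}-\rho'_{\pi})\ll(\gamma_{\pi}-\gamma'_{\pi})^{-2}$. No smoothing or Gallagher lemma is needed.
\item Off-diagonal terms: $P$ has $k$ prime factors and $Q$ has $\ell$, so $P/Q$ can be a prime power only if $P/Q=p^{k-\ell}$. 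These terms are $O((\log T)^{\ell-1})$ when $k-\ell\geqslant 2$, by Hypothesis \textbf{H}$_{\pi}$ and Lemma \ref{Convergent Sums}, and gain one factor of $\log T$ only in odd moments. Since each moment is treated separately, no uniformity in $k$ is needed.
\item Prime powers: those $p^k$ with $2\leqslant k\leqslant m^2+1$ are bounded only in $2d$-th mean over zeros, not pointwise.
\end{itemize}
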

In order to prove Theorem \ref{main thm}, we utilize the method of moments. For $r \in \mathbb{N}$, we compute the moments 
\begin{align}\label{Moments Definition}
\mathcal{M}_{\pi, r}(X,T) := \frac{1}{N_{\pi}( T )}\sum_{\substack{ \lvert \Im \rho_{\pi}' \rvert  \leqslant T}} \left(\Re \mathcal{S}_{\pi}(X, T, \rho_{\pi}') -\frac{\min \{\log X, m \log T \}}{m \log T}\right)^{r}.
\end{align}
These moments will furnish the distribution of $ \mathcal{S}_{\pi}(X, T, \rho_{\pi}')$. For $r \in \mathbb{N}$, let
\begin{align}\label{Defining Normal Distribution Coefficients}
\boldsymbol{\mu}_r:=\begin{cases}
    1 \cdot 3 \cdots (r-1) & \text{if $r$ is even,}\\
    0 &\text{if $r$ is odd.}
    \end{cases} 
\end{align}
\begin{theorem}
\label{thm:twomMomentConditional}
Let $m \in \mathbb{N}, \pi \in \mathscr{A}_m$ and $\theta_{m} \in [0, \frac{1}{2}-\frac{1}{m^2+1}]$ be an admissible exponent towards the Ramanujan conjecture for $L(s,\pi)$. Assume the Riemann Hypothesis for $L(s,\pi)$. Let $X,T \geqslant 2, r \in \mathbb{N}$ and $\mathcal{M}_{\pi, r}(X,T)$ be as defined in \eqref{Moments Definition}. Fix $\alpha \in \mathbb{R}$ such that 
\[0< \alpha < \frac{1}{mr(1+\frac{4}{3}\theta_m)}.
\]
If $r$ is even, then
\[
\mathcal{M}_{\pi,r}(T^{\alpha m},T) = \boldsymbol{\mu}_{r}\left ( \frac{\alpha m \log T}{4} \right)^{\frac{r}{2}} + O_{\pi, r, \alpha}((\log T)^{\frac{r-1}{2}}),
\]
where $\boldsymbol{\mu}_r$ is given by \eqref{Defining Normal Distribution Coefficients}. If $r$ is odd, then
\[
\mathcal{M}_{\pi, r}(T^{\alpha m},T) \ll_{\pi,r,\alpha} (\log T)^{\frac{r-1}{2}}.
\]
\end{theorem}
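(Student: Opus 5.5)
We will prove Theorem \ref{thm:twomMomentConditional} by first replacing $\mathcal{S}_{\pi}(X,T,\rho_{\pi}')$ with a short prime sum via an explicit formula, and then computing moments of that sum over the zeros. Throughout, $X=T^{\alpha m}$.

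\emph{Step 1: explicit formula.} Montgomery's explicit formula, in the form used by Rudnick--Sarnak and Murty--Zaharescu, applies to the weight $w(u)=4/(4-u^2)$. Write $\rho_{\pi}'=\tfrac12+i\gamma'$ (we are on RH). Integrating $\frac{L'}{L}(s,\pi)X^{s-\rho'}$ against the kernel whose transform is $w$ should give
\begin{align*}
\mathcal{S}_{\pi}(X,T,\rho_{\pi}') = -\frac{1}{\sqrt X}\Bigl(\sum_{n\leqslant X}\Lambda_{\pi}(n)\Bigl(\frac{X}{n}\Bigr)^{-1/2} n^{-i\gamma'}+\sum_{n>X}\Lambda_{\pi}(n)\Bigl(\frac{X}{n}\Bigr)^{3/2}n^{-i\gamma'}\Bigr)+\frac{\log X}{m\log T}+\mathcal{E},
\end{align*}
with one conjugate arranged appropriately. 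The constant $\log X/(m\log T)$ is the diagonal term coming from the gamma factors, normalised by the density $m\log T/2\pi$. The error $\mathcal{E}$ collects the truncation $|\Im\rho|\le T$ and the gamma factors, and is small on average over $\rho'$.

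Taking real parts, the centred quantity becomes $\Re P(\gamma')+\mathcal{E}$, where
\[
P(\gamma')=-\sum_n \frac{\Lambda_{\pi}(n)}{\sqrt n}\,w_X(n)\,n^{-i\gamma'},
\]
with $w_X(n)=\min\{(n/X)^{\cdots}\}$ a smooth-ish weight concentrated on $n\le X$. Prime powers $p^k$ with $k\geqslant 2$ contribute $O(1)$ in mean square by Hypothesis \textbf{H}$_\pi$ (now known by Jiang). Tails with $n>X$ are handled similarly, since their weight decays like $(X/n)^{3/2}$.

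\emph{Step 2: moments over zeros.} Expand $(\Re P(\gamma'))^r$ as a sum over $r$-tuples $n_1,\dots,n_r$ and signs. This produces
\[
\frac{1}{N_\pi(T)}\sum_{|\gamma'|\le T}\Bigl(\frac{n_1\cdots n_j}{n_{j+1}\cdots n_r}\Bigr)^{-i\gamma'}.
\]
Sum over zeros using the Landau--Gonek type formula for $\sum_{\gamma'} x^{i\gamma'}$, which is valid for $L(s,\pi)$. Its main term is $N_\pi(T)$ when $x=1$. Its secondary term is of size $T\Lambda_\pi(x)/\sqrt x$-type when $x\ne1$ is a prime power, and the error is $O(x^{1/2+\epsilon}\log T)$-ish.

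\begin{itemize}
\item The diagonal $x=1$ is handled with unique factorisation on primes. The dominant configurations pair each $p_i$ with a conjugate $p_j$. Each pair contributes $\tfrac12\sum_{p\le X}|\Lambda_\pi(p)|^2/p\cdot(\ldots)$, which by \eqref{Key Asymptotic Relation}, after accounting for the $\Re$ and the weight, gives $\tfrac{\log X}{4}=\tfrac{\alpha m\log T}{4}$. The number of perfect pairings is $\boldsymbol{\mu}_r$. Non-pairing coincidences (e.g.\ a prime appearing four times) lose a factor $\log T$.
\item Off-diagonal products $x\neq1$ have $x\le X^r=T^{\alpha m r}$. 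The Landau--Gonek error then totals roughly $X^{r(1/2+\ldots)}$ times coefficient sizes. Bounding $|\Lambda_\pi(p)|\ll p^{\theta_m}\log p$ produces the constraint $\alpha mr(1+\tfrac43\theta_m)<1$. The $\tfrac43$ presumably comes from optimising a fourth-moment/Hölder split.
\item The secondary Landau--Gonek term, when $x$ is a prime power, contributes only lower order.
\end{itemize}

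\emph{Step 3: assembling.} With the diagonal pairings giving the main term and everything else bounded as above, odd $r$ have no perfect pairing, so the odd moments are $O((\log T)^{(r-1)/2})$. The error $\mathcal{E}$ is absorbed using Hölder's inequality together with its mean-square bound.

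The main obstacle is Step 2's off-diagonal control. Uniformity of the Landau--Gonek formula in $x$ up to $T^{\alpha m r}$, with only the $\theta_m$ bound on the coefficients, is exactly what produces the admissible range of $\alpha$. I would first try the crude bound: sum the error over all tuples using $\sum_{n\le X}|\Lambda_\pi(n)|n^{-1/2}\ll X^{1/2+\theta_m}$. I would refine with Rankin--Selberg mean bounds only if that range falls short.
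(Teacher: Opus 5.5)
Your overall architecture (explicit formula, then moments over zeros of a prime sum via the Landau--Gonek formula, with perfect pairings giving $\boldsymbol{\mu}_r$ and prime powers negligible by Hypothesis \textbf{H}$_\pi$) matches the paper's. However, two steps fail as written.

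\textbf{Step 1.} The formula you write is Montgomery's, which is for the kernel $2/(1+u^2)$ and has coefficients $\min\{n/X,X/n\}$. With those weights each pair contributes $\sim\frac{\log X}{2}$, and you would get $\boldsymbol{\mu}_r(\alpha m\log T/2)^{r/2}$. So your constant $\frac{\log X}{4}$ is asserted, not derived. For $w(u)=4/(4-u^2)$ one evaluates the explicit formula at $s=\rho'_\pi\pm 2$ and subtracts. This gives $a_n=\min\{(n/X)^2,(X/n)^2\}$ together with a gamma-factor term $mX^{-2}\log(|\gamma'_\pi|+3)$, which is negligible for $X=T^{\alpha m}$. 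The centring constant $\alpha$ is not a pointwise term at all. It is the average over zeros of the prime sum, coming from the Landau--Gonek term $-\frac{T}{\pi}\overline{\Lambda_\pi(n)}/\sqrt n$; an $O(1)$ shift is harmless at the stated error level. Also, the range $n>X$ carries half of $\frac12\sum_n a_n^2|\Lambda_\pi(n)|^2/n=\frac{\log X}{4}+O(1)$, so the weight is not concentrated on $n\le X$. Getting this with an admissible error needs $\sum_{n\le x}|\Lambda_\pi(n)|^2/n=\frac12\log^2 x+O(1)$ (Lemma \ref{lem:prime_number_theorem}). The asymptotic \eqref{Key Asymptotic Relation}, or even an $O(\log x)$ error, is as large as the whole weighted sum after partial summation.

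\textbf{Step 2.} The more serious gap is that your prime sum is never truncated. So the claim that off-diagonal ratios satisfy $x\le X^r$ is false, and the Landau--Gonek formula (error $\asymp x^{1/2+\theta_m+\varepsilon}\log^2T$, uniform only for $x\le T^{\xi}$) cannot be applied. This truncation is exactly where the $\frac43$ comes from, not a H\"older split. One truncates at $V=X^{\xi}$. The tail $\sum_{n>V}a_n\Lambda_\pi(n)n^{-1/2-i\gamma'_\pi}$ sits inside the $r$-th power and is beyond Landau--Gonek, so it must be bounded for every zero. Dyadic Cauchy--Schwarz with Lemma \ref{lem:prime_number_theorem} gives $\ll X^{2}V^{-3/2}\sqrt{\log V}$, which forces $\xi>\frac43$. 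For $r=2d$, the off-diagonal error with $P,Q\le V^{d}$ is then $\ll T^{-1+\varepsilon}X^{2d}V^{2d\theta_m}$. This requires $\alpha<1/(2dm(1+\xi\theta_m))$, which is the stated range. Your crude bound over $n\le X$ would instead \emph{prove} the range $1/(mr(1+\theta_m))$ of Theorem \ref{Thm:Main_for_test_funcs}, which holds only for compactly supported smooth weights; that is a sign the tail has been lost.

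\textbf{Secondary terms.} Controlling the prime-power part and $\mathcal{E}$ only in mean square is not enough. H\"older or Cauchy--Schwarz would then need moments of the prime sum of order $2r-2>r$ (for $r\ge 3$), outside the stated range. You need their $r$-th moments to be $O(1)$. The paper treats $2\le k\le m^2+1$ by Jensen and Landau--Gonek applied to $(Q/P)^{ki\gamma_\pi}$, and bounds $k>m^2+1$ pointwise via $\theta_m$. You must also remove the $O(\sqrt T\log T)$ exceptional zeros where the truncation $|\Im\rho_\pi|\le T$ is not harmless, and bound their contribution trivially.
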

We expect the results in Theorem \ref{thm:twomMomentConditional} to hold for all fixed $\alpha \in \mathbb{R}\setminus \{0\}$. This leads us to make the following conjecture, which is evidently stronger than Conjectures \ref{conj:one} and \ref{conj:two}.
\begin{conjecture}\label{conj:gaussian}
Let $m \in \mathbb{N}$ and $\pi \in \mathscr{A}_m$. Let $X,T \geqslant 2, r \in \mathbb{N}$ and $\mathcal{M}_{\pi, r}(X,T)$ be as defined in \eqref{Moments Definition}. Fix $\alpha \in \mathbb{R} \setminus \{0 \}$. If $r$ is even, then
\begin{align*}
\mathcal{M}_{\pi, r}(T^{\lvert \alpha \rvert m },T) = (\boldsymbol{\mu}_{r}+o(1))\left ( \frac{\min\{\lvert \alpha \rvert,1\} m \log T}{4} \right)^{\frac{r}{2}}, \quad \textnormal{as} \quad T \to \infty,
\end{align*}
where $\boldsymbol{\mu}_r$ is defined by \eqref{Defining Normal Distribution Coefficients}. If $r$ is odd, then 
\begin{align*}
\mathcal{M}_{\pi, r}(T^{\lvert \alpha \rvert m },T) =o( {(\log T)^{\frac{r}{2}}}) \quad \textnormal{as} \quad T \to \infty.
\end{align*}
\end{conjecture}
We make the following remarks.
\begin{remark}
In some sense, \autoref{main thm} is as close as possible to \autoref{conj:one},
 while failing to prove it for any single fixed $\alpha$.
 We do need in \autoref{main thm} to let  $\{\alpha_j \}_{j \in \mathbb{N}}$ tend to $0$. But we can do so as slowly as we want. 
\end{remark}
\begin{remark}
The universality in the shape of the Pair Correlation Surface with respect to $\pi$, as seen from \eqref{PCS Equation} and Theorem \ref{main thm}, is unexpected at first sight, but admits a natural explanation. While studying the pair correlation of zeros of \(L\)-functions, one typically uses an explicit formula to connect the zeros to primes, leading us to examine moments of sums of the form 
\[ \sum_{n \leqslant X} \frac{a_n \Lambda_{\pi}(n)}{n^{\rho_{\pi}}}, \quad \textnormal{where} \quad \frac{L'}{L}(s,\pi) = \sum_{n=1}^{\infty} \frac{\Lambda_{\pi}(n)}{n^s}.
\]
Here the sequence $\{a_n\}$ corresponds to Montgomery's weight function $w(u)$, or to more general weight functions. Importantly, if we write $\Lambda_{\pi}(p)= \lambda_{\pi}(p) \log p$ then the distribution of the coefficients \(\lambda_{\pi}(p)\) is not universal across different choices of \(\pi\). If $\pi \in \mathscr{A}_2$, then there are two conjectured limiting distributions for the \(\lambda_{\pi}(p)\)'s: Sato--Tate or a uniform distribution with a Dirac mass term, see Serre \cite{Serre}. As the degree increases, the number of possible limiting distributions increases. However, the Rankin-Selberg \(L\)-function theory shows that all these distributions share the same second moment, see Lemma \ref{Rudnick-Sarnak Lemma}. This common second moment is key to the universality in the \(n\)-level correlations, as shown by Rudnick--Sarnak \cite{rudnickZerosPrincipalLfunctions1996} as well as in the shape of the Pair Correlation Surface.
\end{remark}

\begin{remark}\label{remark:small_alpha}
The restriction for the range of $\alpha$ in Theorem \ref{thm:twomMomentConditional} naturally arises in the calculation of the $r$-th moments and reflects upon Montgomery's result, in which case, the asymptotic \eqref{Montgomery's Result} holds for $0\leqslant \alpha < 1$. This restriction may be surpassed if one considers averaging over suitable families of $L$-functions. For instance, by averaging over certain families of Dirichlet $L$-functions and $\mathrm{GL}_2$ $L$-functions, analogous versions of \eqref{Montgomery's Result} have been established for extended ranges of $\alpha$, see the works of \"Ozl\"uk \cite{ozluk1996theq}, Chandee--Lee--Liu--Radziwi\l{}{\l} \cite{chandee2014simple}, and Chandee--Klinger-Logan--Li \cite{chandee2022pair}. It would be interesting to investigate whether the Pair Correlation Surface exhibits distinctive features associated with the symmetry type of the family of $L$-functions considered.
\end{remark}

\begin{remark}
As mentioned in Section \ref{sec 1.1 Motivation}, $\mathcal{S}_{\pi}(X, T, \rho_{\pi}')$ can be viewed as an exponential sum involving the differences $\rho_{\pi}-\rho_{\pi}'$ as $\rho_{\pi}$ varies over the non-trivial zeros of $L(s,\pi)$. Assuming RH for $L(s,\pi)$ and taking into account the attached weight $w(\rho_{\pi}-\rho_{\pi}')$ in \eqref{S definition}, a trivial bound for $\Re \mathcal{S}_{\pi}(X, T, \rho_{\pi}')$ is $O_{\pi}(\log T)$. Theorem \ref{main thm} shows that for most $\rho_{\pi}'$, $\Re \mathcal{S}_{\pi}(X, T, \rho_{\pi}')$ once normalized exhibit square-root cancellation. Moreover, they also satisfy an underlying Gaussian distribution. 
\end{remark}
\par
A significant observation is that Theorem \ref{main thm} does not entirely address the uniformity in \(\alpha\) with respect to \(T\). Specifically, in \eqref{Defining Q} if $X$ is small compared to $T$, say $X \asymp (\log T)^{\alpha m}$ for some fixed $0<\alpha<1/(2m)$ then as we shall see later, the asymptotic estimate for $\mathcal{S}_{\textnormal{av},\pi}(X,T)$ is influenced by a term of order $X^{-2} \log T$. This mirrors the asymptotic relation \eqref{Montgomery's Result} obtained by Montgomery for \(\zeta(s)\). This transition in the behavior of $\mathcal{S}_{\textnormal{av},\pi}(X,T)$ accounts for the distortion observed in the distribution of $S_{\pi}(X,T, \rho_{\pi}')$ for small ranges of $X$, as seen in \autoref{fig:Intro_figure} and also in \cite{github}. To explore this in more detail, we study a second-order distribution of $S_{\pi}(X,T, \rho_{\pi}')$ when \(X\) is of size \((\log T)^\alpha\). We consider the shifted sums 
\begin{align}
\widetilde{\mathcal{S}}_{\pi}(X,T, \rho'_{\pi}) &:= \mathcal{S}_{\pi}(X,T, \rho'_{\pi}) +X^{-2} \frac{L'}{L}( \rho'_{\pi} - 2, \pi) \label{Shifted S}, \\
\Re \widetilde{\mathcal{S}}_{\text{av}, \pi}(X,T) &:= \frac{1}{N_{\pi}( T )}  \sum_{\substack{\lvert \Im{\rho_{\pi}'} \rvert \leqslant T}} \Re \widetilde{\mathcal{S}}_{\pi}(X,T, \rho'_{\pi}) \label{Shifted Mean}
\end{align}
and define 
\begin{equation}\label{Defining Q Shifted}
\widetilde{\mathcal{Q}_{\pi}}(X,T,\rho'_{\pi}) : =\frac{\Re \widetilde{\mathcal{S}_{\pi}}(X, T, \rho'_{\pi})-(m \log T)^{-1}(\min \{ \log X, m\log T\})
 }{(1/2) (\min \{ \log X, m\log T\})^{\frac{1}{2}}}.
\end{equation}
For small ranges of $X$, we can relax the assumption of RH for $L(s,\pi)$ and instead adopt a weaker hypothesis regarding the density of zeros of $L(s,\pi)$ near the critical line.\\

\noindent \textbf{Hypothesis \textbf{Z$_{\pi}$}}. Let $m \in \mathbb{N}$ and $\pi \in \mathscr{A}_m$. There exist constants $A_{\pi}>0, B_{\pi}>0$ (depending on $\pi$) such that
\[
N_{\pi}(\sigma, T) =\#\left\{\rho_{\pi}= \beta_{\pi}+i \gamma_{\pi}: \sigma \leqslant \beta_{\pi}, \lvert \gamma_{\pi} \rvert \leqslant T, L(\rho_{\pi}, \pi) =0 \right \} \ll_{\pi} T^{1-A_{\pi}(\sigma-\frac{1}{2})} \log^{B_{\pi}} T,
\]
uniformly for $\sigma \geqslant \frac{1}{2}$ and $T \geqslant 2$.\\

\begin{remark}\label{Zero Density Remark}
Hypothesis \textbf{Z$_{\pi}$} is known for the Riemann zeta function and Dirichlet $L$-functions due to Selberg \cite{MR0022872, MR0020594} and for any $\pi \in \mathscr{A}_{2}$ due to Luo \cite{Luo} and Beckwith--Liu--Thorner--Zaharescu \cite{MR4574646}.
\end{remark} 
Assuming Hypothesis \textbf{Z$_{\pi}$}, our main result in this direction is as follows.
\begin{theorem}\label{main thm 2}
Let $m \in \mathbb{N}$ and $\pi \in \mathscr{A}_m$. Assume Hypothesis \textnormal{\textbf{Z$_{\pi}$}} for $L(s,\pi)$ with $B_{\pi}<3$. Let $\{\alpha_j \}_{j \in \mathbb{N}}$ be any sequence of
 strictly positive real numbers decreasing to $0$.
 Then there exists an increasing sequence $\{V_j\}_{j \in \mathbb{N}}$
 tending to $\infty$ with the following property.
 For any sequence $\{T_j\}_{j \in \mathbb{N}}$ such that
 $T_j \geqslant V_j$ for all $j \in \mathbb{N}$, we have for all $\lambda \in \mathbb{R}$,
\begin{align}\label{Theorem Equation}
  \lim_{j \to \infty} \frac{1}{N_{\pi}( T_j)}
 \# \left\{ \lvert \Im(\rho_{\pi}') \rvert \leqslant T_j \colon \mathcal{\widetilde{Q}}_{\pi}((\log T_j)^{\alpha_j m},T_j,\rho_{\pi}')
   < \lambda \right\}  = \frac{1}{\sqrt{2 \pi}} \int_{-\infty}^{\lambda} e^{- \frac{1}{2}u^{2}} \diff u,   \end{align}
where $N_{\pi}(T)$ is as in \eqref{numOfZeros}, $\rho_{\pi}'$ runs over non-trivial zeros of $L(s,\pi)$ and $\mathcal{\widetilde{Q}}_{\pi}(X,T,\rho_{\pi}')$ is defined by \eqref{Defining Q Shifted}.
\end{theorem}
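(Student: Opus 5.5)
The approach mirrors the method of moments behind Theorem \ref{main thm}, now in the regime $X=(\log T)^{\alpha m}$ with the shifted sum $\widetilde{\mathcal{S}}_{\pi}$.

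\textbf{Step 1: explicit formula.} For a fixed zero $\rho'_{\pi}$, apply the Landau--Gonek type explicit formula to $\sum_{\rho_\pi} X^{\rho_\pi-\rho'_\pi}w(\rho_\pi-\rho'_\pi)$. The weight $w(u)=4/(4-u^2)$ has poles at $u=\pm2$. Shifting contours, the residue at $u=-2$ produces exactly $-X^{-2}\frac{L'}{L}(\rho'_\pi-2,\pi)$, which is why $\widetilde{\mathcal{S}}_{\pi}$ adds this term back. The residue at $u=+2$ together with the prime sum gives
\[
\widetilde{\mathcal{S}}_{\pi}(X,T,\rho'_\pi) = -\frac{1}{X}\sum_{n\leqslant X}\Lambda_\pi(n)\Big(\frac{n}{X}\Big)n^{-\rho'_\pi}\cdot(\text{smooth weight}) \;-\; X\sum_{n>X}\Lambda_\pi(n)n^{-\rho'_\pi}X\,n^{-2}\cdots \;+\; \frac{\log X}{m\log T}\text{-type main term} \;+\; \text{error}.
\]
In this form the main contribution is a Dirichlet polynomial
\[
P(\rho'_\pi)=\sum_{n} \frac{a_X(n)\Lambda_\pi(n)}{n^{\rho'_\pi}},
\]
with $a_X(n)$ concentrated at $n\asymp X$. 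Since $X$ is only a power of $\log T$, truncating the tail at $n\leqslant X^{A}$ costs $O(X^{-A'})$, so $P$ has length $(\log T)^{O(1)}$. The truncation from $|\Im\rho_\pi|\leqslant T$ and from zeros near $\rho'_\pi$ is controlled by the trivial $O(\log T)$ bound localized to near $\Im\rho'_\pi\pm T$. This affects only $o(N_\pi(T))$ of the zeros $\rho'_\pi$.

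\textbf{Step 2: moments over zeros without RH.} Compute
\[
\frac{1}{N_\pi(T)}\sum_{|\Im\rho'_\pi|\leqslant T}\big(\Re P(\rho'_\pi)\big)^r .
\]
Expand the $r$-th power into Dirichlet polynomials of length $(\log T)^{O_r(1)}\ll T^{\varepsilon}$, and sum $n^{-\rho'}$ over zeros via the Landau--Gonek formula
\[
\sum_{|\gamma|\leqslant T} n^{-\rho} = -\frac{T}{\pi}\frac{\Lambda_\pi(n)}{n}+O(\ldots).
\]
The diagonal terms, where the product of the $n_i$ is matched against the product of the $\overline{n_j}$ in the $\Re$ expansion, give the Gaussian moments $\boldsymbol{\mu}_r\big(\tfrac14\sum|a_X(n)\Lambda_\pi(n)|^2/n\cdots\big)^{r/2}$. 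By Lemma \ref{Rudnick-Sarnak Lemma} and Hypothesis \textbf{H}$_\pi$, the variance is asymptotic to $\tfrac14\log X$, which matches the normalization in $\widetilde{\mathcal{Q}}_\pi$. Off-diagonal terms at primes and prime powers are handled with Hypothesis \textbf{H}$_\pi$; prime-power contributions are $O(1)$ per factor and hence lower order. The role of RH in Theorem \ref{thm:twomMomentConditional} was to write $|n^{-\rho'}|=n^{-1/2}$. Here the moduli $|n^{-\rho'}|=n^{-\beta'}$ must be controlled instead. Zeros with $\beta'\geqslant \tfrac12+\delta$ have $|n^{-\rho'}|\leqslant n^{-1/2-\delta}$, and the extra size is at most $X^{O(r)(\beta'-1/2)}$. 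By Hypothesis \textbf{Z}$_\pi$, the count of zeros with $\beta'-\tfrac12\geqslant u$ is $\ll T^{1-A_\pi u}\log^{B_\pi}T$. Integrating against the growth $(\log T)^{O(r)u}$ shows the off-line zeros contribute
\[
\ll N_\pi(T)^{-1}\,T(\log T)^{B_\pi}\int_{0}^{\infty} T^{-A_\pi u}(\log T)^{cu}\,\diff u \ll (\log T)^{B_\pi-1}\cdot(\log T)^{-1}\cdots .
\]
This must be $o\big((\log X)^{r/2}\big)$ relative to the main term, and with $B_\pi<3$ the accounting should just close. This accounting is where I expect the main obstacle. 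The first attempt is to bound these contributions by Cauchy--Schwarz against the even moment already computed, using Hypothesis \textbf{Z}$_\pi$ only for the measure of the exceptional set.

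\textbf{Step 3: mean and limit.} The mean of $\Re\widetilde{\mathcal{S}}_\pi$ equals $\frac{\log X}{m\log T}+o\big((\log X)^{1/2}\big)$; this is negligible and matches the centering in \eqref{Defining Q Shifted}. For each fixed $\alpha$ and $r$, the normalized moments therefore converge to $\boldsymbol{\mu}_r$ with an error $\varepsilon_r(\alpha,T)\to0$ as $T\to\infty$. Given $\alpha_j\downarrow 0$, a diagonal argument chooses $V_j$ increasing so that $\max_{r\leqslant j}\varepsilon_r(\alpha_j,T)<1/j$ for all $T\geqslant V_j$; this uniformity in $T\geqslant V_j$ must be built into the error bounds. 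Then for any $T_j\geqslant V_j$ the moments of $\widetilde{\mathcal{Q}}_\pi((\log T_j)^{\alpha_jm},T_j,\cdot)$ converge to the Gaussian moments. Since the Gaussian is determined by its moments, the method of moments yields \eqref{Theorem Equation} for every $\lambda$. The condition $\alpha_j\to0$ is needed so that $\log X=\alpha_j m\log\log T_j\to\infty$ can be balanced against the losses from Hypothesis \textbf{Z}$_\pi$ and the polynomial length $X^{r}$.
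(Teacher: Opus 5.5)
There is a genuine gap in Step 2, exactly where you flagged the obstacle, and it starts with a misdiagnosis of what RH was doing. RH was not used to get $|n^{-\rho'_{\pi}}|=n^{-1/2}$. It was used to get $\overline{n^{-\rho'_{\pi}}}=n^{\rho'_{\pi}-1}$. Expanding $(\Re P(\rho'_{\pi}))^{r}$ produces terms $P^{-\rho'_{\pi}}Q^{-\overline{\rho'_{\pi}}}=(PQ)^{-\beta'_{\pi}}(Q/P)^{i\gamma'_{\pi}}$. Without RH this is not of the form $x^{\rho'_{\pi}}$ for a fixed $x$, so the Landau--Gonek formula cannot be applied to the sum over zeros at all. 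The difficulty is structural, not a matter of controlling moduli.

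Nor can you discard the off-line zeros as an exceptional set. The map $\rho\mapsto 1-\overline{\rho}$ preserves the zero set, so $N_{\pi}(\frac12,T)\geqslant N_{\pi}(T)/2\gg T\log T$. Hence Hypothesis \textbf{Z}$_{\pi}$ forces $B_{\pi}\geqslant 1$ and says nothing about zeros with $0<|\beta_{\pi}-\frac12|\ll 1/\log T$. As far as the hypothesis is concerned, essentially all zeros may lie there; only zeros with $|\beta_{\pi}-\frac12|\geqslant\delta=K\log\log T/\log T$ are provably rare. Your fallback of applying Cauchy--Schwarz against ``the even moment already computed'' is therefore circular, since that moment over all zeros is precisely what cannot yet be evaluated.

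The missing ingredient is Proposition \ref{Crucial RH removing Lemma}: for products $P,Q$ of primes up to $V$,
\[
\sum_{|\gamma_{\pi}|\leqslant T}(PQ)^{\beta_{\pi}-\frac12}\Big(\frac{Q}{P}\Big)^{i\gamma_{\pi}}=\sum_{|\gamma_{\pi}|\leqslant T}\Big(\frac{Q}{P}\Big)^{\rho_{\pi}-\frac12}+O\Big(\frac{T\log^{2}V}{(\log T)^{2-B_{\pi}}}\Big),
\]
after which the unconditional formula \eqref{Gonek RH Equation} applies. The proof splits the zeros at $\delta$ and uses Hypothesis \textbf{Z}$_{\pi}$ for $|\beta_{\pi}-\frac12|\geqslant\delta$. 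For the near zeros it pairs $\rho_{\pi}$ with $1-\overline{\rho_{\pi}}$, which has the same ordinate, so the first-order term cancels: $(PQ)^{u}+(PQ)^{-u}-2\ll u^{2}\log^{2}V$. Partial summation then gives $\sum(\beta_{\pi}-\frac12)^{2}\ll T(\log T)^{B_{\pi}-2}$. Dividing by $N_{\pi}(T)\asymp T\log T$ and multiplying by the $\ell^{1}$-norm $X^{d+\varepsilon}$ of the expanded coefficients leaves $(\log T)^{d\alpha m+B_{\pi}-3+\varepsilon}$. This is exactly where $B_{\pi}<3$ and $r\alpha m<2(3-B_{\pi})$ come from.

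Without this symmetric pairing, a first-order comparison only reaches $(\log T)^{B_{\pi}-2}$. That is also the size of the bound you display, which does not tend to $0$ once $B_{\pi}\geqslant 2$, so it does not ``just close'' for $B_{\pi}<3$.

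A secondary point: without RH, higher prime powers are not $O(1)$ pointwise at zeros with small $\beta_{\pi}$. The paper splits at $\beta_{\pi}=\frac14+\frac{\theta_m}{2}$ and uses Hypothesis \textbf{Z}$_{\pi}$ for the low zeros. Your Steps 1 and 3 match the paper's route in substance: the shift cancels the $X^{-2}\frac{L'}{L}(\rho'_{\pi}-2,\pi)$ term, $V_j$ is chosen diagonally, and the method of moments finishes. One correction there: the $\frac{\log X}{m\log T}$ mean does not appear in the explicit formula itself but arises from the Landau--Gonek diagonal after averaging over zeros.
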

To prove Theorem \ref{main thm 2}, we establish asymptotic estimates for the moments
\begin{align}\label{Shifted Moments Definition}
\mathcal{\widetilde{M}}_{\pi, r}(X,T) := \frac{1}{N_{\pi}( T )}\sum_{\substack{ \lvert \Im \rho_{\pi}' \rvert  \leqslant T}} \left(\Re \mathcal{\widetilde{S}}_{\pi}(X, T, \rho_{\pi}') - \frac{\min \{\log X, m \log T \}}{m \log T}\right)^{r}.
\end{align}
\begin{theorem}\label{thm:twomMomentUnconditional}
Let $m \in \mathbb{N}$ and $\pi \in \mathscr{A}_m$. Assume Hypothesis \textnormal{\textbf{Z$_{\pi}$}} for $L(s,\pi)$ with $B_{\pi}<3$. Let $X,T \geqslant 3, r \in \mathbb{N}$ and $\mathcal{\widetilde{M}}_{\pi, r}(X,T)$ be as defined in \eqref{Shifted Moments Definition}. Fix $\alpha \in \mathbb{R}$ such that $0<r\alpha m  <2(3-B_{\pi})$. If $r$ is even, then
\[
\mathcal{\widetilde{M}}_{\pi,r}((\log T)^{\alpha m},T) = \boldsymbol{\mu}_{r}\left ( \frac{\alpha m \log \log T}{4} \right)^{\frac{r}{2}} + O_{\pi,r, \alpha}((\log \log T)^{\frac{r-1}{2}}),
\]
where $\boldsymbol{\mu}_{r}$ is given by \eqref{Defining Normal Distribution Coefficients}. If $r$ is odd, then
\[
\widetilde{\mathcal{M}}_{\pi, r}((\log T)^{\alpha m},T) \ll_{\pi,r,\alpha} (\log \log T)^{\frac{r-1}{2}}.
\]
\end{theorem}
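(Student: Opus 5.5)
Our approach is to derive an explicit formula for $\widetilde{\mathcal S}_\pi(X,T,\rho'_\pi)$ that turns the sum over $\rho_\pi$ into a short Dirichlet polynomial over prime powers, evaluated at $\rho'_\pi$. After that, the moments over $\rho'_\pi$ can be computed with a Landau--Gonek type formula.

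\textbf{Step 1 (explicit formula).} The weight is $w(u)=4/(4-u^2)=\frac{1}{2+u}+\frac{1}{2-u}$. Integrate $X^{s-\rho'}w(s-\rho')\frac{L'}{L}(s,\pi)$ around a tall rectangle and apply the residue theorem. The zeros $\rho_\pi$ give $\mathcal S_\pi$. The pole at $s=\rho'-2$ gives $-X^{-2}\frac{L'}{L}(\rho'-2,\pi)$, up to sign conventions; this is precisely the term removed by the shift in \eqref{Shifted S}. The pole at $s=\rho'+2$ lies in the region of absolute convergence. Shifting contours and expanding $\frac{L'}{L}$ into its Dirichlet series, we expect an identity of the shape
\[
\widetilde{\mathcal S}_\pi(X,T,\rho')=-\frac{1}{X}\sum_{n\leqslant X}\frac{\Lambda_\pi(n)}{n^{\rho'-1}}\Big(\frac{n}{X}\Big)\text{-type weights}-X\sum_{n>X}\frac{\Lambda_\pi(n)}{n^{\rho'+1}}+\frac{\log X}{m\log T}\cdot(\text{contribution of }\tfrac{1}{2}\log\text{-conductor})+\text{error}.
\]
Concretely, there is a Montgomery-type formula
\[
\Re\widetilde{\mathcal S}_\pi=\frac{\log X}{m\log T}\cdot\text{mean}+\Re\,\mathcal P(\rho')+E(\rho'),
\qquad
\mathcal P(\rho')=\sum_n \frac{\Lambda_\pi(n)\,a_X(n)}{n^{\rho'}},
\]
where $a_X(n)=\min(n/X,X/n)\,n^{1/2}\cdot n^{-1/2}$ up to normalization. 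Since $X=(\log T)^{\alpha m}$ is tiny, $a_X(n)$ is essentially supported on $n\leqslant X^{C}$, with exponentially decaying tails.

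Without RH, zeros off the line create the errors $E(\rho')$, since $X^{\rho-\rho'}$ is no longer unimodular. Hypothesis \textbf{Z}$_\pi$ controls these contributions on average over $\rho'$, because $X^{\beta-\beta'}$ is at most $X^{1/2}$ only for a density-small set of zeros.

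\textbf{Step 2 (moments of $\mathcal P$).} Expand $(\Re \mathcal P(\rho'))^r$ into sums over $n_1,\dots,n_r$ of products of $n_i^{\pm\rho'}$. Average over $\rho'$ using the Landau--Gonek formula for $\sum_{|\gamma'|\leqslant T} x^{\rho'}$, which is valid unconditionally with zero-density input. The diagonal terms are those with $\prod n_i^{\epsilon_i}=1$. By unique factorization, the prime-power terms with $k\geqslant2$ are negligible under Hypothesis \textbf{H}$_\pi$. The diagonal then reduces to pairings of primes, giving $\boldsymbol\mu_r\big(\tfrac12\sum_p|\Lambda_\pi(p)|^2a_X(p)^2/p\big)^{r/2}$. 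By \eqref{Key Asymptotic Relation} (Lemma \ref{Rudnick-Sarnak Lemma}) together with partial summation, the bracket equals $\frac{\log X}{4}+O(1)=\frac{\alpha m\log\log T}{4}+O(1)$. Off-diagonal terms produce Landau--Gonek contributions of size $\frac{1}{N_\pi(T)}\cdot\frac{T}{2\pi}\Lambda(\cdot)$. Since the Dirichlet polynomial has length $X^{O(r)}=(\log T)^{O(r\alpha m)}$, these are $\ll(\log T)^{O(r\alpha m)-1}\log^{B}$ after the relevant weighting.

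\textbf{Step 3 (error terms).} Bound $\frac1{N_\pi(T)}\sum|E(\rho')|^r$ and combine via Hölder/Minkowski so that these errors affect only the $(\log\log T)^{(r-1)/2}$ term. This is the main obstacle. Without RH, both the zeros with $\beta>1/2$ and the Landau--Gonek error terms must be controlled by \textbf{Z}$_\pi$. Under \textbf{Z}$_\pi$, zeros with $\beta\geqslant \tfrac12+\delta$ number $T^{1-A\delta}\log^{B}T$, while the $r$-th power of the polynomial grows like $X^{r/2}=(\log T)^{r\alpha m/2}$. Comparing against $N_\pi(T)\asymp T\log T$ should leave a saving of roughly $(\log T)^{B-1+r\alpha m/2-2}$. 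This is exactly where the condition $r\alpha m<2(3-B_\pi)$ should come from. I would first attempt a dyadic decomposition in $\beta$, pairing the density bound with the trivial bound $X^{r(\beta-1/2)}$ and summing over the dyadic ranges.
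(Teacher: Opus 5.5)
There is a genuine gap in Steps 2--3: you never actually handle the zeros off the critical line, and that is the only place where Hypothesis \textbf{Z}$_\pi$ is needed.

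Expanding $(\Re\mathcal P(\rho'))^r$ produces terms $P^{-\rho'}Q^{-\overline{\rho'}}=(PQ)^{-\beta'}(Q/P)^{i\gamma'}$, not products of $n_i^{\pm\rho'}$. Off the line $\overline{\rho'}\neq 1-\rho'$, so $\sum_{|\gamma'|\leqslant T}(PQ)^{-\beta'}(Q/P)^{i\gamma'}$ is not of the form $\sum_{\gamma'}x^{\rho'}$. The Landau--Gonek formula (Lemma \ref{KZ Gonek}, which is unconditional and uses no density input) therefore does not evaluate it. Conversely, the explicit formula needs no hypothesis at all: off an exceptional set, $\widetilde{\mathcal S}_\pi(X,T,\rho')=-\sum_n a_n\Lambda_\pi(n)n^{-\rho'}+O(X\log T/\sqrt T)$ with $a_n=\min(n/X,X/n)^2$, and it contains no separate mean term.

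Your proposed fix, a dyadic decomposition in $\beta'$ with a trivial bound on each range, cannot work. It is consistent with \textbf{Z}$_\pi$ that a positive proportion of all zeros lie at distance $\asymp 1/\log T$ from the line, and \textbf{Z}$_\pi$ only starts to save once $|\beta'-\tfrac12|\gg\log\log T/\log T$. For the zeros closer to the line, a trivial bound of size $X^{r/2}$ swamps the main term $(\log\log T)^{r/2}$. Even a first-order comparison with the on-line quantity is not enough. Combining $|(PQ)^{\beta'-1/2}-1|\ll|\beta'-\tfrac12|\log V$ with $\sum_{\gamma'}|\beta'-\tfrac12|\ll T(\log T)^{B_\pi-1}$ gives only a range like $r\alpha m<2(2-B_\pi)$, which is empty for $B_\pi\geqslant 2$. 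The saving $(\log T)^{B_\pi-3+r\alpha m/2}$ you quote is read off from the hypothesis rather than derived.

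The missing idea is Proposition \ref{Crucial RH removing Lemma}, which rests on a second-order cancellation:
\begin{itemize}
\item By the functional equation, $\beta'+i\gamma'$ and $1-\beta'+i\gamma'$ are both zeros with the same ordinate. Pairing them in $\sum_{\gamma'}\big((PQ)^{\beta'-1/2}-1\big)(Q/P)^{i\gamma'}$ gives $(PQ)^{\eta}+(PQ)^{-\eta}-2\ll\eta^2\log^2 V$ with $\eta=\beta'-\tfrac12$. The first-order terms cancel, and the same happens when comparing $(Q/P)^{i\gamma'}$ with $(Q/P)^{\rho'-1/2}$.
\item Split at $\delta=K\log\log T/\log T$, using \textbf{Z}$_\pi$ beyond $\delta$ and partial summation below it. This gives $\sum(\beta'-\tfrac12)^2\ll T(\log T)^{B_\pi-2}$.
\item Hence $\sum_{\gamma'}(PQ)^{\beta'-1/2}(Q/P)^{i\gamma'}=\sum_{\gamma'}(Q/P)^{\rho'-1/2}+O\big(T\log^2 V\,(\log T)^{B_\pi-2}\big)$, and only now does Landau--Gonek apply.
\item Multiply this error by $\big(\sum_{p\leqslant V}a_p|\Lambda_\pi(p)|p^{-1/2}\big)^r\ll X^{r/2+\varepsilon}$ and divide by $N_\pi(T)\asymp T\log T$. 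The result is $X^{r/2+\varepsilon}(\log T)^{B_\pi-3}$, which is exactly where the condition $r\alpha m<2(3-B_\pi)$ comes from.
\end{itemize}

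A smaller point: Hypothesis \textbf{H}$_\pi$ alone does not make the prime powers negligible at zeros with small $\beta'$, where $p^{-k\beta'}$ is not small. There too you need \textbf{Z}$_\pi$, which guarantees there are few zeros with $\beta'\leqslant\tfrac14+\tfrac{\theta_m}{2}$.
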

\begin{remark}
Theorems \ref{main thm 2} and \ref{thm:twomMomentUnconditional} hold unconditionally when $m=1$ or $m=2$.
\end{remark}
\subsection{Smooth Weight Functions} \label{sec: Smooth weight functions} Fix $X \geqslant 2$ and consider the function $f$ on $(0,\infty)$ defined by
\begin{align}\label{Defining Montgomery function}
f(x) := \begin{cases}
 \left (\dfrac{x}{X} \right )^2, & x \leqslant X \\
 \left (\dfrac{X}{x} \right )^2, & x \geqslant X.
\end{cases}
\end{align} 
Then \eqref{S definition} can be viewed as 
\begin{align}\mathcal{S}_{\pi}(X,T,\rho_{\pi}') &= \sum_{\substack{ \lvert \Im{\rho_{\pi}} \rvert \leqslant T}} \widehat{f}(\rho_{\pi}-\rho_{\pi}'), \label{S definition General Weights}
\end{align}
where $\widehat{f}$ denotes the Mellin transform of $f$. More generally, let $\Psi \in C_{c}^{\infty}(0,\infty)$ be a fixed non-negative compactly supported smooth function. Fix $X \geqslant 2$ and define
\begin{equation}\label{eqn:psi_W}
    \Psi_X(x) := \Psi\left(\frac{x}{X}\right), \quad x \in \mathbb{R}.
\end{equation}
Let $T \geqslant 2$ and define
\begin{align}\label{rewriting G general}
 \mathcal{S}_{\textnormal{av},\pi, \Psi}(X,T)&:= \frac{1}{N_{\pi}( T )}
 \sum_{\substack{\lvert \Im{\rho_{\pi}'} \rvert \leqslant T}} \Re \mathcal{S}_{\pi,\Psi}(X, T, \rho_{\pi}')
\end{align}
 where
\begin{align}\mathcal{S}_{\pi,\Psi}(X,T,\rho_{\pi}') &:= \sum_{\substack{ \lvert \Im{\rho_{\pi}} \rvert \leqslant T}}\widehat{\Psi}_X(\rho_{\pi} - \rho'_{\pi}), \label{eqn:S_test}
\end{align}
and $\rho_{\pi}, \rho_{\pi}'$ vary over the non-trivial zeros of $L(s, \pi)$. 
For $r \in \mathbb{N}$, consider the moments 
\begin{equation}\label{eqn:moments_test}
\mathcal{M}_{\pi, \Psi, r}(X,T) := \frac{1}{N_{\pi}(T)}\sum_{\substack{ \lvert \Im \rho_{\pi}' \rvert  \leqslant T}} \left(\Re \mathcal{S}_{\pi,\Psi}(X, T, \rho_{\pi}') -\widehat{\Psi}(0) \frac{\min \{\log X, m \log T \}}{m \log T}  \right)^{r}.
\end{equation}
We show that the results from Section \ref{PCS Introduction Subsection} can be extended for the class of smooth compactly supported weight functions. Furthermore, we can improve upon the admissible range for $\alpha$ in Theorem \ref{thm:twomMomentConditional}.
\begin{theorem}\label{Thm:Main_for_test_funcs}
Let $m \in \mathbb{N}, \pi \in \mathscr{A}_m$ and $\theta_{m} \in [0, \frac{1}{2}-\frac{1}{m^2+1}]$ be an admissible exponent towards the Ramanujan conjecture for $L(s,\pi)$. Assume the Riemann Hypothesis for $L(s,\pi)$. Let $\Psi \in C_{c}^{\infty}(0,\infty)$ be a fixed non-negative compactly supported smooth function. Let $X,T \geqslant 2, r \in \mathbb{N}$ and $\mathcal{M}_{\pi,\Psi, r}(X,T)$ be as defined in \eqref{eqn:moments_test}. Fix $\alpha \in \mathbb{R}$ such that 
\[0< \alpha < \frac{1}{mr(1+\theta_m)}.
\]
If $r$ is even, then
\[
\mathcal{M}_{\pi,\Psi, r}(T^{\alpha m},T) = \boldsymbol{\mu}_{r}\left( \widehat{\Psi^2}(0)\frac{\alpha m \log T}{2}\right)^{\frac{r}{2}} + O_{\pi,\Psi, r,\alpha}((\log T)^{\frac{r-1}{2}}),
\]
where $\boldsymbol{\mu}_{r}$ is given by \eqref{Defining Normal Distribution Coefficients}. If $r$ is odd, then
\[
\mathcal{M}_{\pi,\Psi, r}(T^{\alpha m},T) \ll_{\pi,\Psi,r, \alpha} (\log T)^{\frac{r-1}{2}}.
\]
\end{theorem}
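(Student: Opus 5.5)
We would follow the same route as for Theorem \ref{thm:twomMomentConditional}, with the smooth weight $\Psi$ replacing $f$ from \eqref{Defining Montgomery function}. The first step is an explicit formula for $\mathcal{S}_{\pi,\Psi}(X,T,\rho'_\pi)$ for a fixed zero $\rho'_\pi$. We apply the argument principle to $\frac{L'}{L}(s,\pi)\widehat{\Psi}_X(s-\rho'_\pi)$, or equivalently sum $\Psi_X$ against the zeros via Mellin inversion. Since $\widehat{\Psi}_X(s)=X^s\widehat{\Psi}(s)$ and $\widehat\Psi$ is entire and rapidly decaying in vertical strips (because $\Psi\in C_c^\infty(0,\infty)$), there are no poles from the weight. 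We therefore expect
\begin{align*}
\mathcal{S}_{\pi,\Psi}(X,T,\rho'_\pi) = -\sum_{n}\frac{\Lambda_\pi(n)}{n^{\rho'_\pi}}\Psi\Big(\frac{n}{X}\Big) - \sum_n \frac{\overline{\Lambda_\pi(n)}}{n^{1-\rho'_\pi}}\Psi\Big(\frac{n}{X}\Big)\cdot(\text{dual term}) + \widehat\Psi(0)\frac{\log X}{m\log T}\cdot(\ldots) + \text{error},
\end{align*}
up to normalisation. The key point is that the prime sum is now supported on $n\asymp X$ and is smoothly truncated. The gamma-factor contribution gives a term of size $\widehat{\Psi}(0)\min\{\log X,m\log T\}/(m\log T)$ on average, which is exactly the centering constant in \eqref{eqn:moments_test}. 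Completing the sum from $|\Im\rho_\pi|\le T$ to all zeros costs only $O(X^{1/2}T^{-A})$, thanks to the rapid decay of $\widehat\Psi$. Under RH we have $1-\rho'_\pi=\overline{\rho'_\pi}$, so $\Re\mathcal{S}_{\pi,\Psi}$ minus its mean reduces to $\Re$ of a single Dirichlet polynomial
\[
P(\rho'_\pi)=\sum_{n}\frac{b_n}{n^{\rho'_\pi}},\qquad b_n=\Lambda_\pi(n)\Psi(n/X)\ (\text{plus conjugate}),
\]
plus negligible errors.

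The second step computes $\frac{1}{N_\pi(T)}\sum_{\rho'}(\Re P(\rho'))^r$. We expand $(\Re P)^r=2^{-r}\sum_{j}\binom{r}{j}P^j\overline{P}^{r-j}$, which gives sums over $n_1\cdots n_j$ and $m_1\cdots m_{r-j}$ of $(n_1\cdots n_j)^{-\rho'}(m_1\cdots m_{r-j})^{-\overline{\rho'}}$. Summing over zeros uses the Landau--Gonek-type formula for $\sum_{\gamma'\le T}x^{i\gamma'}$ for $L(s,\pi)$, which was already used in the proof of Theorem \ref{thm:twomMomentConditional}. The diagonal $n_1\cdots n_j=m_1\cdots m_{r-j}$ contributes $N_\pi(T)$ times the mean value. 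The off-diagonal terms produce $-\frac{T}{2\pi}\Lambda_\pi(x)/\sqrt{x}$-type contributions for $x=\prod n/\prod m$, together with errors of size $O(x^{1/2+\epsilon}\cdot \ldots)$. Because all $n_i\asymp X=T^{\alpha m}$, the products are at most $X^r$. The Landau-type error terms then stay below $N_\pi(T)$ provided $X^{r(1+\theta_m)}\ll T^{m-\epsilon}$, where the factor $1+\theta_m$ enters through the bound $|\Lambda_\pi(p^k)|\ll p^{k\theta_m}\log p$ applied to the coefficient $\Lambda_\pi(x)$ in the off-diagonal. This gives precisely $\alpha<1/(mr(1+\theta_m))$. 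The improvement over $\frac43\theta_m$ comes from the smooth truncation, which avoids the long tail $n>X$ of Montgomery's $f$ that previously had to be cut off.

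The third step evaluates the diagonal. We would show that only pairings $j=r/2$ with $\{n_i\}$ a permutation of $\{m_i\}$ over distinct primes survive at main order. Prime powers and coincident primes are controlled by Hypothesis \textbf{H}$_\pi$ (Jiang) and contribute $O((\log T)^{(r-1)/2})$. The pairings give $\boldsymbol{\mu}_r$ times $\big(\tfrac12\sum_p |\Lambda_\pi(p)|^2\Psi(p/X)^2/p\big)^{r/2}$. By Lemma \ref{Rudnick-Sarnak Lemma}, that is, \eqref{Key Asymptotic Relation} in its differentiated form $\sum_{n\le x}|\Lambda_\pi(n)|^2/n=\frac12\log^2x+O(\log x)$, partial summation gives $\sum_p|\Lambda_\pi(p)|^2\Psi(p/X)^2/p=\log X\int_0^\infty\Psi(u)^2\frac{du}{u}+O(1)=\widehat{\Psi^2}(0)\log X+O(1)$. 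With $\log X=\alpha m\log T$ this yields the variance $\widehat{\Psi^2}(0)\alpha m\log T/2$. For odd $r$ no perfect pairing exists, and the leftover terms give $O((\log T)^{(r-1)/2})$. The off-diagonal main terms $\Lambda_\pi(x)/\sqrt{x}$ summed over configurations are $O_r(1)$ times lower powers of $\log T$, so they are absorbed into the error.

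The main obstacle is the off-diagonal analysis in the second step. The difficulty is controlling the Landau--Gonek secondary terms uniformly for $x$ up to $X^r$ with the non-Ramanujan coefficients. This is what determines the range of $\alpha$. Here we would follow the corresponding estimates from the proof of Theorem \ref{thm:twomMomentConditional}, replacing the sharp cutoff bounds by the smooth-support ones.
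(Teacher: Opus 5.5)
Your architecture matches the paper's. You reduce $\Re\mathcal{S}_{\pi,\Psi}$ to one smoothly weighted prime sum, evaluate the mixed moments with Corollaries \ref{Gonek assuming RH} and \ref{when b>a}, keep the pairings over distinct primes, and handle prime powers with Hypothesis \textbf{H}$_{\pi}$ and $\theta_m$.

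The genuine gap is the arithmetic input for the variance. The estimate you actually state is $\sum_{n\leqslant x}|\Lambda_\pi(n)|^2/n=\frac12\log^2x+E(x)$ with $E(x)=O(\log x)$, which is the Rudnick--Sarnak strength of \eqref{Key Asymptotic Relation}. This does not give $\sum_p\Psi_X^2(p)|\Lambda_\pi(p)|^2/p=\widehat{\Psi^2}(0)\log X+O(1)$. Partial summation leaves the term $-\int_0^\infty E(uX)\,(\Psi^2)'(u)\,\mathrm{d}u$, which is only $O(\log X)$, the same size as the main term. A weight supported on $n\asymp X$ detects the variation of $E$ across $[aX,bX]$, where $\operatorname{supp}\Psi\subset[a,b]$. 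A bound $E(x)=O(\log x)$ does not prevent $|\Lambda_\pi(n)|^2$ from being unevenly distributed between neighbouring dyadic ranges. So you would not even get the leading constant of the variance, let alone the $O(1)$ error needed for the $O((\log T)^{(r-1)/2})$ remainder.

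The paper flags exactly this deficiency of the Rudnick--Sarnak estimate. It replaces it by Lemma \ref{lem:prime_number_theorem}, $\sum_{n\leqslant x}|\Lambda_\pi(n)|^2/n=\frac12\log^2x+O(1)$. This is a prime number theorem for $L(s,\pi\times\widetilde{\pi})$ with a sufficiently good error term: Gillespie--Ye in the self-contragredient case, extended in general via the Humphries--Thorner zero-free region, with Hypothesis \textbf{H}$_{\pi}$ handling prime powers. Lemma \ref{lem:test_general_asymp} then follows by partial summation. Your plan needs this ingredient to close.

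There are also three smaller points, none fatal.

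(i) Your explicit formula is off. The dual prime sum vanishes identically, since it carries the factor $\Psi(1/(nX))=0$, so the ``plus conjugate'' in $b_n$ must go; otherwise $\Re P$ doubles and the variance quadruples. The archimedean and trivial-zero terms are negligible, so nothing of size $\widehat{\Psi}(0)\alpha$ comes from the gamma factor. Corollary \ref{cor:S_pi(T)_explicit} gives simply $\mathcal{S}_{\pi,\Psi}(X,T,\rho'_\pi)=-\sum_n\Psi_X(n)\Lambda_\pi(n)n^{-\rho'_\pi}+O(X^{1/2}T^{-1}+T^{-1/2}\log T)$. This holds only off an exceptional set of $O(\sqrt{T}\log T)$ zeros: when $|\gamma'_\pi|$ is within $O(1)$ of $T$, truncating at $|\Im\rho_\pi|\leqslant T$ can cost $O(\log T)$, not $O(T^{-A})$.

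(ii) The centering $\widehat{\Psi}(0)\alpha$ is really the average over $\rho'_\pi$ of this prime sum (cf.\ Lemma \ref{lem:mean_computation}). It is produced by the Landau--Gonek main term $-\frac{T}{\pi}\Lambda_{\widetilde{\pi}}(n)/n$, i.e.\ by one of the ``off-diagonal main terms'' you discard. The misattribution is harmless only because an $O(1)$ shift changes the $r$-th moment by $O((\log T)^{(r-1)/2})$.

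(iii) The range condition is $X^{r(1+\theta_m)+\varepsilon}\ll T^{1-\varepsilon}$, because $N_\pi(T)\asymp T\log T$. With $X=T^{\alpha m}$ this is exactly $\alpha<1/(mr(1+\theta_m))$. Your $T^{m-\varepsilon}$ would instead give the incorrect range $\alpha<1/(r(1+\theta_m))$.
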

Similar to Conjecture \ref{conj:gaussian}, we expect the results in Theorem \ref{Thm:Main_for_test_funcs} to hold for all fixed $\alpha \in \mathbb{R}\setminus \{0\}$. Define 
\begin{equation} \label{Defining Q General}
\mathcal{Q}_{\pi, \Psi}(X,T,\rho_{\pi}') : =
\frac{\Re \mathcal{S}_{\pi,\Psi}(X, T, \rho_{\pi}')-\widehat{\Psi}(0)(m \log T)^{-1} \min \{ \log X, m \log T\}  }{ \left( (1/2) \min \{ \log X, m \log T \} \widehat{\Psi^2}(0)\right)^{\frac{1}{2}}},
\end{equation}
where $\mathcal{S}_{\pi, \Psi}(X,T,\rho'_{\pi})$ is defined by \eqref{S definition General Weights}. As a consequence of Theorem \ref{Thm:Main_for_test_funcs}, we also obtain the analogue of Theorem \ref{main thm} for $ \mathcal{Q}_{\pi,\Psi}(X,T,\rho_{\pi}')$. 

\subsection{Distributions Associated to Triple Correlation} \label{sec: triple cor intro} This section concerns the distribution of non-trivial zeros of automorphic $L$-functions with respect to their triple correlation.  Let $\Psi \in C_{c}^{\infty}(0,\infty)$ be a fixed non-negative compactly supported smooth function. Let $X_1,X_2,T \geqslant 2$ and consider the sum
\begin{align}
\widehat{\mathcal{S}}_{\pi,\Psi}(X_1,X_2,T,\rho_{\pi}) &:= \sum_{\substack{ \lvert \Im{\rho_{\pi,1}} \rvert \leqslant T}} \sum_{\substack{ \lvert \Im{\rho_{\pi,2}} \rvert \leqslant T}}\widehat{\Psi}_{X_1}(\rho_{\pi,1} - \rho_{\pi}) \widehat{\Psi}_{X_2}(\rho_{\pi} - \rho_{\pi,2}), \label{eqn:S triple}
\end{align}

where $\rho_{\pi,1}, \rho_{\pi,2}$ vary over the non-trivial zeros of $L(s, \pi)$. Then $\widehat{\mathcal{S}}_{\pi,\Psi}(X_1,X_2,T,\rho_{\pi})$ simultaneously captures information about the gaps $\{ (\rho_{\pi,1} - \rho_{\pi})\}$ and $\{ (\rho_{\pi} - \rho_{\pi,2})\}$ as $\rho_{\pi,1}$ and $\rho_{\pi,2}$ vary over neighboring zeros of $\rho_{\pi}$. For $r \in \mathbb{N}$, consider the moments
\begin{equation}\label{Definition of Moments: Triple Correlation}
\widehat{\mathcal{M}}_{\pi, \Psi, r}(X_1,X_2,T) := \frac{1}{N_{\pi}(T)}\sum_{\substack{ \lvert \Im \rho_{\pi} \rvert  \leqslant T}} (\Re \widehat{\mathcal{S}}_{\pi,\Psi}(X_1,X_2, T, \rho_{\pi}))^{r}.
\end{equation}
For $r \in \mathbb{N}$, let
\begin{align}\label{Laplace Coefficients}
\boldsymbol{\mathcal{L}}_r:=\begin{cases}
    r! & \text{if $r$ is even,}\\
    0 &\text{if $r$ is odd,}
    \end{cases} 
\end{align}
and
\begin{align}\label{Chi-Square Coefficients}
\boldsymbol{\mathcal{\chi}}_r=  2^r \cdot r!.
\end{align}
\begin{figure}[t]
    \centering
     {{\includegraphics[width=.5\linewidth]{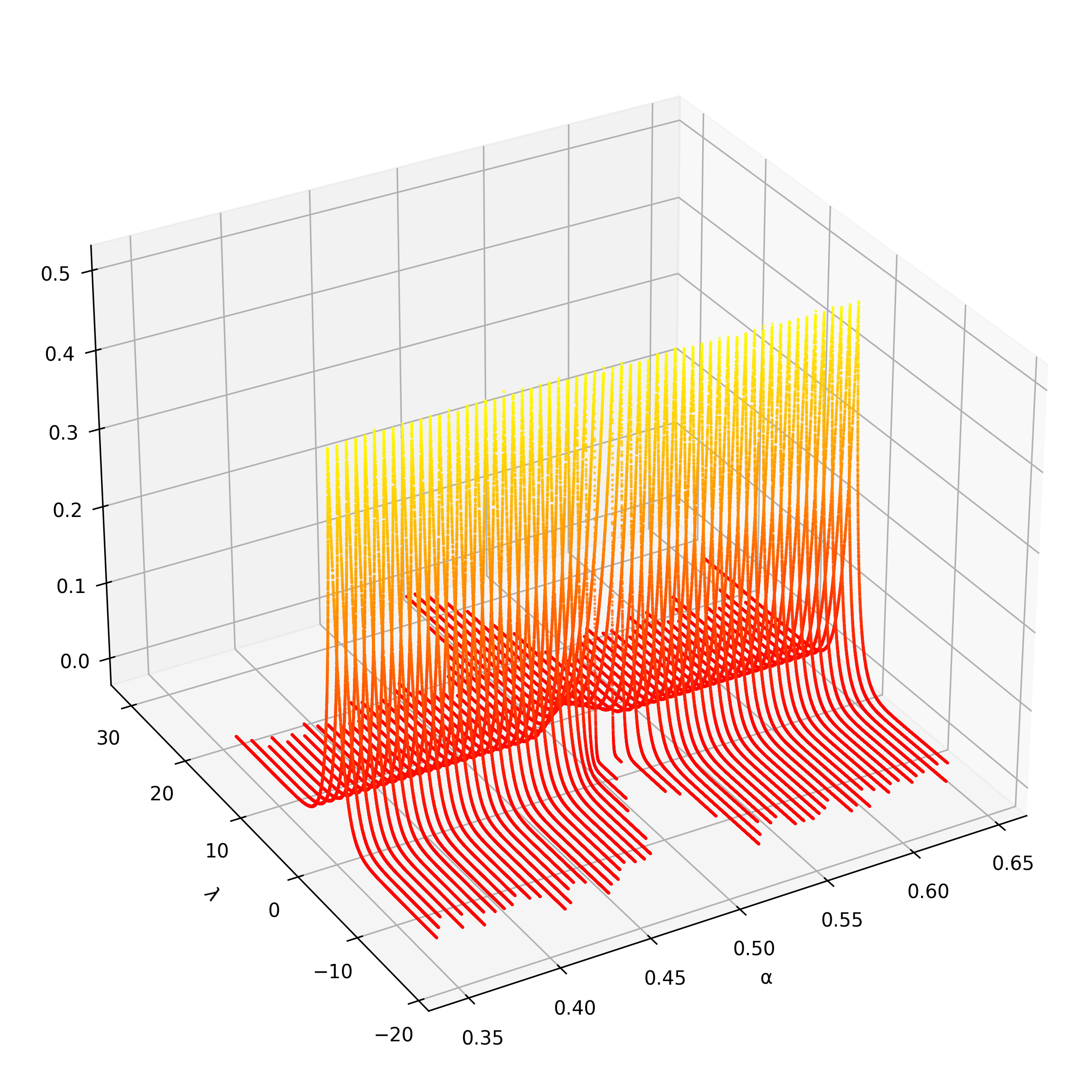}}}%
    \caption{ A discrete approximation of the probability density function associated to $\widehat{\mathcal{S}}_{\pi,\Psi}(X_1,X_2,T,\rho_{\pi})$ where $L(s,\pi)=\zeta(s)$, $X_1= T^{\frac{1}{2}}$ and $X_2=T^{\alpha_2}$ with $\alpha_2$ varying between $(0.35,0.65)$ in a discrete equi-spaced manner. As $\alpha_2$ approaches $\frac{1}{2}$, the graph gradually transitions from Laplace to a Chi-squared distribution.}
    \label{fig:TCS Plot 2}
\end{figure}
\begin{theorem}\label{Theorem : Triple Correlation}
Let $m \in \mathbb{N}, \pi \in \mathscr{A}_m$ and $\theta_{m} \in [0, \frac{1}{2}-\frac{1}{m^2+1}]$ be an admissible exponent towards the Ramanujan conjecture for $L(s,\pi)$. Assume the Riemann Hypothesis for $L(s,\pi)$. Let $\Psi \in C_{c}^{\infty}(0,\infty)$ be a fixed non-negative compactly supported smooth function. Let $X_1,X_2,T \geqslant 2, r \in \mathbb{N}$ and $\widehat{\mathcal{M}}_{\pi,\Psi, r}(X_1,X_2,T)$ be as defined in \eqref{Definition of Moments: Triple Correlation}. Fix $\alpha_1, \alpha_2 >0$ such that
\[0< \alpha_1+\alpha_2 < \frac{1}{mr(1+\theta_m)}.
\]
Then the following holds.
\begin{itemize}
\item[(1)] Suppose $\alpha_1 \neq \alpha_2$. If $r$ is even, then
\[
\widehat{\mathcal{M}}_{\pi,\Psi, r}(T^{\alpha_1 m},T^{\alpha_2 m},T) = \boldsymbol{\mathcal{L}}_{r}\left(\widehat{\Psi^2}(0)\frac{m \sqrt{\alpha_1 \alpha_2} \log T}{2}  \right)^{r} + O_{\pi,\Psi, r,\alpha_1,\alpha_2}((\log T)^{r-\frac{1}{2}}),
\]
where $\boldsymbol{\mathcal{L}}_{r}$ is given by \eqref{Laplace Coefficients}.
If $r$ is odd, then
\[
\widehat{\mathcal{M}}_{\pi,\Psi, r}(T^{\alpha_1 m },T^{\alpha_2 m},T) \ll_{\pi,\Psi,r, \alpha_1,\alpha_2} (\log T)^{\frac{r-1}{2}}.
\]
\item[(2)] Suppose $\alpha_1 = \alpha_2 = \alpha$. Then for any $r \in \mathbb{N}$,
\[
\widehat{\mathcal{M}}_{\pi,\Psi, r}(T^{\alpha m},T^{\alpha m},T) = \boldsymbol{\mathcal{\chi}}_{r}\left(\widehat{\Psi^2}(0)\frac{\alpha m \log T}{2} \right)^{r} + O_{\pi,\Psi, r,\alpha}((\log T)^{r-\frac{1}{2}}).
\]
\end{itemize}
\end{theorem}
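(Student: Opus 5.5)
The plan is to reduce $\widehat{\mathcal{S}}_{\pi,\Psi}(X_1,X_2,T,\rho_\pi)$ to a product of two short Dirichlet polynomials evaluated at $\gamma_\pi=\Im\rho_\pi$, and then compute moments of that product with the same machinery used for Theorem \ref{Thm:Main_for_test_funcs}.

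\emph{Step 1: factorization.} The double sum in \eqref{eqn:S triple} factors as $\widehat{\mathcal{S}}_{\pi,\Psi}(X_1,X_2,T,\rho_\pi)=A_1(\rho_\pi)\,A_2(\rho_\pi)$, where
\[
A_1(\rho_\pi)=\sum_{|\Im\rho_{\pi,1}|\leqslant T}\widehat{\Psi}_{X_1}(\rho_{\pi,1}-\rho_\pi),\qquad A_2(\rho_\pi)=\sum_{|\Im\rho_{\pi,2}|\leqslant T}\widehat{\Psi}_{X_2}(\rho_\pi-\rho_{\pi,2}).
\]
Each $A_j$ is a pair-correlation sum of exactly the type $\mathcal{S}_{\pi,\Psi}$. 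I would apply the explicit formula for $L(s,\pi)$ under RH, as in the proof of Theorem \ref{Thm:Main_for_test_funcs}, to each factor. This gives
\[
A_1(\rho_\pi)=c_1+P_1(\gamma_\pi)+E_1,\qquad A_2(\rho_\pi)=c_2+\overline{P_2(\gamma_\pi)}+E_2 .
\]
Here $c_j=\widehat{\Psi}(0)\alpha_j+o(1)$ is the mean term. The prime polynomial is
\[
P_j(\gamma)=-\sum_n \frac{\Lambda_\pi(n)\Psi(n/X_j)}{\sqrt n}\,n^{-i\gamma},
\]
up to conjugation and normalization conventions, and $E_j$ is small in mean. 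The reflection $\rho_\pi-\rho_{\pi,2}$ in $A_2$ is what produces the conjugate. Truncating at $|\Im\rho|\leqslant T$ costs only errors controlled by the rapid decay of $\widehat{\Psi}_X$ on vertical lines.

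\emph{Step 2: moment expansion.} Expand
\[
(\Re A_1A_2)^r=2^{-r}\bigl(A_1A_2+\overline{A_1A_2}\bigr)^r .
\]
Every term containing some $c_j$ or $E_j$ has fewer than $2r$ prime-polynomial factors, so by Hölder and the pair-correlation moment bounds it contributes $O((\log T)^{r-1/2})$. The main term is therefore
\[
\frac{1}{N_\pi(T)}\sum_{|\gamma_\pi|\leqslant T}\bigl(\Re P_1(\gamma_\pi)\overline{P_2(\gamma_\pi)}\bigr)^r .
\]
I would average $\prod n_i^{-i\gamma}\prod m_k^{i\gamma}$ over zeros using the Landau–Gonek-type formula for $\sum_{\gamma}x^{i\gamma}$ already used for Theorem \ref{Thm:Main_for_test_funcs}.
\begin{itemize}
\item The diagonal terms, where the multisets $\{n_i\}$ and $\{m_k\}$ agree, produce the main term. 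After using Hypothesis $\mathbf{H}_\pi$ to discard prime powers, they reduce to products of $\sum_p|\Lambda_\pi(p)|^2\Psi(p/X_j)^2/p$.
\item By Lemma \ref{Rudnick-Sarnak Lemma} each such sum equals $\widehat{\Psi^2}(0)\log X_j\,(1+o(1))$, with $\log X_j=\alpha_j m\log T$.
\item The off-diagonal terms pick up the secondary term of the Landau–Gonek formula. Their total size is about $X_1^{r}X_2^{r}T^{\theta_m(\cdot)}/T$ times logarithms, and this is negligible exactly when $\alpha_1+\alpha_2<1/(mr(1+\theta_m))$.
\end{itemize}

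\emph{Step 3: combinatorics and the phase transition.} Because $\Psi$ is compactly supported in $(0,\infty)$, for $\alpha_1\neq\alpha_2$ and $T$ large the supports $n\asymp X_1$ and $n\asymp X_2$ are disjoint. Diagonal pairings must then match $P_1$-primes with $P_1$-primes and $P_2$-primes with $P_2$-primes. So $P_1,P_2$ behave like independent standard complex Gaussians $Z_1,Z_2$ with $\mathbb{E}|Z_j|^2=\sigma_j^2=\widehat{\Psi^2}(0)\alpha_j m\log T$ (up to the convention of Step 1). The moments of $\Re(Z_1\overline{Z_2})$ are those of a Laplace variable, namely $r!\,(\sigma_1\sigma_2/2)^r$ for even $r$ and $0$ for odd $r$. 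This gives $\boldsymbol{\mathcal{L}}_r$. For odd $r$ the diagonal terms vanish outright, since no admissible pairing exists, and the remaining lower-order terms give the stated bound.

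When $\alpha_1=\alpha_2$ we have $P_1=P_2$, so $\Re A_1A_2\approx|P_1|^2$. Counting the Wick pairings of $|Z|^{2r}$ gives $r!\,\sigma^{2r}$, which is $\boldsymbol{\chi}_r(\sigma^2/2)^r$. This yields part (2) for every $r$.

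\emph{Main obstacle.} I expect the hardest step to be the off-diagonal control in Step 2. This needs the Landau–Gonek formula uniformly for products of $2r$ primes up to $X_1^rX_2^r$, together with the bound $\theta_m$ toward Ramanujan for the individual $|\Lambda_\pi(n)|$. The approach is the same as in the proof of Theorem \ref{Thm:Main_for_test_funcs}, but with the exponent $\alpha$ there replaced by $\alpha_1+\alpha_2$.
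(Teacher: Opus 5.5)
Your route is the paper's. You factor $\widehat{\mathcal{S}}_{\pi,\Psi}$ through the functional equation into two pair-correlation sums and replace each by $-\sum_n\Psi_{X_j}(n)\Lambda(n)n^{-\rho}$ (Lemma~\ref{Triple Correlations Zeros to Primes}). You then evaluate mixed moments of $P_1\overline{P_2}$ by Landau--Gonek, and read off $\boldsymbol{\mathcal{L}}_r$ from the $d!^2$ separated pairings (disjoint supports when $\alpha_1\neq\alpha_2$) and $\boldsymbol{\chi}_r$ from the $r!$ pairings when $X_1=X_2$. Your pairing counts, variances and range condition are right. The observation that for $X_1=X_2$ the product is $|P_1|^2$ is a clean way to see part (2); in fact $A_2=\overline{A_1}$ exactly under RH.

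\textbf{Main gap: discarding prime powers.} The clause ``after using Hypothesis $\mathbf{H}_\pi$ to discard prime powers'' is the step the paper singles out as delicate, and your disjointness argument does not cover it. Disjointness separates the \emph{indices} $n\asymp X_1$ and $n\asymp X_2$, not their prime factors. If $\alpha_2=k_0\alpha_1$ (or vice versa) with $2\leqslant k_0\leqslant m^2+1$, then $p^{k_0}\asymp X_2$ has $p\asymp X_1$. So in the moments of $\mathcal{G}_1=P_1^{*}\overline{R_2}$ (primes in the $X_1$-range times the prime-power part $R_2$ of $P_2$), two products can be equal without the index multisets agreeing. Ratios equal to $p^{k_0-1}$ also pick up Landau--Gonek main terms. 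Without Ramanujan there is no pointwise bound $R_2\ll1$ for these $k$. Separating $P_1^{*}$ from $R_2$ by Cauchy--Schwarz needs the $4d$-th moment of $R_2$, which is out of range. So these configurations must be bounded directly. The paper does this in Lemma~\ref{Moment Calculations Assuming RH Step 3 3-Cor}, getting $O((\log T)^d)$ from Cauchy--Schwarz, Hypothesis $\mathbf{H}_\pi$ and Lemma~\ref{Convergent Sums}; the range $k>m^2+1$ is handled pointwise via $\theta_m\leqslant\frac12-\frac1{m^2+1}$.

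\textbf{Hölder step.} For the same reason, run Hölder on the product pieces $\frac{1}{N_\pi(T)}\sum|\mathcal{G}_i|^{2d}$, as the paper does, not on $2r$-th moments of $P_1$ and $P_2$ separately. The hypothesis controls only $(\alpha_1+\alpha_2)r$. When $\alpha_1$ is small and $\alpha_2$ is near $1/(mr(1+\theta_m))$, the moment $\frac{1}{N_\pi(T)}\sum|P_2|^{2r}$ is out of range.

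\textbf{Further corrections.}
\begin{itemize}
\item The constants $c_j$ should not appear. The explicit formula gives $A_j=P_j+E_j$, and the $O(1)$ mean of $A_j$ is already the Landau--Gonek average of $P_j$. The mean of the product is $\widehat{\mathrm{Av}}_{\pi,\Psi}\ll(\log T)^{-1}$ (Lemma~\ref{Calculating mean 3-Cor}), not $c_1\overline{c_2}$. Writing $A_j=c_j+P_j+E_j$ double counts. This is harmless for even $r$, but for odd $r$ it would create spurious terms of size $(\log T)^{r-1}$.
\item For odd $r$ in part (1), your argument does not reach the stated $(\log T)^{(r-1)/2}$. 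The pure-prime odd moments are $\ll T^{-\varepsilon}$, since no pairing exists. Your remaining terms, however, are controlled only by Hölder. Because $\mathcal{G}_1$ and $\mathcal{G}_2$ have size about $(\log T)^{1/2}$ (unlike the pair-correlation case, where the prime-power part is $O(1)$), Hölder gives only $O((\log T)^{r-1/2})$.
\item To get the stated odd bound you would need to evaluate these cross terms exactly. The paper here only refers back to the pair-correlation odd case. For Theorem~\ref{Theorem : Triple Correlation CDF}, $o((\log T)^r)$ already suffices.
\item For the stated error term you need $\sum_p\Psi_{X_j}^2(p)|\Lambda_\pi(p)|^2/p=\widehat{\Psi^2}(0)\alpha_j m\log T+O(1)$ (Lemma~\ref{lem:test_general_asymp}), not just $(1+o(1))$.
\end{itemize}
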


\begin{remark}
The moments in the case of triple correlation are not Gaussian. In fact, the elements of the sequence $\boldsymbol{\mathcal{L}}_{r}$ defined by \eqref{Laplace Coefficients} are the moments of a Laplace distribution $\textnormal{Laplace }(\mu,b)$ with mean $\mu=0$ and the scaling parameter $b=1$. On the other hand, the elements of the sequence $\boldsymbol{\mathcal{\chi}}_{r}$ defined by \eqref{Chi-Square Coefficients} are the moments of a Chi-squared distribution $\chi^2_k$ with degree of freedom $k=2$. In particular, the distribution of $\Re \widehat{\mathcal{S}}_{\pi,\Psi}(T^{\alpha_1 m },T^{\alpha_2 m}, T, \rho_{\pi}')$ exhibits the characteristics of a Laplace distribution when $\alpha_1 \neq \alpha_2$ and that of a Chi-squared distribution when $\alpha_1=\alpha_2$.
\end{remark}
Theorem \ref{Theorem : Triple Correlation} now implies the following result on the distribution of $ \widehat{\mathcal{S}}_{\pi,\Psi}(X_1,X_2,T,\rho_{\pi}')$. Define
\begin{equation} \label{Defining Q Triple}
\widehat{\mathcal{Q}}_{\pi, \Psi}(X_1,X_2,T,\rho_{\pi}') : =
\frac{\Re \widehat{\mathcal{S}}_{\pi,\Psi}(X_1,X_2,T,\rho_{\pi}')}{(1/2) \cdot \widehat{\Psi^2}(0)\cdot (\min \{ \log X_1, m \log T \} \cdot \min \{ \log X_2, m \log T \})^{\frac{1}{2}}},
\end{equation}
\begin{theorem}\label{Theorem : Triple Correlation CDF} Let $m \in \mathbb{N}$ and $\pi \in \mathscr{A}_m$. Assume the Riemann Hypothesis for $L(s,\pi)$. Let $\{\alpha_{1j} \}_{j \in \mathbb{N}}$ and $\{\alpha_{2j} \}_{j \in \mathbb{N}}$ be two sequences of
 strictly positive real numbers decreasing to $0$. Then there exists an increasing sequence $\{V_j\}_{j \in \mathbb{N}}$
 tending to $\infty$ with the following property.
 For any sequence $\{T_j\}_{j \in \mathbb{N}}$ such that
 $T_j \geqslant V_j$ for all $j \in \mathbb{N}$, we have for any $\lambda \in \mathbb{R}$,
 \begin{itemize}
\item[(1)] If $\alpha_{1j} \neq \alpha_{2j}$ for all $j \geqslant J_0$, then
\begin{align*}
  \lim_{j \to \infty} \frac{1}{N_{\pi}( T_j)}
 \# \left\{ \lvert \Im(\rho_{\pi}') \rvert \leqslant T_j \colon \widehat{\mathcal{Q}}_{\pi, \Psi}(T_j^{\alpha_{1j}},T_j^{\alpha_{2j}},T_j,\rho_{\pi}') 
   < \lambda \right\}  = \frac{1}{2}\int_{-\infty}^{\lambda} e^{-|u|} \, \diff  u,   
\end{align*}
\item[(2)] If $\alpha_{1j} = \alpha_{2j}$ for all $j \geqslant J_0$ then
\[
\lim_{j \to \infty} \frac{1}{N_{\pi}( T_j)}
 \# \left\{ \lvert \Im(\rho_{\pi}') \rvert \leqslant T_j \colon \widehat{\mathcal{Q}}_{\pi, \Psi}(T_j^{\alpha_{1j}},T_j^{\alpha_{2j}},T_j,\rho_{\pi}') 
   < \lambda \right\}  = \frac{1}{2}\int_{0}^{\lambda} e^{-\frac{u}{2}}\, \mathrm d u,
   \]
\end{itemize}
where $N_{\pi}(T)$ is as in \eqref{numOfZeros}, $\rho_{\pi}'$ runs over non-trivial zeros of $L(s,\pi)$ and $\widehat{\mathcal{Q}}_{\pi,\Psi}(X_1,X_2,T,\rho_{\pi}')$ is defined by \eqref{Defining Q Triple}.
\end{theorem}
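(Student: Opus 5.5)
We argue by the method of moments, exactly as Theorem \ref{main thm} is deduced from Theorem \ref{thm:twomMomentConditional}, now with Theorem \ref{Theorem : Triple Correlation} as the input. Both target laws are determined by their moments: the Laplace law $\frac12 e^{-|u|}$ has moments $\boldsymbol{\mathcal{L}}_r$, with $\boldsymbol{\mathcal{L}}_r\le r!$, and the $\chi^2_2$ law $\frac12 e^{-u/2}\mathbf 1_{u>0}$ has moments $\boldsymbol{\mathcal{\chi}}_r=2^r r!$. Since $\sum_r \boldsymbol{\mathcal{\chi}}_{2r}^{-1/(2r)}=\infty$, Carleman's condition holds. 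Hence it is enough to show that, for every fixed $r$, the $r$-th moment of $\widehat{\mathcal{Q}}_{\pi,\Psi}(T_j^{\alpha_{1j}},T_j^{\alpha_{2j}},T_j,\rho_\pi')$ over zeros with $|\Im\rho_\pi'|\le T_j$ converges to $\boldsymbol{\mathcal{L}}_r$ in case (1) and to $\boldsymbol{\mathcal{\chi}}_r$ in case (2).

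To get these moments, write $X_i=T^{\alpha_i}=T^{(\alpha_i/m)m}$ and apply Theorem \ref{Theorem : Triple Correlation} with parameters $\alpha_i/m$. For fixed $r$ this needs $\alpha_{1}+\alpha_{2}<1/(r(1+\theta_m))$, and that holds for all large $j$ because $\alpha_{ij}\to0$. For small $\alpha_i$ we have $\min\{\log X_i,m\log T\}=\alpha_i\log T$. So the denominator in \eqref{Defining Q Triple} is $\frac12\widehat{\Psi^2}(0)\sqrt{\alpha_1\alpha_2}\,\log T$, which matches the scale $\widehat{\Psi^2}(0)\frac{m\sqrt{(\alpha_1/m)(\alpha_2/m)}\log T}{2}$ appearing in Theorem \ref{Theorem : Triple Correlation}. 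Dividing through, the normalized $r$-th moment equals $\boldsymbol{\mathcal{L}}_r$ or $\boldsymbol{\mathcal{\chi}}_r$ up to an error
\[
E_{r}(\alpha_1,\alpha_2,T)\ll_{\pi,\Psi,r,\alpha_1,\alpha_2}\frac{(\log T)^{r-1/2}}{(\sqrt{\alpha_1\alpha_2}\log T)^r}.
\]
This tends to $0$ as $T\to\infty$ for each fixed $(\alpha_1,\alpha_2)$. For odd $r$ in case (1), the bound $(\log T)^{(r-1)/2}$ divided by $(\log T)^r$ is even smaller.

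The construction of $V_j$ is a diagonal argument, and it is the only real subtlety, because the implied constants depend on $\alpha_1,\alpha_2$ in an uncontrolled way. For each $j$, let $R_j$ be the largest integer $r\le j$ with $(\alpha_{1j}+\alpha_{2j})\,r(1+\theta_m)<1$. Since $\alpha_{ij}\to0$, we have $R_j\to\infty$. Choose $V_j>V_{j-1}$ so large that $|E_r(\alpha_{1j},\alpha_{2j},T)|<1/j$ for all $T\ge V_j$ and all $r\le R_j$. This is possible because there are only finitely many $r$, each with its own fixed-$\alpha$ error tending to $0$. Then for any $T_j\ge V_j$ and any fixed $r$, once $j$ is large enough that $R_j\ge r$, the $r$-th moment is within $1/j$ of its limit, so it converges. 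One point needs care: case (1) is applied with $\alpha_{1j}\ne\alpha_{2j}$, and its error terms may blow up as $\alpha_{1j}-\alpha_{2j}\to0$. The diagonal choice absorbs this, because at each $j$ the pair is fixed and distinct.

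The method of moments (Fr\'echet--Shohat) then gives convergence in distribution of the empirical law to the Laplace law in case (1) and to $\chi^2_2$ in case (2). Both limits are continuous, so the counting functions converge for every $\lambda$, which yields the two displayed formulas. I expect the main obstacle to be the uniformity bookkeeping just described. Everything else is inherited from Theorem \ref{Theorem : Triple Correlation}, together with the routine check that the min in \eqref{Defining Q Triple} reduces to $\alpha_i\log T$.
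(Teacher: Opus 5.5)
Your proposal is correct and follows the same route as the paper. You derive moment convergence from Theorem \ref{Theorem : Triple Correlation}, correctly applied with parameters $\alpha_{ij}/m$ since here $X_i=T_j^{\alpha_{ij}}$, combine it with the diagonal choice of $V_j$ from the proof of Theorem \ref{main thm}, and conclude by the method of moments, using Carleman's condition for the moment-determinacy of $\textnormal{Laplace}(0,1)$ and $\chi^2_2$ where the paper cites Billingsley's Theorem 30.1. One caveat: in case (2) your argument gives the limit $0$ for $\lambda<0$, so the displayed formula there should be read as $\frac12\int_0^{\max\{\lambda,0\}}e^{-u/2}\,\mathrm{d}u$.
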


\begin{remark}
In Figure \ref{fig:TCS Plot 2}, we observe that the Laplace distribution begins transitioning into a Chi-squared distribution as $\alpha_2$ approaches $\alpha_1$. A natural question is what specific insights can be drawn about this phase transition, where the Laplace distribution evolves into the Chi-squared distribution. In this context, our proof of Theorem \ref{Theorem : Triple Correlation} actually yields a slightly stronger result.  For $T \geqslant 2$ sufficiently large, if
\[
\lvert \alpha_1 - \alpha_2 \rvert \geqslant \frac{\log \log T}{\log T},
\]
then we have Conclusion (1) from the statement of Theorem \ref{Theorem : Triple Correlation}. Conversely, if
\[
\lvert \alpha_1 - \alpha_2 \rvert \leqslant \frac{1}{(\log \log T)(\log T)},
\]
then Conclusion (2) holds. Similar refinements apply to the statement of Theorem \ref{Theorem : Triple Correlation CDF}.
\end{remark}

\begin{figure}[t]
    \centering
    {{\includegraphics[width = .35\textwidth]{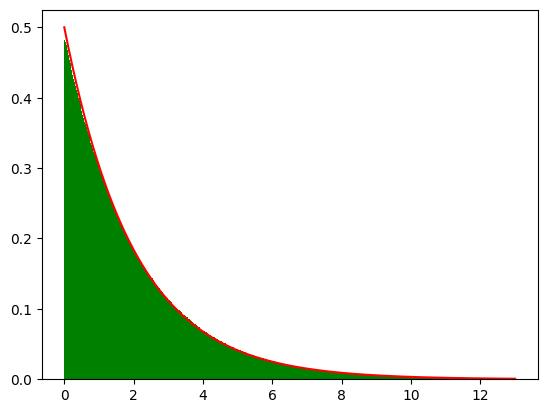}}}
     {{\includegraphics[width=.35\textwidth]{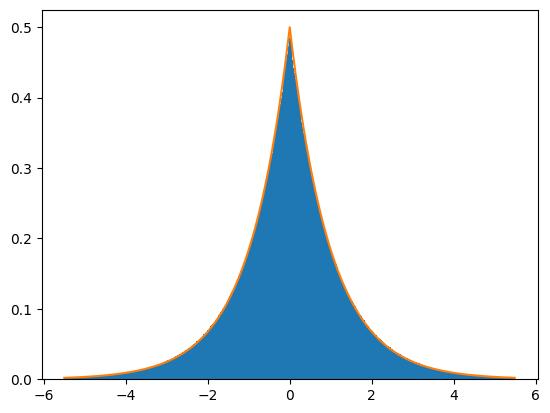}}}%
    \caption{\underline{Left:} A discrete approximation of the probability density function associated to $\widehat{\mathcal{S}}_{\pi,\Psi}(X_1,X_2,T,\rho_{\pi})$ where $L(s,\pi)=\zeta(s)$, $X_1=X_2= T^{\frac{1}{2}}$ (in green). The probability distribution of a Chi-squared distribution with two degrees of freedom (in red). \underline{Right:} A discrete approximation of the probability density function associated to $\widehat{\mathcal{S}}_{\pi,\Psi}(X_1,X_2,T,\rho_{\pi})$ where $L(s,\pi)=\zeta(s)$, $X_1= T^{\frac{1}{2}}$ and $X_2 = T^{\frac{2}{5}}$ (in blue). The probability distribution of a Laplace distribution with mean zero and scaling parameter one (in orange).}
    \label{fig:TCS}
\end{figure}
\subsection{Structure of the Paper}  The paper is organized as follows. In Section \ref{sec: Proof Outline}, we provide brief outlines of the proofs of our results and discuss some potential directions for future research. In Section \ref{sec:preliminaries}, we provide some background on automorphic \( L \)-functions and introduce preliminary lemmas which include explicit formulas for \( L(s,\pi) \), Landau--Gonek type results, and key asymptotic estimates related to Hypothesis \textbf{H$_{\pi}$}. In Section \ref{sec: Conditional 1}, we carry out majority of our work towards estimating the moments in the case of pair correlation. Section \ref{sec: Conditional 2} is devoted to the proofs of Theorems \ref{main thm} and \ref{thm:twomMomentConditional}. In Section \ref{sec: unconditional 1}, we address the issue of uniformity in $\alpha$ with respect to $T$ and prove results using Hypothesis \textbf{Z$_{\pi}$} instead of the Riemann Hypothesis. The proofs of Theorems \ref{main thm 2} and \ref{thm:twomMomentUnconditional} are provided in Section \ref{sec : unconditional 2}. In Section \ref{sec: general weight functions}, we move to a general class of smooth compactly supported weight functions and prove Theorem \ref{Thm:Main_for_test_funcs}. Finally, in Section \ref{sec: Triple Correlation}, we study the distributions in the case of triple correlation and prove Theorems \ref{Theorem : Triple Correlation} and \ref{Theorem : Triple Correlation CDF}.

\section{Outline of the Proofs and Future Directions} \label{sec: Proof Outline}
In this section, we provide a brief overview of the proofs of our main results, and discuss some directions for future research. As mentioned in Section \ref{sec: Introduction}, the proof of Theorem \ref{main thm} relies on establishing the moment estimates in Theorem \ref{thm:twomMomentConditional}. The first step in the proof of Theorem \ref{thm:twomMomentConditional} is an application of the explicit formula for $L(s,\pi)$, see Lemma \ref{lem:explicit_formula_for_L(s,pi)}. We then show that, apart from an exceptional set of zeros of cardinality at most $O_{\pi}(\sqrt{T} \log T)$, the Riemann Hypothesis for $L(s,\pi)$ implies that
\begin{equation}\label{Eq: Zeros to Primes}
\mathcal{S}_{\pi}(X,T, \rho'_{\pi}) = - \sum_{n =1}^{\infty} \frac{a_n\Lambda_{\pi}(n)}{n^{\frac{1}{2}+i\gamma'_{\pi}}}+\frac{m \log( \lvert \gamma'_{\pi} \rvert+3)}{X^2}+ O_{\pi}\bigg(\frac{\sqrt{X}}{T} + \frac{1}{X^2}+\frac{\log T}{\sqrt{T}}\bigg).
\end{equation}
where $\{a_n\}$ is the sequence given by \eqref{Defining coefficients}, see Lemma \ref{Connecting S to sum over primes}. Thus, in the setting of Theorem \ref{thm:twomMomentConditional}, the principal contribution to the distribution of $\mathcal{S}_{\pi}(X,T,\rho'_{\pi})$ will arise from the prime sum over $n$ appearing on the right-hand side of \eqref{Eq: Zeros to Primes}. To evaluate these prime sums and their moments, we require Landau--Gonek type formulas for general automorphic $L$-functions, see Corollaries \ref{Gonek assuming RH} and \ref{when b>a}. We shall also need asymptotic estimates for sums of the form
\begin{align}\label{Eq: Variance Estimation}
\sum_{n \leqslant V} \frac{a_n \lvert \Lambda_{\pi}(n) \rvert^2}{n}, \quad \textrm{and} \quad  \sum_{n \leqslant V} \frac{a_n^2 \lvert \Lambda_{\pi}(n) \rvert^2}{n},
\end{align}
see Lemma \ref{Rudnick-Sarnak Lemma}. We derive these estimates from the asymptotic formula
\begin{align}\label{Eq: Main Rudnick-Sarnak Lemma}
\sum_{n \leqslant X} \frac{|\Lambda_{\pi}(n)|^2}{n}
= \frac{\log^2 X}{2} + O_{\pi}(1).
\end{align}
Note that the asymptotic estimate for the left-hand side of \eqref{Eq: Main Rudnick-Sarnak Lemma} from \cite[Prop. 2.3]{rudnickZerosPrincipalLfunctions1996} comes with an error term of $O_{\pi}(\log X)$, which is insufficient to deduce the asymptotics for the sums in \eqref{Eq: Variance Estimation} needed in our setting. To obtain the necessary relations and establish the stronger estimate \eqref{Eq: Main Rudnick-Sarnak Lemma}, we exploit the currently known zero-free regions for $L(s,\pi)$. This is done across Lemma \ref{lem:prime_number_theorem} and \ref{Rudnick-Sarnak Lemma}.
\par
Following this, in Section \ref{sec: Conditional 1}, we begin by evaluating the mixed moments
\begin{align}\label{Eq: Mixed Moments}
\frac{1}{N_{\pi}(T)}\sum_{\lvert \gamma_{\pi} \rvert \leqslant T}
\bigg(\sum_{p \leqslant V} \frac{a_p \Lambda_{\pi}(p)}
{p^{\frac{1}{2}+i\gamma_{\pi}}}\bigg)^k
\bigg(\sum_{q \leqslant V} \frac{a_q \overline{\Lambda_{\pi}(q)}}
{q^{\frac{1}{2}-i\gamma_{\pi}}}\bigg)^\ell,
\end{align}
where $p$ and $q$ run over primes. In Lemma \ref{Moment Calculations Assuming RH Step 1}, we show that the diagonal terms produce the principal contributions to \eqref{Eq: Mixed Moments}, while
the off-diagonal terms are shown to be negligible through an
application of the Landau--Gonek formulas. In Lemma
\ref{Moment Calculations Assuming RH Step 3}, we further establish that
the contribution from prime powers is negligible. To this end, we divide
the prime powers \(p^k\) into the ranges \(2 \leqslant k \leqslant m^2+1\) and
\(k>m^2+1\). The contribution from the latter range is controlled
directly using the bound $0 \leqslant \theta_m \leqslant \frac{1}{2}-\frac{1}{m^2+1},$ where $\theta_{m} \in [0, \frac{1}{2}-\frac{1}{m^2+1}]$ is an admissible exponent towards the Ramanujan conjecture for $L(s,\pi)$. For the former range, we start by using Jensen's inequality and expanding the
resulting sums. The contribution from these sums are then bounded by employing a combination of tools including Hypothesis \textbf{H$_{\pi}$}, the Landau--Gonek formulas and
asymptotic relations for the sums in \eqref{Eq: Variance Estimation}. Our final step in the proof of Theorem \ref{thm:twomMomentConditional} is to show
\[
\frac{1}{N_{\pi}(T)} \sum_{\lvert \gamma'_{\pi} \rvert \leqslant T} (\Re \mathcal{S}_{\pi}(X, T, \rho'_{\pi}) )^{2d} = \frac{1}{N_{\pi}(T)}\sum_{\lvert \gamma'_{\pi} \rvert \leqslant T} \bigg (\Re \sum_{n \leqslant V} \frac{a_n\Lambda_{\pi}(n)}{n^{\frac{1}{2}+i\gamma'_{\pi}}}\bigg)^{2d} +\textnormal{Smaller Order Terms}.
\]
There is an obstruction here as far as the size of $V$ relative to $X$ is concerned. It is crucial that we choose $V=X^{\frac{4}{3}+\varepsilon}$ since the sequence $\{a_n\}$ does not decay fast enough, see Lemma \ref{S_av to Mean} and Remark \ref{Rem: xi>4/3}. Finally we use our estimates for the moments over prime sums and Lemma \ref{S_av to Mean} to deduce the desired result. 
\par
For the proofs of Theorems \ref{main thm 2} and
\ref{thm:twomMomentUnconditional}, in the
short range $X=(\log T)^{\alpha m},$ the second term on the right-hand side of
\eqref{Eq: Main Rudnick-Sarnak Lemma} becomes dominant. Therefore, in order to still be able to extract the desired Gaussian distribution, we work with the shifted sums
\(\widetilde{\mathcal{S}}_{\pi}(X,T,\rho'_{\pi})\) defined in
\eqref{Shifted S}. The key new ingredient in this setting is Proposition \ref{Crucial RH removing Lemma}, which allows us to establish
the required moment estimates for the associated prime sums under Hypothesis \textbf{Z$_{\pi}$}, instead of assuming RH. For the proof of Theorem \ref{Thm:Main_for_test_funcs}, which treats
general weight functions, we employ an appropriate explicit
formula (see Lemma \ref{lem:explicit_formula_test}) and prove that the
shorter truncation $V=X^{1+\varepsilon}$
is sufficient. This ultimately
allows us to extend the admissible range of \(\alpha\) beyond that
obtained in Theorem \ref{thm:twomMomentConditional}. 
\par
For the more complicated case of triple correlations, that is, in the context of Theorems \ref{Theorem : Triple Correlation} and \ref{Theorem : Triple Correlation CDF}, the prime sums to consider are of the form
\begin{align*}
\sum_{\substack{m_{11}, \dots, m_{k1} \\ m_{12} ,\dots, m_{k2} \\ n_{11}, \dots, n_{\ell1}\\
n_{12}, \dots, n_{\ell 2}}} &\frac{\prod_{i=1}^{k} \Psi_{X_1}(m_{i1})   \Psi_{X_2}(m_{i2}) \Lambda_{\pi}(m_{i1}) \Lambda_{\widetilde{\pi}}(m_{i2}) \prod_{j=1}^{\ell} \Psi_{X_1}(n_{j1}) \Psi_{X_2}(n_{j2}) \Lambda_{\widetilde{\pi}}(n_{j1}) \Lambda_{\pi}(n_{j2})}{(M_1M_2N_1N_2)^{\frac{1}{2}}}\notag \\
&\quad \times \frac{1}{N_{\pi}(T)} \sum_{\lvert \gamma_{\pi} \rvert \leqslant T} \bigg(\frac{M_2N_1}{M_1N_2} \bigg )^{i \gamma_{\pi}},
\end{align*}
where we set $M_1 = \prod_{i=1}^{k}m_{i1}$, $M_2 = \prod_{i=1}^{k}m_{i2}$, $N_1 = \prod_{j=1}^{\ell} n_{j1}$ and $N_2 = \prod_{j=1}^{\ell} n_{j2}$. The main contribution arises from the diagonal, which, after restricting
to primes, is given by the relation $M_2N_1=M_1N_2$.
The combinatorial structure of these diagonal configurations depends
crucially on whether \(\alpha_1=\alpha_2\) or
\(\alpha_1\neq\alpha_2\). These two regimes exhibit distinct diagonal structures and lead to different limiting
distributions, namely the Laplace and the Chi-squared distributions. Bounding the contribution from the non-diagonal terms and the prime powers is more delicate in this scenario. In particular, if $\alpha_1,\alpha_2$ satisfy certain linear relations over $\mathbb{Q}$, some specific configurations involving prime
powers can prevent any cancellation in the sum over
\(\gamma_{\pi}\). We address this situation in Lemma \ref{Moment Calculations Assuming RH Step 3 3-Cor}, which allows us to establish the diagonal contributions as the main term and complete the proofs of Theorems \ref{Theorem : Triple Correlation} and \ref{Theorem : Triple Correlation CDF}.
\par
We conclude this section by outlining some directions for future research related to Conjecture \ref{conj:one} and the results of
this paper. A natural first direction is to formulate and investigate a family aspect analogue of Conjecture \ref{conj:one}, in the spirit of the
Katz--Sarnak philosophy, with the goal of understanding how the pair correlation surfaces vary across different families of
\(L\)-functions. A second  direction is the study of higher order correlation
surfaces and hypersurfaces, where one works with any given finite number of parameters $\alpha_i$. In this setting, one may expect newer distributions to emerge, depending on the arithmetic relations among the \(\alpha_i\)'s. A third possible direction is to establish a direct
connection between these correlation surfaces and the distribution of
eigenvalues from Random Matrix Theory. Such a connection could provide a
random matrix model for the distributions studied in this work and allow one
to compare the results with the predictions arising from
the corresponding random matrix ensembles. Finally, it may be worth investigating if the new techniques introduced recently by Claude \cite{alpöge2026thirdszetazerossimple} can help in the study of correlation surfaces considered in this work. 

\section{Preliminaries}\label{sec:preliminaries}
\subsection{General Notations} For most part of the paper, notation will be introduced when it is needed. Aside from that, we employ some standard notations.
\begin{itemize}
    \item Throughout the paper, the expressions $f(X)=O(g(X))$, $f(X) \ll g(X)$, and $g(X) \gg f(X)$ are equivalent to the statement that $|f(X)| \leqslant C|g(X)|$ for all sufficiently large $X$, where $C>0$ is an absolute constant. A subscript of the form $\ll_{\alpha}$ means the implied constant may depend on the parameter $\alpha$. Dependence on several parameters is indicated in an analogous manner, as in $\ll_{\alpha, \lambda}$.
\item For any set $\mathcal{A}$, $\#\mathcal{A}$ denotes the cardinality of the set $\mathcal{A}$.
\item For $s \in \mathbb{C},$ we denote the Mellin transform of $\varphi: (0,\infty) \to \mathbb{C}$ by 
\begin{equation*}
    \widehat{\varphi}(s) := \int_{0}^{\infty}\varphi(x)x^{s-1} \diff x 
\end{equation*}
when the integral exists.
    \end{itemize}
\subsection{Background on $L$-functions.} We recall some standard facts about $L$-functions arising from automorphic representations. For more details, see Iwaniec--Kowalski \cite{IK2004} and Rudnick--Sarnak \cite{rudnickZerosPrincipalLfunctions1996}.
\par
Let $m \in \mathbb{N}$ and  $\mathscr{A}_m$ be the set of irreducible cuspidal automorphic representations of
$\mathrm{GL}_m$ over $\mathbb{Q}$ with unitary central character. Given $\pi \in \mathscr{A}_m$ with conductor $q_\pi$, let $L(s, \pi)$ be its standard $L$-function. There exist suitable complex numbers $\alpha_{1, \pi}(p), \alpha_{2, \pi}(p), \dots, \alpha_{m, \pi}(p)$ such that
\begin{align}\label{Euler Product}
L(s, \pi)=\prod_{p \text { prime }} \prod_{j=1}^m (1-\alpha_{j, \pi}(p) p^{-s})^{-1}=\sum_{n=1}^{\infty} \frac{\lambda_\pi(n)}{n^s} .  
\end{align}
Both the sum and the product in \eqref{Euler Product} converge absolutely for $\Re(s)>1$. The $\alpha_{j,\pi}(p), 1 \leqslant j \leqslant m$ are called the local roots or local parameters of $L(s,\pi)$ at the prime $p$ and they satisfy $\lvert \alpha_{j,\pi}(p) \rvert < p$ for all primes $p$. We also have spectral parameters $\kappa_\pi(1), \kappa_\pi(2), \dots, \kappa_\pi(m) \in \mathbb{C}$ such that if we define
\[
L(s, \pi_{\infty}):=\pi^{-\frac{ms}{2}} \prod_{j=1}^m \Gamma\bigg (\frac{s+\kappa_\pi(j)}{2}\bigg),
\]
then the completed $L$-function 
\[
\Lambda(s, \pi):=(s(1-s))^{\delta_{\pi}} q_\pi^{\frac{s}{2}} L(s, \pi) L(s, \pi_{\infty})
\]
is entire of order one. Here we let $\delta_{\pi} = 1$ if $\pi \in \mathscr{A}_1 $ is trivial; otherwise $\delta_{\pi} = 0$. Let $\widetilde{\pi} \in \mathscr{A}_m$ be the contragredient representation. Then $\alpha_{j, \widetilde{\pi}}(p)=\overline{\alpha_{j, \pi}(p)}$ and $\kappa_{\widetilde{\pi}}(j)=\overline{\kappa_\pi(j)}$ for $j=1,2, \dots, m$. Moreover, there exists a complex number $\varepsilon_{\pi}$ of modulus one such that for all $s \in \mathbb{C}$,
\begin{align}\label{Functional Equation}
\Lambda(s, \pi)=\varepsilon_{\pi} \Lambda(1-s, \widetilde{\pi}).
\end{align}
The bounds
\begin{align}\label{Towards Ramanujan Bounds}
\log_p\left|\alpha_{j, \pi}(p)\right| \leqslant \theta_m, \quad \operatorname{Re}\left(\kappa_\pi(j)\right) \geqslant -\theta_m
\end{align}
hold for some 
\begin{align}\label{Luo-Rudnick-Sarnak Bound}
0 \leqslant \theta_m \leqslant \frac{1}{2}-\frac{1}{m^2+1}
\end{align}
due to Luo--Rudnick--Sarnak \cite{luo1995selberg, luo1999generalized} and Rudnick--Sarnak \cite{rudnickZerosPrincipalLfunctions1996}. The Ramanujan conjecture and the Selberg eigenvalue conjecture assert that \eqref{Towards Ramanujan Bounds} holds with $\theta_m=0$. This is not known except in specific cases, such as when $m = 1$, and for $\mathrm{GL}_2$ $L$-functions corresponding to holomorphic forms due to Deligne \cite{Deligne}. For $\operatorname{Re}(s)>1$, we take the logarithmic derivative of \eqref{Euler Product} to obtain
\begin{align}\label{Log-Derivative}
\frac{L^{\prime}}{L}(s, \pi)=-\sum_{n=1}^{\infty} \frac{\Lambda_{\pi}(n)}{n^s}, 
\end{align}
where $\Lambda_{\pi}(n)$ is supported on prime powers and is given by
\begin{align}\label{Lambda Def}
\Lambda_\pi(p^k)=\sum_{j=1}^m \alpha_{j,\pi}(p)^k \log p.
\end{align}
Denote by $\rho_{\pi}=\beta_{\pi}+i\gamma_{\pi}$ the non-trivial zeros of $L(s,\pi)$. Define
\begin{align}\label{Number of Zeros}
N_\pi(\sigma, T):=\#\{\rho_{\pi}=\beta_{\pi}+i \gamma_{\pi}: \beta_{\pi} \geqslant \sigma,|\gamma_{\pi}| \leqslant T, L(\rho_{\pi}, \pi)=0\}.
\end{align}
As with $\zeta(s)$, following a standard winding number argument, we can show that
\begin{align}\label{Zero Counting General}
N_\pi(T):=N_\pi(0, T) \sim \frac{T}{\pi} \log q_{\pi}T^{m}.
\end{align}
The Riemann Hypothesis $(\mathrm{RH})$ for $L(s,\pi)$ asserts that $\Re(\rho_{\pi})=\beta_{\pi} = \frac{1}{2}$.
\par 
\subsection{Sums over Zeros to Sums over Primes} Landau \cite[p. 353]{landau1909handbuch} showed that 
\begin{align}\label{Landau's Explicit Formula}
\sum_{n \leqslant X} \frac{\Lambda(n)}{n^s}=-\frac{\zeta'}{\zeta}(s)+\frac{X^{1-s}}{1-s}-\sum_{\rho}\frac{X^{\rho-s}}{\rho-s}+\sum_{n=1}^\infty \frac{X^{-2n-s}}{2n+s},
\end{align}
where $X>1$ is not a prime power, $\rho$ runs over the non-trivial zeros of $\zeta(s)$ and $s \neq 1, s \neq \rho, s \neq -2n$. We shall need a truncated version of \eqref{Landau's Explicit Formula} for $L$-functions arising from automorphic representations. The proof follows standard arguments and for the sake of completeness, we provide it in the Appendix.
\begin{lemma} \label{lem:explicit_formula_for_L(s,pi)}
Let $m \in \mathbb{N}, \pi \in \mathscr{A}_m$ and $\theta_{m} \in [0, \frac{1}{2}-\frac{1}{m^2+1}]$ be an admissible exponent towards the Ramanujan conjecture for $L(s,\pi)$. Let \( X \geqslant 2 \) and 
\[T \geqslant 2+\lvert \operatorname{Im}(s) \rvert+\lvert \operatorname{Im}(\kappa_{\pi}(j)) \rvert,
\]
for each $1 \leqslant j \leqslant m$ where $\kappa_{\pi}(j) \in \mathbb{C}$ are the spectral parameters. Let $s \in \mathbb{C}$ such that \( L(s, \pi) \ne 0 \). Then
\begin{align}\label{Explicit Formula Equation 1}
\sum_{n \leqslant X} \frac{\Lambda_{\pi}(n)}{n^{s}} &=-\frac{L^{\prime}}{L}(s, \pi )+ \delta_{\pi}\frac{X^{1-s}}{1-s} -\sum_{\lvert \Im(\rho_{\pi}-s) \rvert <T} \frac{X^{\rho_{\pi}-s}}{\rho_{\pi}-s}+\underset{\substack{k \in \mathbb{N}\cup \{0\} \\ 1 \leqslant j \leqslant m}}{\sum \sum} \frac{X^{-2 k-\kappa_{\pi}(j)-s}}{2 k+\kappa_{\pi}(j)+s} + E_{\pi}(X,T,s),
\end{align}
where $\delta_{\pi}=1$ if $L(s,\pi)=\zeta(s)$ and zero otherwise,
\begin{align*}
E_{\pi}(X,T,s) \ll_{\pi} \frac{4^{\lvert \operatorname{Re}(s) \rvert } 
 X^{\theta_m} \log X}{X^{\operatorname{Re}(s)}} \min \left(1, \frac{X}{T\langle X \rangle}\right)+ \frac{4^{\lvert \operatorname{Re}(s) \rvert } \log^2(XT)}{T} \left( \frac{X^{1+\theta_m}}{X^{\operatorname{Re}(s)}}+\frac{1}{\log X} \right),
\end{align*}
and $ \langle X \rangle$ denotes the distance of $X$ from the nearest prime power other than $X$ itself. Furthermore, if $X$ is not a prime power, we have the explicit formula
\begin{align}\label{Explicit Formula Equation 2}
\sum_{n \leqslant X} \frac{\Lambda_{\pi}(n)}{n^{s}} &=-\frac{L^{\prime}}{L}(s, \pi )+ \delta_{\pi}\frac{X^{1-s}}{1-s} -\sum_{\rho_{\pi}} \frac{X^{\rho_{\pi}-s}}{\rho_{\pi}-s} +\underset{\substack{k \in \mathbb{N}\cup \{0\} \\ 1 \leqslant j \leqslant m}}{\sum \sum} \frac{X^{-2 k-\kappa_{\pi}(j)-s}}{2 k+\kappa_{\pi}(j)+s}.
\end{align}
In \eqref{Explicit Formula Equation 1} and \eqref{Explicit Formula Equation 2}, if $s=1$ and $\delta_{\pi}=1$ then the term $X^{1-s}(1-s)^{-1}-L^{\prime} / L(s,\pi)$ should be interpreted as $\log X-\gamma$, where $\gamma$ is the Euler--Mascheroni constant.
\end{lemma}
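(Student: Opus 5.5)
We follow the classical Perron-formula route to Landau's formula, adapted to $L(s,\pi)$. Throughout write $c := 1-\operatorname{Re}(s) + 1/\log X$, so that $w$ on the line $\operatorname{Re}(w)=c$ corresponds to $\operatorname{Re}(s+w) = 1+1/\log X$, where the Dirichlet series of $L'/L(s+w,\pi)$ converges absolutely.

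\textbf{Step 1: truncated Perron formula.} Apply the truncated Perron formula to $-\frac{L'}{L}(s+w,\pi)=\sum_n \Lambda_\pi(n) n^{-s-w}$:
\[
\sum_{n\leqslant X}\frac{\Lambda_\pi(n)}{n^s}=\frac{1}{2\pi i}\int_{c-iT}^{c+iT}-\frac{L'}{L}(s+w,\pi)\frac{X^w}{w}\diff w+R_1 .
\]
The standard bound for $R_1$ involves $\sum_n |\Lambda_\pi(n)| n^{-\operatorname{Re}(s)-c}\min(1,T^{-1}|\log(X/n)|^{-1})$. The Luo--Rudnick--Sarnak bound \eqref{Towards Ramanujan Bounds} gives $|\Lambda_\pi(n)|\leqslant m n^{\theta_m}\log n$.
\begin{itemize}
\item For $n$ far from $X$, the bound is $\ll X^{1+\theta_m-\operatorname{Re}(s)}\log^2 X/T$, using $\sum_n |\Lambda_\pi(n)|n^{-1-1/\log X}\ll\log X$ (Rankin--Selberg).
\item For $n$ nearest to $X$, it yields $X^{\theta_m-\operatorname{Re}(s)}\log X\min(1,X/(T\langle X\rangle))$.
\end{itemize}
The factor $4^{|\operatorname{Re}(s)|}$ absorbs the comparison $n^{-\operatorname{Re}(s)}$ versus $X^{-\operatorname{Re}(s)}$ for $X/2<n<2X$.

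\textbf{Step 2: shifting the contour.} Choose $T$ (moving it by $O(1)$ if necessary) so that $|\operatorname{Im}(s)\pm T-\gamma_\pi|\gg 1/\log T$ for every zero. Shift the contour to $\operatorname{Re}(w)=-U$ with $U\to\infty$ through values avoiding the trivial zeros. The residues picked up are:
\begin{itemize}
\item at $w=0$: $-\frac{L'}{L}(s,\pi)$;
\item at $w=\rho_\pi-s$ with $|\operatorname{Im}(\rho_\pi-s)|<T$: $-X^{\rho_\pi-s}/(\rho_\pi-s)$;
\item at the trivial zeros $w=-2k-\kappa_\pi(j)-s$: the stated series;
\item for $\zeta$, at the pole $w=1-s$: $\delta_\pi X^{1-s}/(1-s)$.
\end{itemize}
The $s=1$ convention comes from merging the double pole of $\zeta'/\zeta(1+w)X^w/w$ at $w=0$.

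\textbf{Step 3 (main obstacle): bounding the horizontal integrals.} The key estimate is
\[
\frac{L'}{L}(\sigma+i t,\pi)=\sum_{|\gamma_\pi-t|<1}\frac{1}{\sigma+it-\rho_\pi}+O_\pi(\log(|t|+2)),
\]
valid uniformly for $-1\leqslant\sigma\leqslant 2$. It follows from the Hadamard product, Stirling's formula and the zero count \eqref{Zero Counting General}. The hypothesis $T\geqslant 2+|\operatorname{Im}(s)|+|\operatorname{Im}\kappa_\pi(j)|$ keeps the Gamma factors in their Stirling regime. With the choice of $T$ from Step 2, this gives $L'/L\ll\log^2 T$ on the horizontal segments in the critical strip.

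Hence the horizontal integrals over $-1\leqslant \operatorname{Re}(s+w)\leqslant 1+1/\log X$ are
\[
\ll \frac{\log^2(XT)}{T}\int_{-\infty}^{c}X^u\diff u\ll \frac{\log^2(XT)}{T}\Big(\frac{X^{1-\operatorname{Re}(s)}}{\log X}+\dots\Big),
\]
which is within the $X^{1+\theta_m-\operatorname{Re}(s)}\log^2(XT)/T$ term. To the left of $\operatorname{Re}(s+w)=-1$, use the functional equation \eqref{Functional Equation}: there $L'/L(s+w,\pi)$ is $O(\log(|s+w|+2))$ plus digamma terms, away from the trivial zeros. This gives the $\log^2(XT)/(T\log X)$ contribution, and shows that the left vertical integral tends to $0$ as $U\to\infty$.

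\textbf{Step 4: the untruncated formula.} For \eqref{Explicit Formula Equation 2}, $X$ is not a prime power, so $\langle X\rangle>0$ is fixed. Let $T\to\infty$: the error $E_\pi(X,T,s)\to0$, and the truncated sum over zeros converges (symmetrically) to the full sum $\sum_{\rho_\pi}$.
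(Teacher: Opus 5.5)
Your route is the paper's: truncated Perron, a leftward contour shift collecting residues at $w=0$, at $w=\rho_\pi-s$, at the trivial zeros and (for $\zeta$) at $w=1-s$, the functional equation with Stirling and reflection left of $\operatorname{Re}(s+w)=-1$, and $T\to\infty$ for the untruncated formula. The step that fails as written is your abscissa $c=1-\operatorname{Re}(s)+1/\log X$. For $\operatorname{Re}(s)\geqslant 1+1/\log X$ this gives $c\leqslant 0$, and then Step 1 is false. For $c<0$ one has $\frac{1}{2\pi i}\int_{(c)}y^w w^{-1}\,\mathrm{d}w=-\mathbf{1}_{y<1}$, so the Perron identity acquires an extra term $-\frac{L'}{L}(s,\pi)$; for $c=0$ the line passes through the pole of $X^w/w$. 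Meanwhile in Step 2 the residue at $w=0$ is never crossed. The two mistakes cancel in the final formula. But this regime is exactly the one the paper uses: the lemma is applied at $s=\rho'_\pi+2$, so $\operatorname{Re}(s)=2+\beta'_\pi>2$.

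The paper avoids this with $\sigma_0=\max\{0,1-\operatorname{Re}(s)\}+1/\log X$. For $\operatorname{Re}(s)>1$ the horizontal segments then reach $\operatorname{Re}(s+w)=\operatorname{Re}(s)+\sigma_0$; the part with $\operatorname{Re}(s+w)\geqslant 2$ is bounded by absolute convergence. The far-from-$X$ Perron remainder becomes $\ll \log X/T$. These pieces, on which $X^{\operatorname{Re}(w)}\leqslant e$, are what the term $4^{|\operatorname{Re}(s)|}\log^2(XT)/(T\log X)$ in $E_\pi$ accounts for. You attribute that term to the segments left of $\operatorname{Re}(s+w)=-1$, which is mistaken. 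Those segments give $\frac{\log T}{T}\cdot\frac{X^{-1-\operatorname{Re}(s)}}{\log X}$, which is absorbed by the $X^{1+\theta_m-\operatorname{Re}(s)}$ term instead.

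Two smaller points. Your horizontal bound differs from the paper's but is fine. You choose $T$ with $|\operatorname{Im}(s)\pm T-\gamma_\pi|\gg 1/\log T$ and use the local partial-fraction formula to get $L'/L\ll\log^2 T$. The paper instead cites Iwaniec--Kowalski, Prop.~5.7, to get $T_1,T_2\in[T,T+1]$ with $L'/L\ll\log T$. Either suffices, since $E_\pi$ already allows $\log^2(XT)/T$. However, moving $T$ is not free. The lemma's zero sum is over $|\operatorname{Im}(\rho_\pi-s)|<T$ for the given $T$, so you must restore the $O(\log T)$ zeros between $T$ and the shifted height, at cost $\ll X^{1-\operatorname{Re}(s)}\log T/T$, as the paper does.

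Also, your Step 1 skips the range $X/2<n<2X$ with $n$ not the nearest prime power. There one writes $n=X_1-\nu$ (similarly above $X$), uses $\log(X/n)\geqslant \nu/X_1$, and sums $\sum_{0<\nu<X/2}1/\nu\ll\log X$. That range, not the genuinely distant $n$, is the source of $X^{1+\theta_m-\operatorname{Re}(s)}\log^2 X/T$. The distant $n$ contribute only $X^{1-\operatorname{Re}(s)}\log X/T$.
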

\begin{proof}
    See Appendix.
\end{proof}
Let $\rho_{\pi}$ and $\rho'_{\pi}$ denote non-trivial zeros of $L(s, \pi)$. Recall from Section \ref{sec 1.1 Motivation}, for $X\geqslant 2$ and $T\geqslant 2$, 
\begin{align*}
\mathcal{S}_{\pi}(X,T,\rho_{\pi}') &= \sum_{\substack{ \lvert \Im{\rho_{\pi}} \rvert \leqslant T}} X^{\rho_{\pi}-\rho_{\pi}'}w(\rho_{\pi}-\rho_{\pi}'),\\
\textrm{and} \quad \mathcal{S}_{\text{av},\pi}(X,T)&= \frac{1}{N_{\pi}( T )}
 \sum_{\substack{\lvert \Im{\rho_{\pi}'} \rvert \leqslant T}} \Re \mathcal{S}_{\pi}(X, T, \rho_{\pi}'),
\end{align*}
where $w(u)$ and $N_{\pi}(T)$ are given by $\eqref{weight}$ and \eqref{numOfZeros}. Fix $X \geqslant 2$. Define the sequence
\begin{align}\label{Defining coefficients}
a_{n} := \begin{cases}
 \left (\dfrac{n}{X} \right )^2, & n \leqslant X \\
 \left (\dfrac{X}{n} \right )^2, & n \geqslant X.
\end{cases}
\end{align}
Equipped with Lemma \ref{lem:explicit_formula_for_L(s,pi)}, we are able to establish the following result.
\begin{lemma}
 \label{Sum over Zeros to Sum over Primes}
Let $m \in \mathbb{N}$ and $\pi \in \mathscr{A}_m$. Let \( X,T \geqslant 2 \) and $\{a_n\}$ be the sequence given by \eqref{Defining coefficients}. Then for all non-trivial zeros \( \rho'_{\pi} = \beta'_{\pi} + i \gamma'_{\pi} \) of $L(s,\pi)$ with \( \lvert \gamma'_{\pi} \rvert \leqslant T \) outside an exceptional set of size at most $O_{\pi}(\sqrt{T}\log T)$, we have
\begin{align}\label{Lemma 2.2 Main Result}
 \sum_{\rho_{\pi}} X^{\rho_{\pi}-\rho'_{\pi}} w(\rho_{\pi}-\rho'_{\pi})= - \sum_{n =1}^{\infty} \frac{a_n\Lambda_{\pi}(n)}{n^{\rho'_{\pi}}}+\frac{m\log (\lvert \gamma'_{\pi} \rvert+3)}{X^2}+ O_{\pi}\bigg(\frac{X^{1 - \beta'_{\pi}}}{T} + \frac{1}{X^2}\bigg),
 \end{align}
\end{lemma}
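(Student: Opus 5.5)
The plan is to reduce the weighted sum over zeros to two applications of the explicit formula of Lemma \ref{lem:explicit_formula_for_L(s,pi)}, via a partial fraction decomposition of Montgomery's weight. Since
\[
w(u)=\frac{4}{(2-u)(2+u)}=\frac{1}{2+u}+\frac{1}{2-u},
\]
writing $u=\rho_{\pi}-\rho'_{\pi}$ gives
\[
\sum_{\rho_{\pi}} X^{\rho_{\pi}-\rho'_{\pi}}w(\rho_{\pi}-\rho'_{\pi})
= X^{-2}\sum_{\rho_{\pi}}\frac{X^{\rho_{\pi}-s_-}}{\rho_{\pi}-s_-}
\;-\;X^{2}\sum_{\rho_{\pi}}\frac{X^{\rho_{\pi}-s_+}}{\rho_{\pi}-s_+},
\qquad s_{\pm}:=\rho'_{\pi}\pm 2 .
\]
Both series converge absolutely, since the terms decay like $|\gamma_{\pi}-\gamma'_{\pi}|^{-2}$ after the two pieces are recombined. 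I will first assume that $X$ is not a prime power, so that the untruncated formula \eqref{Explicit Formula Equation 2} applies. The general case follows by continuity in $X$: both sides of \eqref{Lemma 2.2 Main Result} are continuous in $X$, because $a_n$ is continuous in $X$ and $\sum_n |\Lambda_{\pi}(n)|a_n n^{-\beta'_{\pi}}$ converges uniformly.

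\emph{Step 1 (the $s_+$ term).} Apply \eqref{Explicit Formula Equation 2} at $s=s_+$, where $\Re s_+>1$. In this region $-\frac{L'}{L}(s_+,\pi)=\sum_n \Lambda_{\pi}(n)n^{-s_+}$, so the Dirichlet polynomial $\sum_{n\leqslant X}$ cancels and leaves the tail $\sum_{n>X}\Lambda_{\pi}(n)n^{-\rho'_{\pi}-2}$. Multiplying by $-X^2$ produces exactly $-\sum_{n>X}a_n\Lambda_{\pi}(n)n^{-\rho'_{\pi}}$. The remaining terms are the trivial-zero terms and the $\delta_{\pi}$ pole term. After multiplication by $X^2$, the trivial-zero terms have size $\ll X^{\theta_m-\beta'_{\pi}}\cdot X^{-2k}/|2k+\kappa_{\pi}(j)+\rho'_{\pi}+2|$. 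The pole term has size $\ll X^{1-\beta'_{\pi}}/|\rho'_{\pi}+1|$.

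\emph{Step 2 (the $s_-$ term).} Apply \eqref{Explicit Formula Equation 2} at $s=s_-$. Multiplied by $X^{-2}$, the Dirichlet polynomial $\sum_{n\leqslant X}\Lambda_{\pi}(n)n^{2-\rho'_{\pi}}$ gives $-\sum_{n\leqslant X}a_n\Lambda_{\pi}(n)n^{-\rho'_{\pi}}$; combined with Step 1, this is the full prime sum in \eqref{Lemma 2.2 Main Result}. The logarithmic derivative contributes $-X^{-2}\frac{L'}{L}(\rho'_{\pi}-2,\pi)$. To evaluate it, I take the logarithmic derivative of the functional equation \eqref{Functional Equation}:
\[
-\frac{L'}{L}(\rho'_{\pi}-2,\pi)=\frac{L'}{L}(3-\rho'_{\pi},\widetilde{\pi})+\log q_{\pi}+\sum_{j}\frac12\frac{\Gamma'}{\Gamma}\Big(\tfrac{\rho'_{\pi}-2+\kappa_{\pi}(j)}{2}\Big)+\sum_j\frac12\frac{\Gamma'}{\Gamma}\Big(\tfrac{3-\rho'_{\pi}+\overline{\kappa_{\pi}(j)}}{2}\Big)-m\log\pi .
\]
Here $\frac{L'}{L}(3-\rho'_{\pi},\widetilde{\pi})=O_{\pi}(1)$ by absolute convergence. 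Stirling's formula gives each digamma term as $\frac12\log(|\gamma'_{\pi}|+3)+O_{\pi}(1)$, provided the argument stays a bounded distance away from the poles of $\Gamma$. Summing, this piece equals $m\log(|\gamma'_{\pi}|+3)+O_{\pi}(1)$, which after the factor $X^{-2}$ is the stated main term plus $O(X^{-2})$. The trivial-zero terms at $s_-$, multiplied by $X^{-2}$, are $X^{-2k-\kappa_{\pi}(j)-\rho'_{\pi}}/(2k+\kappa_{\pi}(j)+\rho'_{\pi}-2)$, of size $\ll X^{\theta_m-\beta'_{\pi}-2k}/|\gamma'_{\pi}+\Im\kappa_{\pi}(j)|$. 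The pole term is $\ll X^{1-\beta'_{\pi}}/|3-\rho'_{\pi}|$.

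\emph{Step 3 (exceptional set and error terms).} This is where I expect the main difficulty. The Stirling estimate and the trivial-zero terms fail to be small exactly when $|\gamma'_{\pi}|$ is small, or when $\gamma'_{\pi}$ is close to some $-\Im\kappa_{\pi}(j)$; the denominators then become bounded instead of of size $|\gamma'_{\pi}|$. I will therefore exclude all zeros with $|\gamma'_{\pi}|\leqslant \sqrt{T}$. By \eqref{Zero Counting General} there are $O_{\pi}(\sqrt{T}\log T)$ of these, and since the $\kappa_{\pi}(j)$ are fixed, this set also contains all zeros near $-\Im\kappa_{\pi}(j)$ once $T$ is large. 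For the surviving zeros, the trivial-zero contributions are $\ll X^{\theta_m-\beta'_{\pi}}/|\gamma'_{\pi}|$ and must be absorbed into $O(X^{1-\beta'_{\pi}}/T+X^{-2})$. The same is true of the $\delta_{\pi}$ pole terms, which occur only for $\zeta$ and are of size $X^{1-\beta'_{\pi}}/|\gamma'_{\pi}|$.

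Getting the clean $1/T$ rather than $1/\sqrt{T}$ may require a different route. If the crude bound does not suffice, I would instead work from the truncated formula \eqref{Explicit Formula Equation 1}, with truncation height of order $T$, and compare truncated with untruncated zero sums. The error $E_{\pi}$ there, evaluated at $\Re s=\beta'_{\pi}\pm2$, is visibly of size $X^{1-\beta'_{\pi}}/T$ up to logarithms, which is presumably the source of that term in the statement. I would choose between the two routes according to which bookkeeping closes.
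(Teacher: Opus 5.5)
Your set-up is the same as the paper's: the partial fractions $w(u)=\frac{1}{2+u}+\frac{1}{2-u}$, the untruncated formula \eqref{Explicit Formula Equation 2} at $s=\rho'_{\pi}\pm 2$, the functional equation plus Stirling for $X^{-2}\frac{L'}{L}(\rho'_{\pi}-2,\pi)$, continuity in $X$, and discarding the $O_{\pi}(\sqrt{T}\log T)$ zeros with $|\gamma'_{\pi}|\leqslant\sqrt{T}$.

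\textbf{The gap is in Step 3, and your fallback would not close it.} Estimated separately, the $\delta_{\pi}$ terms from $s_{+}$ and $s_{-}$ each have size $X^{1-\beta'_{\pi}}/|\gamma'_{\pi}|$. The trivial-zero terms each have size $X^{\theta_m-\beta'_{\pi}}/|\gamma'_{\pi}|$. For $|\gamma'_{\pi}|$ just above $\sqrt{T}$ this is as large as $X^{1-\beta'_{\pi}}/\sqrt{T}$. That is not $O(X^{1-\beta'_{\pi}}/T+X^{-2})$ in general; it already fails for $\zeta$ under RH with $X=T^{a}$, $a>1/5$. Passing to the truncated formula \eqref{Explicit Formula Equation 1} changes nothing, because the trivial-zero and pole terms appear there in exactly the same form. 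Also, $E_{\pi}$ is not the source of the term $X^{1-\beta'_{\pi}}/T$ in the statement.

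\textbf{The missing step is to recombine the $s_{+}$ and $s_{-}$ contributions termwise before estimating,} just as you already did for the sum over nontrivial zeros. Writing $z=2k+\kappa_{\pi}(j)+\rho'_{\pi}$, the two trivial-zero terms combine to
\[
X^{-2k-\kappa_{\pi}(j)-\rho'_{\pi}}\Big(\frac{1}{z-2}-\frac{1}{z+2}\Big)=\frac{4X^{-2k-\kappa_{\pi}(j)-\rho'_{\pi}}}{z^{2}-4}.
\]
The two pole terms combine to $\frac{4\delta_{\pi}X^{1-\rho'_{\pi}}}{(3-\rho'_{\pi})(1+\rho'_{\pi})}$. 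In both cases the leading $1/(i\gamma'_{\pi})$ parts cancel.

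Since $|z\pm 2|\geqslant|\gamma'_{\pi}+\Im\kappa_{\pi}(j)|$, both denominators are $\gg|\gamma'_{\pi}|^{2}>T$ once $|\gamma'_{\pi}|>\sqrt{T}$ and $T$ is large in terms of $\pi$. Now use $\Re\kappa_{\pi}(j)\geqslant-\theta_m$, $\theta_m<1$ and $\sum_{k\geqslant 0}X^{-2k}\ll 1$. The whole residual is then $O_{\pi}(X^{1-\beta'_{\pi}}/T)$. Together with the $O_{\pi}(X^{-2})$ coming from Stirling, this gives exactly the stated error. This is how the paper concludes, via \eqref{Estimating the Sums over the Residues}.
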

\begin{proof}
Fix a non-trivial zero \( \rho'_{\pi} = \beta'_{\pi} + i \gamma'_{\pi} \) of $L(s,\pi)$. We may assume that \( \lvert \gamma'_{\pi} \rvert > \sqrt{T} \). For \( X \) not a prime power, \( s \ne 1, \) and \( L(s,\pi) \ne 0 \), Lemma \ref{lem:explicit_formula_for_L(s,pi)} shows that
\begin{align}\label{Explicit Formula for L(s,pi)}
\sum_{n \leqslant X} \frac{\Lambda_{\pi}(n)}{n^{s}} &=-\frac{L^{\prime}}{L}(s, \pi )+ \delta_{\pi}\frac{X^{1-s}}{1-s} -\sum_{\rho_{\pi}} \frac{X^{\rho_{\pi}-s}}{\rho_{\pi}-s} +\underset{\substack{k \in \mathbb{N}\cup \{0\} \\ 1 \leqslant j \leqslant m}}{\sum \sum} \frac{X^{-2 k-\kappa_{\pi}(j)-s}}{2 k+\kappa_{\pi}(j)+s},
\end{align}
where $\delta_{\pi}=1$ if $L(s,\pi)=\zeta(s)$ and zero otherwise. Choosing \( s = 2 + \rho'_{\pi} \) in \eqref{Explicit Formula for L(s,pi)} and using the Dirichlet series for $L'/L(s,\pi)$,
we have
\begin{equation}
 \label{s=5/2 L(s,pi)}
 \sum_{\rho_{\pi}} \frac{X^{\rho_{\pi}-\rho'_{\pi}}}{-2 + (\rho_{\pi}-\rho'_{\pi})}
 = \sum_{n > X} \frac{a_n\Lambda_{\pi}(n)}{n^{\rho'_{\pi}}}
 - \delta_{\pi}\frac{X^{1 - \rho'_{\pi}}}{1 + \rho'_{\pi}}
 +\underset{\substack{k \in \mathbb{N}\cup \{0\} \\ 1 \leqslant j \leqslant m}}{\sum \sum} \frac{X^{-2 k-\kappa_{\pi}(j)-\rho'_{\pi}}}{2 k+2+\kappa_{\pi}(j)+\rho'_{\pi}}.
\end{equation}
Similarly, choosing \( s = -2 + \rho'_{\pi} \) in \eqref{Explicit Formula for L(s,pi)} yields
\begin{align}
 \label{s=-3/2 L(s,pi)}
 \sum_{\rho_{\pi}} \frac{X^{\rho_{\pi}-\rho'_{\pi}}}{2 + (\rho_{\pi}-\rho'_{\pi})}
 &= - \sum_{n \leqslant X} \frac{a_n\Lambda_{\pi}(n)}{n^{\rho'_{\pi}}}
 - X^{-2} \frac{L'}{L}( \rho'_{\pi} - 2 , \pi) + \delta_{\pi}\frac{X^{1 - \rho'_{\pi}}}{3 - \rho'_{\pi}}
 + \underset{\substack{k \in \mathbb{N}\cup \{0\} \\ 1 \leqslant j \leqslant m}}{\sum \sum} \frac{X^{-2 k-\kappa_{\pi}(j)-\rho'_{\pi}}}{2 k-2+\kappa_{\pi}(j)+\rho'_{\pi}}.
\end{align}
Subtracting \eqref{s=5/2 L(s,pi)} from \eqref{s=-3/2 L(s,pi)}, we arrive at
\begin{align}
 \sum_{\rho_{\pi}} X^{\rho_{\pi}-\rho'_{\pi}} w(\rho_{\pi}-\rho'_{\pi})
 & = - \sum_{n =1}^{\infty} \frac{a_n\Lambda_{\pi}(n)}{n^{\rho'_{\pi}}} - X^{-2} \frac{L'}{L}( \rho'_{\pi} - 2, \pi)
 + \delta_{\pi}\frac{4X^{1 - \rho'_{\pi}}}{( 3 - \rho'_{\pi} )( 1 + \rho' _{\pi})} \notag \\
&\quad + \underset{\substack{k \in \mathbb{N}\cup \{0\} \\ 1 \leqslant j \leqslant m}}{\sum \sum} \frac{4X^{-2 k-\kappa_{\pi}(j)-\rho'_{\pi}}}{(2 k+\kappa_{\pi}(j)+\rho'_{\pi})^2-4} \label{Estimating the Sums over the Residues}.
 \end{align}
Both sides of \eqref{Estimating the Sums over the Residues} are continuous when $X$ is a prime power so we no longer exclude such cases. Note that if $a \leqslant \sigma \leqslant b<0$ in a fixed vertical strip $[a,b] \times \mathbb{R}$, then
\begin{equation}\label{Derivative Estimate}
-\frac{L'}{L}(\sigma+it, \pi) = m\log (\lvert t \rvert+3) +O_{\pi}(1),
\end{equation}
uniformly in $\sigma$. This follows by taking the logarithmic derivative of the functional equation \eqref{Functional Equation} and applying Stirling's formula. Substituting \eqref{Derivative Estimate} into \eqref{Estimating the Sums over the Residues}, we obtain
\begin{align*}
 \sum_{\rho_{\pi}} X^{\rho_{\pi}-\rho'_{\pi}} w(\rho_{\pi}-\rho'_{\pi})= - \sum_{n =1}^{\infty} \frac{a_n\Lambda_{\pi}(n)}{n^{\rho'_{\pi}}}+\frac{m\log( \lvert \gamma'_{\pi} \rvert+3)}{X^2}+ O_{\pi}\bigg(\frac{X^{1 - \beta'_{\pi}}}{T} + \frac{1}{X^2}\bigg)
 \end{align*}
which completes the proof.
\end{proof}
\begin{remark}
The secondary main term $mX^{-2}\log( \lvert \gamma'_{\pi} \rvert+3)$ in \eqref{Lemma 2.2 Main Result} is the key reason behind the distortion in the shape of the Pair Correlation Surface when $X$ is small compared to $T$. We address this situation in more detail in Sections \ref{sec: unconditional 1} and \ref{sec : unconditional 2}.
\end{remark}
Lemma \ref{Sum over Zeros to Sum over Primes} essentially allows us to replace $\mathcal{S}_{\pi}(X,T,\rho'_{\pi})$ by a sum over primes and prime powers.
\begin{lemma}\label{Connecting S to sum over primes}
Let $m \in \mathbb{N}$ and $\pi \in \mathscr{A}_m$. Let \( X,T \geqslant 2 \) and $\{a_n\}$ be the sequence given by \eqref{Defining coefficients}. Then for all non-trivial zeros \( \rho'_{\pi} = \beta'_{\pi} + i \gamma'_{\pi} \) of $L(s,\pi)$ with \( \lvert \gamma'_{\pi} \rvert \leqslant T \) outside an exceptional set of size at most $O_{\pi}(\sqrt{T}\log T)$, we have
\begin{equation*}
\mathcal{S}_{\pi}(X,T, \rho'_{\pi}) = - \sum_{n =1}^{\infty} \frac{a_n\Lambda_{\pi}(n)}{n^{\rho'_{\pi}}}+\frac{m \log( \lvert \gamma'_{\pi} \rvert+3)}{X^2}+ O_{\pi}\bigg(\frac{X^{1 - \beta'_{\pi}}}{T} + \frac{1}{X^2}+\frac{X\log T}{\sqrt{T}}\bigg).
\end{equation*}
Moreover, assuming the Riemann Hypothesis for $L(s,\pi)$, we have
\begin{equation*}
\mathcal{S}_{\pi}(X,T, \rho'_{\pi}) = - \sum_{n =1}^{\infty} \frac{a_n\Lambda_{\pi}(n)}{n^{\frac{1}{2}+i\gamma'_{\pi}}}+\frac{m \log( \lvert \gamma'_{\pi} \rvert+3)}{X^2}+ O_{\pi}\bigg(\frac{\sqrt{X}}{T} + \frac{1}{X^2}+\frac{\log T}{\sqrt{T}}\bigg).
\end{equation*}
\end{lemma}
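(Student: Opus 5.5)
Lemma \ref{Sum over Zeros to Sum over Primes} already gives the identity for the full sum $\sum_{\rho_{\pi}} X^{\rho_{\pi}-\rho'_{\pi}}w(\rho_{\pi}-\rho'_{\pi})$ over all zeros, outside an exceptional set of size $O_{\pi}(\sqrt{T}\log T)$. The plan is to show that the tail of this sum, over zeros with $\lvert \Im \rho_{\pi}\rvert > T$, is small. Write
\[
\mathcal{S}_{\pi}(X,T,\rho'_{\pi}) = \sum_{\rho_{\pi}} X^{\rho_{\pi}-\rho'_{\pi}}w(\rho_{\pi}-\rho'_{\pi}) - \sum_{\lvert \gamma_{\pi}\rvert > T} X^{\rho_{\pi}-\rho'_{\pi}}w(\rho_{\pi}-\rho'_{\pi}).
\]
The first sum is handled by Lemma \ref{Sum over Zeros to Sum over Primes}. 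It remains to bound the second sum.

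As in the proof of that lemma, I would put the zeros with $\lvert\gamma'_{\pi}\rvert \leqslant \sqrt{T}$ into the exceptional set. By \eqref{Zero Counting General} there are $O_{\pi}(\sqrt{T}\log T)$ of them, so the exceptional set keeps the same size. Now take $\sqrt{T} < \lvert \gamma'_{\pi}\rvert \leqslant T$ and $\lvert \gamma_{\pi}\rvert > T$. Since all nontrivial zeros have $0<\beta<1$, we get $\lvert X^{\rho_{\pi}-\rho'_{\pi}}\rvert \leqslant X^{\beta_{\pi}-\beta'_{\pi}} \leqslant X$, and $\lvert w(\rho_{\pi}-\rho'_{\pi})\rvert \ll 1/\lvert \gamma_{\pi}-\gamma'_{\pi}\rvert^2$ once $\lvert\gamma_{\pi}-\gamma'_{\pi}\rvert \geqslant 3$, say.

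The main obstacle is the regime where $\gamma'_{\pi}$ is close to $\pm T$, because then $\lvert\gamma_{\pi}-\gamma'_{\pi}\rvert$ can be small. I would bound by dyadic or unit-interval decomposition, using that the number of zeros with $\gamma_{\pi}\in[t,t+1]$ is $O_{\pi}(\log(\lvert t\rvert+3))$. This gives
\[
\sum_{\lvert\gamma_{\pi}\rvert>T} \frac{1}{1+\lvert\gamma_{\pi}-\gamma'_{\pi}\rvert^2} \ll_{\pi} \sum_{k\geqslant 0} \frac{\log(T+k)}{1+(T-\lvert\gamma'_{\pi}\rvert+k)^2} \ll_{\pi} \log T.
\]
This is uniform but loses the needed decay when $\gamma'_{\pi}$ is near $\pm T$. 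To handle this, I would also add to the exceptional set the zeros with $T-\sqrt{T} < \lvert\gamma'_{\pi}\rvert\leqslant T$. These number $O_{\pi}(\sqrt{T}\log T)$ by the local zero count, so the exceptional set is still of the stated size. For the remaining zeros, $T-\lvert\gamma'_{\pi}\rvert\geqslant\sqrt{T}$, and the sum above is $\ll_{\pi} \log T/\sqrt{T}$. Multiplying by the factor $X$ gives the error term $X\log T/\sqrt{T}$, which proves the first assertion.

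Under RH, $\beta_{\pi}=\beta'_{\pi}=\tfrac12$. Then $\lvert X^{\rho_{\pi}-\rho'_{\pi}}\rvert=1$, so the tail is $O_{\pi}(\log T/\sqrt{T})$, and $X^{1-\beta'_{\pi}}/T=\sqrt{X}/T$. Also $n^{\rho'_{\pi}}=n^{\frac12+i\gamma'_{\pi}}$. This gives the second assertion.
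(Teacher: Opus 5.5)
Your proposal is correct and follows the paper's route. You discard the $O_{\pi}(\sqrt{T}\log T)$ zeros with $\lvert\gamma'_{\pi}\rvert<\sqrt{T}$ or $\lvert\gamma'_{\pi}\rvert>T-\sqrt{T}$, and bound the tail $\lvert\gamma_{\pi}\rvert>T$ by a unit-interval decomposition using the local zero density. This gives $X\log T/\sqrt{T}$, or $\log T/\sqrt{T}$ under RH since then $\lvert X^{\rho_{\pi}-\rho'_{\pi}}\rvert=1$, and you then invoke Lemma \ref{Sum over Zeros to Sum over Primes}. Your remark that $\lvert w(\rho_{\pi}-\rho'_{\pi})\rvert \ll (1+\lvert\gamma_{\pi}-\gamma'_{\pi}\rvert^{2})^{-1}$ holds without RH, because $\lvert\beta_{\pi}-\beta'_{\pi}\rvert<1$, covers a point the paper glosses over by writing the weight in its RH form.
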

\begin{proof} Except for $O_{\pi}(\sqrt{T}\log T)$ non-trivial zeros $\rho'_{\pi}=\beta'_{\pi} + i \gamma'_{\pi}$ of $L(s,\pi)$, we may assume $\sqrt{T} \leqslant \lvert \gamma'_{\pi} \rvert \leqslant T-\sqrt{T}$. For such $\rho'_{\pi}$, we write
\begin{align}\label{Breaking Zeros into Intervals}
\sum_{\rho_{\pi}}&X^{\rho_{\pi}-\rho'_{\pi}}w(\rho_{\pi}-\rho'_{\pi}) -\mathcal{S}_{\pi}(X,T, \rho'_{\pi}) \notag \\
&=\sum_{m=0}^{\infty} \sum_{\substack{\gamma_{\pi} -T \in (m,m+1] }} \frac{X^{\rho_{\pi}-\rho'_{\pi}}}{4+(\gamma_{\pi}-\gamma'_{\pi})^2}+\sum_{m=0}^{\infty} \sum_{\substack{\gamma_{\pi} +T \in (-m,-m-1] }} \frac{X^{\rho_{\pi}-\rho'_{\pi}}}{4+(\gamma_{\pi}-\gamma'_{\pi})^2}.
\end{align}
For the first sum on the right hand side of \eqref{Breaking Zeros into Intervals}, i.e. the case when $\gamma_{\pi}>T$, we use estimates on the local density of zeros of $L(s,\pi)$ (see \cite[Prop. 5.7]{IK2004}) to obtain the bound
\begin{align}
&\ll_{\pi} \frac{X\log (T+1)}{4+(T-\gamma')^2}+\sum_{m=1}^{\infty} \sum_{\substack{\gamma_{\pi} -T \in (m,m+1] }} \frac{X^{\rho_{\pi}-\rho'_{\pi}}}{4+(\gamma_{\pi}-\gamma'_{\pi})^2} \notag \\
&\ll_{\pi} \frac{X\log T}{T}+ \int_{T+1}^{\infty} \frac{X\log u}{(u-\gamma')^2} \diff u \ll_{\pi} \frac{X\log T}{\sqrt{T}}. \label{Extending the Sum over all Zeros}
\end{align}
A similar argument holds for the second sum on the right hand side of \eqref{Breaking Zeros into Intervals}, i.e. the case when $\gamma_{\pi}<-T$. Moreover, assuming RH for $L(s,\pi)$, we have $\lvert X^{\rho_{\pi}-\rho'_{\pi}} \rvert \leqslant 1$ and the bound for \eqref{Extending the Sum over all Zeros} is $\ll_{\pi} T^{-1/2} \log T$. Now applying Lemma \ref{Sum over Zeros to Sum over Primes}, the desired conclusion follows.
\end{proof}
\subsection{Landau--Gonek Generalizations for $L(s,\pi)$} We shall require a refined form of Gonek's result \cite{gonekExplicitFormulaLandau1993} on sums over zeros of $L$-functions,
due to Murty--Zaharescu \cite{murtyExplicitFormulasPair2002} and Ford--Soundararajan--Zaharescu \cite{fordDistributionImaginaryParts2009}.
\begin{lemma}\label{KZ Gonek}
Let $m \in \mathbb{N}, \pi \in \mathscr{A}_m$ and $\theta_{m} \in [0, \frac{1}{2}-\frac{1}{m^2+1}]$ be an admissible exponent towards the Ramanujan conjecture for $L(s,\pi)$. Let $X >1, T \geqslant 2$ and denote by $n_X$ the nearest prime power to $X$. Then for any $\varepsilon>0$
\begin{align}\label{fzEquation}
\sum_{\lvert \gamma_{\pi} \rvert \leqslant T} X^{\rho_{\pi}}=-\frac{\Lambda_{\pi}(n_X)}{\pi} \frac{\sin(T \log (X / n_X))}{\log (X / n_X)}+O_{\pi, \varepsilon}\bigg(X^{1+\theta_m+\varepsilon}\log^2 T+\frac{\log T}{\log X}\bigg),
\end{align}
where if $X=n_X$, the first term on the right hand side of \eqref{fzEquation} is $-\frac{T\Lambda_{\pi}(X)}{\pi}$.
\end{lemma}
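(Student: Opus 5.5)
We will prove Lemma \ref{KZ Gonek} by the usual Landau--Gonek contour argument. We integrate $\frac{L'}{L}(s,\pi)X^{s}$ around a rectangle and evaluate the two vertical sides separately. Concretely, set $c = 1+\theta_m+1/\log X$ when $X \geqslant 3$ and $c=1+\theta_m+\varepsilon$ otherwise, and choose a height $T' = T+O(1)$ at which $|\gamma_\pi - T'| \gg 1/\log T$ for every zero. The local zero density bound \cite[Prop.~5.7]{IK2004}, with $O_\pi(\log T)$ zeros in a unit window, makes this possible. By the residue theorem,
\[
\sum_{|\gamma_{\pi}|\leqslant T'} X^{\rho_{\pi}} = \frac{1}{2\pi i}\int_{\mathcal{R}} \frac{L'}{L}(s,\pi)X^{s}\diff s + (\text{residues at } s=0,1 \text{ and trivial zeros in the strip}),
\]
where $\mathcal{R}$ is the rectangle with vertices $c\pm iT'$ and $1-c\pm iT'$. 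Any residue at a pole of $L(s,\pi)$ occurs only when $\delta_\pi=1$ and contributes $O(X)$, which the error term absorbs. Replacing $T'$ by $T$ costs $O_\pi(\log T)$ zeros of modulus at most $X^{1+\theta_m}$ in the worst case; under RH the cost is only $\sqrt{X}\log T$, and in either case it is absorbed.

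\emph{Right edge.} On $\Re s = c$ we expand $\frac{L'}{L}(s,\pi) = -\sum_n \Lambda_\pi(n)n^{-s}$ and integrate term by term:
\[
\frac{1}{2\pi i}\int_{c-iT}^{c+iT} \Big(\frac{X}{n}\Big)^{s}\diff s = \frac{1}{\pi}\Big(\frac{X}{n}\Big)^{c}\frac{\sin(T\log(X/n))}{\log(X/n)}.
\]
The term $n=n_X$ gives the main term $-\frac{\Lambda_\pi(n_X)}{\pi}\frac{\sin(T\log(X/n_X))}{\log(X/n_X)}$, up to the factor $(X/n_X)^c = 1+O(c\,|X-n_X|/X)$. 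We use the sine form when $|\log(X/n_X)|$ is small and the trivial bound $T$ otherwise. For the remaining $n$, each term is bounded by $\min\big(T,|\log(X/n)|^{-1}\big)$. Summing over $n$ with the bound $|\Lambda_\pi(n)| \leqslant m\, n^{\theta_m}\log n$ from \eqref{Towards Ramanujan Bounds} and the standard partition ($n\leqslant X/2$, $n\geqslant 2X$, and $X/2<n<2X$, $n\neq n_X$ with $|\log(X/n)|\asymp |X-n|/X$) yields $O_{\pi,\varepsilon}(X^{1+\theta_m+\varepsilon}\log T)$.

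\emph{Left edge and horizontal edges.} On $\Re s = 1-c$ we use the logarithmic derivative of \eqref{Functional Equation}:
\[
\frac{L'}{L}(s,\pi) = -\frac{L'}{L}(1-s,\widetilde\pi) + O_\pi\big(\log(|t|+2)\big).
\]
The Dirichlet series part gives again $\sum_n \Lambda_{\widetilde\pi}(n) n^{-(1-s)} X^{s}$, which we bound by $X^{1-c}\sum_n |\Lambda_{\widetilde\pi}(n)| n^{-c}\cdot T\ll X^{-\theta_m} T$. This is too large, so we integrate by parts or use the first-derivative test on the oscillatory factor $(Xn)^{it}$ to gain a factor $1/\log(Xn)$ per term. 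Together with Stirling applied to the gamma part, this reduces the left-edge contribution to $O(\log T/\log X + X^{-\theta_m}\log^2T)$. This is the source of the $\frac{\log T}{\log X}$ term: the Stirling term $\log t \cdot X^{1-c+it}$ integrates to $\ll \log T/\log X$. On the horizontal edges we use the standard bound $\frac{L'}{L}(\sigma+iT',\pi)\ll_\pi \log^2 T$ for $-1\leqslant\sigma\leqslant 2$, valid by the choice of $T'$. This gives $\ll \log^2 T\int_{1-c}^{c}X^\sigma\diff\sigma \ll X^{1+\theta_m+\varepsilon}\log^2 T$.

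The main obstacle is bookkeeping near $n_X$ and keeping the $X$-dependence uniform in the range $X$ close to $1$. When $X\to 1^+$ the choice $c=1+\theta_m+1/\log X$ fails and the integral over the left edge is non-oscillatory. For that range we will take $c$ fixed (so $X^c\ll 1$) and bound the left edge directly by $\int^T \log t\, |X^{it}|\diff t$ after one integration by parts. This gives exactly $\log T/\log X$, and it is why that term appears in \eqref{fzEquation}.
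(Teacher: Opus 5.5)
Your proposal is correct. It is the standard Landau--Gonek contour argument: the Dirichlet series on the right edge gives the $n_X$ main term, and the functional equation with Stirling on the left edge produces the $\log T/\log X$ term. This is exactly how the results the paper cites for this lemma (Murty--Zaharescu, Prop.~1; Ford--Soundararajan--Zaharescu, Lemma~2) are proved; the paper itself gives no argument beyond those citations.

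Two small bookkeeping fixes, neither affecting the result:
\begin{itemize}
\item The zeros of a non-self-dual $L(s,\pi)$ are not symmetric about the real axis. So you must choose the top and bottom heights separately (as $T_1$ and $-T_2$ are chosen in the paper's appendix), or choose $T'$ so that it avoids ordinates near both $T'$ and $-T'$.
\item The residues at the pole and at the trivial zeros enter the identity with a minus sign.
\end{itemize}
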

\begin{proof} See \cite[Proposition 1]{murtyExplicitFormulasPair2002} and  \cite[Lemma 2]{fordDistributionImaginaryParts2009}.
\end{proof}
A quick consequence of the above lemma are the following two corollaries.
\begin{corollary}\label{Gonek assuming RH}
Let $m \in \mathbb{N}, \pi \in \mathscr{A}_m$ and $\theta_{m} \in [0, \frac{1}{2}-\frac{1}{m^2+1}]$ be an admissible exponent towards the Ramanujan conjecture for $L(s,\pi)$. Let $T \geqslant 2$ and suppose $a,b \in \mathbb{N}$ with $1 \leqslant b<a\leqslant T^{\xi}$ for some $\xi>0$. Then for any $\varepsilon>0$
\begin{align}\label{Gonek RH Equation}
\sum_{\lvert \gamma_{\pi} \rvert \leqslant T}\left(\frac{a}{b}\right)^{\rho_{\pi}}=-\frac{T}{\pi} \Lambda_{\pi}\left(\frac{a}{b}\right)+O_{\pi,\xi, \varepsilon}(a^{1+\theta_m+\varepsilon}\log^2 T).
\end{align}
Moreover, assuming the Riemann Hypothesis for $L(s,\pi)$, for any $\varepsilon>0$
\begin{align}\label{Gonek RH Equation 1}
\sum_{\lvert \gamma_{\pi} \rvert \leqslant T}\left(\frac{a}{b}\right)^{i\gamma_{\pi}}=-\frac{T}{\pi} \sqrt{\frac{b}{a}}\Lambda_{\pi} \left(\dfrac{a}{b}\right)+O_{\pi, \xi, \varepsilon}((ab)^{\frac{1}{2}+\theta_m+\varepsilon}\log^2 T).
\end{align}
\end{corollary}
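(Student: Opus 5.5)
Both parts of Corollary \ref{Gonek assuming RH} follow by applying Lemma \ref{KZ Gonek} with $X = a/b$ and then, for the second statement, using the Riemann Hypothesis to strip off the real part of $\rho_\pi$.

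\emph{First statement.} Since $b<a$, we have $X=a/b>1$ and $X\leqslant a\leqslant T^{\xi}$. There are two cases.

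If $a/b$ is a prime power, then $n_X = X$. Lemma \ref{KZ Gonek} gives the main term $-\frac{T}{\pi}\Lambda_\pi(a/b)$ together with the error
\[
O\bigl(X^{1+\theta_m+\varepsilon}\log^2 T + \log T/\log X\bigr).
\]
We have $X^{1+\theta_m+\varepsilon}\leqslant a^{1+\theta_m+\varepsilon}$. For the second error term, $X=a/b$ with $a>b$ gives $X - 1 = (a-b)/b \geqslant 1/b$, so $\log X \gg 1/b \geqslant 1/a$. Hence $\log T/\log X \ll a\log T$, which is absorbed into $a^{1+\theta_m+\varepsilon}\log^2 T$.

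If $a/b$ is not a prime power (including the case $a/b\notin\mathbb{N}$, where $\Lambda_\pi(a/b)=0$ by convention), then the main term of Lemma \ref{KZ Gonek} is
\[
-\frac{\Lambda_\pi(n_X)}{\pi}\,\frac{\sin(T\log(X/n_X))}{\log(X/n_X)},
\]
which is bounded in absolute value by $|\Lambda_\pi(n_X)|/|\log(X/n_X)|$. We need to show this is $O(a^{1+\theta_m+\varepsilon}\log^2 T)$.

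\begin{itemize}
\item Since $n_X$ is a prime power closest to $X$, we have $n_X\leqslant 2X+1 \ll a$.
\item By \eqref{Towards Ramanujan Bounds}, $|\Lambda_\pi(n_X)|\leqslant m\, n_X^{\theta_m}\log n_X \ll a^{\theta_m+\varepsilon}$.
\item Since $n_X\ne a/b$, the separation is $|X-n_X| = |a - b n_X|/b \geqslant 1/b$. As $n_X$ and $X$ are both $\ll a$, this gives $|\log(X/n_X)| \gg 1/(b\, a) \geqslant a^{-2}$. Refining to $|\log(X/n_X)|\gg |X-n_X|/X \geqslant 1/(bX) = 1/a$ gives a bound of order $a$.
\end{itemize}

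Together these bound the main term by $\ll a^{1+\theta_m+\varepsilon}$, within the stated error. This case matches \eqref{Gonek RH Equation} with $\Lambda_\pi(a/b)=0$.

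\emph{Second statement.} Under RH we have $\rho_\pi = \tfrac12 + i\gamma_\pi$, so $(a/b)^{i\gamma_\pi} = (b/a)^{1/2}(a/b)^{\rho_\pi}$. Multiplying \eqref{Gonek RH Equation} by $\sqrt{b/a}$ gives the main term $-\frac{T}{\pi}\sqrt{b/a}\,\Lambda_\pi(a/b)$. The error becomes
\[
\sqrt{b/a}\;a^{1+\theta_m+\varepsilon}\log^2 T = b^{1/2}a^{1/2+\theta_m+\varepsilon}\log^2T \leqslant (ab)^{1/2+\theta_m+\varepsilon}\log^2T.
\]

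The only delicate point is the case where $a/b$ is not a prime power. There the potentially large main term of Lemma \ref{KZ Gonek} must be absorbed into the error, and this requires the separation bound $|\log(X/n_X)|\gg 1/a$, which comes from $b$ being the denominator of $X=a/b$.
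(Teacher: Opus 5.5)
Your proof is correct and follows the paper's argument essentially step for step: apply Lemma \ref{KZ Gonek} with $X=a/b$, split according to whether $a/b$ is a prime power, absorb the oscillatory term using $|\log(X/n_X)|\gg a^{-1}$ (which comes from $|a-bn_X|\geqslant 1$) and bound $\log T/\log X\ll a\log T$, then multiply by $\sqrt{b/a}$ under RH. The only cosmetic difference is that you absorb $\log n_X$ into $a^{\varepsilon}$, whereas the paper absorbs it into a factor $\log T$, which is where its $\xi$-dependence enters.
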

\begin{proof}
First assume that $a/b$ is not a prime power. Choose $X=a/b$ in \eqref{fzEquation}. Then $X \neq n_X$ and $\lvert \log (X/n_X) \rvert \gg a^{-1}$. Therefore the first term on the right hand side of \eqref{fzEquation} is $\ll_{\pi, \xi} a \cdot n_X^{\theta_m} \log T.$ The error term in \eqref{fzEquation} in this case is
\[
\ll_{\pi, \xi, \varepsilon} \left(\frac{a}{b}\right)^{1+\theta_m+\varepsilon}\log^2 T + a \log T,
\]
for any $\varepsilon>0$ since $\log X \geqslant a^{-1}$. Therefore, we obtain
\[
\sum_{\lvert \gamma_{\pi} \rvert \leqslant T}\left(\frac{a}{b}\right)^{\rho_{\pi}} \ll_{\pi, \xi, \varepsilon} a^{1+\theta_m+\varepsilon}\log^2 T.
\]
When $a/b$ is a prime power, the main term in \eqref{fzEquation} is $- \frac{T\Lambda_{\pi}(a/b)}{\pi}$ and the proof of \eqref{Gonek RH Equation} is complete. Assuming RH for $L(s,\pi)$, we have $\rho_{\pi} = \frac{1}{2}+i\gamma_{\pi}$ and thus \eqref{Gonek RH Equation 1} follows from \eqref{Gonek RH Equation}.
\end{proof}
\begin{corollary}\label{when b>a}
Let $m \in \mathbb{N}, \pi \in \mathscr{A}_m$ and $\theta_{m} \in [0, \frac{1}{2}-\frac{1}{m^2+1}]$ be an admissible exponent towards the Ramanujan conjecture for $L(s,\pi)$. Let $T \geqslant 2$ and suppose $a,b \in \mathbb{N}$ with $1 \leqslant a<b\leqslant T^{\xi}$ for some $\xi>0$.  Then for any $\varepsilon>0$
\begin{align}\label{Gonek RH Equation P<Q}
\sum_{\lvert \gamma_{\pi} \rvert \leqslant T} \left(\frac{a}{b}\right)^{\rho_{\pi}}=-\frac{T}{ \pi } \frac{a}{b} \Lambda_{\widetilde{\pi}}\left(\frac{b}{a}\right)+O_{\pi,\xi, \varepsilon}(ab^{\theta_m+\varepsilon}\log^2 T).
\end{align}
Moreover, assuming the Riemann Hypothesis for $L(s,\pi)$, for any $\varepsilon>0$
\begin{align}\label{Gonek RH Equation P<Q 2}
\sum_{\lvert \gamma_{\pi} \rvert \leqslant T}\left(\frac{a}{b}\right)^{i\gamma_{\pi}}=-\frac{T}{\pi} \sqrt{\frac{a}{b}}\Lambda_{\widetilde{\pi}} \left(\dfrac{b}{a}\right)+O_{\pi, \xi, \varepsilon}((ab)^{\frac{1}{2}+\theta_m+\varepsilon}\log^2 T).
\end{align}
\end{corollary}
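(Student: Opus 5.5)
The plan is to deduce Corollary \ref{when b>a} from Corollary \ref{Gonek assuming RH}, applied to the contragredient $\widetilde{\pi}$, together with the symmetry of the zero set under the functional equation \eqref{Functional Equation}. By \eqref{Functional Equation}, $\rho_{\pi}$ is a non-trivial zero of $L(s,\pi)$ if and only if $1-\rho_{\pi}$ is a non-trivial zero of $L(s,\widetilde{\pi})$. Moreover, $L(s,\widetilde{\pi})=\overline{L(\bar s,\pi)}$, since $\alpha_{j,\widetilde{\pi}}(p)=\overline{\alpha_{j,\pi}(p)}$ and $\kappa_{\widetilde{\pi}}(j)=\overline{\kappa_{\pi}(j)}$. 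Hence $\rho_{\pi}$ is a zero of $L(s,\pi)$ exactly when $\overline{\rho_{\pi}}$ is a zero of $L(s,\widetilde{\pi})$. Combining the two symmetries, the map $\rho_{\pi}\mapsto 1-\overline{\rho_{\pi}}$ permutes the non-trivial zeros of $L(s,\pi)$ with multiplicity and preserves $\lvert \gamma_{\pi}\rvert$. Using this map, I write
\[
\sum_{\lvert \gamma_{\pi}\rvert\leqslant T}\Big(\frac{a}{b}\Big)^{\rho_{\pi}}
=\sum_{\lvert \gamma_{\pi}\rvert\leqslant T}\Big(\frac{a}{b}\Big)^{1-\overline{\rho_{\pi}}}
=\frac{a}{b}\,\overline{\sum_{\lvert \gamma_{\pi}\rvert\leqslant T}\Big(\frac{b}{a}\Big)^{\rho_{\pi}}}.
\]

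Now $1\leqslant a<b\leqslant T^{\xi}$, so Corollary \ref{Gonek assuming RH} applies to the sum on the right with the roles of $a$ and $b$ swapped. I apply it to $\pi$ itself; this is allowed because $\pi\in\mathscr{A}_m$ and it shares the exponent $\theta_m$. It gives $-\frac{T}{\pi}\Lambda_{\pi}(b/a)+O(b^{1+\theta_m+\varepsilon}\log^2T)$. Taking complex conjugates and using $\overline{\Lambda_{\pi}(n)}=\Lambda_{\widetilde{\pi}}(n)$, which follows from \eqref{Lambda Def}, the main term becomes $-\frac{T}{\pi}\frac{a}{b}\Lambda_{\widetilde{\pi}}(b/a)$. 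The error is multiplied by $a/b$, giving $O(ab^{\theta_m+\varepsilon}\log^2 T)$, which is exactly \eqref{Gonek RH Equation P<Q}. If $b/a$ is not an integer, then $\Lambda_{\widetilde{\pi}}(b/a)$ is interpreted as $0$, consistently with the convention in Corollary \ref{Gonek assuming RH}, whose proof covered non-prime-powers by Lemma \ref{KZ Gonek} with $X=b/a$. The only extra care is to rerun that argument with $X=b/a$ rational; the bound $\lvert\log(X/n_X)\rvert\gg b^{-1}$ still holds.

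For \eqref{Gonek RH Equation P<Q 2}, assume RH, so $\rho_{\pi}=\frac12+i\gamma_{\pi}$. Then $(a/b)^{i\gamma_{\pi}}=(b/a)^{1/2}(a/b)^{\rho_{\pi}}$. Multiplying \eqref{Gonek RH Equation P<Q} by $\sqrt{b/a}$ gives the main term $-\frac{T}{\pi}\sqrt{a/b}\,\Lambda_{\widetilde{\pi}}(b/a)$. The error becomes $a^{1/2}b^{1/2+\theta_m+\varepsilon}\log^2T\leqslant (ab)^{1/2+\theta_m+\varepsilon}\log^2T$.

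The main point needing care is the zero-set symmetry. Trivial zeros must be excluded, and the count has to be correct with multiplicity; the pole when $\delta_{\pi}=1$ causes no issue. Both follow from $\Lambda(s,\pi)$ being entire and from \eqref{Functional Equation}.
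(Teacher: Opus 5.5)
Your proof is correct and is essentially the paper's argument. You reflect the sum through the functional equation, apply Corollary~\ref{Gonek assuming RH} with the roles of $a$ and $b$ swapped, and multiply by $a/b$ (and then by $\sqrt{b/a}$ under RH). The only difference is cosmetic. The paper uses $\rho_{\pi}\mapsto 1-\rho_{\pi}$ to pass to the zeros of $L(s,\widetilde{\pi})$ and applies Corollary~\ref{Gonek assuming RH} to $\widetilde{\pi}$ directly, whereas you compose with complex conjugation to stay on $\pi$ and then use $\overline{\Lambda_{\pi}(n)}=\Lambda_{\widetilde{\pi}}(n)$. Also, no rerun for rational $b/a$ is needed: Corollary~\ref{Gonek assuming RH} as stated already covers non-integer ratios.
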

\begin{proof}
Note that 
\begin{equation*}
 \sum_{\lvert \gamma_{\pi} \rvert \leqslant T} \left(\frac{a}{b}\right)^{\rho_{\pi}} = \frac{a}{b} \sum_{\lvert \gamma_{\pi} \rvert \leqslant T} \left(\frac{b}{a}\right)^{1-\rho_{\pi}} = \frac{a}{b}\sum_{\lvert \gamma_{\widetilde{\pi}} \rvert \leqslant T} \left(\frac{b}{a}\right)^{\rho_{\widetilde{\pi}}}.
\end{equation*}
Applying Corollary \ref{Gonek assuming RH}, we obtain
\begin{align}\label{Gonek RH Equation P<Q}
\sum_{\lvert \gamma_{\pi} \rvert \leqslant T} \left(\frac{a}{b}\right)^{\rho_{\pi}}=-\frac{T}{ \pi } \frac{a}{b} \Lambda_{\widetilde{\pi}}\left(\frac{b}{a}\right)+O_{\pi,\xi, \varepsilon}(ab^{\theta_m+\varepsilon}\log^2 T).
\end{align}
Assuming RH for $L(s,\pi)$, we have $\rho_{\pi} = \frac{1}{2}+i\gamma_{\pi}$ and thus \eqref{Gonek RH Equation P<Q 2} follows from \eqref{Gonek RH Equation P<Q}.
\end{proof}
\subsection{Some Key Lemmas and Estimates.} We record the following result for later use.
\begin{lemma}\label{Complex Number Identity}
Let $z \in \mathbb{C}$ and $k \in \mathbb{N}$ . Then
\begin{align*}
\Re(z)^{2k} &= \frac{1}{2^{2 k}}\binom{2k}{k} \Re(z^{k}\overline{z}^{k}) + \frac{1}{2^{2k-1}} \sum_{i=0}^{k-1}\binom{2k}{i} \Re(z^{2k-i}\overline{z}^{i}),\\
\textrm{and} \quad \Re(z)^{2k+1} &= \frac{1}{4^k} \sum_{i=0}^{k}\binom{2k+1}{i} \Re(z^{2k+1-i}\overline{z}^{i}).
\end{align*}
\end{lemma}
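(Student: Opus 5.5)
Write $x=\Re z$ and use $x=\tfrac12(z+\overline z)$. By the binomial theorem,
\[
(\Re z)^n=\frac{1}{2^n}\sum_{i=0}^{n}\binom{n}{i} z^{n-i}\overline{z}^{\,i}.
\]
The left-hand side is real, so it equals the real part of the right-hand side. This gives
\[
(\Re z)^n=\frac{1}{2^n}\sum_{i=0}^{n}\binom{n}{i}\Re\bigl(z^{n-i}\overline{z}^{\,i}\bigr).
\]
The plan is to fold this sum in half using the symmetry $i\leftrightarrow n-i$.

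Note that $z^{i}\overline{z}^{\,n-i}=\overline{z^{n-i}\overline{z}^{\,i}}$, so these two terms have the same real part. Also $\binom{n}{i}=\binom{n}{n-i}$. Hence the terms with index $i$ and $n-i$ contribute equally.

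\emph{Case $n=2k$.} The indices $i=0,\dots,k-1$ pair off with $i=2k,\dots,k+1$, and the middle index $i=k$ stands alone. Doubling the first half gives
\[
(\Re z)^{2k}=\frac{1}{2^{2k}}\binom{2k}{k}\Re\bigl(z^k\overline z^{\,k}\bigr)+\frac{2}{2^{2k}}\sum_{i=0}^{k-1}\binom{2k}{i}\Re\bigl(z^{2k-i}\overline z^{\,i}\bigr).
\]
Since $2/2^{2k}=1/2^{2k-1}$, this is the first identity.

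\emph{Case $n=2k+1$.} There is no middle term: the indices $i=0,\dots,k$ pair off with $i=2k+1,\dots,k+1$. So
\[
(\Re z)^{2k+1}=\frac{2}{2^{2k+1}}\sum_{i=0}^{k}\binom{2k+1}{i}\Re\bigl(z^{2k+1-i}\overline z^{\,i}\bigr).
\]
Since $2/2^{2k+1}=1/4^k$, this is the second identity.

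There is no real obstacle here. The only points needing care are the bookkeeping of the middle term in the even case and the normalising constants.
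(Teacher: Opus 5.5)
Your proof is correct and follows the same route as the paper, which simply writes $\Re(z)=\frac{1}{2}(z+\overline{z})$ and applies the binomial theorem. Your pairing of the terms $i$ and $n-i$, the separate handling of the middle term when $n=2k$, and the normalizing constants $2/2^{2k}=1/2^{2k-1}$ and $2/2^{2k+1}=1/4^k$ are all correct.
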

\begin{proof}
The proof follows by writing $\Re(z)=\frac{1}{2}(z+\overline{z})$ and applying the Binomial Theorem.
\end{proof}

It is essential for later analysis to understand variants of the following sum 
\begin{equation}\label{def:A(x)}
\sum_{n \leqslant X} \frac{|\Lambda_{\pi}(n)|^2}{n}.
\end{equation} 
We have the following lemmas.
\begin{lemma}\label{lem:prime_number_theorem}
Let $m \in \mathbb{N}$ and $\pi \in \mathscr{A}_{m}.$ Let $X \geqslant 2$. Then we have 
\begin{align*}
\sum_{n \leqslant X} \frac{|\Lambda_{\pi}(n)|^2}{n} = \frac{\log ^2 X}{2} + O_{\pi}(1).
\end{align*}
\end{lemma}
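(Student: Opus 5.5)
We will deduce the lemma from a prime number theorem with error term for the Rankin--Selberg convolution $L(s,\pi\times\widetilde{\pi})$, followed by partial summation. The point is that the error in the partial summation must be a convergent integral, not merely $O(\log X)$ as in \cite[Prop. 2.3]{rudnickZerosPrincipalLfunctions1996}.

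First we separate primes from higher prime powers. The contribution of $n=p^k$ with $k\geqslant 2$ is $\sum_{k\geqslant2}\sum_p |\Lambda_\pi(p^k)|^2p^{-k}$. For fixed $k$ in the range $2\leqslant k\leqslant m^2+1$, this sum converges by Hypothesis \textbf{H}$_{\pi}$, which is now known for all $m$ by Jiang \cite{jiang2025}. For $k>m^2+1$ we use the bound \eqref{Towards Ramanujan Bounds}: it gives $|\Lambda_\pi(p^k)|^2\leqslant m^2p^{2k\theta_m}\log^2p$, and since $2\theta_m\leqslant 1-\frac{2}{m^2+1}$ we get $k(1-2\theta_m)\geqslant \frac{2k}{m^2+1}>2$. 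The resulting double sum over $p$ and $k$ therefore converges. Hence prime powers contribute $O_\pi(1)$ uniformly in $X$, and it suffices to prove $\sum_{p\leqslant X}|\Lambda_\pi(p)|^2/p=\frac12\log^2X+O_\pi(1)$.

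Next we relate the primes to $\Lambda_{\pi\times\widetilde\pi}$. By definition $\Lambda_{\pi\times\widetilde\pi}(p)=|\sum_j\alpha_{j,\pi}(p)|^2\log p=|\Lambda_\pi(p)|^2/\log p$ at unramified primes. Ramified primes are finitely many and contribute $O_\pi(1)$. It therefore suffices to know that
\[
\theta(x):=\sum_{p\leqslant x}\Lambda_{\pi\times\widetilde\pi}(p)\log p = x\log x - x + O_\pi\bigl(x e^{-c\sqrt{\log x}}\bigr),
\]
or equivalently the prime number theorem $\psi_{\pi\times\widetilde\pi}(x)=x+O(xe^{-c\sqrt{\log x}})$ together with removal of prime powers. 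The prime power part of $\psi_{\pi\times\widetilde\pi}$ is $O(x^{1-\delta})$, or at least $o(x/\log^2x)$ in a summable sense, by the same $\theta_m$ and Hypothesis \textbf{H}$_\pi$ argument as above.

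The prime number theorem for $L(s,\pi\times\widetilde\pi)$ follows from its simple pole at $s=1$, its standard zero-free region $\sigma>1-c/\log(|t|+3)$, and the possible exceptional real zero $\beta_1<1$. Since $\pi$ is fixed, $\beta_1$ contributes only $x^{\beta_1}/\beta_1\ll x^{1-\delta_\pi}$, which is harmless. This step is the main obstacle. The zero-free region for Rankin--Selberg $L$-functions $L(s,\pi\times\widetilde\pi)$ is classical (Moreno, Sarnak, and Humphries--Thorner in the self-dual-pair setting), and any error of the shape $x/(\log x)^{3}$ would already suffice. We will quote it rather than reprove it, and check that the error term is strong enough to survive two integrations by parts.

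Finally comes partial summation. Writing $\sum_{p\leqslant X}\Lambda_{\pi\times\widetilde\pi}(p)\log p\cdot p^{-1}=\int_{2^-}^X t^{-1}\,d\theta(t)$ and inserting $\theta(t)=t\log t-t+R(t)$, the main part gives $\int_2^X \frac{\log t}{t}\,dt=\frac12\log^2X+O(1)$. The remainder gives $R(X)/X+\int_2^X R(t)t^{-2}\,dt=O_\pi(1)$, because $R(t)/t^2\ll e^{-c\sqrt{\log t}}/t$ is integrable on $[2,\infty)$. Combining this with the prime power and ramified estimates proves the lemma.
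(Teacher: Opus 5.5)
Your proposal follows the paper's route. In both, prime powers are disposed of via Hypothesis \textbf{H}$_{\pi}$ together with the Luo--Rudnick--Sarnak bound for $\theta_m$, and the prime sum comes from a prime number theorem for $L(s,\pi\times\widetilde{\pi})$ resting on the Humphries--Thorner zero-free region plus partial summation; the paper quotes Gillespie--Ye for the self-contragredient case and uses that zero-free region to remove the restriction.

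One small correction: the claimed $O(x^{1-\delta})$ bound for the prime-power part of $\psi_{\pi\times\widetilde{\pi}}$ does not follow from the pointwise $\theta_m$ bound once $4\theta_m \geqslant 1$. You do not need it, because partial summation applied to the full $\psi_{\pi\times\widetilde{\pi}}$ leaves exactly the convergent series $\sum_{k\geqslant 2}\sum_p k|\Lambda_{\pi}(p^k)|^2p^{-k}$ from your first step, which is your ``summable sense.''
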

\begin{proof}
As in \cite[Eq. (2.25)]{rudnickZerosPrincipalLfunctions1996}, we use the upper bound for $\theta_m$ and Hypothesis \textbf{H}$_{\pi}$ to show that the contribution from the prime powers is $O_{\pi}(1).$ Hence we are just concerned with the sum over primes. When $\pi$ is self-contragredient, this follows from \cite[Theorem 1.1]{GillespieYe}.
Due to the zero-free region of \citeauthor{ThornerHumphriesHoheisel} \cite[Theorem 2.1] {ThornerHumphriesHoheisel}, the self-contragredient condition is no longer necessary. 
\end{proof}
\begin{lemma}\label{Rudnick-Sarnak Lemma}
Let $m \in \mathbb{N}$ and $ \pi \in \mathscr{A}_m$. Let $2 \leqslant X \leqslant V$ and suppose $\{a_n\}$ be the sequence in \eqref{Defining coefficients}. Then 
\begin{align*}
    \sum_{n \leqslant V} \frac{a_n \lvert \Lambda_{\pi}(n) \rvert^2}{n} &= \log X- \frac{\log V}{2}\left(\frac{X}{V}\right)^2+O_{\pi}(1),\\
\textrm{and} \quad 
    \sum_{n \leqslant V} \frac{a_n^2 \lvert \Lambda_{\pi}(n) \rvert^2}{n} &= \frac{\log X}{2}- \frac{\log V}{4}\bigg(\frac{X}{V}\bigg)^4+O_{\pi}(1).
\end{align*}
\end{lemma}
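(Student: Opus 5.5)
The plan is partial summation against Lemma \ref{lem:prime_number_theorem}. Write $A(u):=\sum_{n\leqslant u}|\Lambda_{\pi}(n)|^2/n = \tfrac12\log^2 u + R(u)$ with $R(u)=O_{\pi}(1)$ for $u\geqslant 2$; note $A(u)=0$ for $u<2$. Then split each sum at $X$:
\[
\sum_{n\leqslant V}\frac{a_n|\Lambda_{\pi}(n)|^2}{n}=\int_{1}^{X}\Big(\frac{u}{X}\Big)^2\diff A(u)+\int_{X}^{V}\Big(\frac{X}{u}\Big)^2\diff A(u).
\]
The weight is continuous at $u=X$ (both pieces equal $1$), so no boundary term arises at the split point even if $X$ is a prime power.

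For the main terms, substitute $\diff A(u)=\frac{\log u}{u}\diff u+\diff R(u)$. The smooth part gives
\[
\int_1^X \frac{u\log u}{X^2}\diff u=\frac{\log X}{2}-\frac14+O(X^{-2}),\qquad \int_X^V \frac{X^2\log u}{u^3}\diff u=\frac{\log X}{2}-\frac{\log V}{2}\Big(\frac{X}{V}\Big)^2+\frac14-\frac14\Big(\frac XV\Big)^2 .
\]
Summing these produces $\log X-\frac{\log V}{2}(X/V)^2+O(1)$.

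The $\diff R$ parts are integrated by parts. This gives boundary terms of the form (weight)$\times R$, which are $O_{\pi}(1)$ since the weight is bounded by $1$ and $R$ is bounded. It also gives $\int R(u)\,\diff(\text{weight})$. Because the weight is monotone on each piece with total variation at most $2$, this integral is $O_{\pi}(1)$ as well. Near $u=1$ the weight is $O(X^{-2})$ and $A$ vanishes, so the behaviour of $R$ on $[1,2)$ is harmless.

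The second identity is handled identically with $a_n^2$, i.e. weights $(u/X)^4$ and $(X/u)^4$. Here
\[
\int_1^X \frac{u^3\log u}{X^4}\diff u=\frac{\log X}{4}+O(1),\qquad \int_X^V\frac{X^4\log u}{u^5}\diff u=\frac{\log X}{4}-\frac{\log V}{4}\Big(\frac XV\Big)^4+O(1),
\]
giving $\frac{\log X}{2}-\frac{\log V}{4}(X/V)^4+O_{\pi}(1)$. The error handling is the same, since the weights are again monotone and bounded by $1$.

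The only genuine input is the $O_{\pi}(1)$ error term in Lemma \ref{lem:prime_number_theorem}. With the weaker error $O(\log X)$, the Stieltjes error would be of the same order as the main term $\log X$. Given that lemma, the argument is routine and I expect no real obstacle beyond checking that the error terms stay bounded uniformly in $V\geqslant X$. They do, because the total variation of the weight is bounded independently of $X$ and $V$.
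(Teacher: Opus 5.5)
Your proposal is correct and follows the same route as the paper, which proves this lemma in one line by partial summation against Lemma \ref{lem:prime_number_theorem}. Your explicit integrals and the bounded-variation control of the $R$-terms supply exactly the details the paper leaves out, including why the $O_{\pi}(1)$ error, rather than the $O(\log X)$ one, is needed.
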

\begin{proof}
The proof follows by partial summation and Lemma~\ref{lem:prime_number_theorem}.
\end{proof}
\begin{lemma}\label{Convergent Sums}
Let $m \in \mathbb{N}$ and $\pi \in \mathscr{A}_{m}.$ For any $j \in \mathbb{N}$ with $j\geqslant 2$, we have
\begin{align*}
\sum_{p \leqslant X} \frac{|\Lambda_{\pi}(p)|^{2j}}{p^j} < \infty. 
\end{align*} 
\end{lemma}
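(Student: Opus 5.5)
The plan is to reduce to the case $j=2$ and then to control $|\Lambda_{\pi}(p)|^{4}/p^{2}$ using only the bound towards Ramanujan recorded in \eqref{Towards Ramanujan Bounds} and \eqref{Luo-Rudnick-Sarnak Bound}. By \eqref{Lambda Def} we have $\Lambda_{\pi}(p)=\log p\sum_{i=1}^{m}\alpha_{i,\pi}(p)$. Hence $|\Lambda_{\pi}(p)|\leqslant m p^{\theta_m}\log p$, and in particular
\[
\frac{|\Lambda_{\pi}(p)|^{2}}{p}\leqslant m^{2}p^{2\theta_m-1}\log^{2}p \leqslant m^{2}p^{-\frac{2}{m^2+1}}\log^2 p,
\]
which tends to $0$ as $p\to\infty$ and is therefore bounded by a constant $C_{\pi}$ uniformly in $p$. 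It follows that for $j\geqslant 2$,
\[
\frac{|\Lambda_{\pi}(p)|^{2j}}{p^{j}}=\frac{|\Lambda_{\pi}(p)|^{4}}{p^{2}}\Big(\frac{|\Lambda_{\pi}(p)|^{2}}{p}\Big)^{j-2}\leqslant C_{\pi}^{\,j-2}\frac{|\Lambda_{\pi}(p)|^{4}}{p^{2}}.
\]
This reduces everything to proving $\sum_{p}|\Lambda_{\pi}(p)|^{4}p^{-2}<\infty$, and gives a bound uniform in $X$.

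For $j=2$, the crude bound $|\Lambda_{\pi}(p)|^{4}p^{-2}\ll p^{4\theta_m-2}\log^{4}p$ is not summable once $\theta_m$ approaches $\tfrac12-\tfrac1{m^2+1}$. So I would split off one factor only: $|\Lambda_{\pi}(p)|^{4}/p^{2}=\big(|\Lambda_{\pi}(p)|^{2}/p\big)^{2}\leqslant m^{2}p^{2\theta_m-1}\log^{2}p\cdot |\Lambda_{\pi}(p)|^{2}/p$. I would then sum dyadically or by partial summation, using Lemma \ref{lem:prime_number_theorem}. That lemma gives $\sum_{y<p\leqslant 2y}|\Lambda_{\pi}(p)|^{2}/p\ll \log y$ (prime powers contribute positively, so restricting to primes only decreases the sum). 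The dyadic block at $y$ therefore contributes $\ll y^{2\theta_m-1}\log^{3}y\leqslant y^{-\frac{2}{m^2+1}}\log^3 y$. This is summable over $y=2^{k}$ because $2\theta_m-1\leqslant -\tfrac{2}{m^2+1}<0$, and the lemma follows.

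The only potential obstacle is making sure the pointwise bound $|\alpha_{i,\pi}(p)|\leqslant p^{\theta_m}$ holds at every prime, including ramified ones. At the finitely many ramified primes each term is simply finite, so this is harmless. The essential input is the strict inequality $\theta_m<\tfrac12$ from \eqref{Luo-Rudnick-Sarnak Bound}, combined with the mean-square information in Lemma \ref{lem:prime_number_theorem}. If one prefers to avoid that lemma, Rankin--Selberg theory gives $\sum_{p\leqslant x}|\Lambda_{\pi}(p)|^2\log^{-1}p\ll_{\pi} x$, which can replace it in the dyadic step.
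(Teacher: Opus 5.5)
Your proof is correct and follows essentially the same route as the paper: peel off one factor $|\Lambda_{\pi}(p)|^2$, bound the remaining powers pointwise using the Luo--Rudnick--Sarnak exponent $\theta_m\leqslant \frac12-\frac{1}{m^2+1}$, and control what is left with a Rankin--Selberg mean-square estimate. The differences are cosmetic: the paper bounds all $j-1$ extra factors pointwise at once and cites convergence of $\sum_p |\lambda_{\pi}(p)|^2 p^{-\sigma}$ for $\sigma>1$, whereas you first reduce to $j=2$ and then sum dyadically using Lemma \ref{lem:prime_number_theorem}.
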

\begin{proof}
By \eqref{Luo-Rudnick-Sarnak Bound}, we have
\begin{align*}
\sum_{p\leqslant x} \frac{|\Lambda_{\pi}(p)|^{2j}}{p^j} \ll \sum_{p\leqslant x} \frac{|\Lambda_{\pi}(p)|^{2} p^{2\theta_m(j-1)}}{p^j} \ll \sum_{p \leqslant x} \frac{|\lambda_{\pi}(p)|^2}{p^{1+\frac{2(j-1)}{m^2+1}-\epsilon}} \ll 1,
\end{align*}
using the Rankin--Selberg framework, see for instance \cite[Proof of Theorem 5.41]{IK2004}. 
\end{proof}
\section{Moments of Sums over Primes: Part I}\label{sec: Conditional 1} 
By Lemma \ref{Connecting S to sum over primes}, it suffices to focus on sums over primes to study the distribution of $\mathcal{S}_{\pi}(X,T,\rho'_{\pi})$. Fix $\xi>1$. Suppose $T\geqslant 2$ and let $\alpha, X,V$ satisfy
\begin{align}
\label{Defining Parameters}
0<\alpha< \frac{1}{m},\quad X=T^{\alpha m} \quad \textrm{and} \quad V=X^{\xi}.
\end{align}
We begin by estimating certain mixed moments of sums over primes.
\begin{lemma}\label{Moment Calculations Assuming RH Step 1}
Let $m \in \mathbb{N}, \pi \in \mathscr{A}_m$ and $\theta_{m} \in [0, \frac{1}{2}-\frac{1}{m^2+1}]$ be an admissible exponent towards the Ramanujan conjecture for $L(s,\pi)$. Assume the Riemann Hypothesis for $L(s,\pi)$. Let $k, \ell$ be non-negative integers with $k+\ell=2d$ for some $d \in \mathbb{N}$. Let $T \geqslant 2$ and $\alpha, \xi, X, V$ be as in \eqref{Defining Parameters} satisfying 
\[\alpha < \frac{1}{2dm(1+\xi \theta_m)}.
\]
Let $\{a_n\}$ be the sequence defined by \eqref{Defining coefficients}. Then
\begin{align*}
\frac{1}{N_{\pi}(T)}\sum_{\lvert \gamma_{\pi} \rvert \leqslant T}\bigg(\sum_{p \leqslant V} \frac{a_p \Lambda_{\pi}(p) }{p^{\frac{1}{2}+i\gamma_{\pi}}}\bigg)^k\bigg(\sum_{q \leqslant V} \frac{a_q \overline{\Lambda_{\pi}(q)}}{q^{\frac{1}{2}-i\gamma_{\pi}}}\bigg)^\ell &=\boldsymbol{\delta}_{k\ell} d!\bigg( \sum_{p \leqslant V} \frac{a_p^2 \lvert \Lambda_{\pi}(p)\rvert^2}{p}\bigg)^d+O( \max \{1, (\log T)^{d-2}\}).
\end{align*}
Here $p,q$ run over primes up to $V$ and $\boldsymbol{\delta}_{k\ell}=1$ if $k=\ell$, and zero otherwise. The implied constant in the error term depends at most on $\pi, d, \alpha$ and $\xi$.
\end{lemma}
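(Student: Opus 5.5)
Expand both powers and swap the order of summation. Writing $P=p_1\cdots p_k$ and $Q=q_1\cdots q_\ell$, the left-hand side becomes
\[
\sum_{\substack{p_1,\dots,p_k\leqslant V\\ q_1,\dots,q_\ell\leqslant V}} \frac{\prod_i a_{p_i}\Lambda_\pi(p_i)\prod_j a_{q_j}\overline{\Lambda_\pi(q_j)}}{(PQ)^{1/2}}\cdot\frac{1}{N_\pi(T)}\sum_{|\gamma_\pi|\leqslant T}\Big(\frac{Q}{P}\Big)^{i\gamma_\pi}.
\]
We split the tuples according to whether $P=Q$ (the diagonal) or $P\neq Q$ (the off-diagonal).

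\textbf{Diagonal.} For $P=Q$, unique factorisation forces $k=\ell=d$ and makes $(q_j)$ a permutation of $(p_i)$. The inner average is then exactly $1$.
\begin{itemize}
\item If the $p_i$ are distinct, there are $d!$ matchings. These terms give $d!\big(\sum_{p\leqslant V}a_p^2|\Lambda_\pi(p)|^2/p\big)^d$, minus the contribution of tuples with a repeated prime.
\item Tuples with some $p_i=p_j$ involve at least one factor $\sum_p a_p^4|\Lambda_\pi(p)|^4/p^2$. Since $a_p\leqslant1$, this factor is $O_\pi(1)$ by Lemma \ref{Convergent Sums}.
\item All remaining factors are $\sum_p a_p^2|\Lambda_\pi(p)|^2/p\ll\log X\ll\log T$, by Lemma \ref{Rudnick-Sarnak Lemma}.
\end{itemize}
A repeat removes at least one free prime, so these tuples contribute $O((\log T)^{d-2})$, or $O(1)$ when $d=1$. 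When $k\neq\ell$, no tuple is diagonal and the main term is $0$, which matches $\boldsymbol{\delta}_{k\ell}$.

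\textbf{Off-diagonal.} For $P\neq Q$, assume $Q>P$ (the other case is handled by Corollary \ref{when b>a}) and apply \eqref{Gonek RH Equation 1} with $a=Q$, $b=P$. The bound $P,Q\leqslant V^{2d}=T^{2d\alpha m\xi}$ ensures the corollary applies with $\xi$ replaced by $2d\alpha m\xi$. The corollary gives two pieces.

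The first is the main term $-\frac{T}{\pi}\sqrt{P/Q}\,\Lambda_\pi(Q/P)$. It is nonzero only if $Q/P$ is a prime power $r^j$. Since $P,Q$ are squarefree-ish products of primes, after cancelling common primes this forces $Q/P$ to be a single prime $r$ (or $P/Q$ one). Its size is $|\Lambda_\pi(r)|\,r^{-1/2}$, which is bounded by $r^{\theta_m-1/2}\log r$. Dividing by $N_\pi(T)\asymp T\log T$, these terms amount to a diagonal-type sum with one unpaired prime $r$. They are bounded by
\[
(\log T)^{-1}\sum_{r\leqslant V}\frac{a_r|\Lambda_\pi(r)|^2}{r}\times(\log T)^{d-1}\ll(\log T)^{d-1},
\]
up to the bookkeeping of the remaining $k+\ell-1$ primes. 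Since $k+\ell$ is even and exactly one prime is unpaired, another prime is also unpaired. That forces one more lost factor of $\log T$ and gives $O((\log T)^{d-2})$. Making this parity and pairing argument precise is the step I expect to be most delicate; I would organise it by listing which primes in $P$ and $Q$ are matched.

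The second piece is the error term $O((PQ)^{1/2+\theta_m+\varepsilon}\log^2T)$. Summed with the weights $(PQ)^{-1/2}|\Lambda|$ and divided by $N_\pi(T)$, it gives
\[
\ll \frac{\log T}{T}\Big(\sum_{p\leqslant V}p^{2\theta_m+\varepsilon}\log p\Big)^{2d}\ll T^{-1+\varepsilon}V^{2d(1+2\theta_m)}.
\]
The exponent needed is really $(1+\xi\theta_m)$-type. Using $|\Lambda_\pi(p)|\ll p^{\theta_m}\log p$ and $a_p\leqslant1$, the total is $\ll T^{-1+\varepsilon}X^{2d}V^{2d\theta_m}$. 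Here I would use $a_p\leqslant (X/p)^2$ for $p>X$ to gain the factor $X^{2d}$ rather than $V^{2d}$, which gives $T^{-1+\varepsilon+2dm\alpha(1+\xi\theta_m)}$. This is $o(1)$ exactly under the hypothesis $\alpha<1/(2dm(1+\xi\theta_m))$. Combining the diagonal and off-diagonal estimates yields the lemma.
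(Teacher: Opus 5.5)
Your overall structure matches the paper's: expand, take the main term from the diagonal $P=Q$, and handle $P\neq Q$ with Corollaries \ref{Gonek assuming RH} and \ref{when b>a}. Your diagonal analysis is fine. The gap is in the off-diagonal prime-power main terms, the step you flagged as delicate.

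Your claim that cancelling common primes forces $Q/P$ to be a single prime $r$ is false; in fact that never happens. Counting prime factors with multiplicity, $Q/P=r^{j}$ (or $P/Q=r^j$) forces $j=|\ell-k|$. Since $k+\ell=2d$ is even, $j$ is even. So these terms occur only when $k\neq\ell$, and then always with $j\geqslant 2$. When $k=\ell$ and $P\neq Q$, neither ratio is even an integer, so only the Gonek error term survives.

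Your parity heuristic also does not describe what happens. Two distinct unpaired primes would make $Q/P$ a non-prime-power, and the main term would simply vanish. What actually occurs (say $\ell>k$) is $Q=P\,r^{j}$. Up to a combinatorial factor depending on $d$, these terms are
\[
\ll \frac{T}{N_\pi(T)}\Big(\sum_{p\leqslant V}\frac{a_p^2|\Lambda_\pi(p)|^2}{p}\Big)^{k}\sum_{r\leqslant V}\frac{a_r^{j}|\Lambda_\pi(r)|^{j}|\Lambda_\pi(r^{j})|}{r^{j}}.
\]
The saving comes from the $r$-sum being $O(1)$ because $j\geqslant 2$, and this needs an input you never use. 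By Cauchy--Schwarz the $r$-sum is at most
\[
\big(\sum_r|\Lambda_\pi(r)|^{2j}/r^{j}\big)^{1/2}\big(\sum_r|\Lambda_\pi(r^{j})|^2/r^{j}\big)^{1/2}.
\]
The first factor is finite by Lemma \ref{Convergent Sums} and the second by Hypothesis \textbf{H}$_{\pi}$. The bound $|\Lambda_\pi(r^j)|\ll r^{j\theta_m}\log r$ alone does not give convergence for small $j$.

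Since $T/N_\pi(T)\asymp(\log T)^{-1}$ and $k\leqslant d-1$, the total is $\ll(\log T)^{k-1}\leqslant(\log T)^{d-2}$. This is the paper's argument. The case $k=0$, where the only surviving tuples have $Q=r^{2d}$, also shows your tuples need not be ``squarefree-ish''. Relatedly, the size of the main term is $|\Lambda_\pi(r^j)|r^{-j/2}$, not $|\Lambda_\pi(r)|r^{-1/2}$.

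A smaller issue is in the Gonek error term. If you pull out $(PQ)^{\theta_m+\varepsilon}\leqslant V^{2d(\theta_m+\varepsilon)}$ and then use $|\Lambda_\pi(p)|\ll p^{\theta_m}\log p$, you only get $\sum_{p\leqslant V}a_p|\Lambda_\pi(p)|\ll X^{1+\theta_m}$. That gives $T^{-1+\varepsilon}X^{2d(1+\theta_m)}V^{2d\theta_m}$ rather than $T^{-1+\varepsilon}X^{2d}V^{2d\theta_m}$, which covers only $\alpha<1/(2dm(1+(1+\xi)\theta_m))$.

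To reach the stated range you need $\sum_{p\leqslant V}a_p|\Lambda_\pi(p)|\ll X^{1+\varepsilon}$. This follows from Cauchy--Schwarz,
\[
\big(\sum_{p\leqslant V}pa_p\big)^{1/2}\big(\sum_{p\leqslant V}a_p|\Lambda_\pi(p)|^2/p\big)^{1/2}\ll X\log X,
\]
using the Rankin--Selberg mean value in Lemma \ref{Rudnick-Sarnak Lemma}. This is exactly how the paper gets $T^{\eta-1+\varepsilon}$ with $\eta=2\alpha dm(1+\xi\theta_m)<1$.
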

\begin{proof}
All implied constants in the proof may depend at most on $\pi, d, \alpha$ and $\xi$. Let
\begin{equation}\label{A_k,l Notation Assuming RH}
\mathcal{A}_{k,\ell} := \frac{1}{N_{\pi}(T)}\sum_{\lvert \gamma_{\pi} \rvert \leqslant T} \bigg(\sum_{p \leqslant V} \frac{a_p \Lambda_{\pi}(p)}{p^{\frac{1}{2}+i\gamma_{\pi}}}\bigg)^k\bigg(\sum_{q \leqslant V} \frac{a_q \overline{\Lambda_{\pi}(q)}}{q^{\frac{1}{2}-i\gamma_{\pi}}}\bigg)^\ell.
\end{equation}
and $\eta := \eta(\pi, d, \alpha, \xi) = 2\alpha dm(1+\xi \theta_m)$. Note that $\eta<1$. We break our proof into two cases.\\

\noindent \textbf{Case 1 : When $k \neq \ell$.}
Assume $k > \ell$. Consider the situation when $\ell=0$. Then
\begin{align}
\mathcal{A}_{k,0} = \frac{1}{N_{\pi}(T)} \sum_{p_1,p_2, \dots, p_k \leqslant V} \bigg(\prod_{i=1}^{k}a_{p_i} \Lambda_{\pi}(p_i) \bigg) \sum_{\lvert \gamma_{\pi} \rvert \leqslant T} P^{-\frac{1}{2}-i\gamma_{\pi}}, \notag
\end{align}
where $P=p_1p_2 \cdots p_k$. By Corollary \ref{when b>a},
\begin{align}
\sum_{\lvert \gamma_{\pi} \rvert \leqslant T} P^{-i\gamma_{\pi}} =-\frac{T\Lambda_{\pi}(P)}{\pi\sqrt{P}} + O_{\varepsilon}(P^{\frac{1}{2}+\theta_m+\varepsilon} \log^2 T). \notag
\end{align}
for any $\varepsilon>0$. Note $\Lambda_{\pi}(P)=0$ for those tuples $(p_1,p_2,\dots, p_k)$ where all $p_i$'s are not equal. Hence we get
\begin{align}\label{1st Gonek Application}
\mathcal{A}_{k,0} &=-\frac{T}{\pi N_{\pi}(T)}\sum_{p \leqslant V} \frac{a_p^k \Lambda^k_{\pi}(p) \Lambda_{\pi}(p^k)}{p^k} + O_{\varepsilon}\bigg{(} \frac{V^{2d\theta_m+\varepsilon}\log T}{T} \sum_{p_1,\dots, p_k \leqslant V} \prod_{i=1}^{k}a_{p_i} \lvert \Lambda_{\pi}(p_i) \rvert \bigg{)}.
\end{align}
By Cauchy--Schwarz and Lemma \ref{Rudnick-Sarnak Lemma}, the error term in \eqref{1st Gonek Application} is
\[\ll_{\varepsilon} \frac{V^{2d\theta_m+\varepsilon} \log T}{T} \bigg( \sum_{p \leqslant V} pa_{p} \bigg )^{d} \bigg ( \sum_{p \leqslant V} \frac{a_p \lvert \Lambda_{\pi}(p) \rvert^2}{p}\bigg )^{d} \ll_{\varepsilon} T^{\eta-1+\varepsilon}.
\]
Since $k \geqslant 2$, by Cauchy--Schwarz, Hypothesis \textnormal{\textbf{H}$_{\pi}$}  and Lemma \ref{Convergent Sums}, the first term on the right hand side of \eqref{1st Gonek Application} is $\ll (\log T)^{-1}$. Hence by choosing $\varepsilon>0$ sufficiently small in terms of $\eta$, we obtain
\begin{equation}\label{Case 1, when l=0}
\mathcal{A}_{k,0} \ll (\log T)^{-1}. 
\end{equation}
\par
Now assume that $\ell > 0$. Then
\begin{align}
\mathcal{A}_{k,\ell}=\frac{1}{N_{\pi}(T)}\sum_{\substack{p_1, p_2,\dots, p_k \leqslant V \\ q_1,q_2,\dots, q_\ell \leqslant V}} \dfrac{(\prod_{i=1}^k a_{p_i} \Lambda_{\pi}(p_i))(\prod_{j=1}^\ell a_{q_j} \overline{\Lambda_{\pi}(q_j)} )}{(PQ)^{\frac{1}{2}}}\sum_{\lvert \gamma_{\pi} \rvert \leqslant T} \left(\frac{Q}{P} \right )^{i \gamma_{\pi}}, \notag
\end{align}
where $P=p_1p_2 \cdots p_k$ and $Q=q_1q_2 \cdots q_\ell$. Note that $P \neq Q$. We break the sum $\mathcal{A}_{k,\ell}$ by writing
\[ \mathcal{A}_{k,\ell} = \mathcal{A}_{k,\ell}(P>Q)+\mathcal{A}_{k,\ell}(P<Q),
\]
depending on whether $P>Q$ or $P<Q$. Since $Q/P \notin \mathbb{Z}$, using Corollary $\ref{Gonek assuming RH}$, we have
\begin{equation}\label{Q>P Case}
\mathcal{A}_{k,\ell}(P<Q) \ll_{\varepsilon} T^{\eta-1+\varepsilon}.
\end{equation}
When $P>Q$, using Corollary \ref{when b>a}, it follows that
\begin{align*}
\sum_{\lvert \gamma_{\pi} \rvert \leqslant T} \bigg(\frac{Q}{P} \bigg )^{i \gamma_{\pi}} = -\frac{T}{\pi}\sqrt{\frac{Q}{P}} \Lambda_{\pi} \bigg (\frac{P}{Q} \bigg )+ O_{\varepsilon} ((PQ)^{\frac{1}{2}+\theta_m+\varepsilon} \log^2 T).
\end{align*}
Note that $\Lambda_{\pi} (P/Q) \neq 0$ if and only if $P/Q$ is a prime power. Hence we deduce that
\begin{align*}
\mathcal{A}_{k,\ell}(P>Q)\ll_{\varepsilon} \frac{1}{\log T}\bigg(\sum_{p \leqslant V}\frac{(a_p \Lambda_{\pi}(p))^{k-\ell} \Lambda_{\pi}(p^{k-\ell})}{{p}^{k-\ell}}\bigg)\bigg(\sum_{q \leqslant V}\frac{a_q^2 \lvert \Lambda_{\pi}(q) \rvert ^2}{q}\bigg)^\ell +T^{\eta-1+\varepsilon}.
\end{align*}
Since $k-\ell \geqslant 2$, the sum over $p$ is $O(1)$ following the same argument how we established \eqref{Case 1, when l=0}. The sum over $q$ is $\ll (\log X)^{\ell}$ by Lemma \ref{Rudnick-Sarnak Lemma}. Therefore choosing $\varepsilon>0$ sufficiently small in terms of $\eta$, we obtain
\begin{align} \label{Case 1, when l neq 0}
   \mathcal{A}_{k,\ell}(P>Q) \ll (\log T)^{\ell-1}.
\end{align}
In conclusion, from \eqref{Case 1, when l=0}, \eqref{Q>P Case} and \eqref{Case 1, when l neq 0}, the overall contribution in Case 1 is $\ll (\log T)^{d-2}$. A concomitant argument follows when $\ell >k$.\\

\noindent \textbf{Case 2 : When $k=\ell$.} For $k=\ell=d$, we have
\begin{align}
\mathcal{A}_{d,d}=\frac{1}{N_{\pi}(T)}\sum_{\substack{p_1, p_2,\dots, p_d \leqslant V \\ q_1,q_2,\dots, q_d \leqslant V}} \dfrac{(\prod_{i=1}^d a_{p_i} \Lambda_{\pi}(p_i))(\prod_{j=1}^d a_{q_j} \overline{\Lambda_{\pi}(q_j)})}{(PQ)^{\frac{1}{2}}}\sum_{\lvert \gamma_{\pi} \rvert \leqslant T} \left(\frac{Q}{P} \right )^{i \gamma_{\pi}},
\end{align}
where $P=p_1p_2 \cdots p_d$ and $Q=q_1q_2 \cdots q_d$. The main terms occur when $P =Q$. If all the $p_i$'s are distinct, there are exactly $d!$ ways to rewrite $P$ as $Q$. Applying Lemma \ref{Rudnick-Sarnak Lemma} and Lemma \ref{Convergent Sums}, the contribution from the case $P=Q$ is
\begin{align}\label{Inner Sum Estimate}
\sum_{\substack{p_1, p_2,\dots, p_d \leqslant V \\ q_1,q_2,\dots, q_d \leqslant V \\ P=Q}}& \dfrac{(\prod_{i=1}^d a_{p_i} \Lambda_{\pi}(p_i))(\prod_{j=1}^d a_{q_j} \overline{\Lambda_{\pi}(q_j)})}{P} \notag \\
&= d! \sum_{p_1, p_2, \dots, p_d \leqslant
 V}\frac{(\prod_{i=1}^{d} a_{p_i}^2 \lvert \Lambda_{\pi}(p_i) \rvert ^2 )}{P} \notag \\
 &\hspace{0.5cm}- \sum_{j=2}^{d}\bigg \{ d! \bigg (1-\frac{1}{j!} \bigg )\bigg (\sum_{p \leqslant
 V}\frac{ a_p^{2j} \lvert \Lambda_{\pi}(p) \rvert ^{2j}}{p^j} \bigg ) \sum_{\substack{p_1, p_2, \dots, p_{d-j} \leqslant V \\ p_i \neq p}}\frac{ \prod_{i=1}^{d-j} a_{p_i}^2 \lvert \Lambda_{\pi}(p_i) \rvert ^2 }{p_1p_2 \cdots p_{d-j}} \bigg \} \notag \\
 &= d!\bigg ( \sum_{p\leqslant
 V}\frac{a_p^2 \lvert \Lambda_{\pi}(p) \rvert ^2}{p} \bigg )^d + O( (\log T)^{d-2} ).
\end{align}
Note that the error term in \eqref{Inner Sum Estimate} doesn't exist when $d=1$.
\par
Now we concentrate on the error terms, which occur when $P \neq Q$. Here $Q/P$ or $P/Q$ is never a prime power. Applying Corollaries \ref{Gonek assuming RH} and \ref{when b>a}, we see that
\begin{equation}\label{P not equal Q}
\mathcal{A}_{d,d}(P> Q)+\mathcal{A}_{d,d}(P< Q) \ll  T^{\eta-1+\varepsilon}. 
\end{equation}
Choosing $\varepsilon>0$ sufficiently small in terms of $\eta$, from \eqref{Inner Sum Estimate} and \eqref{P not equal Q}, the overall contribution in Case 2 is
\begin{align}
d! \bigg ( \sum_{p\leqslant
 V}\frac{a_p^2 \lvert \Lambda_{\pi}(p) \rvert ^2 }{p} \bigg )^d + O (\max\{1,(\log T)^{d-2}\} ). \notag
\end{align}
Combining the two cases, we arrive at our desired conclusion.
\end{proof}
Our next lemma follows directly from Lemma \ref{Moment Calculations Assuming RH Step 1} combined with Lemma \ref{Complex Number Identity}.
\begin{lemma}\label{Moment Calculations Assuming RH Step 2}
Let $m \in \mathbb{N}, \pi \in \mathscr{A}_m$ and $\theta_{m} \in [0, \frac{1}{2}-\frac{1}{m^2+1}]$ be an admissible exponent towards the Ramanujan conjecture for $L(s,\pi)$. Assume the Riemann Hypothesis for $L(s,\pi)$. Let $d \in \mathbb{N}$, $T \geqslant 2$ and $\alpha, \xi, X, V$ be as in \eqref{Defining Parameters} satisfying 
\[\alpha < \frac{1}{2dm(1+\xi \theta_m)}.
\]
Let $\{a_n\}$ be the sequence defined by \eqref{Defining coefficients}. Then
\begin{align*}
\frac{1}{N_{\pi}(T)}\sum_{\lvert \gamma_{\pi} \rvert \leqslant T} \bigg(\Re \sum_{p \leqslant V} \frac{a_p \Lambda_{\pi}(p)}{p^{\frac{1}{2}+i \gamma_{\pi}}}\bigg)^{2d}&=\boldsymbol{\mu}_{2d}\bigg(\frac{1}{2}\sum_{p \leqslant V} \frac{a_p^2  \lvert \Lambda_{\pi}(p) \rvert ^2 }{p}\bigg)^d+O( \max \{1, (\log T)^{d-2}\} ).
\end{align*}
where $\boldsymbol{\mu}_{2d}$ is defined by \eqref{Defining Normal Distribution Coefficients}. Also, if
\[ \alpha < \frac{1}{m(2d-1)(1+\xi \theta_m)},
\]
then we have
\begin{align*}
\frac{1}{N_{\pi}(T)}\sum_{\lvert \gamma_{\pi} \rvert \leqslant T} \bigg(\Re \sum_{p \leqslant V} \frac{a_p \Lambda_{\pi}(p)}{p^{\frac{1}{2}+i\gamma_{\pi}}}\bigg)^{2d-1} \ll (\log T)^{d-1}.
\end{align*}
The implied constants in the error term and the $\ll$ notation depend at most on $\pi, d, \alpha$ and $\xi$.
\end{lemma}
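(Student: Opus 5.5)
Set $z=z(\gamma_{\pi}) := \sum_{p \leqslant V} a_p \Lambda_{\pi}(p) p^{-\frac{1}{2}-i\gamma_{\pi}}$. Its complex conjugate is $\overline{z} = \sum_{q \leqslant V} a_q \overline{\Lambda_{\pi}(q)} q^{-\frac12+i\gamma_{\pi}}$, because under RH $\gamma_\pi$ is real. Hence $z^k\overline{z}^\ell$ is exactly the summand of $\mathcal{A}_{k,\ell}$ in Lemma \ref{Moment Calculations Assuming RH Step 1}. We expand $\Re(z)^{2d}$ and $\Re(z)^{2d-1}$ with Lemma \ref{Complex Number Identity}, average over $\lvert\gamma_\pi\rvert\leqslant T$, and pass the average inside $\Re$. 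This is allowed since $\frac{1}{N_\pi(T)}\sum \Re(\cdot)=\Re\frac{1}{N_\pi(T)}\sum(\cdot)$. Every term then becomes $\Re\,\mathcal{A}_{k,\ell}$ with $k+\ell$ equal to $2d$ or $2d-1$.

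\emph{Even case.} Lemma \ref{Complex Number Identity} with $k=d$ gives
\[
\Re(z)^{2d}=2^{-2d}\binom{2d}{d}\Re(z^d\overline z^d)+2^{1-2d}\sum_{i=0}^{d-1}\binom{2d}{i}\Re(z^{2d-i}\overline z^{i}).
\]
We apply Lemma \ref{Moment Calculations Assuming RH Step 1}; its hypothesis on $\alpha$ is exactly the one assumed here.
\begin{itemize}
\item The diagonal term $(d,d)$ contributes $d!\,(\sum_p a_p^2\lvert\Lambda_\pi(p)\rvert^2/p)^d+O(\max\{1,(\log T)^{d-2}\})$.
\item The terms with $i<d$ have $k\neq\ell$, so each is $O(\max\{1,(\log T)^{d-2}\})$.
\end{itemize}
It remains to check the constant: $2^{-2d}\binom{2d}{d}d!=\frac{(2d)!}{2^{2d}d!}=2^{-d}\cdot\frac{(2d)!}{2^d d!}=2^{-d}\boldsymbol{\mu}_{2d}$. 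This gives $\boldsymbol{\mu}_{2d}\bigl(\frac12\sum_p\cdots\bigr)^d$, as claimed.

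\emph{Odd case.} Here $k+\ell=2d-1$ is odd, so Lemma \ref{Moment Calculations Assuming RH Step 1} does not apply verbatim, since it assumes $k+\ell$ is even. We rerun its Case 1 argument with total degree $2d-1$ in place of $2d$. The Landau--Gonek error term becomes $V^{(2d-1)\theta_m+\varepsilon}T^{-1}\log T\,(\sum_p p a_p)^{(2d-1)/2}(\cdots)^{(2d-1)/2}\ll T^{\alpha m(2d-1)(1+\xi\theta_m)-1+\varepsilon}$, which is a negative power of $T$ under the stated condition $\alpha<1/(m(2d-1)(1+\xi\theta_m))$. There is no diagonal $P=Q$ because $k\ne\ell$. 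The only surviving main terms come from $P/Q=p^{k-\ell}$ with $\lvert k-\ell\rvert\geqslant1$. They are bounded by
\[
(\log T)^{-1}\Bigl(\sum_p a_p^{k-\ell}\Lambda_\pi(p)^{k-\ell}\Lambda_\pi(p^{k-\ell})p^{-(k-\ell)}\Bigr)\Bigl(\sum_q a_q^2\lvert\Lambda_\pi(q)\rvert^2 q^{-1}\Bigr)^{\min(k,\ell)}.
\]

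The main obstacle is the case $\lvert k-\ell\rvert=1$. There the $p$-sum is $\sum_p a_p^2\lvert\Lambda_\pi(p)\rvert^2/p\ll\log X$ rather than $O(1)$, so we lose a factor $\log T$. With $\min(k,\ell)=d-1$ this gives $(\log T)^{-1}\cdot\log T\cdot(\log T)^{d-1}=(\log T)^{d-1}$, which is exactly the claimed bound. When $\lvert k-\ell\rvert\geqslant2$ the $p$-sum converges by Hypothesis \textbf{H}$_\pi$ and Lemma \ref{Convergent Sums}, and the bound is smaller. Summing the finitely many binomial terms with bounded coefficients yields $\ll(\log T)^{d-1}$, with implied constants depending only on $\pi,d,\alpha,\xi$.
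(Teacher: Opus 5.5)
Your proposal is correct and follows essentially the same route as the paper. You expand via Lemma \ref{Complex Number Identity}, invoke Lemma \ref{Moment Calculations Assuming RH Step 1} for the even case with the constant check $2^{-2d}\binom{2d}{d}d!=2^{-d}\boldsymbol{\mu}_{2d}$, and for the odd case rerun Case 1 of that lemma with total degree $2d-1$, where the configurations with $\lvert k-\ell\rvert=1$ cost one extra factor of $\log T$.

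One harmless slip: when $\lvert k-\ell\rvert=1$, the leftover prime sum is $\sum_p a_p\lvert\Lambda_\pi(p)\rvert^2/p$, with a single power of $a_p$, since the factor $\Lambda_{\widetilde{\pi}}(p)=\overline{\Lambda_\pi(p)}$ comes from Corollary \ref{when b>a}. It is not $\sum_p a_p^2\lvert\Lambda_\pi(p)\rvert^2/p$, but by Lemma \ref{Rudnick-Sarnak Lemma} it is still $\ll\log X$, so your bound stands.
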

\begin{proof}
All implied constants in the proof may depend at most on $\pi,d,\alpha$ and $\xi$. First consider the case when the exponent is even. Using Lemma \ref{Complex Number Identity}, we have
\begin{align*}
\frac{1}{N_{\pi}(T)}\sum_{\lvert \gamma_{\pi} \rvert \leqslant T} \bigg(\Re \sum_{p \leqslant V} \frac{a_p \Lambda_{\pi}(p)}{p^{\frac{1}{2}+i \gamma_{\pi}}}\bigg)^{2d}= \frac{1}{2^{2 d}}\binom{2d}{d} \Re (\mathcal{A}_{d,d}) + \frac{1}{2^{2d-1}} \sum_{i=0}^{d-1}\binom{2d}{i} \Re (\mathcal{A}_{2d-i,i}),
\end{align*}
where $\mathcal{A}_{k,\ell}$ is given by \eqref{A_k,l Notation Assuming RH}. An application of Lemma \ref{Moment Calculations Assuming RH Step 1} shows that
\begin{align*}
\frac{1}{N_{\pi}(T)} \sum_{\lvert \gamma_{\pi} \rvert \leqslant T} \bigg(\Re \sum_{p \leqslant V} \frac{a_p \Lambda_{\pi}(p)}{p^{\frac{1}{2}+i \gamma_{\pi}}}\bigg)^{2d} &= \frac{d!}{2^{2d}}\binom{2d}{d} \bigg( \sum_{p \leqslant V} \frac{a_p^2 \lvert \Lambda_{\pi}(p) \rvert ^2}{p}\bigg)^d+O ( \max \{1, (\log T)^{d-2}\} ) \\
&= \boldsymbol{\mu}_{2d} \bigg(\frac{1}{2} \sum_{p \leqslant V} \frac{a_p^2 \lvert \Lambda_{\pi}(p) \rvert ^2}{p}\bigg)^d+O ( \max \{1, (\log T)^{d-2}\} ).
\end{align*}
\par
When the exponent is odd, in Lemma \ref{Complex Number Identity} none of the terms $z^i\overline{z}^j$ are such that $i=j$. Following the proof of Lemma \ref{Moment Calculations Assuming RH Step 1} and writing $k+\ell=2d-1$, the contributions arise only from Case 1. Assuming $k>\ell$, the critical situation is when $k-\ell =1$. The bounds corresponding to \eqref{Case 1, when l=0} and \eqref{Case 1, when l neq 0} are then amplified by an extra factor of $\log T$ following an application of Lemma \ref{Rudnick-Sarnak Lemma}. A similar argument holds when $\ell >k$.
\end{proof}
Now we extend our sums to include prime powers. Define
\begin{align}\label{Defining Mean}
\text{Av}_{\pi}(T, X, V, \{a_n\}) : &= \frac{1}{N_{\pi}(T)}\sum_{\lvert \gamma_{\pi} \rvert \leqslant T}\Re \sum_{n \leqslant V} \frac{a_n \Lambda_{\pi}(n)}{n^{ \rho_{\pi}}}\\
\label{Defining SD}
\textrm{and} \quad \text{Var}_{\pi}(T, X, V, \{a_n\}) : &= \frac{1}{2}\sum_{n \leqslant V} \frac{a_n^2 \lvert \Lambda_{\pi} (n) \rvert ^2}{n} - \text{Av}_{\pi}^2(T, X, V, \{a_n\}).
\end{align}
We first prove asymptotic formulas for $\text{Av}_{\pi}$ and $\text{Var}_{\pi}$ without assuming RH for $L(s,\pi)$.
\begin{lemma}\label{Calculating mean}
Let $m \in \mathbb{N}, \pi \in \mathscr{A}_m$ and $\theta_{m} \in [0, \frac{1}{2}-\frac{1}{m^2+1}]$ be an admissible exponent towards the Ramanujan conjecture for $L(s,\pi)$. Suppose $T \geqslant 2$ and $\alpha, \xi, X, V$ be as in \eqref{Defining Parameters} satisfying 
\[\alpha < \frac{1}{m(1+\xi \theta_m)}.\]
Let $\{a_n\}$ be the sequence defined by \eqref{Defining coefficients}. Then
\begin{align}
\textnormal{Av}_{\pi}(T, X, V, \{a_n\}) &= -\alpha+O\left(\frac{1}{\log T}\right), \label{Mean Estimate}\\
\textrm{and} \quad \textnormal{Var}_{\pi}(T, X, V, \{a_n\}) &= \frac{\alpha m \log T}{4}+O(1) \label{SD Estimate}.
\end{align}
The implied constants in the error term depend at most on $\pi, \alpha$ and $\xi$.
\end{lemma}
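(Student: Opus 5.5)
The plan is to expand $\textnormal{Av}_{\pi}$ as a double sum over $n \leqslant V$ and over zeros. Swapping the order of summation gives
\[
\textnormal{Av}_{\pi}(T,X,V,\{a_n\}) = \frac{1}{N_{\pi}(T)}\,\Re \sum_{n \leqslant V} a_n \Lambda_{\pi}(n) \sum_{\lvert \gamma_{\pi} \rvert \leqslant T} n^{-\rho_{\pi}} .
\]
For the inner sum over zeros I will use Corollary \ref{when b>a} with $a=1$ and $b=n \leqslant V = T^{\alpha m \xi}$. This corollary does not need RH, which is why the lemma can be stated unconditionally. It gives
\[
\sum_{\lvert \gamma_{\pi} \rvert \leqslant T} n^{-\rho_{\pi}} = -\frac{T}{\pi}\,\frac{\Lambda_{\widetilde{\pi}}(n)}{n} + O_{\varepsilon}\bigl(n^{\theta_m+\varepsilon}\log^2 T\bigr).
\]
Since $\Lambda_{\widetilde{\pi}}(n)=\overline{\Lambda_{\pi}(n)}$, the main term becomes $-\frac{T}{\pi N_{\pi}(T)} \sum_{n\leqslant V} a_n \lvert\Lambda_{\pi}(n)\rvert^2/n$. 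This is real, so taking real parts changes nothing.

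Next, by \eqref{Zero Counting General} we have $T/(\pi N_{\pi}(T)) = (m\log T)^{-1}(1+O(1/\log T))$. Lemma \ref{Rudnick-Sarnak Lemma} gives $\sum_{n\leqslant V} a_n \lvert \Lambda_{\pi}(n)\rvert^2/n = \log X + O(1)$, because $\xi>1$ makes $(X/V)^2\log V = o(1)$. Since $\log X = \alpha m \log T$, the main term is $-\alpha + O(1/\log T)$. For the error term I bound $\lvert a_n\Lambda_{\pi}(n)\rvert \ll n^{\theta_m}\log n$, using \eqref{Towards Ramanujan Bounds}, and use $a_n\leqslant 1$ trivially. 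This gives
\[
\frac{\log^2 T}{T\log T}\sum_{n\leqslant V} n^{2\theta_m+\varepsilon} \ll \frac{V^{1+2\theta_m+\varepsilon}}{T}.
\]
This is too crude, so I will refine it using the decay of $a_n$. For $n\leqslant X$, $a_n=(n/X)^2$; for $n>X$, $a_n=(X/n)^2$. Alternatively, Cauchy--Schwarz as in Lemma \ref{Moment Calculations Assuming RH Step 1} gives a bound of order $X^{1+\varepsilon}V^{\theta_m}/T \ll T^{\alpha m(1+\xi\theta_m)-1+\varepsilon}$: one factor is $\sum_{n} n a_n \ll X^2$ (the tail with $n>X$ contributes $X^2\log V$) and the other is $\sum_n a_n\lvert\Lambda_{\pi}(n)\rvert^2/n \ll \log T$. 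The hypothesis $\alpha<1/(m(1+\xi\theta_m))$ is exactly what makes this bound a negative power of $T$, once $\varepsilon$ is small. This step is where I expect the main bookkeeping difficulty: I must arrange the bound so that only one factor of $V^{\theta_m}$ appears, with the remaining $n$-sum controlled by $X$ rather than $V$. Putting out $n^{\theta_m+\varepsilon}$ as at most $V^{\theta_m+\varepsilon}$ and then summing $a_n\lvert\Lambda_\pi(n)\rvert$ via Cauchy--Schwarz achieves this.

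Finally, for the variance, Lemma \ref{Rudnick-Sarnak Lemma} gives $\frac12\sum_{n\leqslant V} a_n^2\lvert\Lambda_{\pi}(n)\rvert^2/n = \frac{\log X}{4} + O(1) = \frac{\alpha m\log T}{4}+O(1)$, since $(X/V)^4\log V = O(1)$. Subtracting $\textnormal{Av}_{\pi}^2 = \alpha^2 + O(1/\log T) = O(1)$ then yields \eqref{SD Estimate}.
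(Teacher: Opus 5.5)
Your proposal is correct and follows the paper's proof essentially step for step. You apply Corollary \ref{when b>a} with $a=1$, $b=n$ to isolate the main term $-\frac{T}{\pi N_{\pi}(T)}\sum_{n\leqslant V}a_n\lvert\Lambda_{\pi}(n)\rvert^2/n$, bound the error by Cauchy--Schwarz as $T^{\alpha m(1+\xi\theta_m)-1+\varepsilon}$, use Lemma \ref{Rudnick-Sarnak Lemma} for both sums, and obtain the variance by subtracting $\textnormal{Av}_{\pi}^2=O(1)$. One small correction: the relative error $O(1/\log T)$ in $T/(\pi N_{\pi}(T))$ needs the zero-counting formula with its error term, as the paper does via \cite[Theorem 5.8]{IK2004}; the bare asymptotic \eqref{Zero Counting General} does not give it.
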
 
\begin{proof}
All implied constants in the proof may depend at most on $\pi,\alpha$ and $\xi$. By Corollary \ref{when b>a}, we have
\begin{align}\label{Mean Estimate Step 1}
\text{Av}_{\pi} &= -\frac{T}{\pi N_{\pi}(T)} \sum_{n \leqslant V} \frac{a_n \lvert \Lambda_{\pi} (n) \rvert ^2}{n}+O_{\varepsilon} (T^{\alpha m(1+\xi \theta_m)-1+\varepsilon}),
\end{align}
for any $\varepsilon>0$. By Lemma \ref{Rudnick-Sarnak Lemma},
\begin{equation} \label{Mean Estimate Step 2}
\sum_{n \leqslant V} \frac{a_n \lvert \Lambda_{\pi} (n) \rvert ^2}{n} = \alpha m \log T+O(1).
\end{equation}
We choose $\varepsilon>0$ sufficiently small depending on $\pi,\alpha$ and $\xi$. Substituting \eqref{Mean Estimate Step 2} into \eqref{Mean Estimate Step 1} and applying a more precise form of \eqref{Zero Counting General} (see \cite[Theorem 5.8]{IK2004}), \eqref{Mean Estimate} follows. Now using \eqref{Mean Estimate} and Lemma \ref{Rudnick-Sarnak Lemma} in \eqref{Defining SD}, \eqref{SD Estimate} holds.
\end{proof}
We are now ready to estimate our moments involving sums over prime powers.
\begin{lemma}\label{Moment Calculations Assuming RH Step 3}
Let $m \in \mathbb{N}, \pi \in \mathscr{A}_m$ and $\theta_{m} \in [0, \frac{1}{2}-\frac{1}{m^2+1}]$ be an admissible exponent towards the Ramanujan conjecture for $L(s,\pi)$. Assume the Riemann Hypothesis for $L(s,\pi)$. Let $d \in \mathbb{N}$, $T \geqslant 2$ and $\alpha,\xi, X, V$ be as in \eqref{Defining Parameters} satisfying 
\[\alpha < \frac{1}{2dm(1+\xi \theta_m)}.\]
Let $\{a_n\}$ be the sequence defined by \eqref{Defining coefficients}. Then
\begin{align*}
\frac{1}{N_{\pi}(T)}\sum_{\lvert \gamma_{\pi} \rvert \leqslant T}\bigg(\Re \sum_{n \leqslant V} \frac{a_n \Lambda_{\pi}(n)}{n^{\frac{1}{2}+i \gamma_{\pi}}}\bigg)^{2d}=\boldsymbol{\mu}_{2d}\textnormal{Var}_{\pi}^{d} + O((\log T)^{d-\frac{1}{2}}),
\end{align*}
where $\boldsymbol{\mu}_{2d}$ is given by \eqref{Defining Normal Distribution Coefficients} and $\textnormal{Var}_{\pi}$ is defined by \eqref{Defining SD}. Also, if 
\[\alpha < \frac{1}{m(2d-1)(1+\xi \theta_m)},\]
then we have
\begin{align*}
\frac{1}{N_{\pi}(T)}\sum_{\lvert \gamma_{\pi} \rvert \leqslant T}\bigg(\Re \sum_{n \leqslant V} \frac{a_n \Lambda_{\pi}(n)}{n^{\frac{1}{2}+i\gamma_{\pi}}}\bigg)^{2d-1} \ll (\log T)^{d-1}.
\end{align*}
The implied constants in the error term and the $\ll$ notation depend at most on $\pi, d, \alpha$ and $\xi$.
\end{lemma}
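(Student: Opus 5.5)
Write the full sum as $\mathcal{P}(\gamma_\pi)+\mathcal{R}(\gamma_\pi)$, where
\[
\mathcal{P}(\gamma_\pi)=\sum_{p \leqslant V} \frac{a_p \Lambda_{\pi}(p)}{p^{\frac12+i\gamma_\pi}}
\]
is the prime part. The prime-power part $\mathcal{R}=\mathcal{R}_2+\mathcal{R}_3$ is split into $\mathcal{R}_2$, coming from $p^k\leqslant V$ with $2\leqslant k\leqslant m^2+1$, and $\mathcal{R}_3$, coming from $k>m^2+1$. The plan has two steps. First, show that $\Re\mathcal{R}$ is essentially the constant $\textnormal{Av}_\pi$ plus a piece with small moments. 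Second, expand binomially, using Lemma \ref{Moment Calculations Assuming RH Step 2} for the moments of $\Re\mathcal{P}$ and Hölder for the cross terms.

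\textbf{The tail $\mathcal{R}_3$.} By \eqref{Towards Ramanujan Bounds} we have $|\Lambda_\pi(p^k)|\leqslant m p^{k\theta_m}\log p$. Since $\theta_m\leqslant \frac12-\frac{1}{m^2+1}$, this gives
\[
\sum_{k>m^2+1}\sum_p |\Lambda_\pi(p^k)|\, p^{-k/2}\ll \sum_{k>m^2+1}\sum_p p^{-k/(m^2+1)}\log p \ll 1 .
\]
So $\mathcal{R}_3=O(1)$ uniformly in $\gamma_\pi$.

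\textbf{The middle range $\mathcal{R}_2$.} For fixed $k\in[2,m^2+1]$, set
\[
\mathcal{R}_{2,k}(\gamma_\pi)=\sum_{p^k\leqslant V}\frac{a_{p^k}\Lambda_\pi(p^k)}{p^{k/2+ik\gamma_\pi}} .
\]
Its mean over zeros comes from Corollary \ref{when b>a}. When $k=2$, the main term there is
\[
-\frac{T}{\pi N_\pi(T)}\sum \frac{a_{p^2}|\Lambda_\pi(p^2)|^2}{p^2}\ll \frac{1}{\log T}
\]
by Hypothesis \textbf{H}$_\pi$. When $k\geqslant3$, the arguments $p^k$ are prime powers, so Gonek's main term involves $\Lambda_{\pi}(p^k)$ again and is also small. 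I would then bound $\frac{1}{N_\pi(T)}\sum_{\gamma_\pi}|\mathcal{R}_{2,k}|^{2j}$ for $j\leqslant d$ by expanding $\mathcal{R}_{2,k}^{j}\overline{\mathcal{R}_{2,k}}^{j}$ and applying Corollaries \ref{Gonek assuming RH} and \ref{when b>a}, exactly as in the proof of Lemma \ref{Moment Calculations Assuming RH Step 1}. The diagonal $P=Q$ (products of $p_i^k$) gives $(\sum_p |\Lambda_\pi(p^k)|^2p^{-k})^{j}\ll 1$ by \textbf{H}$_\pi$. The off-diagonal terms where $P/Q$ is a prime power are bounded similarly. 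The terms where it is not a prime power are $\ll T^{\eta-1+\varepsilon}$, since the hypothesis $\alpha<1/(2dm(1+\xi\theta_m))$ keeps $V^{2d(1/2+\theta_m)}$ below $T^{1-\delta}$. The conclusion is that all moments of $\mathcal{R}_2$ up to order $2d$ are $O(1)$.

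\textbf{Assembling.} The mean $\textnormal{Av}_\pi$ in Lemma \ref{Calculating mean} is $O(1)$, and the recentring is already built into $\textnormal{Var}_\pi$. Write $\Re\sum_n=\Re\mathcal{P}+E$, where $\frac{1}{N_\pi(T)}\sum_{\gamma_\pi}|E|^{j}\ll 1$ for every $j\leqslant 2d$. Expand $(\Re\mathcal{P}+E)^{2d}$. The $j$-th cross term is bounded by Hölder by
\[
\big(\mathbb{E}(\Re\mathcal{P})^{2d}\big)^{(2d-j)/2d}\big(\mathbb{E}|E|^{2d}\big)^{j/2d}\ll (\log T)^{d-j/2}.
\]
This is $O((\log T)^{d-1/2})$ for $j\geqslant1$, which is exactly the error term claimed. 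The leading term is $\boldsymbol{\mu}_{2d}\big(\tfrac12\sum_{p\leqslant V}a_p^2|\Lambda_\pi(p)|^2/p\big)^d$ by Lemma \ref{Moment Calculations Assuming RH Step 2}. This matches $\boldsymbol{\mu}_{2d}\textnormal{Var}_\pi^d$ up to $O((\log T)^{d-1})$ for three reasons:
\begin{itemize}
\item the prime-power part of $\sum_n a_n^2|\Lambda_\pi(n)|^2/n$ is $O(1)$ by \textbf{H}$_\pi$;
\item $\textnormal{Av}_\pi^2=O(1)$;
\item $\textnormal{Var}_\pi\asymp\log T$ by \eqref{SD Estimate}.
\end{itemize}
The odd moment is handled the same way: the leading term is $\ll(\log T)^{d-1}$ by Lemma \ref{Moment Calculations Assuming RH Step 2}, and the cross terms are $\ll(\log T)^{(2d-1-j)/2}\leqslant(\log T)^{d-1}$.

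The main obstacle is the moment bound for $\mathcal{R}_2$. In the off-diagonal analysis, a ratio $P/Q$ of products of $k$-th powers can itself be a prime power (for instance $p^{k}\cdot p^{k}/p^{k}$). These configurations have to be enumerated and shown to contribute $O(1)$ using \textbf{H}$_\pi$ and Lemma \ref{Convergent Sums}. If this gets messy, the fallback is to bound $\mathbb{E}|\mathcal{R}_2|^{2d}$ via Jensen and only the second-moment (mean-square) Gonek estimate, accepting a slightly worse error that still fits within $(\log T)^{d-1/2}$.
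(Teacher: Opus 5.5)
Your proposal is correct and is essentially the paper's proof. Like the paper, you split off the prime powers: the tail $k>m^2+1$ is handled via \eqref{Luo-Rudnick-Sarnak Bound}, and the range $2\leqslant k\leqslant m^2+1$ via Jensen plus expansion with Corollaries \ref{Gonek assuming RH} and \ref{when b>a}, the diagonal being $O(1)$ by Hypothesis \textbf{H}$_{\pi}$. You then expand binomially, take the main term from Lemma \ref{Moment Calculations Assuming RH Step 2}, and bound the cross terms by H\"older.

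The obstacle you anticipate does not arise. In $\mathcal{R}_{2,k}^{j}\overline{\mathcal{R}_{2,k}}^{\,j}$ both $P$ and $Q$ are products of exactly $j$ primes, so $(Q/P)^{k}\neq 1$ is never a prime power, as the paper observes; your fallback is therefore unnecessary. One correction: bound the off-diagonal error using the decay of $a_n$, as in Lemma \ref{Moment Calculations Assuming RH Step 1}. The crude claim $V^{2d(1/2+\theta_m)}<T^{1-\delta}$ need not hold when $\xi>2$.
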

\begin{proof}
All implied constants in the proof may depend at most on $\pi, d, \alpha$ and $\xi$. First consider the case when the exponent is even. We write
\begin{align}\label{Binomial Expansion Step}
\frac{1}{N_{\pi}(T)}&\sum_{\lvert \gamma_{\pi} \rvert \leqslant T}\bigg(\Re \sum_{n \leqslant V} \frac{a_n \Lambda_{\pi}(n)}{n^{\frac{1}{2}+i \gamma_{\pi}}}\bigg)^{2d}\notag\\
&= \frac{1}{N_{\pi}(T)}\sum_{j=0}^{2d}\binom{2d}{j} \sum_{\lvert \gamma_{\pi} \rvert \leqslant T} \bigg(\Re \sum_{p \leqslant V} \frac{a_p \Lambda_{\pi}(p)}{p^{\frac{1}{2}+i\gamma_{\pi}}}\bigg)^j\bigg(\Re \sum_{\substack{n \leqslant V \\ n =p^k, k \geqslant2}} \frac{a_n\Lambda_{\pi}(n)}{n^{\frac{1}{2}+i\gamma_{\pi} }}\bigg)^{2d-j}.
\end{align}
By Lemma \ref{Moment Calculations Assuming RH Step 2}, the contribution to \eqref{Binomial Expansion Step} from $j=2d$ is
\begin{equation}\label{Transition 1}
\boldsymbol{\mu}_{2d}\bigg(\frac{1}{2}\sum_{p \leqslant V} \frac{a_p^2  \lvert \Lambda_{\pi}(p) \rvert ^2}{p}\bigg)^d+O( \max \{1, (\log T)^{d-2}\})
\end{equation}
where $\boldsymbol{\mu}_{2d}$ is defined by \eqref{Defining Normal Distribution Coefficients}. Using Hypothesis \textnormal{\textbf{H}$_{\pi}$} (see \cite[Eq. 2.25]{rudnickZerosPrincipalLfunctions1996}),
\begin{equation*}
\textnormal{Var}_{\pi} = \frac{1}{2}\sum_{p \leqslant V} \frac{a_p^2  \lvert \Lambda_{\pi}(p) \rvert ^2}{p}+O(1).
\end{equation*}
Therefore applying Lemma \ref{Calculating mean}, \eqref{Transition 1} is equal to
\begin{align*}
\boldsymbol{\mu}_{2d}\textnormal{Var}_{\pi}^{d}+O((\log T)^{d-1}).
\end{align*}
\par
For $j=0$, note that
\[
\bigg \lvert \sum_{\substack{n \leqslant V \\ n =p^k , k \geqslant2}} \frac{a_n\Lambda_{\pi}(n)}{n^{\frac{1}{2}+i\gamma_{\pi} }} \bigg \rvert^{2d} \ll \bigg \lvert \sum_{\substack{n=p^k \leqslant V \\ 2 \leqslant k \leqslant m^2+1 }} \frac{a_{p^k}\Lambda_{\pi}(p^k)}{p^{\frac{k}{2}+ki\gamma_{\pi} }} \bigg \rvert^{2d}+\bigg \lvert \sum_{\substack{n \leqslant V \\ n =p^k , k > m^2+1}} \frac{a_n\Lambda_{\pi}(n)}{n^{\frac{1}{2}+i\gamma_{\pi} }} \bigg \rvert^{2d}.
\]
Using \eqref{Luo-Rudnick-Sarnak Bound}, it follows that
\[
\sum_{k >m^2+1} \sum_{\substack{n \leqslant V \\ n =p^k}} \frac{a_n\Lambda_{\pi}(n)}{n^{\frac{1}{2}+i\gamma_{\pi} }} \ll 1.
\]
Therefore we obtain
\begin{align}\label{Separating the Prime Powers RH Case Step 1}
\frac{1}{N_{\pi}(T)}\sum_{\lvert \gamma_{\pi} \rvert \leqslant T} \bigg(\Re \sum_{\substack{n \leqslant V \\ n =p^k , k \geqslant2}} \frac{a_n\Lambda_{\pi}(n)}{n^{\frac{1}{2}+i\gamma_{\pi} }}\bigg)^{2d} & \notag \ll \frac{1}{N_{\pi}(T)}\sum_{\lvert \gamma_{\pi} \rvert \leqslant T} \bigg \lvert \sum_{\substack{n=p^k \leqslant V \\ 2 \leqslant k \leqslant m^2+1 }} \frac{a_{p^k}\Lambda_{\pi}(p^k)}{p^{\frac{k}{2}+ki\gamma_{\pi} }} \bigg \rvert^{2d}+1 \\
&\ll \sum_{k=2}^{m^2+1} \frac{1}{N_{\pi}(T)}\sum_{\lvert \gamma_{\pi} \rvert \leqslant T} \bigg \lvert \sum_{\substack{n=p^k \leqslant V}} \frac{a_{p^k}\Lambda_{\pi}(p^k)}{p^{\frac{k}{2}+ki\gamma_{\pi} }} \bigg \rvert^{2d}+1,
\end{align}
where in the last step, we apply Jensen's inequality. To bound the contribution from the prime powers with $2 \leqslant k \leqslant m^2+1$, we follow the approach from Case 2 of Lemma \ref{Moment Calculations Assuming RH Step 1}. Fixing $k$ and opening the $2d$-th power, we obtain
\begin{align*}
\frac{1}{N_{\pi}(T)}\sum_{\substack{p_1, p_2,\dots, p_d \leqslant V^{1/k} \\ q_1,q_2,\dots, q_d \leqslant V^{1/k}}} \frac{(\prod_{i=1}^d a_{p_i^k} \Lambda_{\pi}(p_i^k))(\prod_{j=1}^d a_{q_j^k} \overline{\Lambda_{\pi}(q_j^k)})}{(PQ)^{k/2}}\sum_{\lvert \gamma_{\pi} \rvert \leqslant T} \bigg(\frac{Q}{P} \bigg )^{ki \gamma_{\pi}}, 
\end{align*}
where $P=p_1p_2 \cdots p_d$ and $Q=q_1q_2 \cdots q_d$. The contribution from the diagonal terms $P=Q$ is $O(1)$ by Hypothesis \textnormal{\textbf{H}$_{\pi}$}. For the non-diagonal terms $P \neq Q$, note that $Q/P$ or $P/Q$ is never a prime power and thus by Corollaries \ref{Gonek assuming RH} and \ref{when b>a}, their contribution is also $O( 1)$. Therefore the contribution when $j = 0$ is $O( 1)$. 
\par
Now suppose $j\neq 0, 2d$. By H\"{o}lder's inequality, the sum corresponding to each such $j$ is
\begin{equation*}
\ll \frac{1}{N_{\pi}(T)} \bigg(\sum_{\lvert \gamma_{\pi} \rvert \leqslant T}\bigg(\Re \sum_{p \leqslant V} \frac{a_p \Lambda_{\pi}(p)}{p^{\frac{1}{2}+i \gamma_{\pi}}}\bigg)^{2d}\bigg)^{\frac{j}{2d}}\bigg(\sum_{\lvert \gamma_{\pi} \rvert \leqslant T}\bigg(\Re \sum_{\substack{n \leqslant V \\ n =p^k, k \geqslant2}} \frac{a_n\Lambda_{\pi}(n)}{n^{\frac{1}{2}+i\gamma_{\pi} }} \bigg)^{2d}\bigg)^{\frac{2d-j}{2d}}.
\end{equation*}
Using the estimates for $j=0,2d$, the above is $\ll (\log T)^{d-\frac{1}{2}}$. Combining the estimates for different values of $j$ in \eqref{Binomial Expansion Step}, we arrive at our desired result when the exponent is even.
\par
Now, suppose the exponent is odd. We consider two cases. When $d=1$, by definition and Lemma \ref{Calculating mean},
\[
\frac{1}{N_{\pi}(T)}\sum_{\lvert \gamma_{\pi} \rvert \leqslant T}\Re \sum_{n \leqslant V} \frac{a_n \Lambda_{\pi}(n)}{n^{\frac{1}{2}+\gamma_{\pi}}} = \text{Av}_{\pi} \ll 1.
\]
Now assume $d>1$. For $j=2d-1$, by Lemma \ref{Moment Calculations Assuming RH Step 2},
\begin{align*}
\frac{1}{N_{\pi}(T)}\sum_{\lvert \gamma_{\pi} \rvert \leqslant T}\bigg(\Re \sum_{p \leqslant V} \frac{a_p \Lambda_{\pi}(p)}{p^{\frac{1}{2}+i\gamma_{\pi}}}\bigg)^{2d-1} \ll (\log T)^{d-1}.
\end{align*}
For $j=0$, by Cauchy--Schwarz and our previous case on prime powers for even exponents, we get
\begin{align*}
&\frac{1}{N_{\pi}(T)} \sum_{\lvert \gamma_{\pi} \rvert \leqslant T}\bigg(\Re \sum_{\substack{n \leqslant V \\ n =p^k, k \geqslant 2}} \frac{a_n\Lambda_{\pi}(n)}{n^{\frac{1}{2}+i\gamma_{\pi}}}\bigg)^{2d-1} \\
& \ll \frac{1}{N_{\pi}(T)} \bigg( \sum_{\lvert \gamma_{\pi} \rvert \leqslant T}\bigg(\Re \sum_{\substack{n \leqslant V \\ n =p^k, k \geqslant 2}} \frac{a_n\Lambda_{\pi}(n)}{n^{\frac{1}{2}+i\gamma_{\pi}}}\bigg)^{2d} \bigg)^{\frac{1}{2}} \bigg( \sum_{\lvert \gamma_{\pi} \rvert \leqslant T}\bigg(\Re \sum_{\substack{n \leqslant V \\ n =p^k, k \geqslant 2}} \frac{a_n\Lambda_{\pi}(n)}{n^{\frac{1}{2}+i\gamma_{\pi}}}\bigg)^{2d-2} \bigg)^{\frac{1}{2}} \ll 1.
\end{align*}
When \( j \neq 0 \) or \( 2d-1 \), we first use H\"{o}lder's inequality to bound the contribution by
\begin{align}\label{Holder Application}
\frac{1}{N_{\pi}(T)} \bigg(\sum_{\lvert \gamma_{\pi} \rvert \leqslant T}\bigg \lvert \Re \sum_{p \leqslant V} \frac{a_p \Lambda_{\pi}(p)}{p^{\frac{1}{2}+i \gamma_{\pi}}}\bigg \rvert ^{2d-1}\bigg)^{\frac{j}{2d-1}}\bigg(\sum_{\lvert \gamma_{\pi} \rvert \leqslant T}\bigg \lvert \Re \sum_{\substack{n \leqslant V \\ n =p^k, k \geqslant2}} \frac{a_n\Lambda_{\pi}(n)}{n^{\frac{1}{2}+i\gamma_{\pi} }} \bigg \rvert^{2d-1}\bigg )^{\frac{2d-1-j}{2d-1}}.
\end{align}
For the sum over primes, by Cauchy--Schwarz we raise it to even powers and apply Lemma \ref{Moment Calculations Assuming RH Step 2} to obtain
\begin{align}
\sum_{\lvert \gamma_{\pi}\rvert \leqslant T}&\bigg \lvert \Re \sum_{p \leqslant V} \frac{a_p \Lambda_{\pi}(p)}{p^{\frac{1}{2}+i \gamma_{\pi}}}\bigg \rvert ^{2d-1} \ll (\log T)^{d-\frac{1}{2}}. \label{CS Application}
\end{align}
A similar argument works for the sum over higher prime powers and thus, \eqref{Holder Application} is 
\[
\ll \frac{1}{N_{\pi}(T)} (N_{\pi}(T)(\log X)^{d-\frac{1}{2}} )^{\frac{j}{2d-1}} (N_{\pi}(T))^{\frac{2d-1-j}{2d-1}} \ll (\log T)^{d-1},
\]
when $j \neq 0,2d-1$. This completes the proof.
\end{proof}
\section{Proofs of Theorems \ref{main thm} and \ref{thm:twomMomentConditional}}\label{sec: Conditional 2}
In this section, our objective is to prove Theorems \ref{main thm} and \ref{thm:twomMomentConditional}. We will slightly modify \eqref{Defining Parameters} by imposing \(\xi > \frac{4}{3}\). This additional constraint is necessary for the arguments to follow, and we discuss it further at the end of the section, see Remark \ref{Rem: xi>4/3}. We start with the following lemma.
\begin{lemma}\label{S_av to Mean}
Let $m \in \mathbb{N}, \pi \in \mathscr{A}_m$ and $\theta_{m} \in [0, \frac{1}{2}-\frac{1}{m^2+1}]$ be an admissible exponent towards the Ramanujan conjecture for $L(s,\pi)$. Assume the Riemann Hypothesis for $L(s,\pi)$. Let $T \geqslant 2$ and $\alpha, \xi, X, V$ be as in \eqref{Defining Parameters} satisfying 
\[\alpha< \frac{1}{m(1+\xi \theta_m)} \quad \textrm{and} \quad \xi> \frac{4}{3}.\]
Let $\{a_n\}$ be the sequence defined by \eqref{Defining coefficients}. Then
\begin{equation*}
\Re \mathcal{S}_{\textnormal{av},\pi}(X,T)= \alpha + O\bigg(\frac{1}{\log T} \bigg).
\end{equation*}
The implied constant in the error term depends at most on $\pi, \alpha$ and $\xi$.
\end{lemma}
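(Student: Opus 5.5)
We plan to average the pointwise identity of Lemma \ref{Connecting S to sum over primes} over the zeros $\rho'_{\pi}$. We then split the resulting Dirichlet series at $n=V$. The part $n\leqslant V$ is exactly $-\textnormal{Av}_{\pi}$, which Lemma \ref{Calculating mean} evaluates. The tail $n>V$ we bound pointwise, and this is where $\xi>\frac{4}{3}$ should enter.

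\textbf{Step 1 (reduction to primes).} Under RH, Lemma \ref{Connecting S to sum over primes} gives, for all $\lvert\gamma'_{\pi}\rvert\leqslant T$ outside an exceptional set $\mathcal{E}$ with $\#\mathcal{E}\ll_{\pi}\sqrt{T}\log T$,
\[
\mathcal{S}_{\pi}(X,T,\rho'_{\pi})=-\sum_{n=1}^{\infty}\frac{a_n\Lambda_{\pi}(n)}{n^{\frac12+i\gamma'_{\pi}}}+\frac{m\log(\lvert\gamma'_{\pi}\rvert+3)}{X^2}+O_{\pi}\Big(\frac{\sqrt X}{T}+\frac1{X^2}+\frac{\log T}{\sqrt T}\Big).
\]
For $\rho'_{\pi}\in\mathcal{E}$ we use the trivial bound $\mathcal{S}_{\pi}\ll_{\pi}\log T$, which follows from RH and the local density of zeros. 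Hence $\mathcal{E}$ contributes $\ll \sqrt T\log^2T/N_{\pi}(T)\ll T^{-1/2}\log T$ to the average. Since $X=T^{\alpha m}$ with $0<\alpha m<1$, the term $m\log(\lvert\gamma'\rvert+3)/X^2$ averages to $O(T^{-2\alpha m}\log T)$. The remaining error terms are $O(T^{\alpha m/2-1}+T^{-1/2}\log T)$. All of these are $O(1/\log T)$. Including or excluding $\mathcal{E}$ in the main sum costs the same trivial amount.

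\textbf{Step 2 (the range $n\leqslant V$).} Under RH we have $n^{-\rho_{\pi}}=n^{-\frac12-i\gamma_{\pi}}$. Therefore the average of $-\Re\sum_{n\leqslant V}a_n\Lambda_{\pi}(n)n^{-\frac12-i\gamma'_{\pi}}$ is exactly $-\textnormal{Av}_{\pi}(T,X,V,\{a_n\})$. By \eqref{Mean Estimate} of Lemma \ref{Calculating mean}, valid since $\alpha<1/(m(1+\xi\theta_m))$, this equals $\alpha+O(1/\log T)$.

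\textbf{Step 3 (the tail $n>V$; main obstacle).} Here $a_n=X^2/n^2$. We bound the tail pointwise in $\gamma'$ by
\[
X^2\sum_{n>V}\frac{\lvert\Lambda_{\pi}(n)\rvert}{n^{5/2}}.
\]
The difficulty is that the individual bound $\lvert\Lambda_{\pi}(n)\rvert\ll n^{\theta_m}\log n$ is too weak when $\theta_m>0$. Instead we apply Cauchy--Schwarz on dyadic blocks $(U,2U]$, $U\geqslant V$:
\[
\sum_{U<n\leqslant 2U}\frac{\lvert\Lambda_{\pi}(n)\rvert}{n^{5/2}}\leqslant\Big(\sum_{U<n\leqslant2U}\frac{\lvert\Lambda_{\pi}(n)\rvert^2}{n^2}\Big)^{1/2}\Big(\sum_{U<n\leqslant2U}\frac1{n^3}\Big)^{1/2}.
\]
Partial summation from Lemma \ref{lem:prime_number_theorem} gives the Rankin--Selberg mean-square bound $\sum_{n\leqslant x}\lvert\Lambda_{\pi}(n)\rvert^2\ll_{\pi}x\log x$. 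Hence the first factor is $\ll (U^{-1}\log U)^{1/2}$ and the second is $\ll U^{-1}$, so each block is $\ll U^{-3/2}(\log U)^{1/2}$. Summing over dyadic $U\geqslant V$, the tail is
\[
\ll X^2V^{-3/2}(\log V)^{1/2}=X^{2-\frac{3\xi}{2}}(\log X)^{1/2}.
\]
This is a negative power of $T$ precisely because $\xi>\frac{4}{3}$, so it is $O(1/\log T)$ uniformly in $\gamma'$, and hence also after averaging.

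Combining Steps 1--3 gives $\Re\mathcal{S}_{\textnormal{av},\pi}(X,T)=\alpha+O(1/\log T)$. The implied constants depend only on $\pi,\alpha,\xi$.
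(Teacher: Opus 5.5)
Your proposal is correct and follows the paper's proof essentially step for step. The shared steps are: the reduction via Lemma~\ref{Connecting S to sum over primes} with a trivial bound on the exceptional set; identifying the $n\leqslant V$ part with $-\textnormal{Av}_{\pi}$, which Lemma~\ref{Calculating mean} evaluates; and the same dyadic Cauchy--Schwarz bound $X^{2-\frac{3\xi}{2}}(\log X)^{1/2}$ on the tail $n>V$, which is where $\xi>\frac{4}{3}$ enters.

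One small imprecision: at an exceptional zero the Dirichlet series is only trivially $\ll(X\log X)^{1/2}$, not $\ll\log T$. Since $\alpha m<1$, the resulting contribution $\ll T^{-1/2}(X\log X)^{1/2}$ is still a negative power of $T$, exactly as the paper handles it.
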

\begin{proof}
All implied constants in the proof may depend at most on $\pi, \alpha$ and $\xi$. Applying Lemma \ref{Connecting S to sum over primes}, we have
\begin{align}
\Re \mathcal{S}_{\textnormal{av},\pi}(X,T) &= \frac{1}{N_{\pi}(T)} \sum_{\lvert \gamma_{\pi'} \rvert \leqslant T} \Re \mathcal{S}_{\pi}(X, T, \rho'_{\pi}) \notag \\
& =  - \frac{1}{N_{\pi}(T)} \sum_{\lvert \gamma'_{\pi} \rvert \leqslant T} \Re\sum_{n =1}^{\infty} \frac{a_n\Lambda_{\pi}(n)}{n^{\frac{1}{2}+i\gamma'_{\pi}}}+ \frac{1}{N_{\pi}( T )} \sum_{\substack{\rho_{\pi}' \in \mathcal{E}}} \bigg (  \Re \mathcal{S}_{\pi}(X, T, \rho'_{\pi})+ \Re \sum_{n =1}^{\infty} \frac{a_n\Lambda_{\pi}(n)}{n^{\frac{1}{2}+i\gamma'_{\pi}}} \bigg )\notag \\
&\quad + O_{\pi}\bigg( \frac{\log T}{X^2} + \frac{\sqrt{X}}{T} + \frac{\log T}{\sqrt{T}}\bigg)\label{S-AV Step 1},
\end{align}
where $\mathcal{E}$ is an exceptional set of non-trivial zeros $\rho'_{\pi}=\beta_{\pi}+i\gamma'_{\pi}$ of $L(s,\pi)$ with $\lvert \gamma'_{\pi} \rvert \leqslant T$ and $\# \mathcal{E} \ll_{\pi} \sqrt{T}\log T$. The second term on the right hand side of \eqref{S-AV Step 1} is
\begin{equation}\label{S-AV Step 2}
    \ll \frac{1}{N_{\pi}( T )} \sum_{\substack{\rho'_{\pi} \in \mathcal{E}}} (\log T+(X \log X)^{1/2}) \ll T^{-\delta_1},
\end{equation}
for some $\delta_1 = \delta_1(\pi,\alpha, \xi)>0$. To see this, we first remark that here we don't use any cancellation on the sum over zeros. By Cauchy--Schwarz and Lemma \ref{Rudnick-Sarnak Lemma}, the sum over $n$ is $\ll_{\pi}(X \log X)^{1/2}$. To bound the expression $\Re \mathcal{S}(X, T, \rho'_{\pi})$, we assume RH for $L(s,\pi)$ and apply standard estimates on sums over zeros (see \cite[Eq. 5.32]{IK2004}). For the first term on the right hand side of \eqref{S-AV Step 1}, we write
\begin{align}\label{Tail Convergence 1}
 \frac{1}{N_{\pi}(T)} \sum_{\lvert \gamma'_{\pi} \rvert \leqslant T} \Re\sum_{n =1}^{\infty} \frac{a_n\Lambda_{\pi}(n)}{n^{\frac{1}{2}+i\gamma'_{\pi}}}- \textnormal{Av}_{\pi} = \frac{1}{N_{\pi}(T)} \sum_{\lvert \gamma'_{\pi} \rvert \leqslant T} \Re\sum_{n>V} \frac{a_n\Lambda_{\pi}(n)}{n^{\frac{1}{2}+i\gamma'_{\pi}}}.
\end{align}
where $\textnormal{Av}_{\pi} = \textnormal{Av}_{\pi}(T, X, V, \{a_n\})  $ is as defined in \eqref{Defining Mean}.  
Consider the inner sum over $n>V$ in the right hand side of \eqref{Tail Convergence 1}. By an application of Cauchy--Schwarz and Lemma \ref{lem:prime_number_theorem},
\begin{align}\label{Tail Convergence 2}
\sum_{n>V} \frac{a_n\Lambda_{\pi}(n)}{n^{\frac{1}{2}}} \ll \sum_{k=0}^{\infty} \bigg(  \sum_{n \in (2^kV,2^{k+1}V]} a_n^2 \bigg)^{\frac{1}{2}}\bigg (  \sum_{n \in (2^kV,2^{k+1}V]} \frac{\lvert \Lambda_{\pi}(n) \rvert^2}{n}\bigg)^{\frac{1}{2}} \ll T^{-\delta_2}
\end{align}
for some $\delta_2 = \delta_2(\pi, \alpha, \xi)>0$. Here we have used that $\xi>\frac{4}{3}$. Therefore bounding the outer sum over $\gamma'_{\pi}$ in the right hand side of \eqref{Tail Convergence 1} trivially, we arrive at 
\begin{align}\label{S-AV Step 4}
\frac{1}{N_{\pi}(T)} \sum_{\lvert \gamma'_{\pi} \rvert \leqslant T} \Re\sum_{n =1}^{\infty} \frac{a_n\Lambda_{\pi}(n)}{n^{\frac{1}{2}+i\gamma'_{\pi}}}- \textnormal{Av}_{\pi} \ll T^{-\delta_2}.
\end{align}
Putting together \eqref{S-AV Step 2} and \eqref{S-AV Step 4} into \eqref{S-AV Step 1} and applying Lemma \ref{Calculating mean}, the proof follows.
\end{proof}
\subsection*{Proof of Theorem \ref{thm:twomMomentConditional}}
First suppose that the exponent is even and write $r=2d$. We let $X=T^{\alpha m}$ and choose  $V=X^{\xi}$ for some $\xi = \xi(\pi,d,\alpha)>\frac{4}{3}$ such that 
\[\alpha< \frac{1}{2dm(1+\xi \theta_m)}.\]
Then Lemma \ref{Connecting S to sum over primes}, alongside the argument we used to establish \eqref{Tail Convergence 2}, shows that for all non-trivial zeros \( \rho'_{\pi} = \frac{1}{2} + i \gamma'_{\pi} \) of $L(s,\pi)$ with \( \lvert \gamma'_{\pi} \rvert \leqslant T \) outside an exceptional set of size at most $O_{\pi}(\sqrt{T}\log T)$, we have
\begin{align}\label{Tail Convergence 3}
\Re\mathcal{S}_{\pi}(X,T, \rho'_{\pi}) = - \Re\sum_{n \leqslant V} \frac{a_n\Lambda_{\pi}(n)}{n^{\frac{1}{2} + i \gamma'_{\pi}}} + E'_{\pi}(X,T,\rho_{\pi}'),
\end{align} 
where $E_{\pi}'(X,T,\rho_{\pi}') \ll_{\pi,d,\alpha}T^{-\delta_1}$ for some $\delta_1 = \delta_1(\pi,d,\alpha)>0$. Therefore we have
\begin{align*}
\frac{1}{N_{\pi}(T)}& \sum_{\lvert \gamma'_{\pi} \rvert \leqslant T} (\Re \mathcal{S}_{\pi}(X, T, \rho'_{\pi}) )^{2d}\\
&= \frac{1}{N_{\pi}(T)}\sum_{\lvert \gamma'_{\pi} \rvert \leqslant T} \bigg (-\Re \sum_{n \leqslant V} \frac{a_n\Lambda_{\pi}(n)}{n^{\frac{1}{2}+i\gamma'_{\pi}}} +E'_{\pi}(X,T,\rho_{\pi}')\bigg)^{2d} \\
&\quad + \frac{1}{N_{\pi}( T )} \sum_{\substack{ \rho'_{\pi} \in \mathcal{E}}} \bigg ((\Re \mathcal{S}_{\pi}(X,T, \rho'_{\pi}))^{2d}- \bigg (\Re \sum_{n \leqslant V} \frac{a_n\Lambda_{\pi}(n)}{n^{\frac{1}{2}+i\gamma'_{\pi}}} +E'_{\pi}(X,T,\rho_{\pi}')\bigg)^{2d} \bigg )\\
&= \mathcal{U}_1+\mathcal{U}_2
\end{align*}
say, where $\mathcal{E}$ is an exceptional set of non-trivial zeros $\rho'_{\pi}=\beta_{\pi}+i\gamma'_{\pi}$ of $L(s,\pi)$ with $\lvert \gamma'_{\pi} \rvert \leqslant T$ and $\# \mathcal{E} \ll_{\pi} \sqrt{T}\log T$. An application of Lemma \ref{Moment Calculations Assuming RH Step 3} shows that
\begin{align*}
\mathcal{U}_1 &=\frac{1}{N_{\pi}(T)}\sum_{\lvert \gamma'_{\pi} \rvert \leqslant T} \bigg ( \Re \sum_{n \leqslant V} \frac{a_n\Lambda_{\pi}(n)}{n^{\frac{1}{2}+i\gamma'_{\pi}}}\bigg)^{2d}+O_{\pi,d,\alpha}\bigg(\frac{(\log T)^{d-1}}{T^{\delta_1}}\bigg) \notag \\
&=\boldsymbol{\mu}_{2d}\textnormal{Var}_{\pi}^{d} + O_{\pi,d,\alpha}((\log T)^{d-\frac{1}{2}})
\end{align*}
where $\boldsymbol{\mu}_{2d}$ is given by \eqref{Defining Normal Distribution Coefficients} and $\textnormal{Var}_{\pi}$ is as defined in \eqref{Defining SD}. An argument similar to how we proved \eqref{S-AV Step 2} shows that $\mathcal{U}_2 \ll_{\pi,d, \alpha} T^{-\delta_2}$ for some $\delta_2 = \delta_2(\pi,d,\alpha)>0$. Combining the estimates for $\mathcal{U}_1$ and $\mathcal{U}_2$,
\begin{equation}\label{Raw Moments}
\frac{1}{N_{\pi}(T)}\sum_{\lvert \gamma'_{\pi} \rvert \leqslant T} (\Re \mathcal{S}_{\pi}(X, T, \rho'_{\pi}) )^{2d} =\boldsymbol{\mu}_{2d}\textnormal{Var}_{\pi}^{d} + O_{\pi,d,\alpha}((\log T)^{d-\frac{1}{2}}).
\end{equation}
Applying Lemma \ref{Calculating mean} and \ref{S_av to Mean}, it follows that
\begin{equation*}
\mathcal{M}_{\pi, 2d}(T^{\alpha m},T) = \boldsymbol{\mu}_{2d}\left ( \frac{\alpha m \log T}{4} \right)^{d} + O_{\pi,d,\alpha}((\log T)^{d-\frac{1}{2}}).
\end{equation*}
When the exponent is odd, the proof is similar except that $\mathcal{U}_1 \ll_{\pi,d,\alpha} (\log T)^{d-1}$. \qed
\subsection*{Proof of Theorem \ref{main thm}}
For each fixed $r\in \mathbb{N}$, there exists $J_r \in \mathbb{N}$ such that \[
\alpha_j < \frac{1}{rm(1+\frac{4}{3} \theta_m)}\]
for all $j \geqslant J_r$. By Theorem \ref{thm:twomMomentConditional}, for each such $\alpha_j$, there exists $U_{j,r}(\pi,\alpha_j)>0$ depending on $\pi$ and $\alpha_j$ such that for any $T_j \geqslant U_{j,r} $, we have
\begin{align*}
\frac{1}{N_{\pi}(T_j)}\sum_{\lvert \gamma'_{\pi} \rvert \leqslant T_j} \bigg ( \frac{\alpha_{j} m \log T_j}{4} \bigg)^{-\frac{r}{2}} \bigg( \Re \mathcal{S}_{\pi}(T_j^{\alpha_{j} m}, T_j, \rho'_{\pi}) -\alpha_j\bigg)^{r} =\boldsymbol{\mu}_{r} +O_{\pi,r}((\log T_j)^{-\frac{1}{2}}),
\end{align*}
where $\boldsymbol{\mu}_r$ is defined by \eqref{Defining Normal Distribution Coefficients}. Define the sequence $\{V_j\}_{j \in \mathbb{N}}$ as follows. For all $j < J_1$, let $V_j=3$. As $j$ varies over $\mathbb{N}$, we choose  
\[
V_{j}= \max\{U_{j,1},U_{j,2},\cdots,U_{j,r}\}, \quad J_{r} \leqslant j < J_{r+1}.
\]
Then for any sequence $\{T_j\}_{j \in \mathbb{N}}$ such that $T_j \geqslant V_j$
 for all $j \in \mathbb{N}$, and for every positive integer $r$, we have
\begin{align}\label{Final Distribution Statement}
\frac{1}{N_{\pi}(T_j)}\sum_{\lvert \gamma'_{\pi} \rvert \leqslant T_j} \bigg ( \frac{\alpha_{j} m \log T_j}{4} \bigg)^{-\frac{r}{2}} \bigg( \Re \mathcal{S}_{\pi}(T_j^{\alpha_{j} m}, T_j, \rho'_{\pi}) -\alpha_j \bigg)^{r} \rightarrow \boldsymbol{\mu}_{r},\hspace{0.2cm}  \textrm{as} \hspace{0.2cm}  j \to \infty.
\end{align}
The conclusion of Theorem \ref{main thm} now follows by employing the moment method along with an application of the Berry--Esseen theorem. This is a well-known argument, for example, see Davenport--Erd\H{o}s \cite{davenportDistributionQuadraticHigher1952}, Montgomery--Soundararajan \cite{montgomeryPrimesShortIntervals2004} for such applications. \qed

\begin{sectionremark}\label{Rem: xi>4/3}
The additional restriction \(\xi > \frac{4}{3}\), compared to the results in Section \ref{sec: Conditional 1}, is crucial for establishing \eqref{Tail Convergence 3}. While one may improve the results for \eqref{Tail Convergence 2} by leveraging the outer sum over zeros, \eqref{Tail Convergence 3} critically depends on the choice \(\xi > \frac{4}{3}\). This dependence arises because the sequence \(\{a_n\}\) defined by \eqref{Defining coefficients}, originating from the weight function \(w(u)\) in \eqref{weight}, does not decay rapidly enough. This issue does not occur when using smooth, compactly supported weight functions in Section \ref{sec: general weight functions}.
\end{sectionremark}
\section{Moments of Sums over Primes: Part II}\label{sec: unconditional 1}
Let $m \in \mathbb{N}, \pi \in \mathscr{A}_m$ and $\mathcal{\widetilde{S}}_{\pi}(X,T,\rho'_{\pi})$ be as defined in \eqref{Shifted S}. In this section, we focus on obtaining results about the distribution of $\mathcal{\widetilde{S}}_{\pi}(X,T,\rho'_{\pi})$ and thereby addressing the issue of uniformity of $\alpha$ with respect to $T$. To do so, we shall replace the assumption of the Riemann Hypothesis for $L(s,\pi)$ with the following zero density hypothesis for $L(s,\pi)$ which we alluded to in Section \ref{sec: Introduction}.\\

\noindent \textbf{Hypothesis Z$_{\pi}$:}
There exist constants $A_{\pi}>0,B_{\pi}>0$ (depending on $\pi$) such that
$$
\begin{aligned}
N_{\pi}(\sigma, T) & =\# \left\{\rho_{\pi}= \beta_{\pi}+i \gamma_{\pi}: \sigma \leqslant \beta_{\pi}, \lvert \gamma_{\pi} \rvert \leqslant T, L(\rho_{\pi},\pi) =0 \right \}  \ll_{\pi} T^{1-A_{\pi}(\sigma-\frac{1}{2})} \log^{B_{\pi}} T,
\end{aligned}
$$
uniformly for $\sigma \geqslant \frac{1}{2}$ and $T \geqslant 2$.\\

The following proposition is crucial towards establishing results without assuming RH.
\begin{proposition}\label{Crucial RH removing Lemma}
Let $m \in \mathbb{N}$, $ \pi \in \mathscr{A}_m$ and assume Hypothesis \textnormal{\textbf{Z$_{\pi}$}}. Suppose $T,V \geqslant 3$ satisfying
\begin{align}\label{A restriction on V}
\log V =o \left( \frac{\log T}{\log \log T} \right).
\end{align}
Let $k,\ell \in \mathbb{N}$ and suppose $\{p_1,p_2,\dots,p_k\}$ and $\{q_1,q_2,\dots,q_\ell\}$ be two sets of primes satisfying $2\leqslant p_i,q_j \leqslant V$ for all $1 \leqslant i \leqslant k$ and $1 \leqslant j \leqslant \ell$. Let $P = \prod_{i=1}^{k}p_i$ and $Q = \prod_{j=1}^{\ell}q_j$. Then
\begin{align}\label{Eq: Bpi Analysis}
\sum_{\substack{\lvert \gamma_{{\pi}} \rvert \leqslant T} } (PQ)^{\beta_{\pi}-\frac{1}{2}} \bigg (\frac{Q}{P} \bigg )^{i \gamma_{\pi}} = \sum_{\substack{ \lvert \gamma_{\pi} \rvert \leqslant T} } \bigg (\frac{Q}{P}\bigg )^{\rho_{\pi}-\frac{1}{2}}+O_{\pi, k,\ell}
\bigg (\frac{T \log^2 V}{(\log T)^{2-B_{\pi}}} \bigg ).
\end{align}
\end{proposition}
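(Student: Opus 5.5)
The plan is to compare the two sums term by term over the zeros. Write $u=u_\rho=\beta_{\pi}-\tfrac12$ for each zero $\rho_{\pi}=\beta_{\pi}+i\gamma_{\pi}$. Then
\[
(PQ)^{u}\Big(\frac{Q}{P}\Big)^{i\gamma_{\pi}}-\Big(\frac{Q}{P}\Big)^{\rho_{\pi}-\frac12}
=\Big(\frac{Q}{P}\Big)^{u+i\gamma_{\pi}}\big(P^{2u}-1\big),
\]
so only zeros off the critical line contribute. Since $P,Q\leqslant V^{\max(k,\ell)}$ and $0<\beta_{\pi}<1$, the difference $D_\rho$ is bounded by
\[
|D_\rho|\ll_{k,\ell}\min\{1,|u|\log V\}\,V^{(k+\ell)|u|},
\]
and by Taylor expansion for $|u|\log V\leqslant 1$ we have
\[
D_\rho=\Big(\frac{Q}{P}\Big)^{i\gamma_{\pi}}\Big(2u\log P+O_{k,\ell}(u^2\log^2 V)\Big).
\]
Thus the task reduces to bounding
\[
\sum_{|\gamma_{\pi}|\leqslant T} g(|u_\rho|),
\]
where $g$ is an increasing weight with $g(0)=0$. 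I will evaluate this as a Stieltjes integral against $N_{\pi}(\tfrac12+u,T)$, integrate by parts, and insert Hypothesis \textbf{Z$_{\pi}$}. Zeros with $\beta_{\pi}<\tfrac12$ have to be handled as well: I would try to reflect them through the functional equation \eqref{Functional Equation} to zeros $1-\overline{\rho_{\pi}}$ of $L(s,\widetilde{\pi})$, and need a density bound there too. Either one argues that \textbf{Z$_{\pi}$} transfers to $\widetilde\pi$ (the zero sets are related by complex conjugation), or one simply counts zeros with $|\beta_{\pi}-\tfrac12|\geqslant\sigma-\tfrac12$.

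The key consequence of the restriction \eqref{A restriction on V} is
\[
V^{(k+\ell)u}\,T^{-A_{\pi}u}\leqslant T^{-A_{\pi}u/2}
\]
for $T$ large, since $(k+\ell)\log V=o(\log T)$. The exponential growth in $V$ is therefore absorbed, and for the quadratic part one gets
\[
\int_0^{1/2}\frac{d}{du}\big(u^2\log^2V\big)\,T^{1-A_{\pi}u/2}\log^{B_{\pi}}T\,du
\ll \frac{T\log^{B_{\pi}}T\,\log^2V}{\log^2T},
\]
which is exactly the claimed error $T\log^2V/(\log T)^{2-B_{\pi}}$. The range $|u|\log V>1$ is handled the same way, using $g\ll V^{(k+\ell)u}$ together with the factor $T^{-A_{\pi}/\log V}$ gained there, which is far smaller than needed.

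The main obstacle is the linear term $2\log P\sum u_\rho (Q/P)^{i\gamma_{\pi}}$. Bounded trivially by the same integration, it gives only $T\log V\,(\log T)^{B_{\pi}-1}$. That is too large by a factor $\log T/\log V$. To beat it I would first try to exploit the symmetry of the off-line zeros. For each zero $\beta_{\pi}+i\gamma_{\pi}$ with $\beta_{\pi}>\tfrac12$, the reflected zero at $1-\beta_{\pi}+i\gamma_{\pi}$ (of $\pi$ itself in the self-dual case, and in general of $\widetilde\pi$ after pairing with the conjugate) has the opposite value of $u$ but the same oscillating factor $(Q/P)^{i\gamma_{\pi}}$. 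The first-order contributions of the two then cancel, leaving only the $u^2\log^2V$ terms bounded above. If this exact pairing fails for non-self-dual $\pi$, the fallback is to keep the factor $(Q/P)^{u}$ inside the main term, as the right-hand side of \eqref{Eq: Bpi Analysis} already does, and to re-center the expansion around the symmetric weight $\cosh(u\log(PQ))$ versus $\cosh(u\log(Q/P))$. The odd parts of the two sides then agree identically, and only the even (quadratic) difference survives.
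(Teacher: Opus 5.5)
Your argument is correct and is essentially the paper's proof: pair each zero $\beta_{\pi}+i\gamma_{\pi}$ with $1-\beta_{\pi}+i\gamma_{\pi}$ so that the linear term $2u\log P$ cancels. Then bound the paired difference $2\cosh(u\log PQ)-2\cosh(u\log(Q/P))$ by $\ll_{k,\ell} u^2\log^2V\,V^{(k+\ell)u}$, and sum against Hypothesis \textbf{Z$_{\pi}$} by partial summation. Your single integration using $V^{(k+\ell)u}T^{-A_{\pi}u}\leqslant T^{-A_{\pi}u/2}$ merely replaces the paper's split at $\delta=K\log\log T/\log T$, and so needs only $\log V=o(\log T)$.

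The pairing holds for every $\pi$, not only self-dual ones. If $L(\rho_{\pi},\pi)=0$, then $L(1-\rho_{\pi},\widetilde{\pi})=0$ by the functional equation, and since $\overline{L(s,\pi)}=L(\overline{s},\widetilde{\pi})$ this gives $L(1-\overline{\rho_{\pi}},\pi)=0$ with the same multiplicity. This involution also takes care of the zeros with $\beta_{\pi}<\tfrac12$ without any density bound for $\widetilde{\pi}$. Your fallback should be dropped: without this symmetry the odd parts $\sinh(u\log PQ)$ and $\sinh(u\log(Q/P))$ do not agree, and nothing cancels.
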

\begin{proof}
Let
\begin{align*}
 & H_1 = \sum_{\substack{\lvert \gamma_{{\pi}} \rvert \leqslant T} } (PQ)^{\beta_{\pi}-\frac{1}{2}} \bigg (\frac{Q}{P} \bigg )^{i \gamma_{\pi}}, \quad H_2 = \sum_{\substack{ \lvert \gamma_{\pi} \rvert \leqslant T} } \bigg (\frac{Q}{P}\bigg )^{i\gamma_{\pi}} \quad \textnormal{and}  \quad H_3 = \sum_{\substack{ \lvert \gamma_{\pi} \rvert \leqslant T} } \bigg (\dfrac{Q}{P}\bigg )^{\rho_{\pi}-\frac{1}{2}}.
\end{align*}
Choose $\delta = (K\log \log T)/\log T$ for some $K=K(A_{\pi}, B_{\pi})$ sufficiently large depending only on $A_{\pi}$ and $B_{\pi}$. We write
\begin{align}\label{Traingle Inequality 1}
\lvert H_1-H_2\rvert & \leqslant \bigg \lvert \sum_{\substack{ \lvert \gamma_{{\pi}} \rvert \leqslant T\\ \mid \beta_{{\pi}}-\frac{1}{2} \mid < \delta}}((PQ)^{\beta_{{\pi}}-\frac{1}{2}}-1) \bigg (\frac{Q}{P}\bigg )^{i\gamma_{{\pi}}}+\sum_{\substack{\lvert \gamma_{{\pi}} \rvert \leqslant T\\ \mid \beta_{{\pi}}-\frac{1}{2} \mid \geqslant\delta}}((PQ)^{\beta_{{\pi}}-\frac{1}{2}}-1) \bigg (\frac{Q}{P} \bigg)^{i\gamma_{{\pi}}} \bigg \rvert \notag \\
&\ll_{\pi, k,\ell} \sum_{\substack{ \lvert \gamma_{{\pi}} \rvert \leqslant T\\ \frac{1}{2}< \beta_{{\pi}} <\frac{1}{2}+ \delta}}((PQ)^{\beta_{{\pi}}-\frac{1}{2}}+(PQ)^{\frac{1}{2}-\beta_{{\pi}}}-2)  \notag \\
&\quad +\sum_{\substack{ 
 \lvert \gamma_{{\pi}} \rvert \leqslant T\\ \beta_{{\pi}} \geqslant \frac{1}{2}+ \delta}}(PQ)^{\delta}+\log V \int_{1 / 2+\delta}^1 (PQ)^{\sigma-1 / 2} N_{\pi}(\sigma, T)\,  d \sigma
\end{align}
By Hypothesis \textbf{Z$_{\pi}$}, our choice of $\delta$ and \eqref{A restriction on V}, we have
\begin{align}\label{Last Two Terms}
\sum_{\substack{ 
 \lvert \gamma_{{\pi}} \rvert \leqslant T\\ \beta_{{\pi}} \geqslant \frac{1}{2}+ \delta}}(PQ)^{\delta}+\log V \int_{1 / 2+\delta}^1 (PQ)^{\sigma-1 / 2} N_{\pi}(\sigma, T)\,  d \sigma \ll_{\pi, k,\ell} \frac{T}{(\log T)^{100}},
\end{align}
say. So we focus on the first term on the right hand side of \eqref{Traingle Inequality 1}. By Taylor expansion, this is
\begin{align}\label{Taylor Expansion}
\ll_{\pi, k, \ell} \log^2 V \sum_{\substack{ 
 \lvert \gamma_{{\pi}} \rvert \leqslant T\\  \frac{1}{2}< \beta_{{\pi}} <\frac{1}{2}+ \delta }} (\beta_{{\pi}}-\frac{1}{2})^2.
\end{align}
By partial summation and Hypothesis \textbf{Z$_{\pi}$},
\begin{align*}
\sum_{\substack{ \lvert \gamma_{{\pi}} \rvert \leqslant T\\ \frac{1}{2} < \beta_{{\pi}}<\frac{1}{2}+ \delta}}(\beta_{\pi}-\frac{1}{2})^2 \ll _{\pi, k,\ell} T \log^{B_{\pi}} T \int_{0}^{\delta} \theta T^{-A_{\pi}\theta} \diff  \theta + \frac{T}{(\log T)^{100}} \ll _{\pi, k,\ell} \frac{T}{(\log T)^{2-B_{\pi}}}.
\end{align*}
Therefore along with \eqref{Traingle Inequality 1} and \eqref{Last Two Terms}, we see that
\begin{align}\label{H1-H3}
\lvert H_1-H_2 \rvert \ll_{\pi, k,\ell}\frac{T \log^2 V}{(\log T)^{2-B_{\pi}}}.
\end{align}
Note that
\[\Big{(}\dfrac{Q}{P}\Big{)}^{i\gamma_{\pi}} = \Big{(}\dfrac{Q}{P}\Big{)}^{\rho_{\pi}-\frac{1}{2}}+\Big{(}\dfrac{Q}{P}\Big{)}^{i\gamma_{\pi}}\bigg(1-\Big{(}\dfrac{Q}{P}\Big{)}^{\beta_{\pi}-\frac{1}{2}}\bigg),
\]
and hence using techniques similar to how we established \eqref{H1-H3}, we deduce that
\begin{align}\label{H2-H3}
\lvert H_2-H_3\rvert &\ll_{\pi, k,\ell} \bigg \lvert \sum_{\substack{ \lvert \gamma_{{\pi}} \rvert \leqslant T\\ \mid \beta_{{\pi}}-\frac{1}{2} \mid < \delta}}\bigg (\bigg(\frac{Q}{P}\bigg)^{\beta_{{\pi}}-\frac{1}{2}}-1\bigg )\bigg \rvert +\frac{T}{(\log T)^{100}} \ll_{\pi, k,\ell} \frac{T \log^2 V}{(\log T)^{2-B_{\pi}}}.
\end{align}
Combining \eqref{H1-H3} and \eqref{H2-H3} and applying triangle inequality, we arrive at our desired conclusion.
\end{proof}
Fix $\xi>1$. Suppose $T\geqslant 3$ and let $\alpha, X,V$ satisfy
\begin{align}
\label{Defining Parameters II}
0<\alpha< \frac{1}{m},\quad X=(\log T)^{\alpha m} \quad \textrm{and} \quad V=X^{\xi}.
\end{align}
Equipped with Proposition \ref{Crucial RH removing Lemma}, we establish an analogue of Lemma \ref{Moment Calculations Assuming RH Step 1} for small ranges of $X$ compared to $T$. Our proof strategy is similar to Lemma \ref{Moment Calculations Assuming RH Step 1}. We highlight only the differences. 
\begin{lemma}\label{General Moments First Step}
Let $m \in \mathbb{N}$ and $\pi \in \mathscr{A}_m$. Assume Hypothesis \textnormal{\textbf{Z$_{\pi}$}} for $L(s,\pi)$ with $B_{\pi} <3$. Suppose $k, \ell$ be non-negative integers with $k+\ell=2d$ for some $d \in \mathbb{N}$. Let $T \geqslant 3$ and $\alpha,\xi, X, V$ be as in \eqref{Defining Parameters II} satisfying $d\alpha m < 3-B_{\pi}$. Let $\{a_n\}$ be the sequence defined by \eqref{Defining coefficients}. Then
\begin{align*}
\frac{1}{N_{\pi}(T)}\sum_{\lvert \gamma_{\pi} \rvert \leqslant T}\bigg(\sum_{p \leqslant V} \frac{a_p \Lambda_{\pi}(p)}{p^{\rho_{\pi}}}\bigg)^k\bigg(\sum_{q \leqslant V} \frac{a_q \overline{\Lambda_{\pi}(q)} }{q^{\overline{\rho_{\pi}}}}\bigg)^\ell &=\boldsymbol{\delta}_{k\ell} d!\bigg( \sum_{p \leqslant V} \frac{a_p^2 \lvert \Lambda_{\pi}(p)\rvert^2}{p}\bigg)^d\\ &\quad+O( \boldsymbol{\delta}_{k\ell} \max \{1, (\log \log T)^{d-2}\}+(\log T)^{-\varepsilon}),
\end{align*}
for some $\varepsilon>0$ depending at most on $\pi,d,\alpha$ and $\xi$. Here $p,q$ run over primes up to $V$ and $\boldsymbol{\delta}_{k\ell}=1$ if $k=\ell$, and zero otherwise. The implied constant in the error term depends at most on $\pi, d, \alpha$ and $\xi$.
\end{lemma}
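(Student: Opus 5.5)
We follow the proof of Lemma \ref{Moment Calculations Assuming RH Step 1}, the one new point being that without RH the inner sum over zeros is $\sum (PQ)^{\beta_\pi-\frac12}(Q/P)^{i\gamma_\pi}$ rather than $\sum (Q/P)^{i\gamma_\pi}$. Expanding the $k$-th and $\ell$-th powers, with $P=p_1\cdots p_k$ and $Q=q_1\cdots q_\ell$, we write
\[
\frac{1}{N_{\pi}(T)}\sum_{\substack{p_1,\dots,p_k\leqslant V\\ q_1,\dots,q_\ell\leqslant V}}\frac{\prod_{i} a_{p_i}\Lambda_\pi(p_i)\prod_j a_{q_j}\overline{\Lambda_\pi(q_j)}}{(PQ)^{\frac12}}\sum_{\lvert\gamma_\pi\rvert\leqslant T}(PQ)^{\frac12-\beta_\pi}\Big(\frac{Q}{P}\Big)^{i\gamma_\pi}.
\]
The exponent here is $\tfrac12-\beta_\pi$ rather than $\beta_\pi-\tfrac12$. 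Pairing $\rho_\pi\leftrightarrow 1-\overline{\rho_\pi}$, which is a zero of $L(s,\widetilde\pi)$, puts it in the form of Proposition \ref{Crucial RH removing Lemma}. Alternatively, one can rerun the proof of that proposition: the Taylor bound there is symmetric in the sign of $\beta_\pi-\tfrac12$. We then replace each inner sum by $\sum_{\lvert\gamma_\pi\rvert\leqslant T}(Q/P)^{\rho_\pi-\frac12}$, at a cost of $O(T\log^2V/(\log T)^{2-B_\pi})$. Hypothesis \eqref{A restriction on V} holds, since $\log V=\xi\alpha m\log\log T$.

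The replaced sums are handled by the unconditional parts \eqref{Gonek RH Equation} and \eqref{Gonek RH Equation P<Q} of Corollaries \ref{Gonek assuming RH} and \ref{when b>a}, applied with $a/b=Q/P$ or $P/Q$. Since $P,Q\leqslant V^{2d}=(\log T)^{2d\xi\alpha m}$, all powers of $P,Q$ are only polylogarithmic. Hence the Gonek error terms contribute $(\log T)^{O(1)}/T$, which is negligible.

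The main terms are then treated exactly as in Lemma \ref{Moment Calculations Assuming RH Step 1}.
\begin{itemize}
\item[(i)] For $k=\ell$ and $P=Q$, the count gives $d!\big(\sum_p a_p^2\lvert\Lambda_\pi(p)\rvert^2/p\big)^d$. The repeated-prime corrections, via Lemma \ref{Convergent Sums} and Lemma \ref{Rudnick-Sarnak Lemma}, are now $O((\log X)^{d-2})=O((\log\log T)^{d-2})$.
\item[(ii)] For $P\neq Q$ with $P/Q$ or $Q/P$ a prime power (this forces $k\neq\ell$, $\lvert k-\ell\rvert\geqslant2$), the term $\frac{T}{\pi}\Lambda_\pi(\cdot)$ divided by $N_\pi(T)$ gives a factor $1/\log T$. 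By Hypothesis \textbf{H}$_\pi$ and Lemma \ref{Rudnick-Sarnak Lemma}, this contributes $\ll (\log X)^{\ell}/\log T\ll(\log T)^{-1/2}$.
\end{itemize}
This matches the shape $O(\boldsymbol{\delta}_{k\ell}\max\{1,(\log\log T)^{d-2}\}+(\log T)^{-\varepsilon})$ of the statement.

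The main obstacle is the accumulated error from Proposition \ref{Crucial RH removing Lemma}. It is applied to each tuple, so the crude total is
\[
\frac{1}{N_\pi(T)}\cdot\frac{T(\log\log T)^2}{(\log T)^{2-B_\pi}}\sum_{P,Q}\frac{\prod a_{p_i}\lvert\Lambda_\pi(p_i)\rvert\prod a_{q_j}\lvert\Lambda_\pi(q_j)\rvert}{(PQ)^{1/2}}.
\]
By Cauchy--Schwarz, each $p$-sum is $\ll (\sum_{p\leqslant V}a_p)^{1/2}(\log X)^{1/2}\ll (X\log X)^{1/2}$, using that $a_p$ decays like $(X/p)^2$ beyond $X$. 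So the $2d$-fold sum is $\ll X^{d}(\log X)^{d}$, and the total is $\ll (\log T)^{d\alpha m-3+B_\pi}(\log\log T)^{d+2}$. The hypothesis $d\alpha m<3-B_\pi$ gives a saving $(\log T)^{-\varepsilon}$ for some $\varepsilon>0$ depending on $\pi,d,\alpha,\xi$. The delicate point is checking that the implied constant in Proposition \ref{Crucial RH removing Lemma} is uniform over tuples (it depends only on $k,\ell$). It also remains to verify that the $\delta$-truncation there still wins after summing over all tuples, which holds because the $T/(\log T)^{100}$ terms absorb any polylogarithmic factor.
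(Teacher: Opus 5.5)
Your proposal is correct and follows essentially the same route as the paper. You expand the powers, use the symmetry $\rho_\pi \mapsto 1-\overline{\rho_\pi}$ together with Proposition \ref{Crucial RH removing Lemma} to replace each inner sum by $\sum_{\lvert\gamma_\pi\rvert\leqslant T}(Q/P)^{\rho_\pi-\frac12}$, control the accumulated error by Cauchy--Schwarz and $d\alpha m<3-B_\pi$, and then redo the diagonal/off-diagonal analysis of Lemma \ref{Moment Calculations Assuming RH Step 1} with the unconditional Gonek formulas.

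One small correction: $1-\overline{\rho_\pi}$ is a zero of $L(s,\pi)$ itself, with the same ordinate $\gamma_\pi$, not a zero of $L(s,\widetilde{\pi})$. That is exactly why the pairing is an involution of your sum that turns the exponent $\frac12-\beta_\pi$ into $\beta_\pi-\frac12$.
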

\begin{proof}
All implied constants in the proof are allowed to depend on $\pi, d, \alpha$ and $\xi$. Let
\begin{equation}\label{B_k,l Notation 1}
\mathcal{B}_{k,\ell} = \frac{1}{N_{\pi}(T)}\sum_{\lvert \gamma_{\pi} \rvert \leqslant T} \bigg(\sum_{p \leqslant V} \frac{a_p \Lambda_{\pi}(p)}{p^{\rho_{\pi}}}\bigg)^k\bigg(\sum_{q \leqslant V} \frac{a_q  \overline{\Lambda_{\pi}(q)}}{q^{\overline{\rho_{\pi}}}}\bigg)^\ell.
\end{equation}
We break our proof into two cases.\\

\noindent \textbf{Case 1 : When $k \neq \ell$.}
Assume $k > \ell$. When $\ell=0$, similar to Lemma \ref{Moment Calculations Assuming RH Step 1} we obtain $\mathcal{B}_{k,0} \ll (\log T)^{-1}.$ Now suppose $\ell > 0$. By \eqref{Functional Equation}, we can write
\begin{align}
\mathcal{B}_{k,\ell}&=\frac{1}{N_{\pi}(T)}\sum_{\substack{p_1, p_2,\dots, p_k \leqslant V \\ q_1,q_2,\dots, q_\ell \leqslant V}} \bigg (\prod_{i=1}^k a_{p_i} \Lambda_{\pi}(p_i)\bigg )\bigg (\prod_{j=1}^\ell a_{q_j}\overline{\Lambda_{\pi}(q_j)}\bigg ) \sum_{\lvert \gamma_{\pi} \rvert \leqslant T} P^{-\rho_{\pi}}Q^{-\overline{\rho_{\pi}}}, \notag \\
&=\frac{1}{N_{\pi}(T)}\sum_{\substack{p_1, p_2,\dots, p_k \leqslant V \\ q_1,q_2,\dots, q_\ell \leqslant V}} \dfrac{(\prod_{i=1}^k a_{p_i} \Lambda_{\pi}(p_i))(\prod_{j=1}^\ell a_{q_j} \overline{\Lambda_{\pi}(q_j)})}{(PQ)^{\frac{1}{2}}} \sum_{\substack{\lvert \gamma_{{\pi}} \rvert \leqslant T} } (PQ)^{\beta_{\pi}-\frac{1}{2}} \bigg (\frac{Q}{P} \bigg )^{i \gamma_{\pi}}, \label{Transition First Step}
\end{align}
where $P=p_1p_2 \cdots p_k$, $Q=q_1q_2 \cdots q_\ell$ and we use the fact that if $L(\rho_{\pi}, \pi)=0$ then $L(1-\overline{\rho_{\pi}}, \pi)=0$. By Proposition \ref{Crucial RH removing Lemma}, the inner sum over $\gamma_{{\pi}}$ in \eqref{Transition First Step} is
\begin{equation}\label{Replacement}
\sum_{\substack{ \lvert \gamma_{\pi} \rvert \leqslant T} } \bigg (\frac{Q}{P}\bigg )^{\rho_{\pi}-\frac{1}{2}}+O \bigg (\frac{T \log^2 V}{(\log T)^{2-B_{\pi}}} \bigg ).
\end{equation}
By Cauchy--Schwarz and Lemma \ref{Rudnick-Sarnak Lemma}, the contribution to \eqref{Transition First Step} from the error term in \eqref{Replacement} is
\begin{equation*}
\ll \frac{\log^2 V}{(\log T)^{3-B_{\pi}}}\bigg ( \sum_{p \leqslant V} \frac{a_p \lvert \Lambda_{\pi}(p) \rvert}{\sqrt{p}} \bigg)^{2d} \ll_{\varepsilon} \frac{X^{d+\varepsilon}}{(\log T)^{3-B_{\pi}}} \ll_{\varepsilon} (\log T)^{-\varepsilon},
\end{equation*}
for some $\varepsilon>0$ depending on $\pi,d,\alpha$ and $\xi$. To estimate the contribution from the first term in \eqref{Replacement}, we note that $Q$ can never be equal to $P$. We break into cases depending on whether $P>Q$ or $P<Q$. After applying Corollary \ref{Gonek assuming RH} and \ref{when b>a}, 
the overall contribution in Case 1 is $\ll_{\varepsilon} (\log T)^{-\varepsilon}$ for some $\varepsilon>0$ depending on $\pi,d,\alpha$ and $\xi$. A concomitant argument follows when $\ell >k$.\\

\noindent\textbf{Case 2 : When \( k = \ell \)}. We write
\begin{align}
\mathcal{B}_{d,d}&=\frac{1}{N_{\pi}(T)}\sum_{\substack{p_1, p_2,\dots, p_d \leqslant V \\ q_1,q_2,\dots, q_d \leqslant V}} \dfrac{(\prod_{i=1}^d a_{p_i} \Lambda_{\pi}(p_i))(\prod_{j=1}^d a_{q_j} \overline{\Lambda_{\pi}(q_j)})}{(PQ)^{\frac{1}{2}}}\sum_{\substack{\lvert \gamma_{{\pi}} \rvert \leqslant T} } (PQ)^{\beta_{\pi}-\frac{1}{2}} \bigg (\frac{Q}{P} \bigg )^{i \gamma_{\pi}} \notag,
\end{align}
where $P=p_1p_2 \cdots p_d$ and $Q=q_1q_2 \cdots q_d$. The main terms again occur when $P =Q$. An application of Proposition \ref{Crucial RH removing Lemma} shows that the contribution from the case $P=Q$ is
\begin{equation}
\sum_{\substack{p_1, p_2,\dots, p_d \leqslant V \\ q_1,q_2,\dots, q_d \leqslant V \\ P=Q}}\dfrac{(\prod_{i=1}^d a_{p_i} \Lambda_{\pi}(p_i))(\prod_{j=1}^d a_{q_j} \overline{\Lambda_{\pi}(q_j)})}{P} +O_{\varepsilon}((\log T)^{-\varepsilon}). \label{Transition Step Main Term}
\end{equation}
Similar to the proof of Lemma \ref{Moment Calculations Assuming RH Step 1}, the first sum in \eqref{Transition Step Main Term} is equal to
\begin{align*}
d!\bigg ( \sum_{p\leqslant V}\frac{a_p^2 \lvert \Lambda_{\pi}(p) \rvert^2}{p} \bigg )^d + O(\max\{1,(\log \log T)^{d-2}\}).
\end{align*}
The cases $P\neq Q$ can be treated as in Case 1. 

Putting together Cases 1 and 2, the proof follows.
\end{proof}
\begin{lemma}\label{General Moments Step 2}
Let $m \in \mathbb{N}$ and $\pi \in \mathscr{A}_m$. Assume Hypothesis \textnormal{\textbf{Z$_{\pi}$}} for $L(s,\pi)$ with $B_{\pi}<3$. Let $d \in \mathbb{N}$, $T \geqslant 3$ and $\alpha,\xi, X, V$ be as in \eqref{Defining Parameters II} satisfying $d\alpha m < 3-B_{\pi}$. Let $\{a_n\}$ be the sequence defined by \eqref{Defining coefficients}. Then
\begin{align*}
\frac{1}{N_{\pi}(T)}\sum_{\lvert \gamma_{\pi} \rvert \leqslant T} \bigg(\Re \sum_{p \leqslant V} \frac{a_p \Lambda_{\pi}(p)}{p^{ \rho_{\pi}}}\bigg)^{2d}=\boldsymbol{\mu}_{2d}\bigg(\frac{1}{2}\sum_{p \leqslant V} \frac{a_p^2  \lvert \Lambda_{\pi}(p) \rvert ^2}{p}\bigg)^d+O ( \max \{1, (\log \log T)^{d-2}\} ),
\end{align*}
where $\boldsymbol{\mu}_{2d}$ is given by \eqref{Defining Normal Distribution Coefficients}. Also, if $(2d-1)\alpha m < 2(3-B_{\pi})$ then for some $\varepsilon= \varepsilon(\pi,d,\alpha,\xi)>0$,
$$
\frac{1}{N_{\pi}(T)}\sum_{\lvert \gamma_{\pi} \rvert \leqslant T} \bigg(\Re \sum_{p \leqslant V} \frac{a_p \Lambda_{\pi}(p)}{p^{\rho_{\pi}}}\bigg)^{2d-1} \ll (\log T)^{-\varepsilon}.
$$
The implied constants in the error term and the $\ll$ notation depend at most on $\pi, d, \alpha$ and $\xi$.
\end{lemma}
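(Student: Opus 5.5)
The plan mirrors the proof of Lemma \ref{Moment Calculations Assuming RH Step 2}, with Lemma \ref{General Moments First Step} taking the place of Lemma \ref{Moment Calculations Assuming RH Step 1}. Write $z_\pi = \sum_{p \leqslant V} a_p \Lambda_{\pi}(p) p^{-\rho_{\pi}}$. Under the normalisation of \eqref{B_k,l Notation 1}, the conjugate sum is $\overline{z_\pi} = \sum_{q\leqslant V} a_q \overline{\Lambda_{\pi}(q)} q^{-\overline{\rho_{\pi}}}$. Lemma \ref{Complex Number Identity} then expresses $(\Re z_\pi)^{2d}$ as
\[
\frac{1}{2^{2d}}\binom{2d}{d}\Re\big(z_\pi^{d}\overline{z_\pi}^{d}\big) + \frac{1}{2^{2d-1}}\sum_{i=0}^{d-1}\binom{2d}{i}\Re\big(z_\pi^{2d-i}\overline{z_\pi}^{i}\big).
\]
Averaging over $\lvert\gamma_\pi\rvert\leqslant T$ gives $\frac{1}{2^{2d}}\binom{2d}{d}\Re\mathcal{B}_{d,d} + \frac{1}{2^{2d-1}}\sum_{i<d}\binom{2d}{i}\Re\mathcal{B}_{2d-i,i}$.

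For the diagonal term, Lemma \ref{General Moments First Step} with $k=\ell=d$, under the hypothesis $d\alpha m<3-B_\pi$, gives
\[
\mathcal{B}_{d,d} = d!\Big(\sum_{p\leqslant V} a_p^2\lvert\Lambda_\pi(p)\rvert^2/p\Big)^d + O\big(\max\{1,(\log\log T)^{d-2}\}\big).
\]
The $(\log T)^{-\varepsilon}$ error is absorbed. For the off-diagonal terms ($k\neq\ell$) the same lemma gives $O((\log T)^{-\varepsilon})$. Since $\frac{d!}{2^{2d}}\binom{2d}{d} = \boldsymbol{\mu}_{2d}2^{-d}$, this yields the even-moment claim.

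For the odd exponent $2d-1$, Lemma \ref{Complex Number Identity} yields only terms $\Re(z_\pi^{k}\overline{z_\pi}^{\ell})$ with $k+\ell=2d-1$, so $k\neq\ell$ always. Here Lemma \ref{General Moments First Step} cannot be quoted verbatim, because it was stated for $k+\ell$ even. I will instead rerun its Case 1 argument with $k+\ell=2d-1$.

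\begin{itemize}
\item When $\ell=0$: Corollary \ref{when b>a} forces all $p_i$ to be equal, giving $\ll(\log T)^{-1}$ as before. One first needs Proposition \ref{Crucial RH removing Lemma} to pass from $\rho_\pi$ to the Gonek sums, or directly the first statement of Corollary \ref{when b>a} with $\rho_\pi$.
\item When $\ell>0$: Proposition \ref{Crucial RH removing Lemma} replaces the inner zero sum by $\sum (Q/P)^{\rho_\pi-1/2}$. This costs an error of size
\[
\frac{\log^2V}{(\log T)^{3-B_\pi}}\Big(\sum_{p\leqslant V}\frac{a_p\lvert\Lambda_\pi(p)\rvert}{\sqrt p}\Big)^{2d-1} \ll \frac{X^{(2d-1)/2+\varepsilon}}{(\log T)^{3-B_\pi}} = (\log T)^{(2d-1)\alpha m/2+\varepsilon-(3-B_\pi)},
\]
which is a negative power of $\log T$ exactly when $(2d-1)\alpha m<2(3-B_\pi)$. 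This is the source of the stated hypothesis.
\end{itemize}

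The remaining main terms come from Corollaries \ref{Gonek assuming RH} and \ref{when b>a}. Since $P\neq Q$, a nonzero contribution requires $P/Q$ or $Q/P$ to be a prime power. Because $X$, and hence $V$, is only a power of $\log T$, every power-saving error term in those corollaries is $\ll T^{-1+\varepsilon}$ after dividing by $N_\pi(T)$.

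The main obstacle is the critical case $\lvert k-\ell\rvert=1$. In the RH setting of Lemma \ref{Moment Calculations Assuming RH Step 2} this case contributed a genuine $(\log T)^{d-1}$. Here the claim is the much stronger $(\log T)^{-\varepsilon}$, so I must show that the prime-power main term, of shape
\[
\frac{T}{N_\pi(T)}\sum_{p}\frac{a_p\lvert\Lambda_\pi(p)\rvert^2}{p}\Big(\sum_q\frac{a_q^2\lvert\Lambda_\pi(q)\rvert^2}{q}\Big)^{\ell},
\]
is small. The saving comes from $T/N_\pi(T)\asymp 1/\log T$, while the sums over primes are only $\ll (\log X)^{\ell+1}\ll(\log\log T)^{d}$, so the product is $\ll(\log T)^{-1}(\log\log T)^{d}\ll(\log T)^{-\varepsilon}$. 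I expect to verify this carefully and to check that the combinatorial counting of the configurations with $P/Q=p$ stays bounded in terms of $d$.
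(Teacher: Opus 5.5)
Your proposal is correct and is essentially the paper's argument. The paper's proof only says to follow Lemma~\ref{Moment Calculations Assuming RH Step 2}: combine Lemma~\ref{Complex Number Identity} with Lemma~\ref{General Moments First Step} for the even moment, and rerun Case~1 with $k+\ell=2d-1$ for the odd moment, using Proposition~\ref{Crucial RH removing Lemma} and Corollaries~\ref{Gonek assuming RH} and~\ref{when b>a}; your derivation of the threshold $(2d-1)\alpha m<2(3-B_{\pi})$ and of the bound $\ll(\log\log T)^{d}/\log T$ in the critical case $\lvert k-\ell\rvert=1$ (which also covers $d=1$, $\ell=0$, where one gets $\log\log T/\log T$ rather than $(\log T)^{-1}$) fills in details the paper leaves implicit.
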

\begin{proof}
The proof follows the arguments in Lemma \ref{Moment Calculations Assuming RH Step 2}.
\end{proof}
Recall in \eqref{Defining Mean} and \eqref{Defining SD}, we defined
\begin{align*}
\text{Av}_{\pi}(T, X, V, \{a_n\}) &= \frac{1}{N_{\pi}(T)}\sum_{\lvert \gamma_{\pi} \rvert \leqslant T}\Re \sum_{n \leqslant V} \frac{a_n \Lambda_{\pi}(n)}{n^{ \rho_{\pi}}}\\
\textrm{and} \quad \text{Var}_{\pi}(T, X, V, \{a_n\}) &= \frac{1}{2}\sum_{n \leqslant V} \frac{a_n^2 \lvert \Lambda_{\pi} (n) \rvert ^2}{n} - \text{Av}_{\pi}^2(T, X, V, \{a_n\}).
\end{align*}
The key difference in our work here compared to Section \ref{sec: Conditional 1} is the range of $X$. However this doesn't change the structure of the asymptotics for $\text{Av}_{\pi}$ or $\text{Var}_{\pi}$ as in Lemma \ref{Calculating mean}. In particular, following the proof of Lemma \ref{Calculating mean} and using the estimates from Lemma \ref{Rudnick-Sarnak Lemma}, we have for $T \geqslant 3$ and $\alpha,\xi, X, V$ be as in \eqref{Defining Parameters II} satisfying $\alpha m < 2(3-B_{\pi})$,
\begin{align}\label{Mean UC Asymp}
\text{Av}_{\pi}(T, X, V, \{a_n\}) &=  -\frac{\alpha \log \log T}{\log T}+O_{\pi,\alpha,\xi}\left(\frac{1}{\log T}\right), \\
\label{SD UC Asymp}
\textrm{and} \quad \text{Var}_{\pi}(T, X, V, \{a_n\}) &= \frac{ \alpha m \log \log T}{4}+O_{\pi,\alpha,\xi}(1).
\end{align}
We are now ready to estimate our moments involving sums over prime powers.
\begin{lemma}\label{General Moment Calculations Step 3}
Let $m \in \mathbb{N}$ and $\pi \in \mathscr{A}_m$. Assume Hypothesis \textnormal{\textbf{Z$_{\pi}$}} for $L(s,\pi)$ with $B_{\pi}<3$. Let $d \in \mathbb{N}$, $T \geqslant 3$ and $\alpha,\xi, X, V$ be as in \eqref{Defining Parameters II} satisfying $d\alpha m < 3-B_{\pi}$. Let $\{a_n\}$ be the sequence defined by \eqref{Defining coefficients}. Then
\begin{align*}
\frac{1}{N_{\pi}(T)}\sum_{\lvert \gamma_{\pi} \rvert \leqslant T} \bigg(\Re \sum_{n \leqslant V} \frac{a_n \Lambda_{\pi}(n) }{n^{ \rho_{\pi}}}\bigg)^{2d}=\boldsymbol{\mu}_{2d}\textnormal{Var}_{\pi}^{d} + O((\log \log T)^{d-\frac{1}{2}}),
\end{align*}
where $\boldsymbol{\mu}_{2d}$ is given by \eqref{Defining Normal Distribution Coefficients} and $\textnormal{Var}_{\pi}$ is defined by \eqref{Defining SD}. Also, if $(2d-1)\alpha m < 2(3-B_{\pi})$ then
$$
\frac{1}{N_{\pi}(T)}\sum_{\lvert \gamma_{\pi} \rvert \leqslant T} \bigg(\Re \sum_{n \leqslant V} \frac{a_n \Lambda_{\pi}(n)}{n^{\rho_{\pi}}}\bigg)^{2d-1} \ll 1.
$$
The implied constants in the error term and the $\ll$ notation depend at most on $\pi, d, \alpha$ and $\xi$.
\end{lemma}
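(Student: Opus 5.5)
The plan is to mirror the proof of Lemma \ref{Moment Calculations Assuming RH Step 3} line by line, replacing every appeal to RH by Proposition \ref{Crucial RH removing Lemma} (through Lemmas \ref{General Moments First Step} and \ref{General Moments Step 2}), and replacing $\log T$ by $\log\log T$ wherever it measures the size of the prime sums. Note that $\log V=\xi\alpha m\log\log T$, so \eqref{A restriction on V} holds.

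First, split each term of the sum into primes plus higher prime powers, and expand the $2d$-th power binomially exactly as in \eqref{Binomial Expansion Step}, with $n^{\rho_\pi}$ in place of $n^{\frac12+i\gamma_\pi}$. The $j=2d$ term is handled by Lemma \ref{General Moments Step 2}, which gives $\boldsymbol{\mu}_{2d}\big(\frac12\sum_{p\le V}a_p^2|\Lambda_\pi(p)|^2/p\big)^d+O(\max\{1,(\log\log T)^{d-2}\})$. Hypothesis \textbf{H}$_\pi$ gives $\textnormal{Var}_\pi=\frac12\sum_{p\le V}a_p^2|\Lambda_\pi(p)|^2/p+O(1)$, and \eqref{Mean UC Asymp} shows that $\textnormal{Av}_\pi^2$ is negligible. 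By \eqref{SD UC Asymp}, this term therefore becomes $\boldsymbol{\mu}_{2d}\textnormal{Var}_\pi^d+O((\log\log T)^{d-1})$.

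For $j=0$ we bound the higher prime powers. Off the critical line we have $|n^{-\rho_\pi}|\le n^{-1/2}\cdot n^{-(\beta_\pi-1/2)}$, and zeros with $\beta_\pi<1/2$ can be reflected to $1-\overline{\rho_\pi}$ via \eqref{Functional Equation}. For these zeros the modulus is at most $n^{|\beta_\pi-1/2|-1/2}\le V^{1/2}n^{-1/2}$ times a power of $V$, and $V$ is only a power of $\log T$. I would split the zeros as in Proposition \ref{Crucial RH removing Lemma}:
\begin{itemize}
\item zeros with $|\beta_\pi-\frac12|\ge\delta$ number $O(T(\log T)^{-100})$ by Hypothesis \textbf{Z}$_\pi$, so they contribute negligibly after a trivial bound polynomial in $V$;
\item for the remaining zeros, $n^{|\beta_\pi-1/2|}=1+O(\delta\log V)$.
\end{itemize}
The tail $k>m^2+1$ is then $O(1)$ by \eqref{Luo-Rudnick-Sarnak Bound}. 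For $2\le k\le m^2+1$, apply Jensen's inequality, open the $2d$-th power, and use Proposition \ref{Crucial RH removing Lemma} with $P,Q$ replaced by $P^k,Q^k$ (only the constants change). The diagonal is then $O(1)$ by \textbf{H}$_\pi$, and the off-diagonal terms are $O(1)$ by Corollaries \ref{Gonek assuming RH} and \ref{when b>a}, since all moduli are at most $V^{2d}\ll(\log T)^{O(1)}$. The error term from the proposition, summed against coefficients of total size $X^{d+\varepsilon}$, is $O((\log T)^{-\varepsilon})$ provided $d\alpha m<3-B_\pi$.

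For the mixed terms $0<j<2d$, apply H\"older's inequality as before. This gives the bound $(\log\log T)^{\frac{j}{2}}\ll(\log\log T)^{d-\frac12}$.

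The odd case $2d-1$ also follows the earlier proof.
\begin{itemize}
\item For $d=1$, the quantity is $\textnormal{Av}_\pi\ll1$ by \eqref{Mean UC Asymp}.
\item For $d>1$, the pure prime term is $\ll(\log T)^{-\varepsilon}$ by Lemma \ref{General Moments Step 2}.
\item The pure prime-power term is $\ll1$ by Cauchy--Schwarz together with the even-moment bounds above.
\item The mixed terms are handled by H\"older's inequality, using Cauchy--Schwarz to pass to even moments of the prime sum. They give $(\log\log T)^{\frac{j}{2}\cdot\frac{2d-1}{2d-1}}$, which I expect to be bounded by the stated $\ll1$ only after using the sharper $(\log T)^{-\varepsilon}$ cross estimates instead of crude H\"older.
\end{itemize}
The hardest step is this last one, controlling the odd mixed terms to $O(1)$ rather than $(\log\log T)^{d-1}$. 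I would first try expanding $(\Re A+\Re B)^{2d-1}$ directly into products $A^aB^b\overline{A}^{c}\overline{B}^{e}$, and applying Proposition \ref{Crucial RH removing Lemma} together with the Landau--Gonek corollaries to the full multiplicative structure. With an odd total exponent, the diagonal forces some $p^k$ with $k\ge2$ to match against products of primes, and \textbf{H}$_\pi$ should then make each such term $O(1)$.
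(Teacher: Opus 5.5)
Your even-moment argument is correct and is essentially the paper's. The only deviation is in the $j=0$ tail. The paper splits the zeros at $\beta_\pi=\frac14+\frac{\theta_m}{2}$, cuts the prime powers at $k>2(m^2+1)$, and gets a power saving from Hypothesis \textbf{Z$_{\pi}$} for the far zeros. You split at $|\beta_\pi-\frac12|=\delta$ instead. That also works, provided the constant $K$ in $\delta$ is taken large in terms of $d,\alpha,\xi$, not just $A_\pi,B_\pi$, so that $(\log T)^{B_\pi-A_\pi K}$ beats your trivial bound $V^{O(d)}$.

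The gap is the last step of the odd case. You are right that H\"older only gives $(\log\log T)^{j/2}$, i.e.\ up to $(\log\log T)^{d-1}$. That is also all the H\"older route the paper indicates (``precisely as in Lemma~\ref{Moment Calculations Assuming RH Step 3}'') delivers, and it is the bound Theorem~\ref{thm:twomMomentUnconditional} actually uses. However, your proposed repair does not reach $O(1)$ when $d\geqslant 3$.

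After Proposition~\ref{Crucial RH removing Lemma} and the Landau--Gonek corollaries, a diagonal configuration of the $2d-1$ factors splits into blocks of powers of a single prime. Oddness does force one block with unequal numbers of unconjugated and conjugated factors. But the remaining factors may form up to $d-2$ balanced prime pairs, each contributing $\sum_{p\leqslant V}a_p^2|\Lambda_\pi(p)|^2/p\asymp\log X\asymp\log\log T$. For instance, the diagonal of $A^{d-2}\overline{A}^{\,d}B$ has size about $(\log X)^{d-2}\sum_r a_r^2\,\overline{\Lambda_\pi(r)}^{\,2}a_{r^2}\Lambda_\pi(r^2)/r^2$. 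Hypothesis \textbf{H}$_\pi$ only makes the $r$-sum $O(1)$, so you get $(\log\log T)^{d-2}$, not $O(1)$.

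The missing idea is the decay of the weights. Since $a_n=\exp(-2|\log n-\log X|)$, consider a block $r^{k_1},\dots,r^{k_s}$ against $r^{l_1},\dots,r^{l_t}$ with $\sum k_i=\sum l_j$ and $s\neq t$. The triangle inequality gives $\prod_i a_{r^{k_i}}\prod_j a_{r^{l_j}}\leqslant X^{-2|s-t|}\leqslant X^{-2}$; for example $a_r^2a_{r^2}\leqslant X^{-2}$. The remaining $r$-sum converges by \textbf{H}$_\pi$ and Lemma~\ref{Convergent Sums}. Hence:
\begin{itemize}
\item the diagonal is $\ll X^{-2}(\log X)^{d-2}=o(1)$;
\item the prime-power-ratio main terms from Corollaries~\ref{Gonek assuming RH} and \ref{when b>a} contribute $\ll(\log X)^{O(d)}/\log T$;
\item the error from Proposition~\ref{Crucial RH removing Lemma} is $\ll X^{d-\frac12+\varepsilon}(\log T)^{B_\pi-3}$, which is $o(1)$ precisely when $(2d-1)\alpha m<2(3-B_\pi)$.
\end{itemize}
With this input the stated bound $\ll 1$ follows.
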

\begin{proof}
All implied constants in the proof are allowed to depend on $\pi, d, \alpha$ and $\xi$. The proof is similar to that of Lemma \ref{Moment Calculations Assuming RH Step 3} with slight adjustments. For even moments, we write
\begin{align}\label{Binomial Expansion Step Unconditional}
\sum_{\lvert \gamma_{\pi} \rvert \leqslant T}\bigg(\Re \sum_{n \leqslant V} \frac{a_n \Lambda_{\pi}(n)}{n^{ \rho_{\pi}}}\bigg)^{2d}\notag = \sum_{j=0}^{2d}\binom{2d}{j} \sum_{\lvert \gamma_{\pi} \rvert \leqslant T} \bigg(\Re \sum_{p \leqslant V} \frac{a_p \Lambda_{\pi}(p)}{p^{\rho_{\pi}}}\bigg)^j\bigg(\Re \sum_{\substack{n \leqslant V \\ n =p^k, k \geqslant2}} \frac{a_n\Lambda_{\pi}(n)}{n^{\rho_{\pi} }}\bigg)^{2d-j}.
\end{align}
By Lemma \ref{General Moments Step 2} and \eqref{SD UC Asymp}, when $j=2d$, we have
\begin{equation}\label{Transition 1 Unconditional}
\frac{1}{N_{\pi}(T)}\sum_{\lvert \gamma_{\pi} \rvert \leqslant T} \bigg(\Re \sum_{p \leqslant V} \frac{a_p \Lambda_{\pi}(p)}{p^{\rho_{\pi}}}\bigg)^{2d}=\boldsymbol{\mu}_{2d}\textnormal{Var}_{\pi}^{d}+O ((\log \log T)^{d-1}).
\end{equation}
For $j=0$, we break the sum over $\rho_{\pi}$ into two cases, $\beta_{\pi}>\frac{1}{4}+\frac{\theta_m}{2}$ and otherwise. Note that
\[
\bigg \lvert \sum_{\substack{n \leqslant V \\ n =p^k , k \geqslant2}} \frac{a_n\Lambda_{\pi}(n)}{n^{\rho_{\pi} }} \bigg \rvert^{2d} \ll_{d} \bigg \lvert \sum_{\substack{n=p^k \leqslant V \\ 2 \leqslant k \leqslant 2(m^2+1)}} \frac{a_{p^k}\Lambda_{\pi}(p^k)}{p^{k\rho_{\pi} }} \bigg \rvert^{2d}+\bigg \lvert \sum_{\substack{n=p^k \leqslant V \\ k >2(m^2+1) }} \frac{a_n\Lambda_{\pi}(n)}{n^{\rho_{\pi} }} \bigg \rvert^{2d}.
\]
When $\beta_{\pi}>\frac{1}{4}+\frac{\theta_m}{2}$, we have
\begin{align*}
\sum_{\substack{n =p^k \leqslant V\\ k> 2(m^2+1)}} \frac{a_n\Lambda_{\pi}(n)}{n^{\rho_{\pi} }} \ll 1,
\end{align*}
which implies that
\begin{align}\label{Beta>2/5 Higher Prime Powers}
\frac{1}{N_{\pi}(T)}\sum_{\substack{\lvert \gamma_{\pi} \rvert \leqslant T \\ \beta_{\pi}>\frac{1}{4}+\frac{\theta_m}{2}}} \bigg \lvert \sum_{\substack{n=p^k \leqslant V \\ k >2(m^2+1)}} \frac{a_n\Lambda_{\pi}(n)}{n^{\rho_{\pi} }} \bigg \rvert ^{2d} \ll 1.
\end{align}
On the other hand, for $\beta_{\pi} \leqslant \frac{1}{4}+\frac{\theta_m}{2}$, using Hypothesis \textbf{Z$_{\pi}$}, \begin{align}\label{Beta<2/5 Higher Prime Powers}
\frac{1}{N_{\pi}(T)} \sum_{\substack{\lvert \gamma_{\pi} \rvert \leqslant T \\ \beta_{\pi} \leqslant \frac{1}{4}+\frac{\theta_m}{2}}} \bigg \lvert \sum_{\substack{n=p^k \leqslant V \\ k >2(m^2+1)}} \frac{a_n\Lambda_{\pi}(n)}{n^{\rho_{\pi} }}\bigg \rvert ^{2d} \ll 1.
\end{align}
Therefore putting together \eqref{Beta>2/5 Higher Prime Powers} and \eqref{Beta<2/5 Higher Prime Powers}, the contribution from $j=0$ is
\begin{align*}
\frac{1}{N_{\pi}(T)}\sum_{\lvert \gamma_{\pi} \rvert \leqslant T} \bigg(\Re \sum_{\substack{n \leqslant V \\ n =p^k , k \geqslant2}} \frac{a_n\Lambda_{\pi}(n)}{n^{\rho_{\pi} }}\bigg)^{2d}\ll \frac{1}{N_{\pi}(T)}\sum_{\lvert \gamma_{\pi} \rvert \leqslant T} \bigg \lvert \sum_{\substack{n=p^k \leqslant V \\ 2 \leqslant k \leqslant 2(m^2+1)}} \frac{a_{p^k}\Lambda_{\pi}(p^k)}{p^{k\rho_{\pi} }} \bigg \rvert^{2d}+1.
\end{align*}
From this point, we can proceed using the same approach as in Lemma \ref{Moment Calculations Assuming RH Step 3}. We apply Jensen's inequality, and then handle each $k$ individually, following the reasoning outlined in Lemma \ref{General Moments First Step}. In particular, we obtain
\[\frac{1}{N_{\pi}(T)}\sum_{\lvert \gamma_{\pi} \rvert \leqslant T} \bigg \lvert \sum_{\substack{n=p^k \leqslant V \\ 2 \leqslant k \leqslant 2(m^2+1)}} \frac{a_{p^k}\Lambda_{\pi}(p^k)}{p^{k\rho_{\pi} }} \bigg \rvert^{2d} \ll 1,
\]
which implies the contribution when $j = 0$ is $O(1)$. Finally when $j\neq 0, 2d$, by H\"{o}lder's inequality, the sum corresponding to each such $j$ is
$\ll (\log \log T)^{d-\frac{1}{2}}$. Combining the estimates for different values of $j$, we arrive at our desired result. In the case of odd moments, the argument is precisely as in Lemma \ref{Moment Calculations Assuming RH Step 3} with necessary applications of Lemma \ref{General Moments Step 2}.
\end{proof}
\section{Proofs of Theorems \ref{main thm 2} and \ref{thm:twomMomentUnconditional}}\label{sec : unconditional 2}
Let $m \in \mathbb{N}$ and $\pi \in \mathscr{A}_m$. Suppose $T \geqslant 3$ and $\alpha, \xi, X, V$ be as defined in \eqref{Defining Parameters II} satisfying $\alpha m < 2(3-B_{\pi})$. Recall from Section \ref{sec: Introduction}, the shifted sums 
\begin{align*}
\widetilde{\mathcal{S}}_{\pi}(X,T, \rho') &= \mathcal{S}_{\pi}(X,T, \rho')+ X^{-2} \frac{L'}{L}( \rho'_{\pi} - 2 , \pi), \\
\textrm{and} \quad \Re \widetilde{\mathcal{S}}_{\text{av}, \pi}(X,T) &=\frac{1}{N_{\pi}(T)}\sum_{\lvert \gamma_{\pi}' \rvert \leqslant T} \Re \widetilde{\mathcal{S}}_{\pi}(X,T, \rho_{\pi}').
\end{align*}
Following the proofs of Lemma \ref{Sum over Zeros to Sum over Primes} and \ref{Connecting S to sum over primes}, for all but $O_{\pi}(\sqrt{T}\log T)$ zeros $\rho' = \beta'_{\pi}+i\gamma_{\pi}'$ of $L(s, \pi)$ with $0 < \lvert \gamma'_{\pi} \rvert \leqslant T$, we have
\begin{equation}\label{Zeros to Primes for Tilde-S}
\widetilde{\mathcal{S}}_{\pi}(X,T, \rho'_{\pi}) = - \sum_{n =1}^{\infty} \frac{a_n\Lambda_{\pi}(n)}{n^{\rho'_{\pi}}} + O_{\pi}\bigg( \frac{X \log T}{\sqrt{T}}\bigg).
\end{equation}
Similar to Section \ref{sec: Conditional 2}, we slightly adjust \eqref{Defining Parameters II} by allowing $\xi>3$. We prove the following lemma.
\begin{lemma}\label{S_av to Mean Unconditional}
Let $m \in \mathbb{N}$ and $\pi \in \mathscr{A}_m$. Assume Hypothesis \textnormal{\textbf{Z$_{\pi}$}} for $L(s,\pi)$ with $B_{\pi}<3$. Let $T \geqslant 3$ and $\alpha,\xi, X, V$ be as in \eqref{Defining Parameters II} satisfying $\alpha m < 2(3-B_{\pi})$ and $\xi>3$. Let $\{a_n\}$ be the sequence in \eqref{Defining coefficients}. Then
\begin{equation*}
\Re \widetilde{\mathcal{S}}_{\textnormal{av}, \pi}(X,T)=\frac{\alpha \log \log T}{\log T} + O_{\pi, \alpha, \xi}\bigg(\frac{1}{\log T} \bigg).
\end{equation*}
\end{lemma}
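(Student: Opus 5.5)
I will mirror the proof of Lemma \ref{S_av to Mean}, replacing the RH input with Hypothesis \textbf{Z$_{\pi}$}. The starting point is \eqref{Zeros to Primes for Tilde-S}. For all $\rho'_{\pi}$ with $\lvert \gamma'_{\pi}\rvert \leqslant T$ outside an exceptional set $\mathcal{E}$ with $\#\mathcal{E}\ll_\pi \sqrt{T}\log T$, it gives $\widetilde{\mathcal{S}}_{\pi}(X,T,\rho'_{\pi}) = -\sum_{n\geqslant 1} a_n\Lambda_\pi(n) n^{-\rho'_\pi} + O(X\log T/\sqrt{T})$. Since $X=(\log T)^{\alpha m}$, this error is $\ll T^{-1/3}$. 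Averaging over $\rho'_\pi$, I obtain three pieces:
\begin{itemize}
\item $-\frac{1}{N_\pi(T)}\sum_{\lvert\gamma'_\pi\rvert\leqslant T}\Re\sum_{n\leqslant V} a_n\Lambda_\pi(n)n^{-\rho'_\pi} = -\textnormal{Av}_\pi$;
\item a tail term with $n>V$;
\item a correction coming from $\mathcal{E}$.
\end{itemize}
By \eqref{Mean UC Asymp}, the first piece equals $\alpha\log\log T/\log T + O(1/\log T)$, which is the claimed main term. So it suffices to show that the tail and the exceptional contributions are $O(1/\log T)$.

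For the exceptional set, I bound each term trivially. First, $\lvert \widetilde{\mathcal{S}}_\pi(X,T,\rho'_\pi)\rvert$ is at most
\[
\sum_{\lvert\gamma_\pi\rvert\leqslant T} X^{\beta_\pi-\beta'_\pi}\lvert w(\rho_\pi-\rho'_\pi)\rvert + X^{-2}\Big\lvert\frac{L'}{L}(\rho'_\pi-2,\pi)\Big\rvert \ll X\log T .
\]
This uses $\lvert X^{\rho_\pi-\rho'_\pi}\rvert\leqslant X$, the local zero density of \cite[Prop. 5.7]{IK2004}, and \eqref{Derivative Estimate}. Second, $\sum_n a_n\lvert\Lambda_\pi(n)\rvert n^{-\beta'_\pi}$ is bounded, with no RH, by a power of $X$ times $\log$-factors, using $\beta'_\pi>0$, the decay $a_n\ll X^2n^{-2}$ for $n>X$, and the Rankin--Selberg bound of Lemma \ref{lem:prime_number_theorem}. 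Hence the $\mathcal{E}$-contribution is $\ll (\log T)^{C}T^{-1/2}\log T/ N_\pi(T)\cdot T \cdot T^{-1/2}$, which is polynomially small in $T$.

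The main obstacle is the tail $\frac{1}{N_\pi(T)}\sum_{\gamma'_\pi}\Re\sum_{n>V}a_n\Lambda_\pi(n)n^{-\rho'_\pi}$. Without RH, $\beta'_\pi$ can be small, and $n^{-\rho'_\pi}$ is not uniformly bounded by $n^{-1/2}$. I will split the zeros according to whether $\beta'_\pi\geqslant \frac12-\delta$ or $\beta'_\pi<\frac12-\delta$, with $\delta=(K\log\log T)/\log T$ as in Proposition \ref{Crucial RH removing Lemma}.
\begin{itemize}
\item \emph{Zeros with $\beta'_\pi\geqslant\frac12-\delta$.} Here $n^{-\beta'_\pi}\leqslant n^{-1/2}n^{\delta}$. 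Arguing as in \eqref{Tail Convergence 2}, Cauchy--Schwarz over dyadic blocks gives
\[
\sum_{n>V}\frac{a_n\lvert\Lambda_\pi(n)\rvert}{n^{1/2-\delta}}\ll X^2\sum_k (2^kV)^{-3/2+\delta}\log(2^kV)\ll X^{2-3\xi/2+o(1)}.
\]
Since $\xi>3$, this is $\ll X^{-5/2}\ll(\log T)^{-1}$. The choice $\xi>3$ is meant to absorb the loss $n^{\delta}$ together with the extra slack needed below.
\item \emph{Zeros with $\beta'_\pi<\frac12-\delta$.} By the functional equation these are reflections of zeros with $\beta\geqslant\frac12+\delta$, so Hypothesis \textbf{Z$_{\pi}$} gives at most $T^{1-A_\pi\delta}\log^{B_\pi}T\ll T(\log T)^{-100}$ of them. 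For such a zero I use the crude bound $\sum_{n>V}a_n\lvert\Lambda_\pi(n)\rvert n^{-\beta'_\pi}\ll X^{2}V^{-1+o(1)}\cdot V^{1/2+\theta_m}$, valid since $\beta'_\pi\geqslant0$ and $\lvert\Lambda_\pi(n)\rvert\ll n^{\theta_m}\log n$; this is at most a power of $\log T$. After dividing by $N_\pi(T)$, the total is $\ll(\log T)^{-50}$.
\end{itemize}
If the crude bound in the second case is too weak, I will refine the split as a dyadic decomposition in $\beta'_\pi$ and integrate against $N_\pi(\sigma,T)$, as in \eqref{Traingle Inequality 1}.

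Combining the three pieces gives $\Re\widetilde{\mathcal{S}}_{\textnormal{av},\pi}(X,T)=-\textnormal{Av}_\pi+O(1/\log T)$, and \eqref{Mean UC Asymp} then yields the lemma.
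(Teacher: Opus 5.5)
\paragraph{Verdict.} Your decomposition matches the paper's proof. You use the explicit formula \eqref{Zeros to Primes for Tilde-S} off $\mathcal{E}$, bound $\mathcal{E}$ trivially, split off the tail $n>V$, and finish with \eqref{Mean UC Asymp}. Your displayed bound for the $\mathcal{E}$-contribution is garbled, but $\#\mathcal{E}\cdot X\log T/N_\pi(T)\ll X\log T/\sqrt{T}$ is what is needed. Your treatment of the zeros with $\beta'_\pi<\frac12-\delta$, via $\rho\mapsto 1-\overline{\rho}$ and Hypothesis \textbf{Z$_{\pi}$}, is fine. The step that fails is the tail estimate on the zeros with $\beta'_\pi\geqslant\frac12-\delta$.

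\paragraph{The gap.} Your pointwise bound there is $X^{2-3\xi/2}(\log X)^{1/2}$, since $V^{\delta}=O(1)$. With $X=(\log T)^{\alpha m}$ this equals $(\log T)^{-\alpha m(3\xi/2-2)+o(1)}$. So the claim $X^{-5/2}\ll(\log T)^{-1}$ holds only when $\alpha m\geqslant 2/5$. The hypotheses $\xi>3$ and $0<\alpha<1/m$ do not force $\alpha m(3\xi/2-2)\geqslant 1$. When this fails, your error term is larger than the main term $\alpha\log\log T/\log T$, so the lemma does not follow.

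A sharper zero-by-zero estimate would not rescue this. A typical individual tail has size about $X^{2-2\xi}$, which still exceeds $(\log T)^{-1}$ when $\alpha$ is small. Only the average over $\rho'_\pi$ is that small. The paper asserts this bound in one line via a dyadic decomposition; a purely pointwise version of that argument has the same limitation.

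\paragraph{Repair via the average over zeros.} The missing idea is to use the outer average, as Remark \ref{Rem: xi>4/3} hints.
\begin{itemize}
\item For $n>T$, bound trivially using $\beta'_\pi>0$ and $a_n\leqslant X^2n^{-2}$; this gives $\ll X^2T^{-1}\log T$.
\item For $V<n\leqslant T$, interchange the sums and apply the unconditional first part of Corollary \ref{when b>a} with $a=1$, $b=n$:
\[
\frac{1}{N_\pi(T)}\sum_{\lvert\gamma'_\pi\rvert\leqslant T}\Re\sum_{V<n\leqslant T}\frac{a_n\Lambda_\pi(n)}{n^{\rho'_\pi}}=-\frac{T}{\pi N_\pi(T)}\sum_{V<n\leqslant T}\frac{a_n\lvert\Lambda_\pi(n)\rvert^2}{n}+O\bigg(\frac{X^2\log T}{T}\bigg)\ll\frac{X^{2-2\xi}\log X}{\log T}.
\]
The error term uses $a_n\lvert\Lambda_\pi(n)\rvert n^{\theta_m+\varepsilon}\ll X^2n^{-2+2\theta_m+\varepsilon}\log n$ with $2\theta_m<1$.
\end{itemize}
This works for every $\xi>1$ and makes your split in $\beta'_\pi$ unnecessary.

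\paragraph{Alternative repair via a longer truncation.} Note that $\widetilde{\mathcal{S}}_{\textnormal{av},\pi}(X,T)$ does not depend on $V$. You may therefore truncate instead at $X^{\xi'}$, with $\xi'$ so large in terms of $\alpha$ that $\alpha m(3\xi'/2-2)>1$. The asymptotic \eqref{Mean UC Asymp} holds for any fixed truncation exponent, and your pointwise argument then goes through.
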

\begin{proof}
We have
\begin{align}
\Re \widetilde{\mathcal{S}}_{\textnormal{av}, \pi}(X,T) &=-\frac{1}{N_{\pi}(T)}\sum_{\lvert \gamma_{\pi}' \rvert \leqslant T} \Re \sum_{n =1}^{\infty} \frac{a_n\Lambda_{\pi}(n)}{n^{\rho'_{\pi}}} \notag \\
&\quad+\frac{1}{N_{\pi}( T )} \sum_{\substack{\gamma_{\pi}' \in \mathcal{E}}} \bigg (  \Re \mathcal{\widetilde{S}}_{\pi}(X, T, \rho'_{\pi})+ \Re \sum_{n =1}^{\infty} \frac{a_n\Lambda_{\pi}(n)}{n^{\rho'_{\pi}}} \bigg ) + O_{\pi}\bigg( \frac{X \log T}{\sqrt{T}}\bigg)  \label{S-AV Step 1 Unconditional},
\end{align}
where $\mathcal{E}$ is an exceptional set of non-trivial zeros $\rho'_{\pi}=\beta_{\pi}+i\gamma'_{\pi}$ of $L(s,\pi)$ with $\lvert \gamma'_{\pi} \rvert \leqslant T$ and $\# \mathcal{E} \ll_{\pi} \sqrt{T}\log T$. The sum over the exceptional zeros is
\begin{equation}\label{S-AV Step 2 Unconditional}
    \ll_{\pi} \frac{1}{N_{\pi}( T )} \sum_{\substack{\gamma' \in \mathcal{E}}} (X\log T+X\log X) \ll_{\pi} \frac{X \log T}{\sqrt{T}}.
\end{equation}
Similar to Lemma \ref{S_av to Mean}, by using a dyadic decomposition argument, we have
\begin{align}\label{S-AV Step 3 Unconditional}
\frac{1}{N_{\pi}(T)} \sum_{\lvert \gamma'_{\pi} \rvert \leqslant T} \Re\sum_{n =1}^{\infty} \frac{a_n\Lambda_{\pi}(n)}{n^{\rho'_{\pi}}}- \textnormal{Av}_{\pi} = \frac{1}{N_{\pi}(T)} \sum_{\lvert \gamma'_{\pi} \rvert \leqslant T} \Re\sum_{n>V} \frac{a_n\Lambda_{\pi}(n)}{n^{\rho'_{\pi}}} \ll_{\pi, \alpha, \xi} (\log T)^{-1}.
\end{align}
Putting together \eqref{S-AV Step 2 Unconditional} and \eqref{S-AV Step 3 Unconditional} into \eqref{S-AV Step 1 Unconditional} and using \eqref{Mean UC Asymp}, the proof follows.
\end{proof}
\subsection*{Proof of Theorem \ref{thm:twomMomentUnconditional}}
First assume the exponent is even and write $r=2d$. We let $X=(\log T)^{\alpha m}$ with $d\alpha m < 3-B_{\pi}$. Choose $V=X^{\xi}$ for some absolute constant $\xi>3$. Lemma \ref{Connecting S to sum over primes}, together with the argument we used to establish \eqref{S-AV Step 3 Unconditional}, shows that for all non-trivial zeros \( \rho'_{\pi} = \beta'_{\pi} + i \gamma'_{\pi} \) of $L(s,\pi)$ with \( \lvert \gamma'_{\pi} \rvert \leqslant T \) outside an exceptional set $\mathcal{E}$ of size at most $O_{\pi}(\sqrt{T}\log T)$, we have
\begin{equation*}
\Re\mathcal{\widetilde{S}}_{\pi}(X,T, \rho'_{\pi}) = - \Re\sum_{n \leqslant V} \frac{a_n\Lambda_{\pi}(n)}{n^{\rho'_{\pi}}} + \widetilde{E}_{\pi}(X,T,\rho_{\pi}),
\end{equation*} 
where $\widetilde{E}_{\pi}(X,T,\rho_{\pi}) \ll_{\pi, \alpha} X^{-1}$. Then we can write
\begin{align*}
\frac{1}{N_{\pi}(T)}& \sum_{\lvert \gamma'_{\pi} \rvert \leqslant T} (\Re \mathcal{\widetilde{S}}_{\pi}(X, T, \rho'_{\pi}) )^{2d}\\
&= \frac{1}{N_{\pi}(T)}\sum_{\lvert \gamma'_{\pi} \rvert \leqslant T} \bigg (-\Re \sum_{n \leqslant V} \frac{a_n\Lambda_{\pi}(n)}{n^{\rho'_{\pi}}} +\widetilde{E}_{\pi}(X,T,\rho_{\pi})\bigg)^{2d} \\
&\quad + \frac{1}{N_{\pi}( T )} \sum_{\substack{\gamma' \in \mathcal{E}}} \bigg ((\Re \mathcal{\widetilde{S}}_{\pi}(X,T, \rho'_{\pi}))^{2d}- \bigg (-\Re \sum_{n \leqslant V} \frac{a_n\Lambda_{\pi}(n)}{n^{\rho'_{\pi}}} +\widetilde{E}_{\pi}(X,T,\rho_{\pi})\bigg)^{2d} \bigg ) = \widetilde{\mathcal{U}}_1+\widetilde{\mathcal{U}}_2,
\end{align*}
say. An application of Lemma \ref{General Moment Calculations Step 3} shows that
\begin{align*}
\widetilde{\mathcal{U}}_1 =\boldsymbol{\mu}_{2d}\textnormal{Var}_{\pi}^{d} + O_{\pi,d, \alpha}((\log \log T)^{d-\frac{1}{2}}).
\end{align*}
An argument similar to how we proved \eqref{S-AV Step 2 Unconditional} shows that $\widetilde{\mathcal{U}}_2 \ll_{\pi, d,\alpha} T^{-\frac{1}{2}+\varepsilon}$ for any $\varepsilon = \varepsilon(\pi, d,\alpha)>0$. Hence we obtain
\begin{equation}\label{Raw Moments}
\frac{1}{N_{\pi}(T)}\sum_{\lvert \gamma'_{\pi} \rvert \leqslant T} (\Re \mathcal{\widetilde{S}}_{\pi}(X, T, \rho'_{\pi}) )^{2d} =\boldsymbol{\mu}_{2d}\textnormal{Var}_{\pi}^{d} + O_{\pi,d, \alpha}((\log \log T)^{d-\frac{1}{2}}).
\end{equation}
Now applying \eqref{SD UC Asymp} and Lemma \ref{S_av to Mean Unconditional}, it follows that
\begin{equation*}
\mathcal{\widetilde{M}}_{\pi, 2d}((\log T)^{\alpha m},T) = \boldsymbol{\mu}_{2d}\left ( \frac{\alpha m \log \log T}{4} \right)^{d} + O_{\pi, d,\alpha}((\log \log T)^{d-\frac{1}{2}}).
\end{equation*}
When the exponent is odd, the proof is similar except that $\mathcal{U}_1 \ll_{\pi,d,\alpha} 1$. \qed

\subsection*{Proof of Theorem \ref{main thm 2}} The proof follows along the lines of the proof of Theorem \ref{main thm}. \qed
\begin{sectionremark}\label{Rem: Addressing B<3}
We briefly address the condition $B_{\pi}<3$ which shows up in the statements of Theorems \ref{main thm 2} and \ref{thm:twomMomentUnconditional}, as well as in the lemmas across Sections \ref{sec: unconditional 1} and \ref{sec : unconditional 2}. This arises essentially from Lemma \ref{Crucial RH removing Lemma}. Noting that \(V=o(\log T)\), we see that, relative to the number of zeros of \(L(s,\pi)\) up to height \(T\), which is \(\asymp_{\pi} T\log T\), the error term in \eqref{Eq: Bpi Analysis} is truly small only when \(B_{\pi}<3\). 
\end{sectionremark}
\section{General Weight Functions: Proof of Theorem \ref{Thm:Main_for_test_funcs}}\label{sec: general weight functions}
\subsection{Preliminaries}
Recall from Section \ref{sec: triple cor intro}, for $X \geqslant 2$ and $\Psi \in C_{c}^{\infty}(0,\infty)$ a fixed non-negative compactly supported smooth function, we defined
\begin{equation*}
    \Psi_X(x) := \Psi\left(\frac{x}{X}\right), \quad x \in \mathbb{R}.
\end{equation*}
Fix $\xi>1$. Suppose $T\geqslant 2$ and let $\alpha, X,V$ satisfy
\begin{align}
\label{Defining Parameters III}
0<\alpha< \frac{1}{m},\quad X=T^{\alpha m} \quad \textrm{and} \quad V=X^{\xi}.
\end{align}
We first record the following estimates.
\begin{lemma}\label{lem:test_general_asymp}
 Let $m \in \mathbb{N}$ and $\pi \in \mathscr{A}_{m}$. Let $\Psi \in C_{c}^{\infty}(0,\infty)$ be a fixed non-negative compactly supported smooth function. Let $T \geqslant 3$ and $\alpha,\xi, X, V$ be as in \eqref{Defining Parameters III}. Then 
 \begin{equation} \label{eqn:mean_test_asymp}
    \sum_{n \leqslant V}\frac{\Psi_X(n)|\Lambda_{\pi}(n)|^2}{n} = \alpha m \log T \int_{0}^{\infty}\frac{\Psi(t)}{t} \diff t + O(1)
 \end{equation}
  and 
  \begin{equation} \label{eqn:std_test_asymp}
    \sum_{n \leqslant V}\frac{\Psi^2_X(n)|\Lambda_{\pi}(n)|^2}{n} = \alpha m \log T \int_{0}^{\infty}\frac{\Psi^2(t)}{t} \diff t + O(1).
 \end{equation}
The implied constants in the error terms depend at most on $\pi, \Psi, \alpha$ and $\xi$.
\end{lemma}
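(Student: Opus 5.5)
The plan is to deduce both asymptotics from Lemma \ref{lem:prime_number_theorem} by partial (Riemann--Stieltjes) summation. The only new ingredient compared with Lemma \ref{Rudnick-Sarnak Lemma} is that $\Psi$ is smooth and compactly supported. Write
\[
A(u):=\sum_{n\leqslant u}\frac{|\Lambda_{\pi}(n)|^2}{n}=\frac{\log^2 u}{2}+R(u), \qquad R(u)\ll_{\pi}1 \quad (u\geqslant 2),
\]
and let $G$ denote either $\Psi$ or $\Psi^2$. Both are smooth with support in some fixed $[c_1,c_2]\subset(0,\infty)$, where $c_1,c_2$ depend only on $\Psi$.

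First I handle the truncation at $V$. Since $V=X^{\xi}$ with $\xi>1$ and $X=T^{\alpha m}\to\infty$, we have $V>c_2X$ once $T$ is large in terms of $\Psi,\alpha,\xi$. The cutoff $n\leqslant V$ is then irrelevant. For bounded $T$ the claim is trivial after enlarging the implied constant. So the sum is
\[
\sum_{n}\frac{G(n/X)|\Lambda_\pi(n)|^2}{n}=\int_{c_1X}^{c_2X}G(u/X)\,dA(u).
\]

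Next I integrate by parts. The boundary terms vanish because $G(c_1)=G(c_2)=0$, which gives
\[
-\int_{c_1X}^{c_2X}\frac{1}{X}G'(u/X)A(u)\,du.
\]
I substitute $A(u)=\tfrac12\log^2u+R(u)$. The $R$ part contributes at most
\[
\ll \sup|R|\int_{c_1}^{c_2}|G'(t)|\,dt\ll_{\pi,\Psi}1.
\]
For the main part I undo the integration by parts, using $d(\tfrac12\log^2 u)=\frac{\log u}{u}\,du$. This gives
\[
\int_{c_1X}^{c_2X}G(u/X)\frac{\log u}{u}\,du=\int_{c_1}^{c_2}G(t)\frac{\log X+\log t}{t}\,dt=\log X\int_0^\infty\frac{G(t)}{t}\,dt+O_{\Psi}(1).
\]

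Finally, $\log X=\alpha m\log T$, which yields both \eqref{eqn:mean_test_asymp} and \eqref{eqn:std_test_asymp}. There is no real obstacle. The one point needing care is that Lemma \ref{lem:prime_number_theorem} must be used with its $O_\pi(1)$ error rather than the Rudnick--Sarnak $O(\log X)$ error, and the bounded $\int|G'|$ is what lets this $O(1)$ survive the integration by parts. The lower limit $c_1X\geqslant2$ holds for $T$ large, so the lemma applies throughout the range of integration.
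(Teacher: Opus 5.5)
Your proposal is correct and takes the same route as the paper, whose proof just says ``partial summation along with Lemma~\ref{lem:prime_number_theorem}.'' You supply the details that one-liner omits: the cutoff at $V$ disappears for large $T$, the boundary terms vanish because $\Psi$ has compact support, and the $O_\pi(1)$ remainder survives because $\int|G'|$ is bounded.
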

\begin{proof}
Both \eqref{eqn:mean_test_asymp} and \eqref{eqn:std_test_asymp} follow by an application of partial summation along with Lemma~\ref{lem:prime_number_theorem}. 
\end{proof}
\subsection{Explicit Formula}
As before, we conduct the bulk of our analysis on a sum of primes related to our sum over zeros by an explicit formula. We shall require the following lemma. 
\begin{lemma}\label{lem:explicit_formula_test}
Let $m \in \mathbb{N}$ and $\pi \in \mathscr{A}_m$. Assume $\varphi \in C_{c}^{\infty}(2,\infty).$ Let $s\in \mathbb{C}$ such that $L(s,\pi) \neq 0.$ Then 
\begin{equation}\label{eqn:explicit_test_func}
    \sum_{n =1}^{\infty}\frac{\Lambda_{\pi}(n)\varphi(n)}{n^s} = \delta_{\pi}\widehat{\varphi}(1-s) -\sum_{\rho_{\pi}}\widehat{\varphi}(\rho_{\pi}-s) -\underset{\substack{k \in \mathbb{N}\cup \{0\} \\ 1 \leqslant j \leqslant m}}{\sum \sum} \widehat{\varphi}(-2k - \kappa_{\pi}(j) -s),
\end{equation}
where $\kappa_{\pi}(j)\in \mathbb{C}$ are the spectral parameters of $\pi.$ When $s=1,$ the first term in the right hand side of \eqref{eqn:explicit_test_func} should be interpreted as $\delta_{\pi}\widehat{\varphi}(0)$, where $\delta_{\pi} = 1 $ if $L(s,\pi) = \zeta(s)$ and zero otherwise.
\end{lemma}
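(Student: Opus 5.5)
We prove \eqref{eqn:explicit_test_func} by Mellin inversion and a contour shift, which is the standard route to Weil-type explicit formulas. Since $\varphi \in C_c^{\infty}(2,\infty)$, its Mellin transform $\widehat{\varphi}(w)=\int_0^\infty \varphi(x)x^{w-1}\diff x$ is entire. Repeated integration by parts gives, for every $A\geqslant 0$,
\[
\widehat{\varphi}(w)\ll_{A,\varphi,\sigma} (1+|\Im w|)^{-A}
\]
uniformly for $\Re w$ in compact sets, with the dependence on $\Re w$ only through a factor $(\max\operatorname{supp}\varphi)^{\Re w}$. Mellin inversion gives
\[
\varphi(x)=\frac{1}{2\pi i}\int_{(c)}\widehat{\varphi}(w)\,x^{-w}\diff w .
\]
We insert this into the left-hand side with $c$ chosen so that $\Re(s)+c>1$. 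The Dirichlet series of $L'/L$ then converges absolutely on the line of integration, and the rapid decay of $\widehat{\varphi}$ justifies swapping sum and integral:
\[
\sum_{n}\frac{\Lambda_\pi(n)\varphi(n)}{n^s}=\frac{1}{2\pi i}\int_{(c)}\widehat{\varphi}(w)\Big(-\frac{L'}{L}(s+w,\pi)\Big)\diff w .
\]

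Next we move the line of integration to $\Re w=-R$ with $R\to\infty$, along horizontal segments at heights $\pm T_\nu$. The heights $T_\nu\to\infty$ are chosen, as in the Appendix proof of Lemma \ref{lem:explicit_formula_for_L(s,pi)}, so that they avoid ordinates of zeros by $\gg 1/\log T_\nu$. On such segments, and on vertical lines away from the trivial zeros, $L'/L(\sigma+iT_\nu,\pi)\ll \log^2 T_\nu$ for $-R\leqslant \sigma\leqslant 2$; this follows from \cite[Prop.~5.7]{IK2004} together with the functional equation \eqref{Functional Equation}. The decay of $\widehat{\varphi}$ kills these segments. The residues of $-\widehat{\varphi}(w)\frac{L'}{L}(s+w,\pi)$ then produce exactly the terms on the right-hand side:
\begin{itemize}
\item at $w=1-s$ the pole of $\zeta$ contributes $+\delta_\pi\widehat{\varphi}(1-s)$;
\item each nontrivial zero $\rho_\pi$, at $w=\rho_\pi-s$, contributes $-\widehat{\varphi}(\rho_\pi-s)$;
\item the trivial zeros coming from the poles of $L(s,\pi_\infty)$, at $w=-2k-\kappa_\pi(j)-s$, contribute $-\widehat{\varphi}(-2k-\kappa_\pi(j)-s)$.
\end{itemize}
We use $L(s,\pi)\neq 0$ to ensure there is no pole at $w=0$; no pole is needed there anyway, because $\widehat{\varphi}$ is entire. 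When $s=1$ and $\delta_\pi=1$, the pole of $\zeta$ sits at $w=0$ and gives $\widehat{\varphi}(0)$, which is the stated interpretation. Multiplicities and coincidences among the $\kappa_\pi(j)$ are handled by counting zeros with multiplicity. The factor $(s(1-s))^{\delta_\pi}$ only affects $\zeta$, where it just cancels the pole and the trivial zero at $0$; we check this against the definition of the spectral parameters.

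The main obstacle is showing that the integral over $\Re w=-R$ tends to $0$, and that the sum over trivial zeros converges. On the left line we write $-\frac{L'}{L}(s+w,\pi)$ through the functional equation as $\frac{L'}{L}(1-s-w,\widetilde\pi)$ plus gamma-factor terms. The first term is bounded for $\Re(1-s-w)>1$. The digamma terms grow like $\log(|w|+R)$, away from the poles. The factor $\widehat{\varphi}(w)$ with $\Re w=-R$ is bounded by $(\min\operatorname{supp}\varphi)^{-R}\ll 2^{-R}$ times polynomial decay in $\Im w$; this uses $\operatorname{supp}\varphi\subset(2,\infty)$, which is exactly why that support is assumed. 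Taking $R$ along a sequence avoiding the trivial zeros, the integral is $\ll 2^{-R}R^{O(1)}\to 0$. The same bound gives absolute convergence of $\sum_{k,j}\widehat{\varphi}(-2k-\kappa_\pi(j)-s)$. Finally, the sum over $\rho_\pi$ converges absolutely, since the zero density $\ll\log T$ per unit interval is beaten by the decay of $\widehat{\varphi}$. It is therefore legitimate to take the limit $T_\nu\to\infty$ of the symmetrically truncated sums.
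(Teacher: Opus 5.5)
Your argument is correct, and it takes a genuinely different route from the paper's. The paper never sets up a contour integral against $\widehat{\varphi}$. Instead it starts from the sharp-cutoff formula \eqref{Explicit Formula Equation 2} of Lemma \ref{lem:explicit_formula_for_L(s,pi)}, which the Appendix proves by Perron's formula and a contour shift. It writes the smoothed sum as $-\int_2^\infty\varphi'(y)\sum_{n\leqslant y}\Lambda_\pi(n)n^{-s}\diff y$ and integrates that formula term by term in $y$. The term $-\frac{L'}{L}(s,\pi)$ drops out because $\int\varphi'=0$. The identity \eqref{eqn:phi_mell}, in the form $\int_2^\infty\varphi'(y)y^{z}z^{-1}\diff y=-\widehat{\varphi}(z)$, turns every remaining term into the corresponding Mellin transform.

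Given Lemma \ref{lem:explicit_formula_for_L(s,pi)}, the paper's route is very short, but it has two costs:
\begin{itemize}
\item Interchanging $\int\diff y$ with $\sum_{\rho_\pi}$ needs care, because $\sum_{\rho_\pi}y^{\rho_\pi-s}/(\rho_\pi-s)$ converges only conditionally. In effect one integrates the truncated formula \eqref{Explicit Formula Equation 1} and lets $T\to\infty$ by dominated convergence.
\item The cases $s=1$ and $L(s,\pi)=0$ must be handled separately; the latter is done by the uniqueness argument of Lemma \ref{eqn:explicit_test_func_non_trivial_zeros}.
\end{itemize}

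Your Mellin-inversion proof is self-contained and only ever handles absolutely convergent objects. The rapid decay of $\widehat{\varphi}$ controls the horizontal segments, the far-left line and all the sums at once. Moreover, because the kernel $\widehat{\varphi}(w)$ has no pole, $w=0$ is just one more residue. For $s=1$, $\delta_\pi=1$ it gives $+\widehat{\varphi}(0)$. If $s$ is a zero of multiplicity $\mu$, it gives $-\mu\widehat{\varphi}(0)$, which is exactly that zero's contribution to $-\sum_{\rho_\pi}\widehat{\varphi}(\rho_\pi-s)$. So your proof does not need the hypothesis $L(s,\pi)\neq0$, and it yields Lemma \ref{eqn:explicit_test_func_non_trivial_zeros} at no extra cost. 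The price is redoing the bounds for $L'/L$ along the contour, which the Appendix carries out anyway.

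You should settle explicitly the point you left as ``we check this against the definition of the spectral parameters''. For $\delta_\pi=1$ one has $\kappa_\pi(1)=0$. The factor $s$ in $(s(1-s))^{\delta_\pi}$ cancels the pole of $\Gamma(s/2)$ at $s=0$, so no zero of $\zeta$ is forced there; indeed $\zeta(0)=-\tfrac12$. Your residue count therefore gives the trivial-zero sum over $k\geqslant1$ only, and the $k=0$ term $\widehat{\varphi}(-s)$ in the displayed formula must be dropped for $\zeta$. This is an indexing slip in the statement, inherited from \eqref{Explicit Formula Equation 2} (compare Landau's \eqref{Landau's Explicit Formula}, which starts at $n=1$). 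It is not a defect of your argument.
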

\begin{proof}
See Appendix. 
\end{proof}
We now extend Lemma \ref{lem:explicit_formula_test} to include $s \in \mathbb{C}$ that are nontrivial zeros of $L(s,\pi).$
\begin{lemma} \label{eqn:explicit_test_func_non_trivial_zeros} Let $m \in \mathbb{N}$ and $ \pi \in \mathscr{A}_m$. Assume $\varphi \in C_{c}^{\infty}(2,\infty).$ Let $\rho'_{\pi}$ be a non-trivial zero of $L(s, \pi)$.  Then we have
\begin{equation}\label{eqn:explicit_formula weight functions}
\sum_{n =1}^{\infty}\frac{\Lambda_{\pi}(n)\varphi(n)}{n^{\rho'_{\pi}}} = \delta_{\pi}\widehat{\varphi}(1-\rho'_{\pi}) -\sum_{\rho_{\pi}}\widehat{\varphi}(\rho_{\pi}-\rho'_{\pi}) -\underset{\substack{k \in \mathbb{N}\cup \{0\} \\ 1 \leqslant j \leqslant m}}{\sum \sum} \widehat{\varphi}(-2k - \kappa_{\pi}(j) -\rho'_{\pi}).
\end{equation}
where $\rho_{\pi}$ runs over non-trivial zeros of $L(s,\pi)$ and $\kappa_{\pi}(j)\in \mathbb{C}$ are the spectral parameters of $\pi$. 
\end{lemma}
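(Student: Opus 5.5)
The plan is to obtain the identity \eqref{eqn:explicit_formula weight functions} at $s=\rho'_{\pi}$ from Lemma \ref{lem:explicit_formula_test} by continuity in $s$. The formula \eqref{eqn:explicit_test_func} holds for every $s$ with $L(s,\pi)\neq 0$. The only zero that causes trouble is $\rho'_{\pi}$ itself, together with any zeros sharing its ordinate and real part; no term of the form $-\frac{L'}{L}(s,\pi)$ appears in the smooth formula. This suggests that each side of \eqref{eqn:explicit_test_func} should be continuous, indeed holomorphic, in $s$ in a neighbourhood of $\rho'_{\pi}$. Then letting $s\to\rho'_{\pi}$ along points with $L(s,\pi)\neq 0$, which are dense because zeros are isolated, yields the claim.

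First, the left-hand side. Since $\varphi$ is compactly supported in $(2,\infty)$, the sum $\sum_n \Lambda_{\pi}(n)\varphi(n)n^{-s}$ is a finite sum, so it is entire in $s$. Next, the first and third terms on the right. Since $\varphi\in C_c^\infty(2,\infty)$, its Mellin transform $\widehat{\varphi}(z)=\int_0^\infty \varphi(x)x^{z-1}\diff x$ is entire. Integrating by parts repeatedly gives
\[
|\widehat{\varphi}(z)|\ll_{\varphi,A,K} (1+|z|)^{-A}\, b^{\Re z}
\]
uniformly for $\Re z\leqslant K$, where $\operatorname{supp}\varphi\subset[a,b]$. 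Applied to the trivial-zero terms $\widehat{\varphi}(-2k-\kappa_{\pi}(j)-s)$, this shows they are bounded by $a^{-2k-\Re s-\Re\kappa_\pi(j)}$ with $a>2$. Hence the double series converges absolutely and uniformly for $s$ in a compact neighbourhood of $\rho'_{\pi}$, and it is holomorphic there. The term $\delta_{\pi}\widehat{\varphi}(1-s)$ is entire.

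The main step is the sum over zeros $\sum_{\rho_{\pi}}\widehat{\varphi}(\rho_{\pi}-s)$. For $s$ in a small disc $D$ around $\rho'_{\pi}$, the differences $\rho_{\pi}-s$ have bounded real part, since $0<\Re\rho_\pi<1$. The decay bound with $A=2$ then gives $|\widehat{\varphi}(\rho_{\pi}-s)|\ll (1+|\gamma_{\pi}-\Im s|)^{-2}$ uniformly on $D$. The local zero density estimate \cite[Prop.~5.7]{IK2004} says there are $\ll_\pi \log(|t|+3)$ zeros with $|\gamma_\pi-t|\leqslant 1$. Summing over unit intervals therefore shows that the series converges absolutely and uniformly on $D$. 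By Weierstrass's theorem it is holomorphic on $D$. Note that the term $\rho_{\pi}=\rho'_{\pi}$ contributes the finite value $\widehat{\varphi}(0)$ at $s=\rho'_\pi$; there is no pole, which is the advantage over the sharp-cutoff formula in Lemma \ref{lem:explicit_formula_for_L(s,pi)}.

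Finally, both sides of \eqref{eqn:explicit_test_func} are holomorphic on $D$ and agree on $D$ minus the finitely many zeros of $L(s,\pi)$ in $D$. By the identity theorem, or simply by continuity, they agree at $s=\rho'_{\pi}$, which is \eqref{eqn:explicit_formula weight functions}. If the trivial zeros of $L(s,\pi)$ could come near $\rho'_\pi$, one would note that $\Re\rho'_\pi\in(0,1)$, so only nontrivial zeros lie near it. I expect the only real work to be the uniform convergence of the sum over zeros, which rests on the Mellin decay bound combined with the local zero count.
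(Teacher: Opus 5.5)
Your proposal is correct and takes essentially the paper's route: show that every term of the smooth explicit formula (the finite prime sum, the entire Mellin transform, the trivial-zero series and the zero sum) is holomorphic on a small disc about $\rho'_{\pi}$, then apply the identity theorem; your Weierstrass M-test via the local zero count supplies the uniform convergence that the paper only asserts as ``uniform boundedness''. There are two inessential slips: the decay bound should carry $\max\{a^{\Re z},b^{\Re z}\}$ rather than $b^{\Re z}$ (you already use $a^{\Re z}$ for the trivial-zero terms, where it matters), and the trivial zeros $-\kappa_{\pi}(j)$ would lie inside the critical strip if $\Re\kappa_{\pi}(j)<0$, which is not excluded in general, but this is harmless because your argument only needs the zeros of $L(s,\pi)$ in the disc to be isolated.
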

\begin{proof}
Fix $\rho'_{\pi}$ to be a zero of $L(s,\pi).$ To prove this lemma, we use the uniqueness principle of complex analysis. It requires us to understand where each term of \eqref{eqn:explicit_test_func} is holomorphic. It is clear that the Dirichlet series on the left hand side of \eqref{eqn:explicit_test_func} is entire. We also find that 
\begin{align*}
    \underset{\substack{k \in \mathbb{N}\cup \{0\} \\ 1 \leqslant j \leqslant m}}{\sum \sum} \widehat{\varphi}(-2k - \kappa_{\pi}(j) -s)
\end{align*}
is analytic in $\Re s \geqslant 0.$ This follows from $\Re{\kappa_{\pi}(j)} \geqslant -\frac{1}{2}.$ Indeed, for all $k \in \mathbb{N}\cup\{0\},$  
\begin{align*}
    \sum_{1 \leqslant j \leqslant m} \widehat{\varphi}(-2k-\kappa_{\pi}(j) -s) \ll_{\pi} 2^{-2k}\int_{2}^{\infty}t^{-1/2}\varphi(t)\, \diff  t \ll_{\pi,\varphi} 2^{-2k}.
\end{align*}
 Because of the support of $\varphi,$ we find that its Mellin transform is entire. Due to the rapid decay of $\widehat{\varphi}$, there exists $\varepsilon(\rho'_{\pi}) >0$ such that for $s\in \mathbb{C}$ with $|s - \rho'_{\pi}| \leqslant \varepsilon(\rho'_{\pi})$, the sum over the nontrivial zeros in \eqref{eqn:explicit_test_func} is uniformly bounded. We now apply the uniqueness principle on the small ball of radius $\varepsilon(\rho'_{\pi})$ centered at $\rho'_{\pi}$ and \eqref{eqn:explicit_formula weight functions} follows.  
\end{proof}
Recall that in Section \ref{sec: triple cor intro}, for $X,T \geqslant 2$ and $\rho'_{\pi}$ be a nontrivial zero of $L(s,\pi),$ we defined 
\begin{align*}
\mathcal{S}_{\pi,\Psi}(X,T,\rho'_{\pi}) = \sum_{|\Im \rho_{\pi}| \leqslant T }\widehat{\Psi}_X(\rho_{\pi} - \rho'_{\pi}).
\end{align*}
We now view $\mathcal{S}_{\pi,\Psi}(X,T,\rho'_{\pi})$ as a sum over prime powers. 
\begin{corollary}\label{cor:S_pi(T)_explicit} Let $m \in \mathbb{N}$ and $\pi \in \mathscr{A}_m.$ Let $\Psi \in C_{c}^{\infty}(0,\infty)$ be a fixed non-negative compactly supported smooth function.  Let $X,T \geqslant 2$. Then for all nontrivial zeros \( \rho'_{\pi} = \beta'_{\pi} + i \gamma'_{\pi} \) of $L(s,\pi)$ with \( \lvert \gamma'_{\pi} \rvert \leqslant T \) outside an exceptional set of size at most $O_{\pi}(\sqrt{T}\log T)$, we have
\begin{equation*}
\mathcal{S}_{\pi,\Psi}(X,T, \rho'_{\pi}) = - \sum_{n =1}^{\infty} \frac{\Psi_{X}(n)\Lambda_{\pi}(n)}{n^{\rho'_{\pi}}} + O_{\pi,\Psi}\bigg(\frac{X^{1-\beta'_{\pi}}}{T} +\frac{X\log T}{\sqrt{T}}\bigg).
\end{equation*}
Moreover, assuming the Riemann Hypothesis for $L(s,\pi)$, we have
\begin{equation*}
\mathcal{S}_{\pi,\Psi}(X,T, \rho'_{\pi}) = - \sum_{n =1}^{\infty} \frac{\Psi_X(n)\Lambda_{\pi}(n)}{n^{\rho'_{\pi}}}+ O_{\pi,\Psi}\bigg(\frac{X^{\frac{1}{2}}}{T} +\frac{\log T}{\sqrt{T}}\bigg).
\end{equation*}
\end{corollary}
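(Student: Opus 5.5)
Apply the explicit formula of Lemma \ref{eqn:explicit_test_func_non_trivial_zeros} with $\varphi=\Psi_X$ and $s=\rho'_\pi$. If $X$ is large enough that $\Psi_X$ is supported in $(2,\infty)$, this gives
\[
\sum_{\rho_\pi}\widehat{\Psi}_X(\rho_\pi-\rho'_\pi) = -\sum_{n}\frac{\Psi_X(n)\Lambda_\pi(n)}{n^{\rho'_\pi}} + \delta_\pi\widehat{\Psi}_X(1-\rho'_\pi) - \underset{k,j}{\sum\sum}\widehat{\Psi}_X(-2k-\kappa_\pi(j)-\rho'_\pi).
\]
When $X$ is bounded, the support condition may fail. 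In that case the small $n$ are dealt with by a fixed finite adjustment, or we simply note that the claimed statement is trivial with a constant depending on $\Psi$. After this, two things remain: bound the two residual terms, and control the truncation of the zero sum to $\lvert\Im\rho_\pi\rvert\leqslant T$.

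For the residual terms, note that $\widehat{\Psi}_X(s)=X^s\widehat{\Psi}(s)$, and that $\widehat{\Psi}$ decays faster than any power of $\lvert\Im s\rvert$ in vertical strips, because $\Psi$ is smooth with compact support. This is shown by integrating by parts. First, the pole term: $\lvert X^{1-\rho'_\pi}\widehat\Psi(1-\rho'_\pi)\rvert\ll_{\Psi,A} X^{1-\beta'_\pi}(1+\lvert\gamma'_\pi\rvert)^{-A}$. Second, the trivial-zero terms: $\lvert X^{-2k-\kappa_\pi(j)-\rho'_\pi}\widehat\Psi(\cdots)\rvert\ll X^{-2k-\Re\kappa_\pi(j)-\beta'_\pi}(1+\lvert\gamma'_\pi+\Im\kappa_\pi(j)\rvert)^{-A}$. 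Here we use $\Re\kappa_\pi(j)\geqslant-\theta_m>-\frac12$, and the sum over $k$ is geometric. Now discard the exceptional set of zeros with $\lvert\gamma'_\pi\rvert<\sqrt T$ or $\lvert\gamma'_\pi\rvert>T-\sqrt T$; this set has $O_\pi(\sqrt T\log T)$ elements by the zero count. For every remaining $\rho'_\pi$, both residual terms are $\ll_{\pi,\Psi} X^{1-\beta'_\pi}T^{-1}$, taking $A$ large since $\lvert\gamma'_\pi\rvert\geqslant\sqrt T$. Under RH this is $\ll X^{1/2}/T$.

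For the truncation, argue as in Lemma \ref{Connecting S to sum over primes}. Split the zeros with $\gamma_\pi>T$ into unit intervals $\gamma_\pi-T\in(j,j+1]$. Each interval contains $\ll_\pi\log(T+j)$ zeros by \cite[Prop. 5.7]{IK2004}. For each such zero, $\lvert\widehat\Psi_X(\rho_\pi-\rho'_\pi)\rvert\ll_\Psi X^{\beta_\pi-\beta'_\pi}(1+\lvert\gamma_\pi-\gamma'_\pi\rvert)^{-2}$, and the non-exceptional condition gives $\lvert\gamma_\pi-\gamma'_\pi\rvert\geqslant\sqrt T+j$. Without RH we use only $X^{\beta_\pi-\beta'_\pi}\leqslant X$, so the tail is $\ll X\int_{\sqrt T}^\infty\log(T+u)\,u^{-2}\diff u\ll X\log T/\sqrt T$. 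Zeros with $\gamma_\pi<-T$ are handled symmetrically. Under RH, $X^{\beta_\pi-\beta'_\pi}=1$, which gives $\log T/\sqrt T$. Putting the pieces together yields both displayed formulas; in the RH case we also replace $n^{\rho'_\pi}$ by $n^{1/2+i\gamma'_\pi}$.

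There is no deep obstacle here. The one point that needs care is the decay estimate for $\widehat\Psi$ in vertical strips, uniformly in the real part over $[-2k-1,1]$ as $k$ grows. Integrating by parts $A$ times gives $\lvert\widehat\Psi(\sigma+it)\rvert\ll_{A,\Psi}c^{\sigma}\,(1+\lvert\sigma\rvert)^A/(1+\lvert t\rvert)^A$, where $\operatorname{supp}\Psi\subset[c^{-1},c]$. The growth factor $c^{2k}(1+2k)^A$ is absorbed once $X^{-2k}$ beats it. This holds for $X$ larger than a constant depending on $\Psi$; smaller $X$ are covered by the implied constant.
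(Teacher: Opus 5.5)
Your argument for $X$ large enough that $\operatorname{supp}\Psi_X\subset(2,\infty)$ is the paper's proof. You apply Lemma \ref{eqn:explicit_test_func_non_trivial_zeros} at $s=\rho'_\pi$ and discard the $O_\pi(\sqrt T\log T)$ zeros with $\lvert\gamma'_\pi\rvert<\sqrt T$ or $\lvert\gamma'_\pi\rvert>T-\sqrt T$. You then bound the pole and trivial-zero terms using the decay of $\widehat\Psi$ together with $\lvert\gamma'_\pi\rvert\geqslant\sqrt T$, and truncate the zero sum with unit-interval zero counts as in Lemma \ref{Connecting S to sum over primes}. Your $A$-fold integration by parts, with the factor $c^{\lvert\sigma\rvert}$, handles the trivial-zero series more carefully and uniformly in $k$ than the paper's two-fold bound. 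That is a refinement, not a different route.

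What you should drop is the treatment of bounded $X$. For fixed $X$ the error $X^{1-\beta'_\pi}/T+X\log T/\sqrt T$ tends to $0$ as $T\to\infty$. So nothing can be hidden in a $\Psi$-dependent constant, and the statement is not trivial there. In fact it can be false. For such $X$ the hypothesis $\varphi\in C_c^\infty(2,\infty)$ of Lemma \ref{lem:explicit_formula_test} fails. Redoing the contour shift with the functional equation, the conductor and gamma factors contribute an extra term of size about $m\,\Psi_X(1)\log\lvert\gamma'_\pi\rvert=m\,\Psi(1/X)\log\lvert\gamma'_\pi\rvert$. This is the analogue of the secondary term $mX^{-2}\log(\lvert\gamma'_\pi\rvert+3)$ in Lemma \ref{Sum over Zeros to Sum over Primes}, where Montgomery's weight takes the value $X^{-2}$ at $1$. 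So for fixed $X$ with $\Psi(1/X)\neq 0$ the displayed formula is off by an amount of order $\log T$, and no ``fixed finite adjustment'' repairs it.

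The right fix is to state and prove the corollary only for $X\geqslant X_0(\Psi)$. The paper's proof tacitly makes the same assumption when it invokes Lemma \ref{eqn:explicit_test_func_non_trivial_zeros}. It is also all that is used later, where $X=T^{\alpha m}$.
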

\begin{proof} Fix a nontrivial zero \( \rho'_{\pi} = \beta'_{\pi} + i \gamma'_{\pi} \) of $L(s,\pi)$. As before, we may assume that \( \sqrt{T} < \lvert \gamma'_{\pi} \rvert < T-\sqrt{T} \).  Using integration by parts, we derive the following bound
\begin{equation}\label{eqn:mell_transform_bound}
  \widehat{\Psi}_X(z) \ll_{\Psi} \frac{\max\{X^{\Re z},X^{-1}\}}{|z||z+1|}
\end{equation}
for $z \neq 0,-1.$
We consider each term of the explicit formula of Lemma~\ref{eqn:explicit_test_func_non_trivial_zeros} separately.
 In the case $\pi$ is the trivial representation, we find by \eqref{eqn:mell_transform_bound}
\begin{equation*}
    \widehat{\Psi}_X(1-\rho'_{\pi}) \ll_{\Psi} \frac{X^{\Re (1-\rho'_{\pi})}}{|1-\rho'_{\pi}||2-\rho'_{\pi}|} \ll_{\Psi}
    \frac{X^{1-\beta'_{\pi}}}{T},
\end{equation*}
Next, we truncate the infinite sum over our nontrivial zeros. Again by \eqref{eqn:mell_transform_bound},
\begin{align*}
\widehat{\Psi}_X(\rho_{\pi} -\rho'_{\pi}) \ll_{\Psi} \frac{X^{\Re(\rho'_{\pi}-\rho_{\pi})}}{|\rho_{\pi} -\rho'_{\pi}||\rho_{\pi} -\rho'_{\pi}+1|}\ll_{\Psi} \frac{X^{\Re(\rho'_{\pi}-\rho_{\pi})}}{|\gamma_{\pi}-\gamma'_{\pi}|^2}, 
\end{align*}
when $\rho_{\pi} -\rho'_{\pi} \neq 0,-1.$ Following the proof of \eqref{Extending the Sum over all Zeros}, the contribution from these terms are
\begin{align*}
    \ll_{\pi,\Psi} \frac{X\log T}{\sqrt{T}}.
\end{align*}
Assuming RH for $L(s,\pi),$ we have $X^{\Re(\rho'_{\pi}-\rho_{\pi})} \ll 1$. Finally, the sum over the trivial zeros is bounded by our error for the tail of our sum over nontrivial zeros by a similar argument using \eqref{eqn:mell_transform_bound}.
\end{proof}
\subsection{Proof of Theorem \ref{Thm:Main_for_test_funcs}}
Most of the work follows from Section~\ref{sec: Conditional 1} and Section~\ref{sec: Conditional 2}. The key difference is the use of Lemma~\ref{lem:test_general_asymp} instead of Lemma~\ref{Rudnick-Sarnak Lemma}. We also mention that the condition $\xi>1$ in \eqref{Defining Parameters III} is sufficient here, unlike the constraint $\xi>\tfrac{4}{3}$ in Section \ref{sec: Conditional 2}. This contributes to the improvement in the admissible range of $\alpha$ in Theorem \ref{Thm:Main_for_test_funcs} compared to Theorem \ref{thm:twomMomentConditional}. For the benefit of the reader, we include below the lemma for the mixed moments, corresponding to Lemma \ref{Moment Calculations Assuming RH Step 1}, in the context of Theorem~\ref{Thm:Main_for_test_funcs}. The analogous versions of Lemma \ref{Moment Calculations Assuming RH Step 2} and \ref{Moment Calculations Assuming RH Step 3} reads similarly.
\begin{lemma}\label{lem:even_moments_test}
Let $m \in \mathbb{N}, \pi \in \mathscr{A}_m$ and $\theta_{m} \in [0, \frac{1}{2}-\frac{1}{m^2+1}]$ be an admissible exponent towards the Ramanujan conjecture for $L(s,\pi)$. Assume the Riemann Hypothesis for $L(s,\pi)$. Let $\Psi \in C_{c}^{\infty}(0,\infty)$ be a fixed non-negative compactly supported smooth function. Let $k, \ell$ be non-negative integers with $k+\ell=2d$ for some $d \in \mathbb{N}$. Let $T \geqslant 2$ and $\alpha, \xi, X, V$ be as in \eqref{Defining Parameters III} satisfying
\[\alpha < \frac{1}{2dm(1+\xi\theta_m)}.
\]
Then 
\begin{align*}
\frac{1}{N_{\pi}(T)} \sum_{\lvert \gamma_{\pi} \rvert \leqslant T}\bigg(\sum_{p \leqslant V} \frac{\Psi_X(p) \Lambda_{\pi}(p)}{p^{\frac{1}{2}+i\gamma_{\pi}}}\bigg)^k\bigg(\sum_{q \leqslant V} \frac{\Psi_X(p) \overline{\Lambda_{\pi}(q)}}{q^{\frac{1}{2}-i\gamma_{\pi}}}\bigg)^\ell & = \delta_{k\ell} d!\bigg(\sum_{p \leqslant V}\frac{\Psi^2_{X}(p)|\Lambda_{\pi}(p)|^2}{p}\bigg)^d\\
& \quad + O(\max\{1,(\log X)^{d-2}\})
\end{align*}
where $p$ and $q$ run over the primes up to $V$ and $\delta_{k\ell} =1$ if $k=\ell$ and zero otherwise. The implied constant in the error term depends at most on $\pi, \Psi, d, \alpha$ and $\xi$.
\end{lemma}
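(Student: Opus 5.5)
The argument mirrors Lemma \ref{Moment Calculations Assuming RH Step 1}, with the weights $a_p$ replaced by $\Psi_X(p)$. Write
\[
\mathcal{C}_{k,\ell}:=\frac{1}{N_{\pi}(T)}\sum_{\substack{p_1,\dots,p_k\leqslant V\\ q_1,\dots,q_\ell\leqslant V}}\frac{\prod_{i}\Psi_X(p_i)\Lambda_{\pi}(p_i)\prod_j\Psi_X(q_j)\overline{\Lambda_{\pi}(q_j)}}{(PQ)^{1/2}}\sum_{\lvert\gamma_{\pi}\rvert\leqslant T}\Big(\frac{Q}{P}\Big)^{i\gamma_{\pi}},
\]
where $P=\prod_i p_i$ and $Q=\prod_j q_j$. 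The inner sum over zeros is evaluated by Corollaries \ref{Gonek assuming RH} and \ref{when b>a}. Set $\eta=2\alpha dm(1+\xi\theta_m)<1$. Since $\Psi$ is bounded and $P,Q\leqslant V^{2d}$, every off-main Landau--Gonek error term contributes
\[
\ll_\varepsilon \frac{V^{2d\theta_m+\varepsilon}\log T}{T}\Big(\sum_{p\leqslant V}\Psi_X(p)\lvert\Lambda_\pi(p)\rvert\Big)^{2d}\ll T^{\eta-1+\varepsilon}.
\]
This uses Cauchy--Schwarz with $\sum_{p\leqslant V}p\Psi_X(p)^2\ll V^2$ and Lemma \ref{lem:test_general_asymp}, or simply the trivial bound $\sum_{p\leqslant V}\lvert\Lambda_\pi(p)\rvert\ll V^{1+\theta_m}\log V$. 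Choosing $\varepsilon$ small in terms of $\eta$ makes this a negative power of $T$.

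\textbf{Case $k\neq\ell$}, say $k>\ell$. When $P<Q$, the ratio $Q/P$ is not an integer and only the error term survives. When $P>Q$, the main term $-\frac{T}{\pi}\sqrt{Q/P}\,\Lambda_\pi(P/Q)$ is nonzero only if $P/Q$ is a prime power. Since all the $p_i,q_j$ are primes, this forces $\{q_j\}$ to be a sub-multiset of $\{p_i\}$ with the remaining $k-\ell\geqslant 2$ primes all equal to a single prime $p$. The resulting contribution is
\[
\ll \frac{1}{\log T}\Big(\sum_{p}\frac{\lvert\Lambda_\pi(p)\rvert^{k-\ell}\lvert\Lambda_\pi(p^{k-\ell})\rvert}{p^{k-\ell}}\Big)\Big(\sum_{q\leqslant V}\frac{\Psi_X^2(q)\lvert\Lambda_\pi(q)\rvert^2}{q}\Big)^{\ell}.
\]
The first factor is $O(1)$ by Cauchy--Schwarz, Hypothesis \textbf{H}$_\pi$ and Lemma \ref{Convergent Sums}, and the second is $\ll(\log X)^\ell$ by \eqref{eqn:std_test_asymp}. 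Because $\log X\asymp\log T$ and $\ell\leqslant d-1$, the total is $\ll(\log X)^{d-2}$. The subcase $\ell=0$ gives $\ll(\log T)^{-1}$ in the same way, and $\ell>k$ follows by symmetry.

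\textbf{Case $k=\ell=d$.} When $P\neq Q$, neither $P/Q$ nor $Q/P$ can be a prime power, because both are products of exactly $d$ primes. So only error terms arise, and these are $\ll T^{\eta-1+\varepsilon}$. When $P=Q$, the inner sum equals $N_\pi(T)$ exactly, since $\lvert\gamma_\pi\rvert\leqslant T$ counts all zeros. The diagonal then contributes
\[
\sum_{P=Q}\frac{\prod_i\Psi_X(p_i)\Lambda_\pi(p_i)\prod_j\Psi_X(q_j)\overline{\Lambda_\pi(q_j)}}{P}.
\]
For distinct $p_i$ there are $d!$ matchings, giving $d!\big(\sum_{p\leqslant V}\Psi_X^2(p)\lvert\Lambda_\pi(p)\rvert^2/p\big)^d$. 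Tuples with repeated primes introduce, as in \eqref{Inner Sum Estimate}, factors $\sum_p\Psi_X^{2j}(p)\lvert\Lambda_\pi(p)\rvert^{2j}/p^j=O(1)$ for $j\geqslant 2$ by Lemma \ref{Convergent Sums}, times at most $d-2$ copies of the $\log X$-sized sum. Hence their total is $O(\max\{1,(\log X)^{d-2}\})$.

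Combining the two cases gives the statement. The only step needing care is the error-term bookkeeping under the constraint $\alpha<1/(2dm(1+\xi\theta_m))$. Since $\Psi_X(p)$ is bounded and supported on $p\asymp X$, this is no harder than in Lemma \ref{Moment Calculations Assuming RH Step 1}, and is in fact simpler because no tail beyond $V$ is involved.
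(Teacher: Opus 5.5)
Your route is the paper's: the proof of Lemma \ref{Moment Calculations Assuming RH Step 1} with $a_p$ replaced by $\Psi_X(p)$, and \eqref{eqn:std_test_asymp} used in place of Lemma \ref{Rudnick-Sarnak Lemma}. Your combinatorics in both cases are right. However, the one step where the hypothesis $\alpha<1/(2dm(1+\xi\theta_m))$ is actually used fails as you justify it. That step is the bound $\ll T^{\eta-1+\varepsilon}$ for the Landau--Gonek error terms, and neither of your two justifications gives it.

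\textbf{Why your bounds fail.} With $\sum_{p\leqslant V}p\Psi_X(p)^2\ll V^2$, Cauchy--Schwarz only yields $\big(\sum_{p\leqslant V}\Psi_X(p)\lvert\Lambda_\pi(p)\rvert\big)^{2d}\ll V^{2d}(\log V)^{2d}$. The error is then $\ll V^{2d(1+\theta_m)+\varepsilon}/T=T^{2d\alpha m\xi(1+\theta_m)-1+\varepsilon}$. This exponent exceeds $\eta-1$ by $2d\alpha m(\xi-1)>0$ and need not be negative. For example, take $m=1$, $\theta_1=0$, $\xi=3/2$ and $2d\alpha=9/10$: the hypothesis holds, but the bound is $T^{0.35+\varepsilon}$. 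Your alternative, the pointwise bound $\sum_{p\leqslant V}\lvert\Lambda_\pi(p)\rvert\ll V^{1+\theta_m}\log V$, is worse still.

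\textbf{The fix.} You must use the compact support of $\Psi$, which you mention only in passing at the end. If $\operatorname{supp}\Psi\subset[c_1,c_2]\subset(0,\infty)$, then $\Psi_X(p)=0$ unless $c_1X\leqslant p\leqslant c_2X$. Cauchy--Schwarz with Lemma \ref{lem:prime_number_theorem} then gives
\[
\sum_{p\leqslant V}\Psi_X(p)\lvert\Lambda_\pi(p)\rvert\ll\Big(\sum_{p\leqslant c_2X}p\Big)^{1/2}\Big(\sum_{p\leqslant c_2X}\frac{\lvert\Lambda_\pi(p)\rvert^2}{p}\Big)^{1/2}\ll X\log X.
\]
Hence the error is $\ll V^{2d\theta_m+\varepsilon}X^{2d}(\log T)^{2d+1}/T\ll T^{\eta-1+2\varepsilon}$. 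This is the analogue of the estimate $\sum_{p\leqslant V}pa_p\ll X^{2+\varepsilon}$ implicit in the paper's proof of Lemma \ref{Moment Calculations Assuming RH Step 1}, where the decay of $a_p$ beyond $X$ plays the role of the support.

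The mean-square (Rankin--Selberg) input is genuinely needed here. The pointwise bound $\lvert\Lambda_\pi(p)\rvert\ll p^{\theta_m}\log p$, even restricted to $p\asymp X$, loses a factor $X^{2d\theta_m}$ that the hypothesis on $\alpha$ does not absorb in general. With this correction the rest of your proof goes through.
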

We also require the following lemma, which is an analogue of Lemma \ref{S_av to Mean} and establishes the average of $\mathcal{S}_{\pi,\Psi}(X,T, \rho_{\pi}')$ as $\rho_{\pi}'$ vary over the zeros of $L(s,\pi)$. We set 
\begin{equation}
    \Re \mathcal{S}_{\textnormal{av},\pi,\Psi}(X,T) := \frac{1}{N_{\pi}(T)} \sum_{\lvert \Im{\rho_{\pi}'} \rvert \leqslant T} 
 \Re \mathcal{S}_{\pi,\Psi}(X,T,\rho'_{\pi}).
\end{equation}
\begin{lemma}\label{lem:mean_computation}
Let $m \in \mathbb{N}, \pi \in \mathscr{A}_m$ and $\theta_{m} \in [0, \frac{1}{2}-\frac{1}{m^2+1}]$ be an admissible exponent towards the Ramanujan conjecture for $L(s,\pi)$. Assume the Riemann Hypothesis for $L(s,\pi)$. Let $\Psi \in C_{c}^{\infty}(0,\infty)$ be a fixed non-negative compactly supported smooth function. Let $T \geqslant 2$ and $\alpha, \xi, X, V$ be as in \eqref{Defining Parameters III} satisfying 
\[\alpha< \frac{1}{m(1+\xi \theta_m)}.\]
Then  
\begin{equation*}
    \Re \mathcal{S}_{\textnormal{av},\pi,\Psi}(X,T) = -\alpha  \int_{0}^{\infty}\frac{\Psi(t)}{t} \diff t + O_{\pi,\Psi, \alpha,\xi}\bigg(\frac{1}{\log T}\bigg).
\end{equation*}
\end{lemma}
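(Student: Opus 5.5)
The plan is to copy the proof of Lemma \ref{S_av to Mean}, with Corollary \ref{cor:S_pi(T)_explicit} in place of Lemma \ref{Connecting S to sum over primes} and Lemma \ref{lem:test_general_asymp} in place of Lemma \ref{Rudnick-Sarnak Lemma}.

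\textbf{Step 1: pass to prime sums.} Under RH, Corollary \ref{cor:S_pi(T)_explicit} applies to all zeros with $\lvert\gamma'_{\pi}\rvert\leqslant T$ outside an exceptional set $\mathcal{E}$ with $\#\mathcal{E}\ll_{\pi}\sqrt{T}\log T$. For these zeros
\[
\Re\mathcal{S}_{\pi,\Psi}(X,T,\rho'_{\pi})=-\Re\sum_{n}\frac{\Psi_X(n)\Lambda_{\pi}(n)}{n^{\frac12+i\gamma'_{\pi}}}+O\bigg(\frac{\sqrt X}{T}+\frac{\log T}{\sqrt T}\bigg).
\]
Since $X=T^{\alpha m}$ with $\alpha m<1$, this error is $\ll T^{-\delta}$.

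On $\mathcal{E}$, both $\mathcal{S}_{\pi,\Psi}(X,T,\rho'_{\pi})$ and the prime sum can be bounded trivially. The zero sum is $O(\log T)$ by RH, the decay \eqref{eqn:mell_transform_bound} and the local zero density. The prime sum is $\ll (X\log X)^{1/2}$ by Cauchy--Schwarz and Lemma \ref{lem:prime_number_theorem}. Dividing by $N_{\pi}(T)\asymp T\log T$, the total contribution of $\mathcal{E}$ is $\ll T^{-\delta}$.

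\textbf{Step 2: truncate at $V$.} Because $\Psi$ has compact support in $(0,\infty)$, $\Psi_X(n)=0$ for $n>cX$. Choosing $\xi>1$ with $V=X^{\xi}\geqslant cX$ for $T$ large, the sum over $n$ equals the sum over $n\leqslant V$ exactly. This is why $\xi>1$ suffices here, with no tail estimate like \eqref{Tail Convergence 2}. We are then reduced to computing the analogue of $\textnormal{Av}_{\pi}$:
\[
\frac{1}{N_{\pi}(T)}\sum_{\lvert\gamma'_{\pi}\rvert\leqslant T}\Re\sum_{n\leqslant V}\frac{\Psi_X(n)\Lambda_{\pi}(n)}{n^{\frac12+i\gamma'_{\pi}}}.
\]

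\textbf{Step 3: average over zeros.} Swap the order of summation and apply \eqref{Gonek RH Equation P<Q 2} from Corollary \ref{when b>a} with $a=1$, $b=n\leqslant V\leqslant T^{\xi}$:
\[
\sum_{\lvert\gamma\rvert\leqslant T}n^{-i\gamma}=-\frac{T}{\pi}\frac{\Lambda_{\widetilde\pi}(n)}{\sqrt n}+O\big(n^{\frac12+\theta_m+\varepsilon}\log^2T\big).
\]
Since $\Lambda_{\widetilde\pi}(n)=\overline{\Lambda_{\pi}(n)}$, the main term is
\[
-\frac{T}{\pi N_{\pi}(T)}\sum_{n\leqslant V}\frac{\Psi_X(n)\lvert\Lambda_{\pi}(n)\rvert^2}{n}.
\]

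The error term is the main obstacle, since the $\alpha$-range must be checked. It contributes
\[
\frac{\log^2T}{N_{\pi}(T)}\sum_{n\leqslant V}\Psi_X(n)\lvert\Lambda_{\pi}(n)\rvert\, n^{\theta_m+\varepsilon}\ll \frac{V^{1+\theta_m+\varepsilon}\log^2 T}{T\log T}.
\]
Here the bound $\sum_{n\leqslant V}\lvert\Lambda_{\pi}(n)\rvert\ll V$ comes from Cauchy--Schwarz and Lemma \ref{lem:prime_number_theorem}. Since $\Psi_X$ is supported on $n\ll X$, the error is in fact $\ll X^{1+\theta_m+\varepsilon}T^{-1+\varepsilon}$. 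This is $\ll T^{-\delta}$ because $\alpha m(1+\xi\theta_m)<1$ (with $\xi$ close to $1$, and $\varepsilon$ small).

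\textbf{Step 4: evaluate the main term.} By \eqref{eqn:mean_test_asymp} of Lemma \ref{lem:test_general_asymp}, the sum equals $\alpha m\log T\int_0^\infty\Psi(t)t^{-1}\,dt+O(1)$. By \eqref{Zero Counting General} in the sharper form of \cite[Theorem 5.8]{IK2004}, $T/(\pi N_{\pi}(T))=1/(m\log T)+O(1/\log^2 T)$. Multiplying, the main term is $\alpha\int_0^\infty\Psi(t)t^{-1}\,dt+O(1/\log T)$.

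Restoring the overall minus sign from Step 1 gives
\[
\Re\mathcal{S}_{\textnormal{av},\pi,\Psi}(X,T)=-\alpha\int_0^\infty\frac{\Psi(t)}{t}\,dt+O\bigg(\frac1{\log T}\bigg),
\]
as stated; all the $T^{-\delta}$ errors are absorbed into $O(1/\log T)$.
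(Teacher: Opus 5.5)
Your route is the one the paper intends. It gives no separate proof, only the instruction to adapt Lemma \ref{S_av to Mean} with the remark that $\xi>1$ suffices for the tail, and your observation that $\Psi_X(n)=0$ for $n>V$ once $T$ is large is exactly why. Steps 1--3 are sound. The failure is the sign in Step 4. There are two minus signs: one from the explicit formula in Step 1, and one in the Landau--Gonek main term of Step 3. They cancel:
\[
\begin{aligned}
\frac{1}{N_{\pi}(T)}\sum_{\lvert\gamma'_{\pi}\rvert\leqslant T}\Re\sum_{n\leqslant V}\frac{\Psi_X(n)\Lambda_{\pi}(n)}{n^{\frac12+i\gamma'_{\pi}}}
&= -\frac{T}{\pi N_{\pi}(T)}\sum_{n\leqslant V}\frac{\Psi_X(n)\lvert\Lambda_{\pi}(n)\rvert^2}{n}+O(T^{-\delta})\\
&= -\alpha\int_0^\infty\frac{\Psi(t)}{t}\diff t+O\Big(\frac{1}{\log T}\Big).
\end{aligned}
\]
The minus sign from Step 1 then gives $\Re\mathcal{S}_{\textnormal{av},\pi,\Psi}(X,T)=+\alpha\int_0^\infty\Psi(t)t^{-1}\diff t+O(1/\log T)$. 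Your sentence ``Multiplying, the main term is $\alpha\int\cdots$'' discards the Step 3 sign. Only this slip makes your argument end at the displayed formula.

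The slip conceals a misprint in the statement: its sign cannot be right. Under RH, substituting $x=e^y$ in the Mellin transform gives
\[
\mathcal{S}_{\textnormal{av},\pi,\Psi}(X,T)=\frac{1}{N_{\pi}(T)}\int_{-\infty}^{\infty}\Psi\Big(\frac{e^y}{X}\Big)\Big\lvert\sum_{\lvert\gamma_{\pi}\rvert\leqslant T}e^{i\gamma_{\pi}y}\Big\rvert^2\diff y\geqslant 0,
\]
whereas $\int_0^\infty\Psi(t)t^{-1}\diff t>0$ unless $\Psi\equiv0$. This agrees with Lemma \ref{S_av to Mean}, where $\Re\mathcal{S}_{\textnormal{av},\pi}(X,T)=+\alpha$ while $\textnormal{Av}_{\pi}=-\alpha$ by Lemma \ref{Calculating mean}. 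It also agrees with the centering $+\widehat{\Psi}(0)\alpha$ in \eqref{eqn:moments_test}. The quantity $-\alpha\int_0^\infty\Psi(t)t^{-1}\diff t$ is the mean of the prime sum, i.e.\ the analogue of $\textnormal{Av}_{\pi}$, not of $\Re\mathcal{S}_{\textnormal{av},\pi,\Psi}$. If you keep both signs, your argument is a complete proof of the corrected statement with $+\alpha\int_0^\infty\Psi(t)t^{-1}\diff t$.
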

From here on, the techniques from Section \ref{sec: Conditional 2} can be adapted to complete the proof of Theorem \ref{Thm:Main_for_test_funcs}. For clarity, we mention here that the analogue of \eqref{Tail Convergence 2} in the proof of Lemma \ref{lem:mean_computation} holds once $\xi>1$. 
\section{Triple Correlations: Proofs of Theorems \ref{Theorem : Triple Correlation} and \ref{Theorem : Triple Correlation CDF}}\label{sec: Triple Correlation}
\subsection{Sums over Zeros to Sums over Primes} Recall from Section \ref{sec: triple cor intro} that for \( X_1, X_2, T \geqslant 2 \) and \( \rho_{\pi} \) a nontrivial zero of \( L(s,\pi) \), we defined
\begin{align}\label{3-Cor Def 1}
\mathcal{S}_{\pi,\Psi}(\rho_{\pi}) := \mathcal{S}_{\pi,\Psi}(X_1, X_2, T, \rho_{\pi}) = \sum_{|\Im \rho_{\pi,1}| \leqslant T } \sum_{|\Im \rho_{\pi,2}| \leqslant T } \widehat{\Psi}_{X_1}(\rho_{\pi,1} - \rho_{\pi}) \widehat{\Psi}_{X_2}(\rho_{\pi} - \rho_{\pi,2})
\end{align}
where \( \Psi_X(x) \) is defined by \eqref{eqn:psi_W}, and the sums run over the nontrivial zeros \( \rho_{\pi,1}, \rho_{\pi,2} \) of \( L(s, \pi) \). Throughout this section, when the parameters \( X_1, X_2, \) and \( T \) are clear from the context, we will omit them from the notation. We begin our analysis with the following lemma.

\begin{lemma}\label{Triple Correlations Zeros to Primes}
Let $m \in \mathbb{N}$ and $\pi \in \mathscr{A}_m$. Assume the Riemann Hypothesis for $L(s,\pi)$. Let $\Psi \in C_{c}^{\infty}(0,\infty)$ be a fixed non-negative compactly supported smooth function. Suppose \( T \geqslant 2 \), $\alpha_1, \alpha_2>0$, $X_1 = T^{\alpha_1 m}$ and $X_2=T^{\alpha_2 m}$. Then for all non-trivial zeros \( \rho_{\pi} = \beta_{\pi} + i \gamma_{\pi} \) of $L(s,\pi)$ with \( \lvert \gamma_{\pi} \rvert \leqslant T \) outside an exceptional set of size at most $O_{\pi}(\sqrt{T}\log T)$, we have
\begin{equation*}
\mathcal{S}_{\pi,\Psi}(\rho_{\pi}) =\sum_{n_1,n_2 =1}^{\infty} \frac{\Psi_{X_1}(n_1) \Psi_{X_2}(n_2) \Lambda_{\pi}(n_1) \Lambda_{\widetilde{\pi}}(n_2)}{(n_1n_2)^{\frac{1}{2}}} \bigg( \frac{n_2}{n_1} \bigg)^{i \gamma_{\pi}}+O\bigg(\frac{X_1^{1/2} \log T}{T^{1/2}}+\frac{X_2^{1/2} \log T}{T^{1/2}}\bigg).
\end{equation*}
The implied constant in the error term depends at most on $\pi, \Psi, \alpha_1$ and $\alpha_2$.
\end{lemma}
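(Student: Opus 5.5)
The plan is to factor the double sum over zeros and apply the single-sum explicit formula (Corollary \ref{cor:S_pi(T)_explicit}) to each factor separately. By definition,
\[
\mathcal{S}_{\pi,\Psi}(\rho_\pi)=\Big(\sum_{|\Im\rho_{\pi,1}|\leqslant T}\widehat{\Psi}_{X_1}(\rho_{\pi,1}-\rho_\pi)\Big)\Big(\sum_{|\Im\rho_{\pi,2}|\leqslant T}\widehat{\Psi}_{X_2}(\rho_\pi-\rho_{\pi,2})\Big)=:A_1(\rho_\pi)\,A_2(\rho_\pi).
\]
The first factor is exactly $\mathcal{S}_{\pi,\Psi}(X_1,T,\rho_\pi)$. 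Outside the exceptional set, Corollary \ref{cor:S_pi(T)_explicit} under RH gives
\[
A_1=-\sum_{n_1}\Psi_{X_1}(n_1)\Lambda_\pi(n_1)\,n_1^{-\frac12-i\gamma_\pi}+O\big(X_1^{1/2}T^{-1}+T^{-1/2}\log T\big).
\]

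For the second factor, I rewrite it in terms of the contragredient. The zeros of $L(s,\widetilde\pi)$ are $\overline{\rho}$ for $\rho$ a zero of $L(s,\pi)$, equivalently $1-\rho$ by the functional equation \eqref{Functional Equation}. Under RH, $\rho_\pi-\rho_{\pi,2}=i(\gamma_\pi-\gamma_{\pi,2})$, and this equals $\overline{\rho_{\pi,2}}-\overline{\rho_\pi}$, where $\overline{\rho_{\pi,2}}$ runs over the zeros of $L(s,\widetilde\pi)$ with $|\Im|\leqslant T$ and $\overline{\rho_\pi}=\tfrac12-i\gamma_\pi$ is itself a zero of $L(s,\widetilde\pi)$. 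Hence $A_2=\mathcal{S}_{\widetilde\pi,\Psi}(X_2,T,\overline{\rho_\pi})$. Applying Corollary \ref{cor:S_pi(T)_explicit} to $\widetilde\pi$, which also satisfies RH, gives
\[
A_2=-\sum_{n_2}\Psi_{X_2}(n_2)\Lambda_{\widetilde\pi}(n_2)\,n_2^{-\frac12+i\gamma_\pi}+O\big(X_2^{1/2}T^{-1}+T^{-1/2}\log T\big).
\]
The exceptional sets for $\pi$ and $\widetilde\pi$ each have size $O(\sqrt T\log T)$, so their union, transported via $\rho\mapsto\overline\rho$, is still $O(\sqrt T\log T)$.

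Multiplying the two expansions, the two minus signs cancel, and the product of the main terms is exactly the double prime sum in the statement, with phase $(n_2/n_1)^{i\gamma_\pi}$. The cross terms remain, each of the form (main term of one factor) $\times$ (error of the other). The main obstacle is controlling these: each prime sum must be bounded uniformly in $\gamma_\pi$ by a quantity of size about $X^{1/2}$. Trivially, by Cauchy--Schwarz and Lemma \ref{lem:test_general_asymp} as in the proof of \eqref{S-AV Step 2},
\[
\sum_n \Psi_X(n)|\Lambda_\pi(n)|\,n^{-1/2}\ll\Big(\sum_{n\ll X}1\Big)^{1/2}\Big(\sum_n\frac{\Psi_X(n)^2|\Lambda_\pi(n)|^2}{n}\Big)^{1/2}\ll (X\log X)^{1/2}.
\]
Here the compact support of $\Psi$ restricts $n\asymp X$. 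Consequently a cross term is at most
\[
(X_1\log X_1)^{1/2}\big(X_2^{1/2}T^{-1}+T^{-1/2}\log T\big).
\]
The piece $X_1^{1/2}T^{-1/2}\log T\cdot(\log X_1)^{1/2}$ is slightly larger than the claimed $X_1^{1/2}T^{-1/2}\log T$ by a factor $(\log T)^{1/2}$. I would try to remove this factor by using the estimate $\widehat{\Psi}_X(z)\ll X^{\Re z}/|z|^2$ from \eqref{eqn:mell_transform_bound} for the tail, which gives a tail bound $(\log T)/\sqrt T$ with no $\log X$ loss. If that does not suffice, the $(\log T)^{1/2}$ loss is harmless for later use since the errors are power-saving once $\alpha_i<1/m$, and I would absorb it by adjusting the error. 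The product of the two error terms is clearly smaller than the cross terms.

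A second, cleaner way to handle the main obstacle is to bound $A_1$ and $A_2$ trivially under RH via \cite[Eq.~5.32]{IK2004}: each is $\ll\log T$, since $\widehat{\Psi}_X(it)\ll(1+t^2)^{-1}$ uniformly in $X$. The cross term (error)$\times A_j$ is then $\ll\big(X^{1/2}T^{-1}+T^{-1/2}\log T\big)\log T$. This is still the same order as the statement's error up to logarithms, and in particular it is at most $X_j^{1/2}T^{-1/2}\log T$ because $X_j^{1/2}\geqslant\log T$ when $X_j$ is a power of $T$. That final comparison is the step I expect to close the argument.
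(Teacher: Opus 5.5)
\textbf{Verdict.} Your proposal is correct, via your second route. Its overall architecture is the paper's: factor $\mathcal{S}_{\pi,\Psi}(\rho_\pi)=A_1A_2$, rewrite the second factor as a sum over zeros of $L(s,\widetilde\pi)$ centred at $\overline{\rho_\pi}=1-\rho_\pi$, and apply Corollary~\ref{cor:S_pi(T)_explicit} to each factor. The main terms then multiply to the stated double prime sum exactly as you say. Where you differ from the paper is in handling the cross terms.

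\textbf{The paper's route.} The paper writes $A_j=M_j+E_j$ with $E_j\ll \log T/\sqrt{T}$. It then bounds the prime sums themselves, $M_j\ll X_j^{1/2}$, invoking Hypothesis~\textbf{H}$_{\pi}$. This is exactly what removes the $(\log X)^{1/2}$ you lost in your first attempt.

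\textbf{Your first attempt.} The loss there comes from running Cauchy--Schwarz over all integers $n\asymp X$ rather than only over the prime powers in that range. There are $\ll X/\log X$ such prime powers, and $\sum_{n\asymp X}|\Lambda_\pi(n)|^2/n\ll\log X$ by Lemma~\ref{lem:prime_number_theorem}, which gives $\ll X^{1/2}$. So your proposed repair via the Mellin bound for the tail does not touch the real source of the loss. Likewise, ``absorbing it into the error'' would not prove the lemma as stated.

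\textbf{Your second route.} This route is complete and genuinely different from the paper's. You write
\[
A_1A_2-M_1M_2=E_1A_2+E_2A_1-E_1E_2
\]
and use $|A_j|\ll\log T$ uniformly in $X_j$. This follows from $|\widehat\Psi_{X}(it)|=|\widehat\Psi(it)|\ll(1+t^2)^{-1}$ together with the local density of zeros under RH. The cross terms are then $\ll (X_j^{1/2}T^{-1}+T^{-1/2}\log T)\log T$, and $\log T\ll_{\alpha_j,m}X_j^{1/2}$ closes the argument. The term $E_1E_2$ is negligible in the range where the lemma is actually used, namely $\alpha_j<1/m$.

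\textbf{What each approach buys.} Your route needs no information about the coefficients $\Lambda_\pi(n)$ beyond the corollary itself, so no Hypothesis~\textbf{H}$_{\pi}$ is needed at this step. It also does not require $X_j\ll T$, whereas the paper tacitly does by recording the corollary's error as $O(\log T/\sqrt{T})$. The cost is one factor of $\log T$, which you must pay for using the fact that $X_j$ is a positive power of $T$. The paper's argument, by contrast, would survive for much smaller $X_j$, for example $X_j$ a power of $\log T$ as in Sections~\ref{sec: unconditional 1}--\ref{sec : unconditional 2}.
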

\begin{proof}
All implied constants in the proof are allowed to depend on $\pi, \Psi, \alpha_1$ and $\alpha_2$. By the functional equation \eqref{Functional Equation} for $L(s,\pi)$, we rewrite \eqref{3-Cor Def 1} as
\begin{align}\label{3-Cor Step 1}
\mathcal{S}_{\pi,\Psi}(\rho_{\pi}) &= \sum_{|\Im \rho_{\pi,1}| \leqslant T } \widehat{\Psi}_{X_1}(\rho_{\pi,1} - \rho_{\pi}) \sum_{|\Im \rho_{\widetilde{\pi},2}| \leqslant T }  \widehat{\Psi}_{X_2}(\rho_{\widetilde{\pi},2} - \rho_{\widetilde{\pi}}).
\end{align}
By \eqref{cor:S_pi(T)_explicit}, for all non-trivial zeros \( \rho_{\pi} = \beta_{\pi} + i \gamma'_{\pi} \) of $L(s,\pi)$ with \( \lvert \gamma_{\pi} \rvert \leqslant T \) outside an exceptional set of size at most $O_{\pi}(\sqrt{T}\log T)$, we have
\begin{align}
\sum_{|\Im \rho_{\pi,1}| \leqslant T } \widehat{\Psi}_{X_1}(\rho_{\pi,1} - \rho_{\pi}) &= - \sum_{n_1 =1}^{\infty} \frac{\Psi_{X_1}(n_1)\Lambda_{\pi}(n_1)}{n_1^{\rho_{\pi}}}+ O\bigg(\frac{\log T}{\sqrt{T}}\bigg) \label{Each S separately 1} \\
\textrm{and} \quad \sum_{|\Im \rho_{\widetilde{\pi},2}| \leqslant T }  \widehat{\Psi}_{X_2}(\rho_{\widetilde{\pi},2} - \rho_{\widetilde{\pi}}) &= - \sum_{n_2 =1}^{\infty} \frac{\Psi_{X_2}(n_2)\Lambda_{\widetilde{\pi}}(n_2)}{n_2^{\rho_{\widetilde{\pi}}}}+ O\bigg(\frac{\log T}{\sqrt{T}}\bigg). \label{Each S separately 2}
\end{align}
Substituting \eqref{Each S separately 1} and \eqref{Each S separately 2} in \eqref{3-Cor Step 1} and using Hypothesis \textnormal{\textbf{H}$_{\pi}$}, we arrive at 
\begin{align*}
\mathcal{S}_{\pi,\Psi}(\rho_{\pi}) = \sum_{n_1,n_2 =1}^{\infty} \frac{\Psi_{X_1}(n_1) \Psi_{X_2}(n_2) \Lambda_{\pi}(n_1) \Lambda_{\widetilde{\pi}}(n_2)}{(n_1n_2)^{\frac{1}{2}}} \bigg( \frac{n_2}{n_1} \bigg)^{i \gamma_{\pi}}+O\bigg(\frac{X_1^{1/2} \log T}{T^{1/2}}+\frac{X_2^{1/2} \log T}{T^{1/2}}\bigg),
\end{align*}
which completes the proof.
\end{proof}
\subsection{Moment Computations} We now study moments of sums over primes. Fix $\xi>1$. Let $T\geqslant 2$ and $\alpha_1,\alpha_2, X_1,X_2, V_1$ and $V_2$ satisfy
\begin{align}
\label{Defining Parameters Triple Cor}
0<\alpha_1, \alpha_2< \frac{1}{m},\quad X_1=T^{\alpha_1 m}, \quad X_2=T^{\alpha_2 m}, \quad  V_1=X_1^{\xi} \quad \textrm{and} \quad V_2=X_2^{\xi}.
\end{align}
Define
\begin{align}
\mathcal{G}_{\pi,\Psi}(\rho_{\pi}) &= \mathcal{G}_{\pi,\Psi}(X_1,X_2, \rho_{\pi}) :=\sum_{\substack{n_1 \leqslant V_1 \\ n_2 \leqslant V_2}} \frac{\Psi_{X_1}(n_1) \Psi_{X_2}(n_2) \Lambda_{\pi}(n_1) \Lambda_{\widetilde{\pi}}(n_2)}{(n_1n_2)^{\frac{1}{2}}} \left( \frac{n_2}{n_1} \right)^{i \gamma_{\pi}},\label{G Def} \\
\textrm{and} \quad \mathcal{G}^{*}_{\pi,\Psi}(\rho_{\pi}) &= \mathcal{G}^{*}_{\pi,\Psi}(X_1,X_2, \rho_{\pi}) :=\sum_{\substack{p_1 \leqslant V_1 \\ p_2 \leqslant V_2}} \frac{\Psi_{X_1}(p_1) \Psi_{X_2}(p_2) \Lambda_{\pi}(p_1) \Lambda_{\widetilde{\pi}}(p_2)}{(p_1p_2)^{\frac{1}{2}}} \left( \frac{p_2}{p_1} \right)^{i \gamma_{\pi}}\label{G Def Primes} ,
\end{align}
where $p_1,p_2$ runs over primes up to $V_1$ and $V_2$ respectively.
\begin{lemma}\label{Moment Calculations Assuming RH Step 1 : 3-Cor}
Let $m \in \mathbb{N}, \pi \in \mathscr{A}_m$ and $\theta_{m} \in [0, \frac{1}{2}-\frac{1}{m^2+1}]$ be an admissible exponent towards the Ramanujan conjecture for $L(s,\pi)$. Assume the Riemann Hypothesis for $L(s,\pi)$. Let $\Psi \in C_{c}^{\infty}(0,\infty)$ be a fixed non-negative compactly supported smooth function. Suppose $k, \ell$ be non-negative integers with $k+\ell=2d$ for some $d \in \mathbb{N}$. Let $T \geqslant 2$ and $\alpha_1, \alpha_2,\xi, X_1,X_2, V_1,V_2$ be as defined in \eqref{Defining Parameters Triple Cor} satisfying $\alpha_1 \neq \alpha_2$ and
\[
\alpha_1+\alpha_2 < \frac{1}{2dm(1+\xi \theta_m)}.\]
Then
\begin{align}\label{k-l moment}
\frac{1}{N_{\pi}(T)}\sum_{\lvert \gamma_{\pi} \rvert \leqslant T}\mathcal{G}^{*}_{\pi,\Psi}(\rho_{\pi})^k \overline{\mathcal{G}^{*}_{\pi,\Psi}( \rho_{\pi})}^{\ell}&=\boldsymbol{\delta}_{k\ell}d!^2\bigg ( \sum_{p\leqslant
 V_1}\frac{\Psi_{X_1}^2(p) \lvert \Lambda_{\pi}(p) \rvert ^2}{p} \bigg )^d\bigg ( \sum_{p\leqslant
 V_2}\frac{\Psi_{X_2}^2(p) \lvert \Lambda_{\pi}(p) \rvert ^2}{p} \bigg )^d \notag \\
 &\quad+ O((\log T)^{2d-2} ).
\end{align}
Here $\mathcal{G}^{*}_{\pi,\Psi}(\rho_{\pi})$ is defined by \eqref{G Def Primes} and $\boldsymbol{\delta}_{k\ell}=1$ if $k=\ell$, and zero otherwise. The implied constant in the error term depends at most on $\pi, \Psi, d, \alpha_1, \alpha_2$ and $\xi$.
\end{lemma}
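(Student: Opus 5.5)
I will follow the template of Lemma \ref{Moment Calculations Assuming RH Step 1}. Expanding the $k$-th and $\ell$-th powers of $\mathcal{G}^{*}_{\pi,\Psi}$ gives the sum displayed in Section \ref{sec: Proof Outline}:
\[
\frac{1}{N_{\pi}(T)}\sum_{\substack{m_{11},\dots,m_{k1},\,m_{12},\dots,m_{k2}\\ n_{11},\dots,n_{\ell1},\,n_{12},\dots,n_{\ell2}}} \frac{W(\mathbf{m},\mathbf{n})}{(M_1M_2N_1N_2)^{1/2}}\sum_{\lvert\gamma_{\pi}\rvert\leqslant T}\bigg(\frac{M_2N_1}{M_1N_2}\bigg)^{i\gamma_{\pi}}.
\]
Here all variables are primes, $m_{i1},n_{j1}\leqslant V_1$ and $m_{i2},n_{j2}\leqslant V_2$, and $W$ is the product of the $\Psi_{X_\nu}$ weights and the $\Lambda_{\pi},\Lambda_{\widetilde{\pi}}$ (and conjugate) coefficients. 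Conjugation turns $\Lambda_{\widetilde\pi}(p)$ into $\Lambda_{\pi}(p)$, since $\overline{\Lambda_{\widetilde{\pi}}(p)}=\Lambda_{\pi}(p)$. Set $A=M_2N_1$ and $B=M_1N_2$. Both are products of exactly $k+\ell=2d$ primes and are at most $V_1^{d}V_2^{d}=T^{\xi m d(\alpha_1+\alpha_2)}$.

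When $A\neq B$, apply Corollaries \ref{Gonek assuming RH} and \ref{when b>a}. The main term $T\Lambda_{\pi}(A/B)$ survives only if $A/B$ is a prime power. That forces the two products to differ by the removal of cancelling primes, which is impossible since both have exactly $2d$ prime factors. So only the error term $O((AB)^{1/2+\theta_m+\varepsilon}\log^2T)$ remains. Summed against $W/(AB)^{1/2}$ and using Cauchy--Schwarz with Lemma \ref{lem:test_general_asymp}, this gives $\ll T^{\eta-1+\varepsilon}$ with $\eta=2dm(\alpha_1+\alpha_2)(1+\xi\theta_m)<1$, which is negligible.

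The main work is the diagonal $M_2N_1=M_1N_2$. Because $\Psi$ has compact support in $(0,\infty)$ and $\alpha_1\neq\alpha_2$, for $T$ large the ranges $\operatorname{supp}\Psi_{X_1}\subset[cX_1,CX_1]$ and $\operatorname{supp}\Psi_{X_2}$ are disjoint, since $X_1/X_2=T^{m(\alpha_1-\alpha_2)}\to 0$ or $\infty$. Hence a prime from an index-$1$ slot ($m_{i1}$ or $n_{j1}$) can never equal a prime from an index-$2$ slot. The identity of multisets $\{m_{i2}\}\cup\{n_{j1}\}=\{m_{i1}\}\cup\{n_{j2}\}$ then splits into $\{n_{j1}\}=\{m_{i1}\}$ and $\{m_{i2}\}=\{n_{j2}\}$. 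This already forces $k=\ell$; otherwise there is no diagonal, and the whole expression is $\ll T^{\eta-1+\varepsilon}$.

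For $k=\ell=d$, distinct primes give $d!$ matchings in each of the two independent blocks. Each matched pair contributes $\Psi_{X_\nu}^2(p)\lvert\Lambda_{\pi}(p)\rvert^2/p$, where $\lvert\Lambda_{\widetilde{\pi}}(p)\rvert=\lvert\Lambda_\pi(p)\rvert$. Together with $T/N_\pi(T)$-free normalization, this yields $d!^2(\sum_{p\leqslant V_1}\cdots)^d(\sum_{p\leqslant V_2}\cdots)^d$. The terms with repeated primes inside a block are handled by inclusion--exclusion as in \eqref{Inner Sum Estimate}. Using Lemma \ref{Convergent Sums}, each coincidence costs a factor $\log T$, so these contribute $O((\log T)^{2d-2})$.

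The main obstacle I expect is bookkeeping rather than analysis. First, I must check that the separation of supports is what kills cross-matchings, and that it needs $T$ large in terms of $\alpha_1-\alpha_2$ and $\Psi$. Second, I must confirm that near-diagonal terms with $A/B$ a prime power cannot occur, which follows from the equal prime-count argument above.
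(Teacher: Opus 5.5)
Your proposal is correct and follows essentially the same route as the paper. You expand the moment, dispose of every term with $M_2N_1\neq M_1N_2$ via Corollaries \ref{Gonek assuming RH} and \ref{when b>a}, and use $\alpha_1\neq\alpha_2$ (through the disjoint supports of $\Psi_{X_1},\Psi_{X_2}$ for large $T$) to split the diagonal into $M_1=N_1$, $M_2=N_2$; this forces $k=\ell$ and produces the $d!^2$ main term, and your equal-prime-count argument is a cleaner justification of the paper's remark that the ratio is never an integer.

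There are two harmless slips. First, $A,B\leqslant V_1^dV_2^d$ holds only when $k=\ell$; what you actually use is $AB\leqslant (V_1V_2)^{2d}$, which always holds. Second, a coincidence of primes within a block costs $(\log T)^2$ rather than $\log T$, because two factors of size $\asymp\log T$ are replaced by a convergent sum from Lemma \ref{Convergent Sums}, and this is exactly what gives the stated $O((\log T)^{2d-2})$.
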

\begin{proof}
All implied constants in the proof may depend at most on $\pi,\Psi, d, \alpha_1,\alpha_2$ and $\xi$. We assume $T$ is sufficiently large. For $k, \ell \in \mathbb{N}$, let $\vec{p_i} = (p_{i1}, p_{i2})$ for $1 \leqslant i \leqslant k$ and $\vec{q_j} = (q_{j1}, q_{j2})$ for $1 \leqslant j \leqslant \ell$. We let $P_1= \prod_{i=1}^{k}p_{i1}, P_2= \prod_{i=1}^{k}p_{i2}, Q_1= \prod_{j=1}^{\ell}q_{j1}$ and $Q_1= \prod_{j=1}^{\ell}q_{j2}$. The left hand side of \eqref{k-l moment} is
\begin{align}\label{C Expansion}
\sum_{\substack{\vec{p_1},\dots, \vec{p_k} \\ \vec{q_1},\dots, \vec{q_\ell}}} &\frac{\prod_{i=1}^{k} \Psi_{X_1}(p_{i1})   \Psi_{X_2}(p_{i2}) \Lambda_{\pi}(p_{i1}) \Lambda_{\widetilde{\pi}}(p_{i2}) \prod_{j=1}^{\ell} \Psi_{X_1}(q_{j1}) \Psi_{X_2}(q_{j2}) \Lambda_{\widetilde{\pi}}(q_{j1}) \Lambda_{\pi}(q_{j2})}{(P_1P_2Q_1Q_2)^{\frac{1}{2}}}\notag \\
&\quad \times \frac{1}{N_{\pi}(T)} \sum_{\lvert \gamma_{\pi} \rvert \leqslant T} \bigg(\frac{P_2Q_1}{P_1Q_2} \bigg )^{i \gamma_{\pi}}, 
\end{align}
where the outer sum in the right hand side of \eqref{C Expansion} is restricted to vectors $\vec{p_i}, \vec{q_j}$ with $2 \leqslant p_{i1}, q_{j1} \leqslant V_1$ and $2 \leqslant p_{i2}, q_{j2} \leqslant V_2$. We break our proof into two cases.\\

\noindent \textbf{Case 1 : When $k \neq \ell$.} In this situation, since $\alpha_1 \neq \alpha_2$, $(P_2 Q_1)/(P_1Q_2)$ or $(P_1 Q_2)/(P_2Q_1)$ is never an integer. Similar to the proof of Case 1 in Lemma \ref{Moment Calculations Assuming RH Step 1}, we apply Corollaries \ref{Gonek assuming RH} and \ref{when b>a} to the inner sum over $\gamma_{\pi}$ in \eqref{C Expansion}. Hence by Cauchy--Schwarz and Lemma \ref{Convergent Sums}, the contribution to \eqref{C Expansion} from this case is
\begin{equation}\label{C Non-diagonal}
\ll  \frac{(X_1 X_2)^{2d(1+\xi \theta_m)+\varepsilon} \log T}{T}.
\end{equation}
\noindent \textbf{Case 2 : When $k=\ell$.} The main terms occur when $P_2Q_1 =P_1Q_2$. Since $\alpha_1 \neq \alpha_2$, for $T$ large, this is only possible when $P_1=Q_1$ and $P_2=Q_2$. Applying a combinatorial argument similar to Lemma \ref{Moment Calculations Assuming RH Step 1}, the contribution to \eqref{C Expansion} when $P_2Q_1 =P_1Q_2$ is
\begin{align}\label{C Diagonal}
&d!^2\bigg ( \sum_{p\leqslant
 V_1}\frac{\Psi_{X_1}^2(p) \lvert \Lambda_{\pi}(p) \rvert ^2}{p} \bigg )^d\bigg ( \sum_{p\leqslant
 V_2}\frac{\Psi_{X_2}^2(p) \lvert \Lambda_{\pi}(p) \rvert ^2}{p} \bigg )^d + O( (\log T)^{2d-2} ).
\end{align}
The situation when $P_2Q_1 \neq P_1Q_2$ can be handled exactly as in Case 1.\\

Combining \eqref{C Non-diagonal} and \eqref{C Diagonal} and choosing $\varepsilon>0$ sufficiently small, we arrive at our desired result.
\end{proof}
\begin{lemma}\label{Moment Calculations Assuming RH Step 2 3-Cor}
Let $m \in \mathbb{N}, \pi \in \mathscr{A}_m$ and $\theta_{m} \in [0, \frac{1}{2}-\frac{1}{m^2+1}]$ be an admissible exponent towards the Ramanujan conjecture for $L(s,\pi)$. Assume the Riemann Hypothesis for $L(s,\pi)$. Let $\Psi \in C_{c}^{\infty}(0,\infty)$ be a fixed non-negative compactly supported smooth function. Let $d\in \mathbb{N}$, $T \geqslant 2$ and $\alpha_1, \alpha_2,\xi, X_1,X_2, V_1,V_2$ be as in \eqref{Defining Parameters Triple Cor} satisfying $\alpha_1 \neq \alpha_2$ and
\[
\alpha_1+\alpha_2 < \frac{1}{2dm(1+\xi \theta_m)}.\]
Then
\begin{align*}
\frac{1}{N_{\pi}(T)}\sum_{\lvert \gamma_{\pi} \rvert \leqslant T} (\Re \mathcal{G}^{*}_{\pi,\Psi}( \rho_{\pi}))^{2d}&=\boldsymbol{\mathcal{L}}_{2d}\bigg ( \frac{1}{2}\sum_{p\leqslant
 V_1}\frac{\Psi_{X_1}^2(p) \lvert \Lambda_{\pi}(p) \rvert ^2}{p} \bigg )^d\bigg ( \frac{1}{2} \sum_{p\leqslant
 V_2}\frac{\Psi_{X_2}^2(p) \lvert \Lambda_{\pi}(p) \rvert ^2}{p} \bigg )^d\\
 &\quad+O((\log T)^{2d-2} )
\end{align*}
where $\boldsymbol{\mathcal{L}}_{2d}$ is defined by \eqref{Laplace Coefficients}. Also, if 
\[
\alpha_1+\alpha_2 < \frac{1}{m(2d-1)(1+\xi \theta_m)},\] then
\begin{align*}
\frac{1}{N_{\pi}(T)}\sum_{\lvert \gamma_{\pi} \rvert \leqslant T} (\Re \mathcal{G}^{*}_{\pi,\Psi}( \rho_{\pi}))^{2d-1} \ll T^{-\varepsilon} \quad \textrm{for some} \quad \varepsilon = \varepsilon(\pi,d, \alpha_1, \alpha_2, \xi)>0.
\end{align*}
The implied constants in the error term and the $\ll$ notation depend at most on $\pi, \Psi, d, \alpha_1, \alpha_2$ and $\xi$.
\end{lemma}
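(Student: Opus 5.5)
The plan is to reduce both statements to Lemma \ref{Moment Calculations Assuming RH Step 1 : 3-Cor} via Lemma \ref{Complex Number Identity}, exactly as Lemma \ref{Moment Calculations Assuming RH Step 2} was deduced from Lemma \ref{Moment Calculations Assuming RH Step 1}. Write $z=\mathcal{G}^{*}_{\pi,\Psi}(\rho_{\pi})$ and let $\mathcal{C}_{k,\ell}$ denote the averaged mixed moment on the left-hand side of \eqref{k-l moment}.

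\textbf{Even moments.} By Lemma \ref{Complex Number Identity},
\[
\frac{1}{N_{\pi}(T)}\sum_{\lvert \gamma_{\pi} \rvert \leqslant T} (\Re z)^{2d} = \frac{1}{2^{2d}}\binom{2d}{d}\Re(\mathcal{C}_{d,d}) + \frac{1}{2^{2d-1}}\sum_{i=0}^{d-1}\binom{2d}{i}\Re(\mathcal{C}_{2d-i,i}).
\]
For $i\leqslant d-1$ we have $k=2d-i\neq \ell=i$ and $k+\ell=2d$. Lemma \ref{Moment Calculations Assuming RH Step 1 : 3-Cor} therefore applies under the stated hypothesis $\alpha_1+\alpha_2<1/(2dm(1+\xi\theta_m))$ and gives $\mathcal{C}_{2d-i,i}\ll(\log T)^{2d-2}$.

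For the diagonal term, the same lemma gives $\mathcal{C}_{d,d}=d!^2A^dB^d+O((\log T)^{2d-2})$, where $A,B$ denote the two sums $\sum_{p\leqslant V_j}\Psi_{X_j}^2(p)\lvert\Lambda_\pi(p)\rvert^2/p$. The main term is real, since $A$ and $B$ are real. The coefficient then simplifies as
\[
\frac{1}{4^d}\binom{2d}{d}d!^2=\frac{(2d)!}{4^d}, \qquad\text{so}\qquad \frac{(2d)!}{4^d}A^dB^d=\boldsymbol{\mathcal{L}}_{2d}\Big(\frac{A}{2}\Big)^d\Big(\frac{B}{2}\Big)^d,
\]
which is the claimed main term. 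By Lemma \ref{lem:test_general_asymp}, both $A$ and $B$ are $\ll\log T$, so the combined error is $O((\log T)^{2d-2})$ as required.

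\textbf{Odd moments.} By the second identity of Lemma \ref{Complex Number Identity}, $(\Re z)^{2d-1}$ is a linear combination of $\Re(z^{2d-1-i}\overline{z}^{i})$ with $0\leqslant i\leqslant d-1$. Every such term has $k\neq\ell$, with $k+\ell=2d-1$. Lemma \ref{Moment Calculations Assuming RH Step 1 : 3-Cor} was stated only for even $k+\ell$, so instead I rerun its Case 1 directly with $k+\ell=2d-1$. The expansion \eqref{C Expansion} holds verbatim.

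The key point is that, since $\alpha_1\neq\alpha_2$ and $T$ is large, the ratio $(P_2Q_1)/(P_1Q_2)$ is never $1$. Hence no diagonal main term arises at all. Note the contrast with Lemma \ref{Moment Calculations Assuming RH Step 2}: there an $O(1)$ diagonal-type contribution from $k-\ell=1$ produced a power of $\log T$, whereas here no such contribution appears.

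Applying Corollaries \ref{Gonek assuming RH} and \ref{when b>a}, the main terms $\Lambda_\pi(\cdot)$ vanish or are negligible. Using Cauchy--Schwarz and Lemma \ref{Convergent Sums} as in \eqref{C Non-diagonal}, the total contribution is then bounded by
\[
\ll \frac{(X_1X_2)^{(2d-1)(1+\xi\theta_m)/2+\varepsilon}\log T}{T}.
\]
Since $\alpha_1+\alpha_2<1/(m(2d-1)(1+\xi\theta_m))$, this is $\ll T^{-\varepsilon'}$ for some $\varepsilon'>0$, after choosing $\varepsilon$ small.

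\textbf{Main obstacle.} The delicate step is the assertion that $P_2Q_1=P_1Q_2$ forces $k=\ell$, $P_1=Q_1$ and $P_2=Q_2$. I would justify it using the supports: the factors $p_{i1},q_{j1}$ lie in $\operatorname{supp}\Psi\cdot X_1$, while $p_{i2},q_{j2}$ lie in $\operatorname{supp}\Psi\cdot X_2$. Because $X_1/X_2=T^{(\alpha_1-\alpha_2)m}$, these two prime ranges are disjoint once $T$ is large. Unique factorization then matches the $X_1$-range primes and the $X_2$-range primes on each side separately, giving $P_1=Q_1$ and $P_2=Q_2$, and comparing the numbers of prime factors in each range gives $k=\ell$. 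This is precisely the input already used in Cases 1 and 2 of Lemma \ref{Moment Calculations Assuming RH Step 1 : 3-Cor}, so I would simply invoke it.
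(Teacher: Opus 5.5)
Your proposal is correct and follows the paper's route exactly: the paper obtains this lemma by combining Lemma \ref{Complex Number Identity} with Lemma \ref{Moment Calculations Assuming RH Step 1 : 3-Cor}, rerunning its Case 1 when $k+\ell=2d-1$, just as Lemma \ref{Moment Calculations Assuming RH Step 2} was derived. Your coefficient computation $\binom{2d}{d}d!^2/4^d=(2d)!/4^d$, which gives $\boldsymbol{\mathcal{L}}_{2d}$, is also right. Two small corrections are needed, neither affecting the conclusion:

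\textbf{The odd-case exponent.} The bound should read $(X_1X_2)^{(2d-1)(1+\xi\theta_m)+\varepsilon}(\log T)/T$, with no factor $\tfrac12$ in the exponent. The Gonek error $(P_1P_2Q_1Q_2)^{1/2+\theta_m+\varepsilon}$ exactly cancels the weights $(P_1P_2Q_1Q_2)^{-1/2}$, so the prime sums contribute a full $(X_1X_2)^{2d-1}$. This corrected exponent is precisely what the hypothesis $\alpha_1+\alpha_2<1/(m(2d-1)(1+\xi\theta_m))$ is calibrated for.

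\textbf{Vanishing of the Gonek main terms for $k\neq\ell$.} Knowing that the ratio is $\neq 1$ is not enough; the pair-correlation case with $k-\ell=1$ shows a prime-power ratio produces a genuine main term. What you need is that neither $P_2Q_1/(P_1Q_2)$ nor its reciprocal is an integer. Your disjoint-support argument gives this in divisibility form: $P_1Q_2\mid P_2Q_1$ would force $P_1\mid Q_1$ and $Q_2\mid P_2$, hence $k=\ell$.
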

\begin{proof}
The proof follows the arguments in Lemma \ref{Moment Calculations Assuming RH Step 2}.
\end{proof}
We now define
\begin{align}\label{Defining Mean 3-Cor}
\widehat{\text{Av}}_{\pi, \Psi}(T, X_1, X_2, \xi) : &= \frac{1}{N_{\pi}(T)}\sum_{\lvert \gamma_{\pi} \rvert \leqslant T}\Re \mathcal{G}_{\pi,\Psi}(\rho_{\pi}),\\
\widehat{\text{Var}}_{\pi, \Psi}(T, X_1,X_2, \xi) : &= \bigg ( \frac{1}{2}\sum_{n\leqslant
 V_1}\frac{\Psi_{X_1}^2(n) \lvert \Lambda_{\pi}(n) \rvert ^2}{n} \bigg )\bigg ( \frac{1}{2} \sum_{n\leqslant
 V_2}\frac{\Psi_{X_2}^2(n) \lvert \Lambda_{\pi}(n) \rvert ^2}{n} \bigg )- \widehat{\text{Av}}_{\pi, \Psi}^2, \label{Defining SD 3-Cor}
\end{align}
where $\mathcal{G}_{\pi,\Psi}(\rho_{\pi})$ is as in \eqref{G Def}. We establish below asymptotic formulas for $\widehat{\text{Av}}_{\pi,\Psi}$ and $\widehat{\text{Var}}_{\pi, \Psi}$ without assuming RH for $L(s,\pi)$.
\begin{lemma}\label{Calculating mean 3-Cor}
Let $m \in \mathbb{N}, \pi \in \mathscr{A}_m$ and $\theta_{m} \in [0, \frac{1}{2}-\frac{1}{m^2+1}]$ be an admissible exponent towards the Ramanujan conjecture for $L(s,\pi)$. Let $\Psi \in C_{c}^{\infty}(0,\infty)$ be a fixed non-negative compactly supported smooth function. Let $d\in \mathbb{N}$, $T \geqslant 2$ and $\alpha_1, \alpha_2,\xi, X_1,X_2, V_1,V_2$ be as in \eqref{Defining Parameters Triple Cor} satisfying $\alpha_1 \neq \alpha_2$ and \[
\alpha_1+\alpha_2 < \frac{1}{m(1+\xi \theta_m)}.\]
Then
\begin{align}
\widehat{\textnormal{Av}}_{\pi,\Psi}(T, X_1, X_2, \xi) & \ll (\log T)^{-1} \label{Mean Estimate 3-Cor},\\
\textrm{and} \quad \widehat{\textnormal{Var}}_{\pi,\Psi}(T, X_1, X_2, \xi) &=  \frac{\alpha_1 \alpha_2 (m \log T)^2}{4} \left( \int_{0}^{\infty}\frac{\Psi^2(t)}{t} \diff t \right)^2+O(\log T) \label{SD Estimate 3-Cor}.
\end{align}
The implied constants in the $\ll$ notation and the error term depend at most on $\pi, \Psi, \alpha_1, \alpha_2$ and $\xi$.
\end{lemma}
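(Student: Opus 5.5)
We follow the proof of Lemma \ref{Calculating mean}, treating the two estimates in order: first the average $\widehat{\text{Av}}_{\pi,\Psi}$, then the product of the two diagonal sums in \eqref{Defining SD 3-Cor}.

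\textbf{The average.} We expand $\mathcal{G}_{\pi,\Psi}(\rho_{\pi})$ from \eqref{G Def} and interchange the sum over $n_1,n_2$ with the sum over zeros. This gives
\[
\widehat{\text{Av}}_{\pi,\Psi}=\Re \sum_{\substack{n_1\leqslant V_1\\ n_2\leqslant V_2}}\frac{\Psi_{X_1}(n_1)\Psi_{X_2}(n_2)\Lambda_{\pi}(n_1)\Lambda_{\widetilde{\pi}}(n_2)}{(n_1n_2)^{1/2}}\cdot\frac{1}{N_{\pi}(T)}\sum_{\lvert\gamma_{\pi}\rvert\leqslant T}\Big(\frac{n_2}{n_1}\Big)^{i\gamma_{\pi}}.
\]
We want to avoid assuming RH. To do so, we write $(n_2/n_1)^{i\gamma_\pi}$ via $(n_2/n_1)^{\rho_\pi}$ and use Corollaries \ref{Gonek assuming RH} and \ref{when b>a} in their unconditional form \eqref{Gonek RH Equation} and \eqref{Gonek RH Equation P<Q}. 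The passage from $\beta_\pi$ to $\tfrac12$ is handled by the symmetry $\rho_\pi\leftrightarrow 1-\overline{\rho_\pi}$, as in \eqref{Transition First Step}. If one prefers, one may simply invoke RH here as in Lemma \ref{Calculating mean}, whose argument is the model.

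We split the terms into three classes.
\begin{itemize}
\item[(a)] $n_1=n_2$. Here the inner average is $1$. The constraint $\Psi_{X_1}(n)\Psi_{X_2}(n)\neq0$ forces $n\asymp X_1$ and $n\asymp X_2$ simultaneously. Since $\alpha_1\neq\alpha_2$, the ratio $X_1/X_2=T^{(\alpha_1-\alpha_2)m}$ tends to $0$ or $\infty$, so for $T$ large the supports of $\Psi_{X_1}$ and $\Psi_{X_2}$ are disjoint. Hence this class is empty.
\item[(b)] $n_1/n_2$ or $n_2/n_1$ a prime power. The main term of the Landau--Gonek formula has size $\frac{T}{N_\pi(T)}\frac{|\Lambda_\pi(n_1/n_2)|}{\sqrt{n_1/n_2}}\ll (\log T)^{-1}\frac{|\Lambda|}{\sqrt{\cdot}}$. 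Summing over $n_2$ and the prime power $n_1/n_2=p^k$, and using Cauchy--Schwarz together with Lemma \ref{lem:test_general_asymp} and Hypothesis \textbf{H}$_\pi$, the contribution is $\ll(\log T)^{-1}$ times a convergent weighted sum.
\item[(c)] All remaining terms. The Gonek error terms total $\ll T^{-1}(V_1V_2)^{1+\theta_m+\varepsilon}\log^2T\cdot(\log T)^{O(1)}$. Since $\xi(\alpha_1+\alpha_2)m(1+\theta_m)<1$, this is $\ll T^{-\delta}$ for a small choice of $\xi>1$, so it is negligible.
\end{itemize}

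\textbf{Main obstacle.} I expect class (b) to be the main obstacle. A pair with $n_1=p^{k}n_2$ can only arise when $n_1\asymp X_1$ and $n_2\asymp X_2$, so such a pair requires $p^k\asymp X_1/X_2$. The concern is that these terms could add up to a quantity of size $O(1)$ rather than $O((\log T)^{-1})$.

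To rule this out, first use Lemma \ref{lem:prime_number_theorem} with Cauchy--Schwarz to bound the weighted sum over $n_2$ by $\ll X_2^{1/2}$. In the normalized sum we then have $1/\sqrt{n_1n_2}\cdot\sqrt{n_2/n_1}=1/n_1$. Summing over the $n_2$ and $p^k$ with $n_2p^k\asymp X_1$ gives $\sum |\Lambda_\pi(n_2)\Lambda_\pi(p^k)|/(n_2p^k)\cdot\Lambda_\pi(p^k)$ restricted to $n_2p^k\asymp X_1$. By Cauchy--Schwarz and the mean-value bounds this is $O(\log T)$ at worst. Multiplying by $T/N_\pi(T)\asymp(\log T)^{-1}$ gives $O(1)$.

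This is one power of $\log T$ short of the target. To gain it, we will use that $\Psi$ is fixed, so there is a dyadic localization $n_1\asymp X_1$ and $n_2\asymp X_2$. The sum $\sum_{n_2\asymp X_2}|\Lambda(n_2)|^2/n_2$ over a dyadic range is $O(1)$ rather than $O(\log T)$, by Lemma \ref{lem:prime_number_theorem}. With this input the same count gives $O(1)\cdot(\log T)^{-1}$, which proves \eqref{Mean Estimate 3-Cor}.

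\textbf{The variance.} By Lemma \ref{lem:test_general_asymp}, applied with the weight $\Psi^2$ and with $\alpha_1$ and $\alpha_2$ respectively, each factor $\frac12\sum_{n\leqslant V_i}\Psi_{X_i}^2(n)|\Lambda_\pi(n)|^2/n$ equals $\frac{\alpha_i m\log T}{2}\int_0^\infty\Psi^2(t)t^{-1}\diff t+O(1)$. Multiplying the two asymptotics gives the main term of \eqref{SD Estimate 3-Cor} with error $O(\log T)$. Finally, subtracting $\widehat{\text{Av}}^2_{\pi,\Psi}\ll(\log T)^{-2}$ leaves this unchanged, which completes the proof.
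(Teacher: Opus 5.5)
\textbf{Verdict: there is a genuine gap, in class (b), and the step you use to close it rests on a false estimate.} Your variance computation and your class (a) argument are fine.

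\textbf{Why the class (b) argument fails.} You claim that the dyadic sum $\sum_{n_2\asymp X_2}|\Lambda_\pi(n_2)|^2/n_2$ is $O(1)$. By Lemma \ref{lem:prime_number_theorem} such a block equals $\log 2\cdot\log X_2+O(1)\asymp\log T$. Lemma \ref{lem:test_general_asymp}, which you use yourself in the variance step, says exactly that the $\Psi_{X}$-localized sum has order $\log T$. So no factor of $\log T$ is gained, and your bound for class (b) stays at $O(1)$, which does not give \eqref{Mean Estimate 3-Cor}. The preceding count is not sound either. You treat $n_2$ and $p^k=n_1/n_2$ as independent variables, and the factor $\Lambda_\pi(n_1)$ drops out of your summand. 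Without further input you could only bound $\Lambda_\pi(n_1)$ trivially, so even your ``$O(\log T)$ at worst'' is unjustified.

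\textbf{The missing idea is arithmetic, not analytic.} A class (b) term is nonzero only if $n_1$, $n_2$ (because of the factor $\Lambda_\pi(n_1)\Lambda_{\widetilde{\pi}}(n_2)$) and their ratio are all prime powers. This forces $n_1=p^{\ell_1}$ and $n_2=p^{\ell_2}$ for one and the same prime $p$, with $\ell_1\neq\ell_2$. Take $\alpha_2>\alpha_1$ without loss of generality, as the paper does, so that $n_2>n_1$ for $T$ large. The double sum then collapses to
\[
\sum_{\ell_2>\ell_1}\sum_p \Psi_{X_1}(p^{\ell_1})\Psi_{X_2}(p^{\ell_2})\Lambda_\pi(p^{\ell_1})\Lambda_{\widetilde{\pi}}(p^{\ell_2})\Lambda_\pi(p^{\ell_2-\ell_1})\,p^{-\ell_2}, \qquad \ell_2\geqslant 2.
\]
This is $O(1)$:
\begin{itemize}
\item for $\ell_2>m^2+1$ it converges by \eqref{Luo-Rudnick-Sarnak Bound};
\item for each of the finitely many pairs with $\ell_2\leqslant m^2+1$ it converges by Cauchy--Schwarz, Hypothesis \textbf{H}$_{\pi}$ and Lemma \ref{Convergent Sums}, the critical pair being $(\ell_1,\ell_2)=(1,2)$.
\end{itemize}
Multiplying by $T/N_\pi(T)\asymp(\log T)^{-1}$ gives \eqref{Mean Estimate 3-Cor} directly, with no dyadic localization needed. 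This is the paper's argument.

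\textbf{Two smaller points.}
\begin{itemize}
\item In class (c), $\xi$ is given in the hypotheses and is not at your disposal, so you cannot choose it small. Instead, use the compact support of $\Psi$ to replace $V_1V_2$ by $X_1X_2$. The stated condition $(\alpha_1+\alpha_2)m(1+\xi\theta_m)<1$ then suffices.
\item The unconditional route via \eqref{Transition First Step} is not available here. Removing the factor $(PQ)^{\beta_\pi-1/2}$ requires Proposition \ref{Crucial RH removing Lemma}, hence Hypothesis \textbf{Z}$_{\pi}$ and $\log V=o(\log T/\log\log T)$. That condition fails for $V_i=T^{\xi\alpha_i m}$. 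The weight $1/n_1$ you actually use comes from the RH form \eqref{Gonek RH Equation P<Q 2}.
\end{itemize}
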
 
\begin{proof}
All implied constants in this proof depend only on \( \pi, \Psi \), \( \alpha_1 \), \( \alpha_2 \), and \( \xi \). We assume \( T \) is sufficiently large. Without loss of generality, suppose that \( \alpha_2 > \alpha_1 \). By Corollary \ref{when b>a}, we obtain
\begin{align}\label{Expression for AV}
\widehat{\text{Av}}_{\pi,\Psi} = -\frac{T}{\pi N_{\pi}(T)} \Re \sum_{\substack{n_1 \leqslant V_1 \\ n_2 \leqslant V_2}}  \frac{\Psi_{X_1}(n_1) \Psi_{X_2}(n_2) \Lambda_{\pi}(n_1) \Lambda_{\widetilde{\pi}}(n_2)}{n_2}\Lambda_{\pi} \left( \frac{n_2}{n_1} \right) + O\left( \frac{(X_1X_2)^{1+\xi \theta_m + \varepsilon} \log T}{T} \right)
\end{align}
for any \( \varepsilon > 0 \). The error term is acceptable by choosing $\varepsilon$ sufficiently small. We note that $\Lambda_{\pi}(n_2/n_1) \neq 0$ if and only if $n_2/n_1$ is a prime power, that is, if for some prime $p$, $n_1 = p^{\ell_1}$ and $n_2=p^{\ell_2}$ where $\ell_1,\ell_2 \in \mathbb{N}$ with $\ell_2>\ell_1$. So we rewrite the double sum over $n_1,n_2$ in \eqref{Expression for AV} as
\begin{align}\label{Rewriting the Sum}
\sum_{\substack{\ell_1, \ell_2 \geqslant 1 \\ \ell_2 > \ell_1}}^{\infty} \sum_{\substack{p \leqslant \min \{ V_1^{1/\ell_1}, V_2^{1/\ell_2}\}}}  \frac{\Psi_{X_1}(p^{\ell_1}) \Psi_{X_2}(p^{\ell_2}) \Lambda_{\pi}(p^{\ell_1}) \Lambda_{\widetilde{\pi}}(p^{\ell_2})}{p^{\ell_2}}\Lambda_{\pi} (p^{\ell_2-\ell_1})
\end{align}
Note that the length of the sums over $\ell_1$ and $\ell_2$ in \eqref{Rewriting the Sum} is $\ll 1$. We consider two cases:\\

\noindent \textbf{Case 1 : $\ell_2 > m^2+1$.} Using \eqref{Luo-Rudnick-Sarnak Bound}, the sum over $p$ and $\ell_2$ converges absolutely. Therefore, \eqref{Rewriting the Sum} is $\ll 1$.\\

\noindent \textbf{Case 2 : $\ell_2 \leqslant m^2+1$.} Here we treat each tuple $(\ell_1,\ell_2)$ separately. Fixing $(\ell_1,\ell_2)$ and applying Cauchy--Schwarz, \eqref{Rewriting the Sum} is
\begin{align}\label{Cauchy--Schwarz Applied}
\ll \bigg( \sum_p \frac{\lvert \Lambda_{\widetilde{\pi}}(p^{\ell_2}) \rvert^2}{p^{\ell_2}} \bigg)^{\frac{1}{2}}\bigg( \sum_p \frac{\lvert \Lambda_{\pi}(p^{\ell_1}) \Lambda_{\pi}(p^{\ell_2-\ell_1}) \rvert^2}{p^{\ell_2}} \bigg)^{\frac{1}{2}}.
\end{align}
Since $\ell_2>1$, using Hypothesis \textnormal{\textbf{H}$_{\pi}$}, the first factor in the right hand side of \eqref{Cauchy--Schwarz Applied} is $\ll 1$. For the second factor, using Cauchy--Schwarz, Hypothesis \textnormal{\textbf{H}$_{\pi}$} and \eqref{Towards Ramanujan Bounds}, the critical case reduces to $\ell_1=1, \ell_2=2$. To address this final case, we use Lemma \ref{Convergent Sums}. Overall, we conclude that \eqref{Cauchy--Schwarz Applied} is $\ll 1$.
\par
Combining Cases 1 and 2 in \eqref{Expression for AV} and applying \eqref{Zero Counting General}, we deduce \eqref{Mean Estimate 3-Cor}. Applying Lemma \ref{lem:test_general_asymp} and using \eqref{Mean Estimate 3-Cor}, we then conclude the proof of \eqref{SD Estimate 3-Cor}.
\end{proof}
We are now ready to estimate our moments involving sums over prime powers.
\begin{lemma}\label{Moment Calculations Assuming RH Step 3 3-Cor}
Let $m \in \mathbb{N}, \pi \in \mathscr{A}_m$ and $\theta_{m} \in [0, \frac{1}{2}-\frac{1}{m^2+1}]$ be an admissible exponent towards the Ramanujan conjecture for $L(s,\pi)$. Assume the Riemann Hypothesis for $L(s,\pi)$. Let $\Psi \in C_{c}^{\infty}(0,\infty)$ be a fixed non-negative compactly supported smooth function. Let $d\in \mathbb{N}$, $T \geqslant 2$ and $\alpha_1, \alpha_2,\xi, X_1,X_2, V_1,V_2$ be as in \eqref{Defining Parameters Triple Cor} satisfying $\alpha_1 \neq \alpha_2$ and 
\[
\alpha_1+\alpha_2 < \frac{1}{2dm(1+\xi \theta_m)}.\]
Then
\begin{align*}
\frac{1}{N_{\pi}(T)}\sum_{\lvert \gamma_{\pi} \rvert \leqslant T} (\Re \mathcal{G}_{\pi,\Psi}( \rho_{\pi}))^{2d}=\boldsymbol{\mathcal{L}}_{2d} \widehat{\textnormal{Var}}_{\pi, \Psi}^d+O( (\log T)^{2d-\frac{1}{2}} ).
\end{align*}
where $\boldsymbol{\mathcal{L}}_{2d}$ is given by \eqref{Laplace Coefficients} and $\widehat{\textnormal{Var}}_{\pi, \Psi}$ is defined by \eqref{Defining SD 3-Cor}. Also, if 
\[
\alpha_1+\alpha_2 < \frac{1}{m(2d-1)(1+\xi \theta_m)},\] then
\begin{align*}
\frac{1}{N_{\pi}(T)}\sum_{\lvert \gamma_{\pi} \rvert \leqslant T} (\Re \mathcal{G}_{\pi,\Psi}( \rho_{\pi}))^{2d-1} \ll (\log T)^{d-1}.
\end{align*}
The implied constants in the error term and the $\ll$ notation depend at most on $\pi, \Psi, d, \alpha_1, \alpha_2$ and $\xi$.
\end{lemma}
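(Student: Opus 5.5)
The plan mirrors the proof of Lemma \ref{Moment Calculations Assuming RH Step 3}. Write $\mathcal{G}_{\pi,\Psi}(\rho_{\pi}) = \mathcal{G}^{*}_{\pi,\Psi}(\rho_{\pi}) + \mathcal{R}(\rho_{\pi})$. Here $\mathcal{R}$ collects the terms of \eqref{G Def} in which at least one of $n_1,n_2$ is a prime power $p^k$ with $k\geqslant 2$. Then $\mathcal{R} = \mathcal{R}_1\mathcal{R}_2' + \mathcal{R}_1'\mathcal{R}_2 + \mathcal{R}_1\mathcal{R}_2$, where $\mathcal{R}_i$ is the higher-prime-power part and $\mathcal{R}_i'$ the prime part of the $i$-th factor of the product structure
\[\sum_{n_1}\frac{\Psi_{X_1}(n_1)\Lambda_{\pi}(n_1)}{n_1^{\frac12+i\gamma_{\pi}}}\cdot\sum_{n_2}\frac{\Psi_{X_2}(n_2)\Lambda_{\widetilde\pi}(n_2)}{n_2^{\frac12-i\gamma_{\pi}}}.\]
Expanding $(\Re \mathcal{G})^{2d}$ binomially in $\Re\mathcal{G}^{*}$ and $\Re\mathcal{R}$, the $j=2d$ term is handled by Lemma \ref{Moment Calculations Assuming RH Step 2 3-Cor}. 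That lemma gives $\boldsymbol{\mathcal{L}}_{2d}$ times the product of the two halved prime variances, up to $O((\log T)^{2d-2})$. Hypothesis \textbf{H}$_{\pi}$ shows that replacing prime sums by full sums $n\leqslant V_i$ changes each halved variance by $O(1)$. Lemma \ref{Calculating mean 3-Cor} gives $\widehat{\textnormal{Av}}^2\ll(\log T)^{-2}$. Since each variance is $\asymp\log T$, this product equals $\widehat{\textnormal{Var}}_{\pi,\Psi}+O(\log T)$, and raising to the $d$-th power costs $O((\log T)^{2d-1})$.

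The key step is the $j=0$ bound. I aim to show $\frac{1}{N_{\pi}(T)}\sum_{\gamma_{\pi}}|\mathcal{R}(\rho_{\pi})|^{2d}\ll(\log T)^{d}$, i.e. that $\mathcal{R}$ behaves like (bounded)$\times$(a single Gaussian-size prime sum). By Hölder it suffices to bound the $4d$-th moments of each factor separately. For $\mathcal{R}_i$, split $k\leqslant m^2+1$ from $k>m^2+1$ as before. The large-$k$ part is $O(1)$ pointwise by \eqref{Luo-Rudnick-Sarnak Bound}. For each fixed small $k$, expand the moment and use Corollaries \ref{Gonek assuming RH} and \ref{when b>a}. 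The diagonal is $O(1)$ by Hypothesis \textbf{H}$_{\pi}$. Off-diagonal ratios $(Q/P)^{k}$ are never prime powers, except that $Q^{k}/P^{k}$ could be a $k$-th power of a prime. I would bound these terms directly via Lemma \ref{Convergent Sums}; they give $O(1)$. For $\mathcal{R}_i'$, the $4d$-th moment is $\ll(\log T)^{2d}$ by the analogue of Lemma \ref{Moment Calculations Assuming RH Step 2} (Lemma \ref{lem:even_moments_test}). This gives $\|\mathcal{R}\|_{2d}\ll(\log T)^{1/2}$.

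For the mixed terms $0<j<2d$, apply Hölder: the contribution is $\ll \|\mathcal{G}^{*}\|_{2d}^{j}\|\mathcal{R}\|_{2d}^{2d-j}\ll (\log T)^{j}(\log T)^{(2d-j)/2}\leqslant(\log T)^{2d-\frac12}$, with the worst case at $j=2d-1$. This yields the even-moment claim.

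The main obstacle is the range condition in the Gonek estimates. Moments of $\mathcal{R}_1\mathcal{R}_2'$ involve products of size up to $(V_1V_2)^{2d}$ through the combined ratio $P_2Q_1/(P_1Q_2)$. To get power savings I would expand the full product $\mathcal{G}$ rather than each factor separately, exactly as in \eqref{C Expansion}, keeping the error $(X_1X_2)^{2d(1+\xi\theta_m)+\varepsilon}T^{-1}$. The hypothesis on $\alpha_1+\alpha_2$ makes this $T^{-\delta}$. The arithmetic coincidences where $P_2Q_1/(P_1Q_2)$ is a prime power with mixed exponents can be bounded like Cases 1--2 of Lemma \ref{Calculating mean 3-Cor}: each such chain forces a repeated prime that costs a convergent sum.

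For odd exponents $2d-1$, the prime term is $\ll T^{-\varepsilon}$ by Lemma \ref{Moment Calculations Assuming RH Step 2 3-Cor}. The remaining terms are handled by Cauchy--Schwarz and Hölder as in Lemma \ref{Moment Calculations Assuming RH Step 3}, using the even bounds $\|\mathcal{G}^{*}\|_{2d}\ll\log T$ and $\|\mathcal{R}\|\ll(\log T)^{1/2}$. Since the stated odd bound $(\log T)^{d-1}$ is weaker than what the main terms produce, it should follow once these pieces are combined, though whether the mixed terms meet it needs checking.
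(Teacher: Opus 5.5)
Your architecture is the same as the paper's. You split off $\mathcal{G}^{*}$ and obtain $\boldsymbol{\mathcal{L}}_{2d}\widehat{\textnormal{Var}}_{\pi,\Psi}^{d}$ from Lemma \ref{Moment Calculations Assuming RH Step 2 3-Cor}, Hypothesis \textbf{H}$_{\pi}$ and Lemma \ref{Calculating mean 3-Cor}. You then bound the pure prime-power pieces and close with H\"older. The gap is the $j=0$ estimate $\frac{1}{N_{\pi}(T)}\sum_{|\gamma_\pi|\leqslant T}|\mathcal{R}(\rho_\pi)|^{2d}\ll(\log T)^{d}$. Your main argument, Cauchy--Schwarz into $4d$-th moments of the single factors, does not work in the stated range. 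The $4d$-th moment of $\mathcal{R}_1'$ via Lemma \ref{lem:even_moments_test} (with $d$ replaced by $2d$) needs $\alpha_1<\frac{1}{4dm(1+\xi\theta_m)}$, because the Gonek error is roughly $X_1^{4d(1+\xi\theta_m)}T^{-1+\varepsilon}$. The hypothesis only gives $\alpha_1+\alpha_2<\frac{1}{2dm(1+\xi\theta_m)}$, which allows $\alpha_1$ close to $\frac{1}{2dm(1+\xi\theta_m)}$ when $\alpha_2$ is small. You sense this and fall back on expanding the product directly, which is what the paper does. But then your $4d$-th-moment analysis of $\mathcal{R}_i$ is beside the point, and the argument that actually gives $(\log T)^d$ is missing. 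Also, the difficulty you single out, ratios $P_2Q_1/(P_1Q_2)$ that are prime powers ``with mixed exponents'', is not the real one.

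Here is what is needed, and what the paper does. Remove $k>m^2+1$ pointwise, use Jensen to fix $2\leqslant k\leqslant m^2+1$, and expand $\bigl|\sum_{p_1}\sum_{p_2^k}\bigr|^{2d}$ for the piece $\mathcal{R}_1'\mathcal{R}_2$.
\begin{itemize}
\item The ratios $P_2^kQ_1/(P_1Q_2^k)$ have numerator and denominator with the same number of prime factors, so they are never nontrivial prime powers. For the same reason, your exception ``$Q^k/P^k$ a $k$-th power of a prime'' cannot occur. Corollaries \ref{Gonek assuming RH} and \ref{when b>a} therefore leave only the diagonal $P_2^kQ_1=P_1Q_2^k$.
\item If $k\alpha_1\neq\alpha_2$, the primes $p_1\asymp X_1$ and the bases $p_2\asymp X_2^{1/k}$ lie in disjoint ranges for large $T$. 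This forces $P_1=Q_1$ and $P_2=Q_2$, and the contribution is $(\log T)^d\cdot O(1)$ by Hypothesis \textbf{H}$_{\pi}$.
\item In the resonant case $\alpha_2=k_0\alpha_1$ (or $\alpha_1=k_0\alpha_2$ for $\mathcal{R}_1\mathcal{R}_2'$), the diagonal has extra solutions. For these, $v_r(P_1)-v_r(Q_1)=k_0\,(v_r(P_2)-v_r(Q_2))\neq 0$ for some prime $r$, so $r$ is repeated at least $k_0$ times among the $X_1$-primes.
\item These extra solutions are the paper's factors $\mathcal{T}_j$, bounded via Cauchy--Schwarz, Hypothesis \textbf{H}$_{\pi}$ and Lemma \ref{Convergent Sums}.
\end{itemize}
Your remark that ``a repeated prime costs a convergent sum'' is the right mechanism. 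Without the disjoint-range argument and the resonance analysis, however, the $j=0$ bound is not established.

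On the odd moments, your suspicion is correct. H\"older with $\|\mathcal{G}^{*}\|_{2d}\ll\log T$ and $\|\mathcal{R}\|_{2d}\ll(\log T)^{1/2}$ gives only $(\log T)^{2d-3/2}$, coming from $2d-2$ factors $\Re\mathcal{G}^{*}$ and one $\Re\mathcal{R}$. This is not $(\log T)^{d-1}$. The paper's proof of this part only points to Lemma \ref{Moment Calculations Assuming RH Step 3}. The same H\"older step works there because the prime-power part is $O(1)$, whereas here it is of size $(\log T)^{1/2}$. The weaker bound still suffices for the method of moments in Theorem \ref{Theorem : Triple Correlation CDF}.
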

\begin{proof}
All implied constants in the proof may depend at most on $\pi,\Psi, d, \alpha_1,\alpha_2$ and $\xi$. First consider the case when the exponent is even. We break the double sum in $\mathcal{G}_{\pi,\Psi}$ as follows:
\begin{align}\label{Breaking G}
    \mathcal{G}_{\pi,\Psi}(\rho_{\pi}) &= \sum_{\substack{p_1 \leqslant V_1}}\sum_{\substack{p_2 \leqslant V_2}}+\sum_{\substack{p_1 \leqslant V_1}}\sum_{\substack{n_2 = p_2^k \leqslant V_2\\ k \geqslant 2}}+\sum_{\substack{ n_1 = p_1^k \leqslant V_1 \\ k \geqslant 2}}\sum_{\substack{p_2 \leqslant V_1}}+\sum_{\substack{ n_1 = p_1^k \leqslant V_1\\ k \geqslant 2}}\sum_{\substack{n_2 = p_2^{\ell} \leqslant V_2\\ \ell \geqslant 2}} \notag\\
    &=\mathcal{G}^{*}(\rho_{\pi})+\mathcal{G}_1(\rho_{\pi})+\mathcal{G}_2(\rho_{\pi})+\mathcal{G}_3(\rho_{\pi}), 
\end{align}
where $\mathcal{G}^{*}$ is defined by \eqref{G Def Primes}, $p_1,p_2$ run over primes and we drop the suffix $\pi,\Psi$ for brevity. By Binomial expansion, we have
\begin{align}\label{Multinomial Expansion Step}
\frac{1}{N_{\pi}(T)}\sum_{\lvert \gamma_{\pi} \rvert \leqslant T}(\Re \mathcal{G}_{\pi,\Psi}( \gamma_{\pi}))^{2d}&= \frac{1}{N_{\pi}(T)}\sum_{\substack{j_1,j_2,j_3,j_4=0\\ j_1+j_2+j_3+j_4=2d}}^{2d} \frac{2d!}{j_1! j_2!j_3!j_4!} \notag \\
&\quad \times\sum_{\lvert \gamma_{\pi} \rvert \leqslant T} (\Re \mathcal{G}^{*}(\rho_{\pi}))^{j_1}(\Re \mathcal{G}_1(\rho_{\pi}))^{j_2}(\Re \mathcal{G}_2(\rho_{\pi}))^{j_3}(\Re \mathcal{G}_3(\rho_{\pi}))^{j_4}.
\end{align}
By Lemma \ref{Moment Calculations Assuming RH Step 2 3-Cor} and Hypothesis \textnormal{\textbf{H}$_{\pi}$} (see \cite[Eq. 2.25]{rudnickZerosPrincipalLfunctions1996}), the contribution to \eqref{Multinomial Expansion Step} from $j_1=2d$ is
\begin{align}\label{j1=2d}
\boldsymbol{\mathcal{L}}_{2d} \widehat{\textnormal{Var}}_{\pi,\Psi}^{d}+O((\log T)^{2d-1}).
\end{align}
The cases $j_2=2d$ and $j_3=2d$ are symmetric. So we only consider $j_2=2d$. We break into cases when $k>m^2+1$ and otherwise. We write
\begin{align}\label{2dth Power}
\bigg \lvert \sum_{\substack{p_1 \leqslant V_1}}\sum_{\substack{p_2 \leqslant V_2\\ n_2 = p_2^k, k \geqslant 2}} \bigg \rvert^{2d} \ll_d \bigg \lvert \sum_{\substack{p_1 \leqslant V_1}} \sum_{\substack{n_2=p_2^k \leqslant V_2 \\ 2 \leqslant k \leqslant m^2+1 }} \bigg \rvert^{2d}+\bigg \lvert \sum_{\substack{p_1 \leqslant V_1}} \sum_{\substack{n_2=p_2^k \leqslant V_2 \\ k > m^2+1 }} \bigg \rvert^{2d}.
\end{align}
Since 
\[
\sum_{k >m^2+1} \sum_{\substack{n_2 \leqslant V_2 \\ n_2 =p_2^k}} \frac{\Psi_{X_2}(n_2)\Lambda_{\pi}(n_2)}{n^{\frac{1}{2}-i\gamma_{\pi} }} \ll 1,
\]
the second term in the right hand side of \eqref{2dth Power} can be addressed as follows: 
\begin{align}\label{Separating the Prime Powers RH Case Step 1 3-Cor}
\frac{1}{N_{\pi}(T)}\sum_{\lvert \gamma_{\pi} \rvert \leqslant T} \bigg \lvert \sum_{\substack{p_1 \leqslant V_1}} \sum_{\substack{n_2=p_2^k \leqslant V_2\\ k > m^2+1 }} \bigg \rvert^{2d} \ll \frac{1}{N_{\pi}(T)}\sum_{\lvert \gamma_{\pi} \rvert \leqslant T} \bigg \lvert \sum_{\substack{p_1 \leqslant V_1}} \bigg \rvert^{2d} \ll (\log T)^d,
\end{align}
where we apply Lemma \ref{lem:even_moments_test} to obtain the final inequality. Now we treat the first term in the right hand side of \eqref{2dth Power}. To this end, we first apply Jensen's inequality to obtain 
\begin{align}\label{Separating the Prime Powers RH Case Step 2 3-Cor}
\frac{1}{N_{\pi}(T)}\sum_{\lvert \gamma_{\pi} \rvert \leqslant T} \bigg \lvert \sum_{\substack{p_1 \leqslant V_1}} \sum_{\substack{n_2=p_2^k \leqslant V_2 \\ 2 \leqslant k \leqslant m^2+1 }} \bigg \rvert^{2d} &\ll \sum_{k=2}^{m^2+1} \frac{1}{N_{\pi}(T)}\sum_{\lvert \gamma_{\pi} \rvert \leqslant T} \bigg \lvert \sum_{\substack{p_1 \leqslant V_1}} \sum_{\substack{n_2=p_2^k \leqslant V_2 }} \bigg \rvert^{2d},
\end{align}
For each fixed $k$, we first expand the $2d$-th power in the right hand side of \eqref{Separating the Prime Powers RH Case Step 2 3-Cor}. Assume without loss of generality that $\alpha_2>\alpha_1$. Suppose there exists some $2 \leqslant k_0 \leqslant m^2+1$ such that $\alpha_1 =\alpha_2/k_0$. When $k \neq k_0$, we follow the argument from Lemma \ref{Moment Calculations Assuming RH Step 1 : 3-Cor}. The combinatorial argument in this scenario is precisely the same and thus, the contribution to the right side of \eqref{Separating the Prime Powers RH Case Step 2 3-Cor} when $k \neq k_0$ is
\begin{align}\label{Separating the Prime Powers RH Case Step 3 3-Cor}
\ll \sum_{\substack{k=2 \\ k \neq k_0}}^{m^2+1} \bigg ( \sum_{p\leqslant
 V_1}\frac{\Psi_{X_1}^2(p) \lvert \Lambda_{\pi}(p) \rvert ^2}{p} \bigg )^d\bigg ( \sum_{p^k \leqslant
 V_2}\frac{\Psi_{X_2}^2(p^k) \lvert \Lambda_{\pi}(p^k) \rvert ^2}{p^k} \bigg )^d \ll (\log T)^d,
\end{align}
by Hypothesis \textnormal{\textbf{H}$_{\pi}$}. The case $k = k_0$ is only considered if such $k_0$ exists. The argument in this case follows as in Case 2 of Lemma \ref{Calculating mean 3-Cor}. In particular, the contribution to \eqref{Separating the Prime Powers RH Case Step 2 3-Cor} from this case is 
\begin{align}\label{Separating the Prime Powers RH Case Step 4 3-Cor}
\ll \bigg ( \sum_{p\leqslant
 V_1}\frac{\Psi_{X_1}^2(p) \lvert \Lambda_{\pi}(p) \rvert ^2}{p} \bigg )^d \prod_{j=1}^{d} \mathcal{T}_j, \\
 \textrm{where} \quad \mathcal{T}_j = \sum_{p^{k_0} \leqslant
 V_2}\frac{\Psi_{X_2}^2(p^{k_0}) \lvert \Lambda_{\pi}(p^{k_0}) \rvert ^2}{p^{k_0}} \quad \textrm{or} &\quad \sum_{p^{k_0} \leqslant
 V_2}\frac{ \Psi_{X_1}^{k_0}(p) \Psi_{X_2}(p^{k_0}) \lvert \Lambda_{\widetilde{\pi}}(p) \rvert ^{k_0} \lvert \Lambda_{\pi}(p^{k_0}) \rvert}{p^{k_0}}, 1 \leqslant j \leqslant d. \notag
\end{align}
By Cauchy--Schwarz, Hypothesis  \textnormal{\textbf{H}$_{\pi}$} and Lemma \ref{Convergent Sums}, it follows that each $\mathcal{T}_j \ll 1$. This implies that \eqref{Separating the Prime Powers RH Case Step 4 3-Cor} is $\ll (\log T)^d$. Putting this together with \eqref{Separating the Prime Powers RH Case Step 1 3-Cor} and \eqref{Separating the Prime Powers RH Case Step 3 3-Cor}, the overall contribution when $j_2 = 2d$ is $\ll (\log T)^d$. The same bound holds when $j_3=2d$. Finally, techniques similar to the case $j=0$ in Lemma \ref{Moment Calculations Assuming RH Step 3} and the cases $j_2=2d, j_3=2d$ show that the contribution from $j_4=2d$ is $O(1)$. 
\par
We now apply a generalized version of H\"{o}lder's inequality to deal with the other tuples $(j_1,j_2,j_3,j_4)$ in \eqref{Multinomial Expansion Step} to arrive at our desired result. In the case of odd moments, the argument is precisely as in Lemma \ref{Moment Calculations Assuming RH Step 3} with necessary applications of Lemma \ref{Moment Calculations Assuming RH Step 2 3-Cor}. 
\end{proof}
\begin{remark}
Assume that $\alpha_1,\alpha_2>0$ such that for $T \geqslant 2$ sufficiently large, we have
\begin{align}\label{Laplace : Extra Condition}
  \lvert \alpha_1 - \alpha_2 \rvert \geqslant \frac{\log \log T}{\log T}.  
\end{align}
Then the combinatorial analysis corresponding to Case 2 in the proof of Lemma \ref{Moment Calculations Assuming RH Step 1 : 3-Cor} remains the same if we replace the condition $\alpha_1 \neq \alpha_2$ with the condition \eqref{Laplace : Extra Condition}. We may now adapt the arguments in the subsequent lemmas accordingly to replace the condition $\alpha_1 \neq \alpha_2$ with \eqref{Laplace : Extra Condition}.
\end{remark}
\subsection{The case when $\alpha_1=\alpha_2$} \label{Special Case : alpha equal} In the special case where \( \alpha_1 = \alpha_2 = \alpha \), our techniques remain the same as in Lemma \ref{Moment Calculations Assuming RH Step 1 : 3-Cor}. But the combinatorial analysis is different. Considering Lemma \ref{Moment Calculations Assuming RH Step 1 : 3-Cor}, for any non-negative integers $k, \ell$ with $k+\ell=r$ for some $r \in \mathbb{N}$, if all the primes in \( P_1, P_2, Q_1, \) and \( Q_2 \) are distinct, there are \( r! \) possible ways to permute the primes among themselves. In particular, we remark here that the odd moments also contribute main terms. Hence in this scenario, writing $X=T^{\alpha}$ and $V=X^{\xi}$, we obtain
\begin{align*}
\frac{1}{N_{\pi}(T)}\sum_{\lvert \gamma_{\pi} \rvert \leqslant T}\mathcal{G}^{*}_{\pi,\Psi}(\rho_{\pi})^k \overline{\mathcal{G}^{*}_{\pi,\Psi}( \rho_{\pi})}^{\ell} 
&= r! \bigg( \sum_{p \leqslant V} \frac{\Psi_{X}^2(p) \lvert \Lambda_{\pi}(p) \rvert^2}{p} \bigg)^{r} + O( \max \{ 1, (\log T)^{r-2} \}).
\end{align*}
An application of Lemma \ref{Complex Number Identity}, then implies that
\begin{align*}
\frac{1}{N_{\pi}(T)}\sum_{\lvert \gamma_{\pi} \rvert \leqslant T} (\Re \mathcal{G}^{*}_{\pi,\Psi}( \rho_{\pi}))^{r}&=\boldsymbol{\mathcal{\chi}}_r \bigg (\frac{1}{2}\sum_{p\leqslant
 V}\frac{\Psi_{X}^2(p) \lvert \Lambda_{\pi}(p) \rvert ^2}{p} \bigg )^{r}+O( \max \{1, (\log T)^{r-2}\} ).
\end{align*}
The implied constants in the error terms in the above two relations depend at most on $\pi,\Psi,r,\alpha$ and $\xi$. From here on, the arguments in Lemma \ref{Moment Calculations Assuming RH Step 3 3-Cor} can be adapted with necessary changes. Note that the analysis remains the same if we replace the condition $\alpha_1=\alpha_2$ by
\begin{align}\label{Laplace : Extra Condition 2}
  \lvert \alpha_1 - \alpha_2 \rvert \leqslant \frac{1}{(\log \log T) \log T}
\end{align}
for $T \geqslant 2$ sufficiently large.
\subsection{Proof of Theorem \ref{Theorem : Triple Correlation}} All implied constants in the proof depend at most on $\pi,\Psi,r,\alpha_1,\alpha_2$ and $\xi$. Assume $\alpha_1 \neq \alpha_2$. First consider the case when the exponent is even and write $r=2d$. We let $X_1=T^{\alpha_1 m}, X_2 = T^{\alpha_2 m} $ and choose  $V_1=X_1^{\xi}, V_2=X_2^{\xi}$ for some $\xi = \xi(\pi,d,\alpha_1,\alpha_2)>1$ such that 
\[\alpha_1+\alpha_2< \frac{1}{2dm(1+\xi \theta_m)}.\]
Then Lemma \ref{Triple Correlations Zeros to Primes} shows that for all non-trivial zeros \( \rho_{\pi} = \frac{1}{2} + i \gamma_{\pi} \) of $L(s,\pi)$ with \( \lvert \gamma_{\pi} \rvert \leqslant T \) outside an exceptional set of size at most $O_{\pi}(\sqrt{T}\log T)$, we have
\begin{align}\label{Tail Convergence 3 3-Cor}
\Re \mathcal{S}_{\pi,\Psi}(\rho_{\pi}) = \Re\mathcal{G}_{\pi,\Psi}( \rho_{\pi}) + \widehat{E}_{\pi, \Psi}(\rho_{\pi}),
\end{align} 
where $\widehat{E}_{\pi, \Psi}(\rho_{\pi}) \ll T^{-\delta_1}$ for some $\delta_1 = \delta_1(\pi,\Psi,d,\alpha_1,\alpha_2)>0$. Therefore we have
\begin{align*}
\frac{1}{N_{\pi}(T)} \sum_{\lvert \gamma_{\pi} \rvert \leqslant T} (\Re \mathcal{S}_{\pi,\Psi}(\rho_{\pi}) )^{2d}&= \frac{1}{N_{\pi}(T)}\sum_{\lvert \gamma_{\pi} \rvert \leqslant T} (\Re \mathcal{G}_{\pi,\Psi}(\rho_{\pi}) +\widehat{E}_{\pi, \Psi}(\rho_{\pi}))^{2d} \\
&\quad + \frac{1}{N_{\pi}( T )} \sum_{\substack{ \rho_{\pi} \in \mathcal{E}}}  ((\Re \mathcal{S}_{\pi,\Psi}(\rho_{\pi}))^{2d}- (\Re \mathcal{G}_{\pi,\Psi}( \rho_{\pi}) +\widehat{E}_{\pi, \Psi}(\rho_{\pi}))^{2d} )\\
&= \widehat{\mathcal{U}}_1+\widehat{\mathcal{U}}_2,
\end{align*}
say, where $\mathcal{E}$ is an exceptional set of non-trivial zeros $\rho_{\pi}=\beta_{\pi}+i\gamma_{\pi}$ of $L(s,\pi)$ with $\lvert \gamma_{\pi} \rvert \leqslant T$ and $\# \mathcal{E} \ll_{\pi} \sqrt{T}\log T$. An application of Lemma \ref{Moment Calculations Assuming RH Step 3 3-Cor} shows that
\begin{align*}
\widehat{\mathcal{U}}_1 =\boldsymbol{\mathcal{L}}_{2d} \widehat{\textnormal{Var}}_{\pi,\Psi}^{d} + O((\log T)^{2d-\frac{1}{2}})
\end{align*}
where $\boldsymbol{\mathcal{L}}_{2d}$ is given by \eqref{Defining Normal Distribution Coefficients} and $\widehat{\textnormal{Var}}_{\pi,\Psi}$ is as defined in \eqref{Defining SD 3-Cor}. To bound $\widehat{\mathcal{U}}_2$, we use arguments similar to how we proved \eqref{S-AV Step 2}. For clarity, we mention that the term $\Re \mathcal{S}_{\pi,\Psi}(\rho_{\pi})$ is bounded using rapid decay of the Mellin-transform and standard estimates on sums over zeros (see \cite{IK2004}). The term $\Re \mathcal{G}_{\pi,\Psi}( \rho_{\pi})$ is bounded using Cauchy--Schwarz and Lemma \ref{lem:test_general_asymp}. In conclusion, we have $\widehat{\mathcal{U}}_2 \ll T^{-\delta_2}$ for some $\delta_2 = \delta_2(\pi,\Psi,d,\alpha_1,\alpha_2)>0$. Combining the estimates for $\widehat{\mathcal{U}}_1$ and $\widehat{\mathcal{U}}_2$, we obtain
\begin{equation}\label{Raw Moments}
\frac{1}{N_{\pi}(T)}\sum_{\lvert \gamma_{\pi} \rvert \leqslant T} (\Re \mathcal{S}_{\pi,\Psi}(X_1,X_2,T,\rho_{\pi}) )^{2d} =\boldsymbol{\mathcal{L}}_{2d} \widehat{\textnormal{Var}}_{\pi,\Psi}^{d} + O((\log T)^{2d-\frac{1}{2}}).
\end{equation}
Applying \eqref{SD Estimate 3-Cor}, it follows that
\begin{equation*}
\widehat{\mathcal{M}}_{\pi,\Psi, 2d}(T^{\alpha_1 m},T^{\alpha_2 m},T) = \boldsymbol{\mathcal{L}}_{r}\left(\frac{m \sqrt{\alpha_1 \alpha_2} \log T}{2} \int_{0}^{\infty}\frac{\Psi^2(t)}{t} \diff t \right)^{2d} + O((\log T)^{2d-\frac{1}{2}}).
\end{equation*}
For odd moments, the proof is similar except that $\widehat{\mathcal{U}}_1 \ll (\log T)^{d-1}$. 
\par
When $\alpha_1 = \alpha_2$, the arguments are similar except that we use the necessary asymptotics estimates from Section \ref{Special Case : alpha equal} \qed
\subsection{Proof of Theorem \ref{Theorem : Triple Correlation CDF}} The proof follows along the lines of the proof of Theorem \ref{main thm}. The new input is that both the distributions $\textnormal{Laplace }(0,1)$ and $\chi^2_2$ are uniquely determined by their moments (see Billingsley \cite[Theorem 30.1]{Billingsley}). \qed
\section{Statements and Declarations}
\subsection{Competing Interests.} The authors have no competing interests to declare that are relevant to the
content of this article.
\subsection{Data Availability} The data that support the computations of this work are obtained from \cite{lmfdb} and \cite{Rubcalc}. Further details can be found in \cite{github}.
\subsection{Artificial Intelligence Usage} The authors declare that no artificial intelligence or AI-assisted technologies were used in the conduct of the research, or in the generation of any content presented in the manuscript.
\section{Acknowledgements}
We thank Peter Sarnak for helpful comments and for raising several interesting questions during the third author’s talk at the Joint IAS--Princeton University Number Theory Seminar which motivated our work in Sections \ref{sec: general weight functions} and \ref{sec: Triple Correlation}. We thank Jesse Thorner for valuable discussions on the subject of the paper and for encouraging our work in Sections \ref{sec: unconditional 1} and \ref{sec : unconditional 2}. The authors are grateful to Dimitris Koukoulopoulos for pointing out the applicability of the method of moments to the Laplace and Chi-squared distributions. We also thank Vorappan Chandee, Yujiao Jiang, Xiannan Li and Di Liu for valuable comments on the topic of the paper.
\par
Debmalya Basak is partially supported by the Juliette Alexandra Trjitzinsky Fellowship, Department of Mathematics, University of Illinois Urbana-Champaign and by the Max Planck Institute for Mathematics, Bonn. Cruz Castillo is partially supported by the Alfred P. Sloan Foundation’s MPHD Program, awarded in 2021, and by the National Science Foundation Graduate Research Fellowship Program under Grant No. DGE 21-46756. 
\section*{Appendix} 
For the benefit of the reader, we present below the details for the proofs of Lemma \ref{lem:explicit_formula_for_L(s,pi)} and Lemma \ref{lem:explicit_formula_test}.\\
\par

\noindent \textbf{Proof of Lemma \ref{lem:explicit_formula_for_L(s,pi)}.} All implied constants are allowed to depend on $\pi$. Let $\sigma_0 = \max \{ 0, 1-\operatorname{Re}(s)\}+(\log X)^{-1}$. A standard argument using Perron's formula \cite[Corollary 5.3]{montgomery2007multiplicative} allows us to write
\begin{align}\label{Truncated Integral}
\sum_{n \leqslant X} \frac{\Lambda_{\pi}(n)}{n^{s}} =\frac{1}{2 \pi i} \int_{\sigma_0-i T}^{\sigma_0+i T} -\frac{L'}{L}(s+w, \pi) \frac{X^w}{w} \diff w +R_{\pi}(X,T,s),
\end{align}
where $T \geqslant 2+\lvert \operatorname{Im}(s) \rvert+\lvert \operatorname{Im}(\kappa_{\pi}(j)) \rvert $ for each $1 \leqslant j \leqslant m$ and
\begin{align}\label{The Remainder Part}
R_{\pi}(X,T,s) \ll_{\pi} \sum_{\substack{n=1 \\ n \neq X}}^{\infty} \frac{\Lambda_{\pi}(n)}{n^{\operatorname{Re}(s)}} \left(\frac{X}{n} \right)^{\sigma_0} \min \bigg \{1, \frac{1}{T \lvert \log (X/n) \rvert } \bigg \}+\frac{4^{\sigma_0}+X^{\sigma_0}}{T}\left(-\frac{L^{\prime}}{L}\left(\sigma_0+\operatorname{Re}(s), \pi \right)\right).
\end{align}
When $n \geqslant 2X$ or $n \leqslant X/2$, $\lvert \log (X/n) \rvert$ has a positive lower bound and therefore, the contribution from these terms in \eqref{The Remainder Part} is 
\[
\ll_{\pi} \frac{X^{\sigma_0}}{T} \left ( -\frac{L'}{L}(\sigma_0+\operatorname{Re}(s), \pi)\right) \ll_{\pi} \frac{X^{\max \{ 0, 1-\operatorname{Re}(s)\}} \log X}{T}.
\]
Consider next the terms for which $X/2<n<X$. Let $X_1$ be the largest prime power less than $X$. We can assume $X/2<X_1<X$, since otherwise the terms under consideration vanish. For the term $n=X_1$, we have $\log(X/n) \geqslant (X-X_1)/X$ 

and therefore the contribution of this term to \eqref{The Remainder Part} is
$$
\ll_{\pi} \frac{4^{\lvert \operatorname{Re}(s) \rvert }\Lambda_{\pi} \left(X_1\right)}{X^{\operatorname{Re}(s)}} \min \left(1, \frac{X}{T\left(X-X_1\right)}\right) \ll_{\pi} 4^{\lvert \operatorname{Re}(s) \rvert }X^{\theta_m-\operatorname{Re}(s)} (\log X) \min \left(1, \frac{X}{T\left(X-X_1\right)}\right) .
$$
\par
For the other terms, we can put $n=X_1-\nu$ where $0<\nu<X/2$ and then $\log(X/n) \geqslant \nu/X_1$. Hence the contribution of these terms to \eqref{The Remainder Part} is
\begin{align*}
\ll_{\pi} \frac{4^{\lvert \operatorname{Re}(s) \rvert } X^{1+\theta_m} \log X}{X^{\operatorname{Re}(s)} T} \sum_{0<\nu<X/2} \frac{1}{\nu} \ll_{\pi} \frac{4^{\lvert \operatorname{Re}(s) \rvert } X^{1+\theta_m-\operatorname{Re}(s)} \log ^2 X}{ T} .
\end{align*}
A similar argument holds for the terms $X < n < 2X$. Therefore combining all the cases, we obtain
\begin{align}\label{Bound for the Remainder Term}
R_{\pi}(X,T,s) &\ll_{\pi} \frac{4^{\lvert \operatorname{Re}(s) \rvert } X^{1+\theta_m-\operatorname{Re}(s)} \log ^2 X}{ T} +4^{\lvert \operatorname{Re}(s) \rvert }X^{\theta_m-\operatorname{Re}(s)} (\log X) \min \left(1, \frac{X}{T\langle X \rangle}\right) \notag \\
&\quad +\frac{4^{\lvert \operatorname{Re}(s) \rvert } X^{\max \{ 0, 1-\operatorname{Re}(s)\}} \log X}{T},
\end{align}
where $ \langle X \rangle$ denotes the distance of $X$ from the nearest prime power other than $X$ itself. 
\par
Our next step is to shift the path of integration in \eqref{Truncated Integral} to the left. Let $K_{\pi}$ be a large positive number depending only on $\pi$ satisfying $K_{\pi} > -\operatorname{Re}(s)$ that will be chosen later suitably. Since $T \geqslant 2+\lvert \operatorname{Im}(s) \rvert+\lvert \operatorname{Im}(\kappa_{\pi}(j)) \rvert $ for each $1 \leqslant j \leqslant m$, by \cite[Proposition 5.7]{IK2004}, there exists $T_1, T_2 \in[T, T+1]$ such that
\begin{align}\label{Iwaniec-Kowalski Prop 5.7}
\frac{L^{\prime}}{L}\left(\sigma+i \operatorname{Im}(s)-i T_2, \pi \right), \frac{L^{\prime}}{L}\left(\sigma+i \operatorname{Im}(s)+i T_1 , \pi\right) \ll_{\pi} \log T
\end{align}
uniformly for $-1 \leqslant \sigma \leqslant 2$. Note that the range of $\sigma$ in \cite[Proposition 5.7]{IK2004} can be extended to $-1 \leqslant \sigma \leqslant 2$ at the cost of amplifying the implied constant. Moreover, by extending the range of integration in \eqref{Truncated Integral} from $|\operatorname{Im}(w)| \leqslant T$ to $-T_2 \leqslant \operatorname{Im}(w) \leqslant T_1$, the additional error is at most
$$
\ll_{\pi} \frac{X^{\sigma_0}}{T} \left ( -\frac{L'}{L}(\sigma_0+\operatorname{Re}(s), \pi)\right) \ll_{\pi} \frac{X^{\max \{ 0, 1-\operatorname{Re}(s)\}} \log X}{T},
$$
which is acceptable. We replace the contour in \eqref{Truncated Integral} with a new contour $\mathcal{C}$ connecting the points $\sigma_0-i T_2,-K_{\pi}-\operatorname{Re}(s)-i T_2,-K_{\pi}-\operatorname{Re}(s)+i T_1, \sigma_0+i T_1$, in this order. By Cauchy's residue theorem, we get
\begin{align}\label{Contour Theorem}
\frac{1}{2 \pi i} \int_{\sigma_0-i T}^{\sigma_0+i T} -\frac{L'}{L}(s+w, \pi) \frac{X^w}{w} \diff w &=\frac{1}{2 \pi i} \int_{\mathcal{C}}-\frac{L'}{L}(s+w, \pi) \frac{X^w}{w} \diff w -\sum_{-T_2<\Im(\rho_{\pi}-s)<T_1} \frac{X^{\rho_{\pi}-s}}{\rho_{\pi}-s} \notag\\
&\quad+\underset{\substack{k \in \mathbb{N}\cup \{0\}, 1 \leqslant j \leqslant m \\ -K_{\pi}<\operatorname{Re}(-2k-\kappa_{\pi}(j))}}{\sum \sum} \frac{X^{-2 k-\kappa_{\pi}(j)-s}}{2 k+\kappa_{\pi}(j)+s} -\frac{L^{\prime}}{L}(s, \pi )+\delta_{\pi}\frac{X^{1-s}}{1-s}.
\end{align}
Here $\delta_{\pi}=1$ if $L(s,\pi)=\zeta(s)$ and zero otherwise. Also, if $s=1$ and $\delta_{\pi}=1$, the integrand has a double pole at $w=0$ and $X^{1-s}(1-s)^{-1}-\zeta^{\prime} / \zeta(s)$ should be replaced with the residue $\log X-\gamma$ where $\gamma$ is the Euler--Mascheroni constant. We may shorten the sum over $-T_2<\Im(\rho-s)<T_1$ to a sum over $-T \leqslant \Im(\rho-s) \leqslant T$ with an additional error of at most
$$
\ll_{\pi} \sum_{\Im(\rho_{\pi}-s) \in\left(T, T_1\right) \cup\left(-T_2,-T\right)} \frac{X^{1-\operatorname{Re}(s)}}{|\rho_{\pi}-s|} \ll_{\pi} \frac{X^{1-\operatorname{Re}(s)} \log T}{T}
$$
which is again acceptable. Therefore, it remains to bound the integral over $\mathcal{C}$. 
\par
To bound the horizontal parts of our integral over $\mathcal{C}$, we break the range of integration $\operatorname{Re}(w) \in\left[-K_{\pi}-\operatorname{Re}(s), \sigma_0\right]$ into three separate parts. First consider $\operatorname{Re}(w) \in \left[-1-\operatorname{Re}(s), \min \left\{2-\operatorname{Re}(s), \sigma_0\right\}\right]$. We apply \eqref{Iwaniec-Kowalski Prop 5.7} to obtain
\begin{align}\label{Horizontal Integral Part I}
\frac{1}{2 \pi i} \int_{-1-\operatorname{Re}(s)+i T_1}^{\min \left\{2-\operatorname{Re}(s), \sigma_0\right\}+i T_1} -\frac{L'}{L}(s+w, \pi) \frac{X^w}{w} \diff w \ll_{\pi} \frac{\log T}{T} \frac{X^{\min \left\{2-\operatorname{Re}(s), \sigma_0\right\}}}{\log X},
\end{align}
and the same bound holds if $T_1$ is replaced with $-T_2$. Next, the contribution of $\operatorname{Re}(w) \in \left [2-\operatorname{Re}(s), \sigma_0\right]$ should only be considered if this is a nonempty interval. In this case, we obtain
\begin{align}\label{Horizontal Integral Part II}
\frac{1}{2 \pi i} \int_{2-\operatorname{Re}(s)+i T_1}^{\sigma_0+i T_1} -\frac{L'}{L}(s+w, \pi) \frac{X^w}{w} \diff w \ll_{\pi} \frac{X^{\max \{ 0, 1-\operatorname{Re}(s)\}}}{T \log X}.
\end{align}
The same bound holds if $T_1$ is replaced with $-T_2$. Now we consider the contribution from $\operatorname{Re}(w) \in[-K_{\pi}-\operatorname{Re}(s),-1-\operatorname{Re}(s)]$. Let $D \subset \mathbb{C}$ be the half-plane $\{\sigma < -1-\operatorname{Re}(s) \}$ minus a disc of radius $1/(4m)$ around each of the points $-2m-\kappa_{\pi}(j)-s$, where $ m \in \mathbb{N} \cup \{0\}$ and $1 \leqslant j \leqslant m$. For any $w \in D$, by taking the log derivative of the functional equation \eqref{Functional Equation} for $L(s,\pi)$ we have 
\begin{align}
-\frac{L'}{L}(s+w, \pi) &=\frac{L'}{L}(1-s-w, \widetilde{\pi})\notag \\
&\quad+\frac{1}{2}\sum_{j=1}^{m} \bigg ( \frac{\Gamma'}{\Gamma}\left(\frac{s+w+\kappa_{\pi}(j)}{2} \right)+ \frac{\Gamma'}{\Gamma}\left(\frac{1-s-w+\kappa_{\widetilde{\pi}}(j)}{2} \right) \bigg )+O_{\pi}(1).
\end{align} 
Since $w \in D$, we have $L'/L(1-s-w, \widetilde{\pi}) \ll_{\pi} 1$. By Stirling's formula,
\begin{align}\label{Stirling's Formula Step 1}
\sum_{j=1}^{m} \frac{\Gamma'}{\Gamma}\left(\frac{1-s-w+\kappa_{\widetilde{\pi}}(j)}{2} \right) \ll_{\pi} \log (\lvert s+w \rvert+2).
\end{align}
If $\operatorname{Re}(s+w+\kappa_{\pi}(j))>\frac{1}{2}$, again by Stirling's formula,
\begin{align}\label{Stirling's Formula Step 2}
\sum_{j=1}^{m} \frac{\Gamma'}{\Gamma}\left(\frac{s+w+\kappa_{\pi}(j)}{2} \right)  \ll_{\pi} \log (\lvert s+w \rvert+2).
\end{align}
Otherwise, by using the reflection formula for $\Gamma(s)$, we have
\begin{align}\label{Stirling's Formula Step 3}
\sum_{j=1}^{m} \frac{\Gamma'}{\Gamma}\left(\frac{s+w+\kappa_{\pi}(j)}{2} \right) = \sum_{j=1}^{m} \frac{\Gamma'}{\Gamma}\left(1-\frac{s+w+\kappa_{\pi}(j)}{2} \right) - \sum_{j=1}^{m} \pi \cot \left(\pi \frac{s+w+\kappa_{\pi}(j)}{2} \right).
\end{align}
The sum involving $\Gamma'/\Gamma$ on the right-hand side of \eqref{Stirling's Formula Step 3} is again $\ll_{\pi} \log (\lvert s+w \rvert+2)$. Since $w \in D$, the cotangent sum in \eqref{Stirling's Formula Step 3} is $O_{\pi}(1)$. Combining all the estimates, we see that for any $w \in D$, 
\begin{align}\label{Stirling's Formula Step 4}
\bigg \lvert \frac{L'}{L}(s+w, \pi)\bigg \rvert \ll_{\pi} \log (\lvert s+w \rvert+2).
\end{align}
Therefore, it follows that
\begin{align}\label{Horizontal Integral Part III}
\frac{1}{2 \pi i} \int_{-K_{\pi}-\operatorname{Re}(s)+i T_1}^{-1-\operatorname{Re}(s)+i T_1} -\frac{L'}{L}(s+w, \pi) \frac{X^w}{w} \diff w \ll_{\pi} \frac{\log T}{T} \int_{-K_{\pi}}^{-1} X^{a-\operatorname{Re}(s)} \diff  a \ll_{\pi} \frac{\log T}{T} \frac{X^{-1-\operatorname{Re}(s)}}{\log X}.
\end{align}
The same bound holds if $T_1$ is replaced with $-T_2$. The combined error from \eqref{Horizontal Integral Part I}, \eqref{Horizontal Integral Part II} and \eqref{Horizontal Integral Part III} is acceptable. We may now choose $K_{\pi}$ such that our contour lies inside $D$ and therefore the integral over the vertical part of $\mathcal{C}$ is bounded using \eqref{Stirling's Formula Step 4} again. To this end, we obtain
\begin{align}\label{Stirling's Formula Step 5}
\frac{1}{2 \pi i} \int_{-K_{\pi}-\operatorname{Re}(s)-i T_2}^{-K_{\pi}-\operatorname{Re}(s)+i T_1} -\frac{L'}{L}(s+w, \pi) \frac{X^w}{w} \diff w &\ll_{\pi} \frac{\log (K_{\pi} T)}{K_{\pi}+\operatorname{Re}(s)} X^{-K_{\pi}-\operatorname{Re}(s)} \int_{-T_2}^{T_1} \diff  t \notag \\
&\ll_{\pi} \frac{T \log (K_{\pi} T) X^{-K_{\pi}-\operatorname{Re}(s)}}{K_{\pi}+\operatorname{Re}(s)} .
\end{align}
Letting $K_{\pi}$ tend to $\infty$ while still ensuring the constraints on $K_{\pi}$, the bound in \eqref{Stirling's Formula Step 5} tends to zero. Therefore combining all the error terms and putting together \eqref{Truncated Integral} and \eqref{Contour Theorem}, we obtain \eqref{Explicit Formula Equation 1}.
\par
If $X$ is not a prime power, then by shifting $X$ appropriately we may assume $\langle X \rangle \geqslant \frac{1}{2}$. Therefore letting $T$ tending to $\infty$, $E_{\pi}(X,T,s)$ tends to zero and thus \eqref{Explicit Formula Equation 2} follows. \qed \\

\noindent \textbf{Proof of Lemma \ref{lem:explicit_formula_test}.} Assume $s \neq 1.$ For $z \neq 0,$ by integration by parts, 
\begin{equation}\label{eqn:phi_mell}
\widehat{\varphi}(z) = \int_{2}^{\infty}\varphi(y)y^{z-1} \diff y = -\frac{1}{z}\int_{2}^{\infty}\varphi'(y)y^z \diff y .
\end{equation}
We write
\begin{align*}
\sum_{n =1}^{\infty} \frac{\Lambda_{\pi}(n)\varphi(n)}{n^s} = \sum_{n \geqslant 2} \frac{\Lambda_{\pi}(n)}{n^s}\int_{n}^{\infty} -\varphi'(y) \diff y = -\int_{2}^{\infty}\varphi'(y)\sum_{2 \leqslant n \leqslant y} \frac{\Lambda_{\pi}(n)}{n^s} \diff y.
\end{align*}
From \eqref{Explicit Formula Equation 2} of Lemma~\ref{lem:explicit_formula_for_L(s,pi)}, we see 
\begin{equation}\label{eqn:test_explicit_form_step_1}
\begin{split}
\sum_{n =1}^{\infty} \frac{\Lambda_{\pi}(n) \varphi(n)}{n^s} & =  \int_{2}^{\infty} \varphi'(y)\frac{L'}{L}(s,\pi) \diff y + \frac{\delta_{\pi}}{1-s}\int_{2}^{\infty}\varphi'(y)y^{1-s} \diff y + \int_2^{\infty} \sum_{\rho_{\pi}} \frac{\varphi'(y)y^{\rho_{\pi}-s}}{\rho_{\pi}-s} \diff y\\
    & \quad - \int_{2}^{\infty} \underset{\substack{k \in \mathbb{N}\cup \{0\} \\ 1 \leqslant j \leqslant m}}{\sum \sum} \frac{\varphi'(y)y^{-2 k-\kappa_{\pi}(j)-s}}{2 k+\kappa_{\pi}(j)+s}.
\end{split}
\end{equation}
Note we may apply \eqref{Explicit Formula Equation 2} of Lemma~\ref{lem:explicit_formula_for_L(s,pi)} because $\mathbb{N}$ has Lebesgue measure zero. The first integral on the right hand side of \eqref{eqn:test_explicit_form_step_1} vanishes by the Fundamental Theorem of Calculus. Since $s\neq 1,$ we apply \eqref{eqn:phi_mell} to find that the second term of the right-hand side of \eqref{eqn:test_explicit_form_step_1} equals $\delta_{\pi} \widehat{\varphi}(1-s).$ We now proceed to integral of the sum over the zeros of $L(s,\pi).$ Note, we have for $\Re(z) \leqslant 1,$
\begin{equation}\label{eqn:mell_transform_trivial}
    \widehat{\varphi}(z) \ll_{\varphi}2^{\Re(z) -1}.
\end{equation}
Furthermore, when $z \neq 0,-1$ with $\Re z \leqslant 1$, we find 
\begin{equation}\label{eqn_varphi_mell_2nd_derivative}
    \widehat{\varphi}(z) \ll_{\varphi} \frac{1}{|z||z+1|.}
\end{equation}
Using \eqref{eqn:phi_mell} with \eqref{eqn:mell_transform_trivial} and \eqref{eqn_varphi_mell_2nd_derivative}, we may apply Fubini's theorem and interchange the sums with our integrals in  \eqref{eqn:test_explicit_form_step_1}. With this, we arrive at \eqref{eqn:explicit_test_func} for $s\neq 1.$ When $s =1$ and $\pi$ is the trivial representation, we exchange the first two terms of the right-hand side of \eqref{eqn:test_explicit_form_step_1} for $\widehat{\varphi}(0);$ the rest of the work is left unchanged. \qed
\printbibliography

@article{davenportDistributionQuadraticHigher1952,
  title = {The Distribution of Quadratic and Higher Residues},
  author = {Davenport, H. and Erd\H{o}s, P.},
  date = {1952},
  journaltitle = {Publicationes Mathematicae Debrecen},
  shortjournal = {Publ. Math. Debrecen},
  volume = {2},
  pages = {252--265},
  issn = {0033-3883,2064-2849},
  doi = {10.5486/pmd.1952.2.3-4.18},
  mrnumber = {55368}
}

@article{fordDistributionImaginaryParts2009,
  title = {On the Distribution of Imaginary Parts of Zeros of the {{Riemann}} Zeta Function. {{II}}},
  author = {Ford, K. and Soundararajan, K. and Zaharescu, A.},
  date = {2009},
  journaltitle = {Mathematische Annalen},
  shortjournal = {Math. Ann.},
  volume = {343},
  number = {3},
  pages = {487--505},
  issn = {0025-5831},
  doi = {10.1007/s00208-008-0280-x},
  mrnumber = {2480701}
}

@article{gallagherDistributionPrimesShort1976,
  title = {On the Distribution of Primes in Short Intervals},
  author = {Gallagher, P. X.},
  date = {1976},
  journaltitle = {Mathematika. A Journal of Pure and Applied Mathematics},
  shortjournal = {Mathematika},
  volume = {23},
  number = {1},
  pages = {4--9},
  issn = {0025-5793},
  doi = {10.1112/S0025579300016442},
  mrnumber = {409385}
}

@incollection{goldstonPairCorrelationZeros1987,
  title = {Pair Correlation of Zeros and Primes in Short Intervals},
  booktitle = {Analytic Number Theory and {{Diophantine}} Problems ({{Stillwater}}, {{OK}}, 1984)},
  author = {Goldston, D. A. and Montgomery, H. L.},
  date = {1987},
  series = {Progr. {{Math}}.},
  volume = {70},
  pages = {183--203},
  publisher = {{Birkhäuser Boston, Boston, MA}},
  isbn = {978-0-8176-3361-5},
  mrnumber = {1018376}
}

@incollection{gonekExplicitFormulaLandau1993,
  title = {An Explicit Formula of {{Landau}} and Its Applications to the Theory of the Zeta-Function},
  booktitle = {A Tribute to {{Emil Grosswald}}: Number Theory and Related Analysis},
  author = {Gonek, S. M.},
  date = {1993},
  series = {Contemp. {{Math}}.},
  volume = {143},
  pages = {395--413},
  publisher = {{Amer. Math. Soc., Providence, RI}},
  doi = {10.1090/conm/143/1210528},
  mrnumber = {1210528}
}

@article{kim2003functoriality,
  title={Functoriality for the exterior square of GL\_4 and symmetric fourth of GL\_2, Appendix 1 by Dinakar Ramakrishnan, Appendix 2 by Henry H. Kim and Peter Sarnak},
  author={Kim, H.},
  journal={J. Amer. Math. Soc.},
  volume={16},
  pages={139--183},
  year={2003}
}

@article {gallagher1985pair,
    AUTHOR = {Gallagher, P. X.},
     TITLE = {Pair correlation of zeros of the zeta function},
   JOURNAL = {J. Reine Angew. Math.},
  FJOURNAL = {Journal f\"ur die Reine und Angewandte Mathematik. [Crelle's
              Journal]},
    VOLUME = {362},
      YEAR = {1985},
     PAGES = {72--86},
      ISSN = {0075-4102,1435-5345},
   MRCLASS = {11M26},
  MRNUMBER = {809967},
MRREVIEWER = {Masako\ Izumi},
       DOI = {10.1515/crll.1985.362.72},
       URL = {https://doi.org/10.1515/crll.1985.362.72},
}

@book{goldston1985prime,
  title={Prime numbers and the pair correlation of zeros of the zeta function},
  author={Goldston, D. A.},
  year={1985},
  publisher={Univ. Texas Press}
}

@inproceedings{montgomeryPairCorrelationZeros1973,
  title = {The Pair Correlation of Zeros of the Zeta Function},
  booktitle = {Analytic Number Theory ({{Proc}}. {{Sympos}}. {{Pure Math}}., {{Vol}}. {{XXIV}}, {{St}}. {{Louis Univ}}., {{St}}. {{Louis}}, {{Mo}}., 1972)},
  author = {Montgomery, H. L.},
  date = {1973},
  pages = {181--193},
  publisher = {{Amer. Math. Soc., Providence, R.I.}},
  mrnumber = {0337821}
}

@misc{alpöge2026thirdszetazerossimple,
      title={More than two thirds of the zeta zeros are simple and on the critical line}, 
      author={L. Alpöge and R. Furman},
      year={2026},
      eprint={2608.13637},
      archivePrefix={arXiv},
      primaryClass={math.NT},
      url={https://arxiv.org/abs/2608.13637}, 
}

@article{Heath1982,
author = {Heath-Brown, D. R.},
journal = {Acta Arithmetica},
language = {eng},
number = {1},
pages = {85-99},
title = {Gaps between primes, and the pair correlation of zeros of the zeta-function},
url = {http://eudml.org/doc/205833},
volume = {41},
year = {1982},
}

@article {HB-Goldston,
    AUTHOR = {Heath-Brown, D. R. and Goldston, D. A.},
     TITLE = {A note on the differences between consecutive primes},
   JOURNAL = {Math. Ann.},
  FJOURNAL = {Mathematische Annalen},
    VOLUME = {266},
      YEAR = {1984},
    NUMBER = {3},
     PAGES = {317--320},
      ISSN = {0025-5831,1432-1807},
   MRCLASS = {11N05},
  MRNUMBER = {730173},
MRREVIEWER = {Julia\ Mueller},
       DOI = {10.1007/BF01475582},
       URL = {https://doi.org/10.1007/BF01475582},
}

@incollection {Mont-Odlyzko,
    AUTHOR = {Montgomery, H. L. and Odlyzko, A. M.},
     TITLE = {Gaps between zeros of the zeta function},
 BOOKTITLE = {Topics in classical number theory, {V}ol. {I}, {II}
              ({B}udapest, 1981)},
    SERIES = {Colloq. Math. Soc. J\'anos Bolyai},
    VOLUME = {34},
     PAGES = {1079--1106},
 PUBLISHER = {North-Holland, Amsterdam},
      YEAR = {1984},
      ISBN = {0-444-86509-8},
   MRCLASS = {11M26},
  MRNUMBER = {781177},
MRREVIEWER = {A.\ Perelli},
}

@incollection {Odlyzko2,
    AUTHOR = {Odlyzko, A. M.},
     TITLE = {The {$10^{22}$}-nd zero of the {R}iemann zeta function},
 BOOKTITLE = {Dynamical, spectral, and arithmetic zeta functions ({S}an
              {A}ntonio, {TX}, 1999)},
    SERIES = {Contemp. Math.},
    VOLUME = {290},
     PAGES = {139--144},
 PUBLISHER = {Amer. Math. Soc., Providence, RI},
      YEAR = {2001},
      ISBN = {0-8218-2079-6},
   MRCLASS = {11M26 (11Y35)},
  MRNUMBER = {1868473},
       DOI = {10.1090/conm/290/04578},
       URL = {https://doi.org/10.1090/conm/290/04578},
}

@article{montgomeryPrimesShortIntervals2004,
  title = {Primes in Short Intervals},
  author = {Montgomery, H. L. and Soundararajan, K.},
  date = {2004},
  journaltitle = {Communications in Mathematical Physics},
  shortjournal = {Comm. Math. Phys.},
  volume = {252},
  number = {1-3},
  pages = {589--617},
  issn = {0010-3616,1432-0916},
  doi = {10.1007/s00220-004-1222-4},
  mrnumber = {2104891}
}

@article{chandee2014simple,
  title={Simple zeros of primitive Dirichlet $L$-functions and the asymptotic large sieve},
  author={Chandee, V. and Lee, Y. and Liu, S. and Radziwi{\l}{\l}, M.},
  journal={Quarterly Journal of Mathematics},
  volume={65},
  number={1},
  pages={63--87},
  year={2014},
  publisher={OUP}
}

@article {MR0022872,
    AUTHOR = {Selberg, A.},
     TITLE = {Contributions to the theory of {D}irichlet's {$L$}-functions},
   JOURNAL = {Skr. Norske Vid.-Akad. Oslo I},
  FJOURNAL = {Skrifter Utgitt av det Norske Videnskaps-Akademi i Oslo. I.
              Mat.-Naturv. Klasse},
    VOLUME = {1946},
      YEAR = {1946},
    NUMBER = {3},
     PAGES = {62},
      ISSN = {0029-2338},
   MRCLASS = {10.0X},
  MRNUMBER = {22872},
MRREVIEWER = {H.\ Heilbronn},
}

@article {Luo,
    AUTHOR = {Luo, W.},
     TITLE = {Zeros of {H}ecke {$L$}-functions associated with cusp forms},
   JOURNAL = {Acta Arith.},
  FJOURNAL = {Acta Arithmetica},
    VOLUME = {71},
      YEAR = {1995},
    NUMBER = {2},
     PAGES = {139--158},
      ISSN = {0065-1036,1730-6264},
   MRCLASS = {11F66 (11F11 11M41)},
  MRNUMBER = {1339122},
MRREVIEWER = {Kazuyuki\ Hatada},
       DOI = {10.4064/aa-71-2-139-158},
       URL = {https://doi.org/10.4064/aa-71-2-139-158},
}

@article {Deligne,
    AUTHOR = {Deligne, P.},
     TITLE = {La conjecture de {W}eil. {I}},
   JOURNAL = {Inst. Hautes \'Etudes Sci. Publ. Math.},
  FJOURNAL = {Institut des Hautes \'Etudes Scientifiques. Publications
              Math\'ematiques},
    NUMBER = {43},
      YEAR = {1974},
     PAGES = {273--307},
      ISSN = {0073-8301,1618-1913},
   MRCLASS = {14G13},
  MRNUMBER = {340258},
MRREVIEWER = {Nicholas\ M.\ Katz},
       URL = {http://www.numdam.org/item?id=PMIHES_1974__43__273_0},
}

@article {MR4574646,
    AUTHOR = {Beckwith, O. and Liu, D. and Thorner, J. and Zaharescu,
              A.},
     TITLE = {A zero density estimate and fractional imaginary parts of
              zeros for {$\rm{GL}_2$} {$L$}-functions},
   JOURNAL = {Math. Proc. Cambridge Philos. Soc.},
  FJOURNAL = {Mathematical Proceedings of the Cambridge Philosophical
              Society},
    VOLUME = {174},
      YEAR = {2023},
    NUMBER = {3},
     PAGES = {605--630},
      ISSN = {0305-0041,1469-8064},
   MRCLASS = {11F66 (11M41)},
  MRNUMBER = {4574646},
MRREVIEWER = {Guohua\ Chen},
       DOI = {10.1017/S0305004122000445},
       URL = {https://doi.org/10.1017/S0305004122000445},
}

@article {Odlyzko,
    AUTHOR = {Odlyzko, A. M.},
     TITLE = {On the distribution of spacings between zeros of the zeta
              function},
   JOURNAL = {Math. Comp.},
  FJOURNAL = {Mathematics of Computation},
    VOLUME = {48},
      YEAR = {1987},
    NUMBER = {177},
     PAGES = {273--308},
      ISSN = {0025-5718,1088-6842},
   MRCLASS = {11M26 (11-04 11Y35)},
  MRNUMBER = {866115},
MRREVIEWER = {S.\ L.\ Segal},
       DOI = {10.2307/2007890},
       URL = {https://doi.org/10.2307/2007890},
}

@article {MR0020594,
    AUTHOR = {Selberg, A.},
     TITLE = {Contributions to the theory of the {R}iemann zeta-function},
   JOURNAL = {Arch. Math. Naturvid.},
  FJOURNAL = {Archiv for Mathematik og Naturvidenskab},
    VOLUME = {48},
      YEAR = {1946},
    NUMBER = {5},
     PAGES = {89--155},
      ISSN = {0365-4524},
   MRCLASS = {10.0X},
  MRNUMBER = {20594},
MRREVIEWER = {E.\ C.\ Titchmarsh},
}

@article{chandee2022pair,
  title={Pair correlation of zeros of $\Gamma$ 1 (q) $L$-functions},
  author={Chandee, V. and Klinger-Logan, K. and Li, X.},
  journal={Mathematische Zeitschrift},
  volume={302},
  number={1},
  pages={219--258},
  year={2022},
  publisher={Springer}
}

@article {GillespieYe,
    AUTHOR = {Gillespie, T. L. and Ye, Y.},
     TITLE = {The prime number theorem and {H}ypothesis {H} with lower-order
              terms},
   JOURNAL = {J. Number Theory},
  FJOURNAL = {Journal of Number Theory},
    VOLUME = {141},
      YEAR = {2014},
     PAGES = {59--82},
      ISSN = {0022-314X,1096-1658},
   MRCLASS = {11M26 (11F70 11M41)},
  MRNUMBER = {3195388},
MRREVIEWER = {Lejla\ Smajlovi\'c},
       DOI = {10.1016/j.jnt.2014.01.022},
       URL = {https://doi.org/10.1016/j.jnt.2014.01.022},
}

@article {ThornerHumphriesHoheisel,
    AUTHOR = {Humphries, P. and Thorner, J.},
     TITLE = {Towards a {${\rm GL}_n$} variant of the {H}oheisel phenomenon},
   JOURNAL = {Trans. Amer. Math. Soc.},
  FJOURNAL = {Transactions of the American Mathematical Society},
    VOLUME = {375},
      YEAR = {2022},
    NUMBER = {3},
     PAGES = {1801--1824},
      ISSN = {0002-9947,1088-6850},
   MRCLASS = {11F66},
  MRNUMBER = {4378080},
MRREVIEWER = {Liyang\ Yang},
       DOI = {10.1090/tran/8544},
       URL = {https://doi.org/10.1090/tran/8544},
}

@article {kimgl4,
    AUTHOR = {Kim, H. H.},
     TITLE = {A note on {F}ourier coefficients of cusp forms on {${\rm
              GL}_n$}},
   JOURNAL = {Forum Math.},
  FJOURNAL = {Forum Mathematicum},
    VOLUME = {18},
      YEAR = {2006},
    NUMBER = {1},
     PAGES = {115--119},
      ISSN = {0933-7741,1435-5337},
   MRCLASS = {11F30 (11F66 11F70)},
  MRNUMBER = {2206246},
MRREVIEWER = {Ramin\ Takloo-Bighash},
       DOI = {10.1515/FORUM.2006.007},
       URL = {https://doi.org/10.1515/FORUM.2006.007},
}

@article {KimShahidi,
    AUTHOR = {Kim, H. H. and Shahidi, F.},
     TITLE = {Functorial products for {${\rm GL}_2\times{\rm GL}_3$} and the
              symmetric cube for {${\rm GL}_2$}},
      NOTE = {With an appendix by Colin J. Bushnell and Guy Henniart},
   JOURNAL = {Ann. of Math. (2)},
  FJOURNAL = {Annals of Mathematics. Second Series},
    VOLUME = {155},
      YEAR = {2002},
    NUMBER = {3},
     PAGES = {837--893},
      ISSN = {0003-486X,1939-8980},
   MRCLASS = {11F70 (22E55)},
  MRNUMBER = {1923967},
MRREVIEWER = {A.\ Raghuram},
       DOI = {10.2307/3062134},
       URL = {https://doi.org/10.2307/3062134},
}

@article{ozluk1996theq,
  title={On the $q$-Analogue of the Pair Correlation Conjecture},
  author={{\"O}zl{\"u}k, A. E},
  journal={Journal of number theory},
  volume={59},
  number={2},
  pages={319--351},
  year={1996},
  publisher={Elsevier}
}

@book{montgomery2007multiplicative,
  title={Multiplicative number theory I: Classical theory},
  author={Montgomery, H. L. and Vaughan, R. C},
  number={97},
  year={2007},
  publisher={Cambridge university press}
}

@book{landau1909handbuch,
  title={Handbook of the theory of the distribution of prime numbers},
  author={Landau, E.},
  volume={1},
  year={1909},
  publisher={BG Teubner}
}

@inproceedings{luo1999generalized,
  title={On the generalized Ramanujan conjecture for GL(n)},
  author={Luo, W. and Rudnick, Z. and Sarnak, P.},
  booktitle={Proceedings of Symposia in Pure Mathematics},
  volume={66},
  number={2},
  pages={301--310},
  year={1999},
  organization={Providence, RI; American Mathematical Society; 1998}
}

@article{luo1995selberg,
  title={On Selberg’s eigenvalue conjecture},
  author={Luo, W. and Rudnick, Z. and Sarnak, P.},
  journal={Geometries in Interaction: GAFA special issue in honor of Mikhail Gromov},
  pages={387--401},
  year={1995},
  publisher={Springer}
}

@article{murtyExplicitFormulasPair2002,
  title = {Explicit Formulas for the Pair Correlation of Zeros of Functions in the {{Selberg}} Class},
  author = {Murty, M. R. and Zaharescu, A.},
  date = {2002},
  journaltitle = {Forum Mathematicum},
  shortjournal = {Forum Math.},
  volume = {14},
  number = {1},
  pages = {65--83},
  issn = {0933-7741},
  doi = {10.1515/form.2002.006},
  mrnumber = {1880195}
}

@online{Rubcalc,
    author = {M. Rubinstein},
    title = {\emph{{$L$}-function c++ class library and, the command line program lcalc}},
    year = 2014,
    url = {https://github.com/agrawroh/l-calc},
    urldate = {2024-26-02}
}

@incollection{rudnickZerosPrincipalLfunctions1996,
  title = {Zeros of Principal {{L-functions}} and Random Matrix Theory},
  booktitle = {Duke {{Mathematical Journal}}},
  author = {Rudnick, Z. and Sarnak, P.},
  date = {1996},
  volume = {81},
  number = {2},
  pages = {269--322},
  issn = {0012-7094},
  doi = {10.1215/S0012-7094-96-08115-6},
  mrnumber = {1395406}
}

@book {Serre,
    AUTHOR = {Serre, J-P.},
     TITLE = {Abelian {$l$}-adic representations and elliptic curves},
      NOTE = {McGill University lecture notes written with the collaboration
              of W. Kuyk and J. Labute},
 PUBLISHER = {W. A. Benjamin, Inc., New York-Amsterdam},
      YEAR = {1968},
     PAGES = {xvi+177 pp.},
   MRCLASS = {14.49},
  MRNUMBER = {263823},
}

@article {Dyson,
    AUTHOR = {Dyson, F. J.},
     TITLE = {Statistical theory of the energy levels of complex systems.
              {III}},
   JOURNAL = {J. Mathematical Phys.},
  FJOURNAL = {Journal of Mathematical Physics},
    VOLUME = {3},
      YEAR = {1962},
     PAGES = {166--175},
      ISSN = {0022-2488,1089-7658},
   MRCLASS = {81.60},
  MRNUMBER = {143558},
MRREVIEWER = {G.\ K\"all\'en},
       DOI = {10.1063/1.1703775},
       URL = {https://doi.org/10.1063/1.1703775},
}

@article {Heath-Brown2,
    AUTHOR = {Heath-Brown, D. R.},
     TITLE = {The average analytic rank of elliptic curves},
   JOURNAL = {Duke Math. J.},
  FJOURNAL = {Duke Mathematical Journal},
    VOLUME = {122},
      YEAR = {2004},
    NUMBER = {3},
     PAGES = {591--623},
      ISSN = {0012-7094,1547-7398},
   MRCLASS = {11G05 (11G40)},
  MRNUMBER = {2057019},
MRREVIEWER = {Joseph\ H.\ Silverman},
       DOI = {10.1215/S0012-7094-04-12235-3},
       URL = {https://doi.org/10.1215/S0012-7094-04-12235-3},
}

@book {Mehta,
    AUTHOR = {Mehta, M.},
     TITLE = {Random matrices},
    SERIES = {Pure and Applied Mathematics (Amsterdam)},
    VOLUME = {142},
   EDITION = {Third},
 PUBLISHER = {Elsevier/Academic Press, Amsterdam},
      YEAR = {2004},
     PAGES = {xviii+688},
      ISBN = {0-12-088409-7},
   MRCLASS = {82-02 (15-02 15A52 60B99 60K35 82B41)},
  MRNUMBER = {2129906},
}

@article {Young,
    AUTHOR = {Young, M. P.},
     TITLE = {Low-lying zeros of families of elliptic curves},
   JOURNAL = {J. Amer. Math. Soc.},
  FJOURNAL = {Journal of the American Mathematical Society},
    VOLUME = {19},
      YEAR = {2006},
    NUMBER = {1},
     PAGES = {205--250},
      ISSN = {0894-0347,1088-6834},
   MRCLASS = {11G40 (11G05 11L20 11M41)},
  MRNUMBER = {2169047},
MRREVIEWER = {Harald\ A.\ Helfgott},
       DOI = {10.1090/S0894-0347-05-00503-5},
       URL = {https://doi.org/10.1090/S0894-0347-05-00503-5},
}

@article {Keating-Snaith,
    AUTHOR = {Keating, J. P. and Snaith, N. C.},
     TITLE = {Random matrix theory and {$L$}-functions at {$s=1/2$}},
   JOURNAL = {Comm. Math. Phys.},
  FJOURNAL = {Communications in Mathematical Physics},
    VOLUME = {214},
      YEAR = {2000},
    NUMBER = {1},
     PAGES = {91--110},
      ISSN = {0010-3616,1432-0916},
   MRCLASS = {11M26 (15A52 82B41)},
  MRNUMBER = {1794267},
MRREVIEWER = {Ze\'ev\ Rudnick},
       DOI = {10.1007/s002200000262},
       URL = {https://doi.org/10.1007/s002200000262},
}

@incollection {Iwaniec--Sarnak,
    AUTHOR = {Iwaniec, H. and Sarnak, P.},
     TITLE = {Perspectives on the analytic theory of {$L$}-functions},
      NOTE = {GAFA 2000 (Tel Aviv, 1999)},
   JOURNAL = {Geom. Funct. Anal.},
  FJOURNAL = {Geometric and Functional Analysis},
      YEAR = {2000},
     PAGES = {705--741},
      ISSN = {1016-443X,1420-8970},
   MRCLASS = {11M26 (11F67 11F70 11F72 11G40)},
  MRNUMBER = {1826269},
MRREVIEWER = {Henry\ H.\ Kim},
       DOI = {10.1007/978-3-0346-0425-3\_6},
       URL = {https://doi.org/10.1007/978-3-0346-0425-3_6},
}

@article {Iwaniec-Luo-Sarnak,
    AUTHOR = {Iwaniec, H. and Luo, W. and Sarnak, P.},
     TITLE = {Low lying zeros of families of {$L$}-functions},
   JOURNAL = {Inst. Hautes \'Etudes Sci. Publ. Math.},
  FJOURNAL = {Institut des Hautes \'Etudes Scientifiques. Publications
              Math\'ematiques},
    NUMBER = {91},
      YEAR = {2000},
     PAGES = {55--131},
      ISSN = {0073-8301,1618-1913},
   MRCLASS = {11M41 (11F66 11M26)},
  MRNUMBER = {1828743},
MRREVIEWER = {Philippe\ G.\ Michel},
       URL = {http://www.numdam.org/item?id=PMIHES_2000__91__55_0},
}

@book {IK2004,
    AUTHOR = {Iwaniec, H. and Kowalski, E.},
     TITLE = {Analytic number theory},
    SERIES = {American Mathematical Society Colloquium Publications},
    VOLUME = {53},
 PUBLISHER = {American Mathematical Society, Providence, RI},
      YEAR = {2004},
     PAGES = {xii+615},
      ISBN = {0-8218-3633-1},
   MRCLASS = {11-02 (11Fxx 11Lxx 11Mxx 11Nxx)},
  MRNUMBER = {2061214},
MRREVIEWER = {K.\ Soundararajan},
       DOI = {10.1090/coll/053},
       URL = {https://doi.org/10.1090/coll/053},
}

@misc{lmfdb,
  author       = {The {LMFDB Collaboration}},
  title        = {The {L}-functions and modular forms database},
  howpublished = {\url{https://www.lmfdb.org}},
  year         = {2023}
}

@article{katzZeroesZetaFunctions1999,
  title = {Zeroes of Zeta Functions and Symmetry},
  author = {Katz, N. and Sarnak, P.},
  year = {1999},
  journal = {American Mathematical Society. Bulletin. New Series},
  volume = {36},
  number = {1},
  pages = {1--26},
  issn = {0273-0979,1088-9485},
  doi = {10.1090/S0273-0979-99-00766-1},
  mrnumber = {1640151}
}

@book{katzRandomMatricesFrobenius1999,
  title = {Random Matrices, {{Frobenius}} Eigenvalues, and Monodromy},
  author = {Katz, N. and Sarnak, P.},
  year = {1999},
  series = {American {{Mathematical Society Colloquium Publications}}},
  volume = {45},
  publisher = {{American Mathematical Society, Providence, RI}},
  doi = {10.1090/coll/045},
  isbn = {978-0-8218-1017-0},
  mrnumber = {1659828}
}

@online{github,
  author = {D. Basak and C. Castillo and A. Zaharescu},
  title = {Surfaces Associated to Zeros of Automorphic $L$-functions Data Companion.},
  year = {2024},
  publisher = {GitHub},
  note = {Available at: \href{https://cruzcstll99.github.io/Correlation_Surfaces/index.html}{Surfaces Associated to Zeros of Automorphic $L$-functions}}
}

@book {Billingsley,
    AUTHOR = {Billingsley, P.},
     TITLE = {Probability and measure},
    SERIES = {Wiley Series in Probability and Statistics},
   EDITION = {Anniversary},
      NOTE = {With a foreword by Steve Lalley and a brief biography of
              Billingsley by Steve Koppes},
 PUBLISHER = {John Wiley \& Sons, Inc., Hoboken, NJ},
      YEAR = {2012},
     PAGES = {xviii+624},
      ISBN = {978-1-118-12237-2},
   MRCLASS = {60-01 (28-01)},
  MRNUMBER = {2893652},
}

@article {Baluyot2016,
    AUTHOR = {Baluyot, S. A. C.},
     TITLE = {On the pair correlation conjecture and the alternative
              hypothesis},
   JOURNAL = {J. Number Theory},
  FJOURNAL = {Journal of Number Theory},
    VOLUME = {169},
      YEAR = {2016},
     PAGES = {183--226},
      ISSN = {0022-314X,1096-1658},
   MRCLASS = {11M26 (11M06)},
  MRNUMBER = {3531237},
MRREVIEWER = {Micah\ B.\ Milinovich},
       DOI = {10.1016/j.jnt.2016.05.007},
       URL = {https://doi-org.proxy2.library.illinois.edu/10.1016/j.jnt.2016.05.007},
}

@article {BM2018,
    AUTHOR = {Bui, H. M. and Milinovich, M. B.},
     TITLE = {Gaps between zeros of the {R}iemann zeta-function},
   JOURNAL = {Q. J. Math.},
  FJOURNAL = {The Quarterly Journal of Mathematics},
    VOLUME = {69},
      YEAR = {2018},
    NUMBER = {2},
     PAGES = {403--423},
      ISSN = {0033-5606,1464-3847},
   MRCLASS = {11M06},
  MRNUMBER = {3811585},
MRREVIEWER = {Lilu\ Zhao},
       DOI = {10.1093/qmath/hax047},
       URL = {https://doi-org.proxy2.library.illinois.edu/10.1093/qmath/hax047},
}

@article {GS2022,
    AUTHOR = {Goldston, D. A. and Suriajaya, Ade Irma},
     TITLE = {The prime number theorem and pair correlation of zeros of the
              {R}iemann zeta-function},
   JOURNAL = {Res. Number Theory},
  FJOURNAL = {Research in Number Theory},
    VOLUME = {8},
      YEAR = {2022},
    NUMBER = {4},
     PAGES = {Paper No. 71, 10},
      ISSN = {2522-0160,2363-9555},
   MRCLASS = {11M06 (11M26 11N05)},
  MRNUMBER = {4483573},
MRREVIEWER = {Tsz\ Ho\ Chan},
       DOI = {10.1007/s40993-022-00371-4},
       URL = {https://doi-org.proxy2.library.illinois.edu/10.1007/s40993-022-00371-4},
}

@article {BGST2024,
    AUTHOR = {Baluyot, S. A. C. and Goldston, D. A. and
              Suriajaya, A. I. and Turnage-Butterbaugh, C. L.},
     TITLE = {An unconditional {M}ontgomery theorem for pair correlation of
              zeros of the {R}iemann zeta-function},
   JOURNAL = {Acta Arith.},
  FJOURNAL = {Acta Arithmetica},
    VOLUME = {214},
      YEAR = {2024},
     PAGES = {357--376},
      ISSN = {0065-1036,1730-6264},
   MRCLASS = {11M06 (11M26)},
  MRNUMBER = {4772293},
MRREVIEWER = {Alessandro\ Fazzari},
       DOI = {10.4064/aa230612-20-3},
       URL = {https://doi-org.proxy2.library.illinois.edu/10.4064/aa230612-20-3},
}

@misc{jiang2025,
      title={On Hypothesis H of Rudnick and Sarnak}, 
      author={Y. Jiang},
      year={2025},
      eprint={2507.20653},
      archivePrefix={arXiv},
      primaryClass={math.NT},
      url={https://arxiv.org/abs/2507.20653}, 
}

@article {WuYe,
    AUTHOR = {Wu, J. and Ye, Y.},
     TITLE = {Hypothesis {H} and the prime number theorem for automorphic
              representations},
   JOURNAL = {Funct. Approx. Comment. Math.},
  FJOURNAL = {Uniwersytet im. Adama Mickiewicza w Poznaniu. Wydzia\l\
              Matematyki i Informatyki. Functiones et Approximatio
              Commentarii Mathematici},
    VOLUME = {37},
      YEAR = {2007},
     PAGES = {461--471},
      ISSN = {0208-6573,2080-9433},
   MRCLASS = {11F70 (22E55)},
  MRNUMBER = {2364718},
MRREVIEWER = {A.\ Peth\H o},
       DOI = {10.7169/facm/1229619665},
       URL = {https://doi.org/10.7169/facm/1229619665},
}
\end{document}